\newtheorem{theorem}{Theorem}[section]
\newtheorem{lemma}[theorem]{Lemma}
\newtheorem{prop}[theorem]{Proposition}
\newtheorem{corollary}[theorem]{Corollary}
\newtheorem{exAux}[theorem]{Example}
\newenvironment{example}{\begin{exAux} \rm}{\end{exAux}}
\newtheorem{Def}[theorem]{Definition}
\newenvironment{defi}{\begin{Def} \rm}{\end{Def}}
\newtheorem{Note}[theorem]{Note}
\newenvironment{note}{\begin{Note} \rm}{\end{Note}}
\newtheorem{Problem}[theorem]{Problem}
\newtheorem{Rem}[theorem]{Remark}
\newtheorem{Not}[theorem]{Notation}
\newenvironment{notation}{\begin{Not} \rm}{\end{Not}}
\newtheorem{Conj}[theorem]{Conjecture}
\newtheorem{Ass}[theorem]{Assumption}
\newenvironment{proof}{\medskip\noindent{\bf Proof.\ }}{\qed\medskip}
\newenvironment{proofof}[1]{\medskip\noindent{\bf Proof  of {#1}.\ 
}}{\qed\medskip}
\newcommand{\qed}{\hfill\mbox{$\Box$\qquad\qquad}}
\newcommand{\F}{\mathbb{F}}
\renewcommand{\S}{\mathbb{S}}
\newcommand{\cS}{\mathcal{S}}
\newcommand{\Mat}{\text{\rm Mat}}
\newcommand{\Matd}{\text{\rm Mat}_{d+1}(\F)}
\newcommand{\End}{\text{\rm End}(V)}
\newcommand{\vphi}{\varphi}
\renewcommand{\th}{\theta}
\renewcommand{\b}[1]{\langle #1 \rangle }
\newif\ifDRAFT
\begin{document}

\title{Compatibility and companions for Leonard pairs}

\author{Kazumasa Nomura and Paul Terwilliger}

\maketitle

\medskip

\begin{center}
   \bf  Abstract.
\end{center}

\begin{quote}
In this paper, we introduce the concepts of compatibility and companion
for Leonard pairs.
These concepts are roughly described as follows.
Let $\F$ denote a field, and let $V$ denote a vector space over $\F$
with finite positive dimension.
A Leonard pair on $V$ is an ordered pair of diagonalizable $\F$-linear maps
$A : V \to V$ and $A^* : V \to V$ that each act in an irreducible tridiagonal fashion
on an eigenbasis for the other one.
Leonard pairs $A,A^*$ and $B,B^*$ on $V$ are said to be compatible whenever 
$A^* = B^*$ and $[A,A^*] = [B,B^*]$, where $[r,s] = r s - s r$.
For a Leonard pair $A,A^*$ on $V$, by a companion of $A,A^*$ we mean
an $\F$-linear map $K: V \to V$ such that $K$ is a polynomial in $A^*$ and
$A-K, A^*$ is a Leonard pair on $V$.
The concepts of compatibility and companion are related as follows.
For compatible Leonard pairs $A,A^*$ and $B,B^*$ on $V$,
define $K = A-B$. Then $K$ is a companion of $A,A^*$.
For a Leonard pair $A,A^*$ on $V$ and a companion $K$ of $A,A^*$,
define $B = A-K$ and $B^* = A^*$.
Then $B,B^*$ is a Leonard pair on $V$ that is compatible with $A,A^*$.
Let $A,A^*$ denote a Leonard pair on $V$.
We  find all the Leonard pairs $B, B^*$ on $V$ that are compatible with $A,A^*$.
For each solution $B, B^*$ we describe the corresponding companion $K = A-B$.
\end{quote}

\section{Introduction}
\label{sec:intro}
\ifDRAFT {\rm sec:intro}. \fi

The notion of a Leonard pair was introduced by the second author in \cite{T:Leonard}.
We will recall the definition after a few comments.
A square matrix is said to be tridiagonal whenever each nonzero entry lies on
the diagonal, the subdiagonal, or the superdiagonal.
A tridiagonal matrix is said to be irreducible whenever each entry on the
subdiagonal is nonzero and each entry on the superdiagonal is nonzero.
Let $\F$ denote a field, and let $V$ denote a vector space over $\F$
with finite positive dimension.
A Leonard pair on $V$ is an ordered pair of $\F$-linear maps $A : V \to V$ and
$A^* : V \to V$ 
that satisfy (i) and (ii) below:
\begin{itemize}
\item[\rm (i)]
there exists a basis for $V$ with respect to which the matrix representing $A$
is irreducible tridiagonal and the matrix representing $A^*$ is diagonal;
\item[\rm (ii)]
there exists a basis for $V$ with respect to which the matrix representing $A^*$
is irreducible tridiagonal and the matrix representing $A$ is diagonal.
\end{itemize}

We have some historical remarks about Leonard pairs.
The concept of a Leonard pair originated in Algebraic Combinatorics,
in the study of $Q$-polynomial distance-regular graphs \cite{BI, BCN,DKT}.
The origin story begins with the 1973 thesis of Philippe Delsarte \cite{Del}.
In that thesis, Delsarte showed that a $Q$-polynomial distance-regular graph
yields two sequences of orthogonal polynomials that are related by what
is now called Askey-Wilson duality \cite[p.\ 261]{T:qRacah}.
Motivated by Delsarte's thesis and Eiichi Bannai's lectures at Ohio State University,
Douglas Leonard showed in 1982 that the $q$-Racah polynomials give the most general
orthogonal polynomial system that satisfies Askey--Wilson duality \cite{L}.
In their 1984 book \cite[Theorem 5.1]{BI} Bannai and Ito give a comprehensive version
Leonard's theorem, that treats all the limiting cases.
This version gives a complete classification of the orthogonal polynomial systems
that satisfy Askey--Wilson duality.
It shows that the orthogonal polynomial systems that satisfy Askey--Wilson duality all come
from the terminating branch of the 
Askey scheme (see \cite{KLS} and \cite[Section 1]{T:LSnote}.)
The Leonard theorem \cite[Theorem 5.1]{BI} is a bit complicated.
To simplify and clarify the theorem, the second author introduced the
notion of a Leonard pair and Leonard system \cite{T:Leonard}.
The Leonard systems are classified up to isomorphism in \cite[Theorem 1.9]{T:Leonard}.
This result gives a linear algebraic version of Leonard's theorem.
For more information on Leonard pairs and orthogonal polynomials, see
\cite[Appendix A]{T:Leonard} and \cite{T:qRacah, T:survey}.

We just mentioned how Leonard pairs are related to orthogonal polynomials.
Leonard pairs have applications to many other areas of mathematics and physics,
such as 
Lie theory \cite{IT, NT:Krawt, BM, Hart, Hart2, IT2b},
quantum groups \cite{Al, BT, IT2, IT3, Bockting, AC, BockT, IRT},
spin models  \cite{Cur:spinLP, NT:spin, CN:spin, CauW},
double affine Hecke algebras \cite{NT:DAHA, H:DAHA, H:DAHA2,Lee, LeeT},
partially ordered sets \cite{Liu, T:poset, Wor, MT},
and exactly solvable models in statistical mechanics 
\cite{Bas1, Bas2, Bas3, Bas4, Bas5, Bas6, Bas7}.
For more information about Leonard pairs and related topics,
see  \cite{T:Introduction, T:Kyoto, NT:Krawt, NT:TB, Tanaka}.

Next we recall some basic facts about Leonard pairs.
Let $A,A^*$ denote a Leonard pair on $V$.
By the construction, each of $A$ and $A^*$ is diagonalizable.
By \cite[Lemma 1.3]{T:Leonard} the eigenspaces of $A$ and $A^*$ all have dimension one.
Let $d+1$ denote the dimension of $V$, and
let $\{\th_i\}_{i=0}^d$ denote an ordering of the eigenvalues of $A$.
For $0 \leq i \leq d$ let $v_i$ denote an eigenvector for $A$ corresponding to $\th_i$.
The ordering $\{\th_i\}_{i=0}^d$ is said to be standard whenever the matrix
representing $A^*$ with respect to the basis $\{v_i\}_{i=0}^d$ is irreducible tridiagonal.
For a standard ordering $\{\th_i\}_{i=0}^d$ of the eigenvalues of $A$,
the ordering $\{\th_{d-i}\}_{i=0}^d$ is standard and no further ordering is
standard.
Similar comments apply to the orderings of the eigenvalues for $A^*$.
The Leonard pair $A,A^*$ is often described using some data
called a parameter array \cite[Definition 17.1]{T:qRacah}.
This is a sequence of scalars
\begin{equation}
   ( \{\th_i\}_{i=0}^d; \{\th^*_i\}_{i=0}^d; \{\vphi_i\}_{i=1}^d; \{\phi_i\}_{i=1}^d)   \label{eq:parray00}
\end{equation}
such that:
(i)
there exists a basis for $V$ with respect to which 
the matrices representing $A$ and $A^*$ are
\begin{align*}
A &:
\begin{pmatrix}
 \th_0 & & & & & \text{\bf 0}  \\
 1 & \th_1 \\
    & 1 & \th_2 \\
    &    &  \cdot & \cdot \\
    &     &          & \cdot & \cdot \\
\text{\bf 0} & & & & 1 & \th_d
\end{pmatrix},
&
A^* &:
\begin{pmatrix}
\th^*_0 & \vphi_1 & & & & {\bf 0}  \\
          & \th^*_1 & \vphi_2 \\
         &  &  \th^*_1 &  \cdot    \\
         &  &  &  \cdot & \cdot  \\
         &  &  &  &  \cdot & \vphi_d  \\
\text{\bf 0} & & & & & \th^*_d
\end{pmatrix};
\end{align*}
(ii)
there exists a basis for $V$
with respect to which the matrices representing $A$ and $A^*$ are
\begin{align*}
A &:
\begin{pmatrix}
 \th_d & & & & & \text{\bf 0}  \\
 1 & \th_{d-1} \\
    & 1 & \th_{d-2} \\
    &    &  \cdot & \cdot \\
    &     &          & \cdot & \cdot \\
\text{\bf 0} & & & & 1 & \th_0
\end{pmatrix},
&
A^* &:
\begin{pmatrix}
\th^*_0 & \phi_1 & & & & {\bf 0}  \\
          & \th^*_1 & \phi_2 \\
         &  &  \th^*_1 &  \cdot    \\
         &  &  &  \cdot & \cdot  \\
         &  &  &  &  \cdot & \phi_d  \\
\text{\bf 0} & & & & & \th^*_d
\end{pmatrix}.
\end{align*}
We are using the description in \cite[Theorem 18.1]{T:LSnote}.
For the above parameter array 
the sequence $\{\th_i\}_{i=0}^d$ (resp.\ $\{\th^*_i\}_{i=0}^d$) is an ordering
of the eigenvalues of $A$ (resp.\ $A^*$).
These orderings are standard \cite[Theorem 3.2]{T:Leonard}.
We comment on the uniqueness of the parameter array.
Consider a parameter array \eqref{eq:parray00} of $A,A^*$.
Then by \cite[Theorem 1.11]{T:Leonard}
each of the following is a parameter array of $A,A^*$:
\begin{align*}
&  ( \{\th_i\}_{i=0}^d; \{\th^*_i\}_{i=0}^d; \{\vphi_i\}_{i=1}^d; \{\phi_i\}_{i=1}^d),
&&   ( \{\th_i\}_{i=0}^d; \{\th^*_{d-i}\}_{i=0}^d; \{\phi_{d-i+1}\}_{i=1}^d; \{\vphi_{d-i+1}\}_{i=1}^d),
\\
&  ( \{\th_{d-i}\}_{i=0}^d; \{\th^*_i\}_{i=0}^d; \{\phi_i\}_{i=1}^d; \{\vphi_i\}_{i=1}^d),
&&  ( \{\th_{d-i}\}_{i=0}^d; \{\th^*_{d-i}\}_{i=0}^d; \{\vphi_{d-i+1}\}_{i=1}^d; \{\phi_{d-i+1}\}_{i=1}^d).
\end{align*}
Moreover, $A,A^*$ has no further parameter array.
By \cite[Lemma 12.4]{T:TDD} two Leonard pairs over $\F$ are isomorphic
if and only if they have a common parameter array.
Now consider a parameter array \eqref{eq:parray00} of $A,A^*$.
By \cite[Theorem 1.9]{T:Leonard} the expressions
\begin{align}
&  \frac{\th_{i-2} - \th_{i+1} } { \th_{i-1} - \th_i },
&&  \frac{\th^*_{i-2} - \th^*_{i+1} } { \th^*_{i-1} - \th^*_i }            \label{eq:indep0}
\end{align}
are equal and independent of $i$ for $2 \leq i \leq d-1$.
For the rest of this paragraph, assume that $d \geq 3$.
Let $\beta+1$ denote the common value of \eqref{eq:indep0}.
The scalar $\beta$ is called the fundamental constant of $A,A^*$.
For a parameter array of $A,A^*$ the entries
satisfy numerous relations \cite[Theorem 1.9]{T:Leonard}.
In \cite{T:parray}
the solutions are given in closed form,
in terms of seven free variables in addition to $d$ and $\beta$.
These seven variables are called basic.
The closed forms depend on the nature of $\beta$, as we now describe.
The Leonard pair $A,A^*$ is said to have type I whenever $\beta \neq \pm 2$;
type II whenever $\beta = 2$ and $\text{\rm Char}(\F) \neq 2$;
type III$^+$ whenever $\beta=-2$, $\text{\rm Char}(\F) \neq 2$, and $d$ is even;
type III$^-$ whenever $\beta=-2$, $\text{\rm Char}(\F) \neq 2$, and $d$ is odd;
type IV whenever $\beta=2$ and $\text{\rm Char}(\F)=2$.
For each type, the solutions are given in \cite[Section 5]{T:parray}.

We now describe our goals for the present paper.
We introduce the concepts of compatibility and companion for Leonard pairs.
These concepts are described as follows.
Leonard pairs $A,A^*$ and $B,B^*$ on $V$ are said to be compatible whenever 
$A^* = B^*$ and $[A,A^*] = [B,B^*]$, where $[r,s] = r s - s r$.
For a Leonard pair $A,A^*$ on $V$, by a companion of $A,A^*$ we mean
an $\F$-linear map $K: V \to V$ such that $K$ is a polynomial in $A^*$ and
$A-K, A^*$ is a Leonard pair on $V$.
The concepts of compatibility and companion are related as follows.
For compatible Leonard pairs $A,A^*$ and $B,B^*$ on $V$,
define $K = A-B$. Then $K$ is a companion of $A,A^*$.
For a Leonard pair $A,A^*$ on $V$ and a companion $K$ of $A,A^*$,
define $B = A-K$ and $B^* = A^*$.
Then $B,B^*$ is a Leonard pair on $V$ that is compatible with $A,A^*$.
Let $A,A^*$ denote a Leonard pair on $V$.
In this paper, we find all the Leonard pairs $B,B^*$ on $V$
that are compatible with $A,A^*$.
For each solution $B,B^*$ we describe the corresponding companion $K = A-B$.

We will describe our main results after a few comments.
Consider a Leonard pair  $A,A^*$ on $V$ with a parameter array \eqref{eq:parray00}.
We will show that for $d \geq 3$, the scalar
\begin{align*}
 \kappa &= (\th_{i-1}-\th_{i+1})^2 + (\beta+2)(\th_i - \th_{i-1})(\th_i - \th_{i+1}) 
\end{align*}
is independent of $i$ for $1 \leq i \leq d-1$.
We call $\kappa$ the invariant value for $A,A^*$.

As we will see, every Leonard pair can be represented by an ordered pair of
matrices, such that the first matrix is irreducible tridiagonal with all entries $1$ on the subdiagonal,
and the second matrix is diagonal.
Motivated by this fact, we consider the following setup.
Fix a diagonal matrix
\[
  A^* = \text{\rm diag}(\th^*_0, \th^*_1, \ldots, \th^*_d)
\]
with $\{\th^*_i\}_{i=0}^d$ mutually distinct scalars in $\F$.
An irreducible tridiagonal matrix is said to be normalized whenever it has all entries $1$ on the
subdiagonal.
Let the set $\Omega$ consist of  the normalized irreducible tridiagonal matrices $A$
such that $A,A^*$ is a Leonard pair on $\F^{d+1}$.
For the moment let $A \in \Omega$.
As we will see in Lemma \ref{lem:Omega00}, 
the set $\Omega$ contains all the matrices $B$ such that $B,A^*$ is a Leonard pair
on $\F^{d+1}$ that is compatible with $A,A^*$.
For the rest of this section, fix matrices $A$ and $B$ in $\Omega$, and let
\begin{align*}
&  (\{\th_i\}_{i=0}^d; \{\th^*_i\}_{i=0}^d; \{\vphi_i\}_{i=1}^d; \{\phi_i\}_{i=1}^d),
&&  (\{\th'_i\}_{i=0}^d; \{\th^*_i\}_{i=0}^d; \{\vphi'_i\}_{i=1}^d; \{\phi'_i\}_{i=1}^d) 
\end{align*}
denote a parameter array of $A,A^*$ and $B,A^*$, respectively.

We now describe our first main result.
The Leonard pairs $A,A^*$ and $B,A^*$ are compatible if and only if
\begin{align*}
\vphi_i \phi_i &= \vphi'_i \phi'_i   && (1 \leq i \leq d).
\end{align*}
Our second main result is as follows.
For the case $d=1$, $A,A^*$ and $B,A^*$ are compatible if and only if
$\vphi_1 \phi_1 = \vphi'_1 \phi'_1$.
For the case $d=2$, $A,A^*$ and $B, A^*$ are compatible
if and only if $\vphi_1 \phi_1 = \vphi '_1 \phi'_1$ and
$\vphi_2 \phi_2 = \vphi'_2 \phi'_2$.
For the case $d \geq 3$, 
$A, A^*$ and $B,A^*$ are compatible if and only if
\begin{align}
\kappa &= \kappa',    &
\vphi_1 \phi_1 &= \vphi'_1 \phi'_1,  &
\vphi_d \phi_d &= \vphi'_d \phi'_d,           \label{eq:maincond}
\end{align}
where $\kappa$ (resp.\ $\kappa'$) is the invariant value for $A,A^*$ (resp.\ $B,A^*$). 
We now describe our further main results.
Assume that $d \geq 3$.
Note that $A,A^*$ and $B,A^*$ have the same fundamental constant
and the same type.
For each type,
we describe the conditions \eqref{eq:maincond} in terms of the basic variables for $A,A^*$ and $B,A^*$.
These descriptions are given in 
Theorems 
\ref{thm:type1main},
\ref{thm:type2main},
\ref{thm:type3+main},
\ref{thm:type3-main},
\ref{thm:type4main}.
We solve the resulting equations in terms of the basic variables of $A,A^*$.
Our solutions are listed in Theorems
\ref{thm:type1sol},
\ref{thm:type1sol2},
\ref{thm:type2sol},
\ref{thm:type3+sol},
\ref{thm:type3-sol},
\ref{thm:type4sol}.
For each solution we describe the corresponding companion $K = A-B$.
These descriptions can be found in Theorems
\ref{thm:ex2},
\ref{thm:ex3},
\ref{thm:ex2b},
\ref{thm:ex3b},
\ref{thm:type2ex2},
\ref{thm:type2ex1},
\ref{thm:type3+ex2},
\ref{thm:type3-ex2},
\ref{thm:type4K}.

We mention some examples of compatible Leonard pairs.
Let $A$, $B \in \Omega$ as above.
We mentioned earlier that $A,A^*$ and $B,A^*$ are compatible if and only if
$\vphi_i \phi_i = \vphi'_i \phi'_i$ for $1 \leq i \leq d$.
This condition is satisfied if one of the following {\rm (i)--(iv)} holds:
\begin{itemize}
\item[\rm (i)]
 $\vphi'_i = \vphi_i$ and $\phi'_i = \phi_i \quad (1 \leq i \leq d)$;
\item[\rm (ii)]
 $\vphi'_i = \phi_i$ and $\phi'_i = \vphi_i \quad (1 \leq i \leq d)$;
\item[\rm (iii)]
 $\vphi'_i = - \vphi_i$ and $\phi'_i = - \phi_i \quad (1 \leq i \leq d)$;
\item[\rm (iv)]
 $\vphi'_i = - \phi_i$ and $\phi'_i = - \vphi_i  \quad  (1 \leq i \leq d)$.
\end{itemize}
As we will see in Proposition \ref{prop:A=B}, the condition (i) or (ii)
holds if and only if there exists $\zeta \in \F$ such that $B = A + \zeta I$.
Here $I$ denotes the identity matrix.
Define $A^\vee = - \mathbb{S} A \mathbb{S}^{-1}$, where  $\mathbb{S}$ denotes the diagonal matrix that
has $(i,i)$-entry $(-1)^i$ for $0 \leq i \leq d$.
Then $A^\vee \in \Omega$ (see Lemma \ref{lem:veecoincide}.)
As we will see in Proposition \ref{prop:ABvee}, the condition (iii) or (iv)
holds if and only if there exists $\zeta \in \F$ such that
$B = A^\vee + \zeta I$.
In the main body of the paper, we interpret conditions (iii), (iv) using a
symmetric binary relation called the bond relation.

The paper is organized as follows.
In Section \ref{sec:pre} we fix our notation, and recall some materials from linear algebra.
In Sections \ref{sec:trid}, \ref{sec:normalize} we obtain some results about tridiagonal matrices
that will be used later in the paper.
In Section \ref{sec:normalize} we discuss the normalization of an irreducible tridiagonal matrix.
In Section  \ref{sec:bond} we introduce the bond relation.
In Section \ref{sec:LP}
we recall some basic facts about Leonard pairs.
In Section \ref{sec:bondLP} we apply the bond relation to Leonard pairs.
In Section \ref{sec:compatible}
we introduce the concepts of compatibility and companion for Leonard pairs.
In Section \ref{sec:Omega} we introduce the set $\Omega$.
In Section \ref{sec:bondOmega} we consider the bond relation on $\Omega$.
In Section \ref{sec:compOmega} we consider the compatibility relation on $\Omega$.
In Section \ref{sec:characterize}
we prove our first main result.
In Section \ref{sec:type} we describe some basic facts about the type of a Leonard pair.
In Section \ref{sec:refine} we display a formula that will be used in the proof of
our second main result.
In Section \ref{sec:typeO} we describe the companions of a Leonard pair for $d=1,2$.
In Sections \ref{sec:parraytype1}--\ref{sec:companiontype4} we consider Leonard pairs with $d \geq 3$.
For each type I--IV
we describe the parameter array in terms of the basic variables,
and prove the formula from Section 14
(Sections 
\ref{sec:parraytype1}, 
\ref{sec:parraytype2}, 
\ref{sec:parraytype3+}, 
\ref{sec:parraytype3-}, 
\ref{sec:parraytype4});
we represent condition \eqref{eq:maincond} in terms of the basic variables and 
give the solutions
(Sections 
\ref{sec:characterizetype1},
\ref{sec:characterizetype2},
\ref{sec:characterizetype3+},
\ref{sec:characterizetype3-},
\ref{sec:characterizetype4});
and we describe the companions of the given Leonard pair
(Sections
\ref{sec:companiontype1},
\ref{sec:companiontype2},
\ref{sec:companiontype3+},
\ref{sec:companiontype3-},
\ref{sec:companiontype4}).

\section{Preliminaries}
\label{sec:pre}
\ifDRAFT {\rm sec:pre}. \fi

The following notational conventions hold throughout the paper.
Let $\F$ denote a field.
Every vector space and algebra discussed in this paper is over $\F$.
Fix an integer $d \geq 0$.
The notation $\{x_i\}_{i=0}^d$ refers to the sequence $x_0,x_1,\ldots,x_d$.
Let $\Matd$ denote the algebra consisting of the $d+1$ by $d+1$
matrices that have all entries in $\F$.
We index the rows and columns by $0,1,\ldots, d$.
The identity element of $\Mat_{d+1}(\F)$ is denoted by $I$.
Let $\F^{d+1}$ denote the vector space consisting of the column
vectors with $d+1$ rows and all entries in $\F$.
We index the rows by $0,1,\ldots,d$.
The algebra $\Matd$ acts on $\F^{d+1}$ by left multiplication.
Let $V$ denote a vector space with dimension $d+1$.
Let $\End$ denote the algebra consisting of the
$\F$-linear maps $V \to V$.
The identity element of $\End$ is denoted by $I$.
We recall how each basis $\{v_i\}_{i=0}^d$ of $V$ gives an algebra
isomorphism $\text{\rm End}(V) \to \Mat_{d+1}(\F)$.
For $A \in \text{\rm End}(V)$ and $M \in \Mat_{d+1}(\F)$,
we say that {\em $M$ represents $A$ with respect to $\{v_i\}_{i=0}^d$}
whenever $A v_j = \sum_{i=0}^d M_{i,j} v_i$ for $0 \leq j \leq d$.
The isomorphism sends $A$ to the unique matrix in $\Mat_{d+1}(\F)$ that represents
$A$ with respect to $\{v_i\}_{i=0}^d$.
Let $A \in \End$.
By an {\em eigenspace of $A$}, we mean a subspace $W \subseteq V$ 
such that $W \neq 0$ and there exists $\theta \in \F$ such that $W = \{ v \in V \, |\, A v = \theta v\}$;
in this case $\theta$ is the {\em eigenvalue} of $A$ associated with $W$.
We say that $A$ is {\em diagonalizable} whenever $V$ is spanned by the eigenspaces of $A$.
We say that $A$ is {\em multiplicity-free} whenever $A$ is diagonalizable and its eigenspaces
all have dimension one.
Assume that $A$ is multiplicity-free.
Let $\{\th_i\}_{i=0}^d$ denote an ordering of the eigenvalues of $A$.
For $0 \leq i \leq d$ let $V_i$ denote the eigenspace of $A$ associated with $\th_i$,
and define $E_i \in \End$ such that $(E_i - I) V_i =0$ and $E_i V_j=0$ for $j \neq i$ $(0 \leq j \leq d)$.
We call $E_i$ the {\em primitive idempotent of $A$ associated with $\th_i$}.
We have
(i) $E_i E_j = \delta_{i,j} E_i$ $(0 \leq i,j \leq d)$;
(ii) $I = \sum_{i=0}^d E_i$;
(iii) $A E_i = \th_i E_i = E_i A$ $(0 \leq i \leq d)$;
(iv) $A = \sum_{i=0}^d \th_i E_i$;
(v) $V_i = E_i V$ $(0 \leq i \leq d)$;
(vi) $\text{\rm rank} (E_i) = 1$ $(0 \leq i \leq d)$;
(vii) $\text{\rm tr} (E_i) = 1$ $(0 \leq i \leq d)$,
where tr means trace.
Moreover
\begin{align*}
 E_i &= \prod_{ \scriptsize \begin{matrix}  { 0 \leq j \leq d}  \\  j \neq i \end{matrix} }
            \frac{ A - \th_j I } {\th_i - \th_j }
          && ( 0 \leq i \leq d).
\end{align*}
Let $\b{A}$ denote the subalgebra of $\text{\rm End}(V)$ generated by $A$.
The algebra $\b{A} $ is commutative.
The elements $\{A^i\}_{i=0}^d$ form a basis of $\b{ A }$
and $\prod_{i=0}^d (A - \th_i I) = 0$.
Moreover $\{E_i\}_{i=0}^d$ form a basis of $\b{ A }$.

\begin{lemma}    \label{lem:As2}    \samepage
\ifDRAFT {\rm lem:As2}. \fi
Assume that $A \in \End$ is multiplicity-free with primitive idempotents $\{E_i\}_{i=0}^d$.
Then for $H \in \End$ the following {\rm (i)--(iii)} are equivalent:
\begin{itemize}
\item[\rm (i)]
$H \in \b{ A }$;
\item[\rm (ii)]
$H$ commutes with $A$;
\item[\rm (iii)]
$H$ commutes with $E_i$ for $0 \leq i \leq d$.
\end{itemize}
\end{lemma}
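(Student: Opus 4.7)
The plan is to prove the cyclic implications (i) $\Rightarrow$ (ii) $\Rightarrow$ (iii) $\Rightarrow$ (i), each of which follows directly from facts collected in the paragraph preceding the lemma.

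First, (i) $\Rightarrow$ (ii) is immediate: the subalgebra $\b{A} \subseteq \End$ is commutative and contains $A$, so every element of $\b{A}$ commutes with $A$.

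For (ii) $\Rightarrow$ (iii) I would invoke the explicit Lagrange-interpolation formula
\[
 E_i = \prod_{\scriptsize \begin{matrix} 0 \leq j \leq d \\ j \neq i \end{matrix}} \frac{A - \th_j I}{\th_i - \th_j} \qquad (0 \leq i \leq d)
\]
displayed just before the lemma. This exhibits each $E_i$ as a polynomial in $A$, so any $H$ that commutes with $A$ commutes with every $E_i$.

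The substantive direction is (iii) $\Rightarrow$ (i). Assuming $H$ commutes with each $E_i$, note that $H$ leaves the subspace $V_i = E_i V$ invariant: for $v \in V_i$ we have $E_i(Hv) = H(E_i v) = Hv$, so $Hv \in V_i$. Because $A$ is multiplicity-free, $\dim V_i = 1$, and hence $H$ acts as a scalar $\lambda_i \in \F$ on $V_i$. Using $I = \sum_{i=0}^d E_i$ together with $V = \bigoplus_{i=0}^d V_i$, one computes $Hv = \sum_i \lambda_i E_i v$ for every $v \in V$, giving $H = \sum_{i=0}^d \lambda_i E_i$. Since $\{E_i\}_{i=0}^d$ is a basis of $\b{A}$, this places $H \in \b{A}$ and closes the cycle. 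No step presents a genuine obstacle; the only point to verify carefully is that $H$ preserves each one-dimensional eigenspace, after which expansion in the idempotent basis is automatic.
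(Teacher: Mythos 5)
Your proof is correct and complete. The paper itself disposes of this lemma in a single sentence: it passes to an eigenbasis of $A$, in which $A$ and the $E_i$ become diagonal, and cites the fact that a matrix in $\Matd$ commutes with every diagonal matrix if and only if it is itself diagonal. Your argument is the coordinate-free unwinding of exactly that observation: the cycle (i) $\Rightarrow$ (ii) $\Rightarrow$ (iii) $\Rightarrow$ (i), with the commutativity of $\b{A}$ for the first step, the interpolation formula $E_i = \prod_{j \neq i}(A-\th_j I)/(\th_i - \th_j)$ for the second, and the eigenspace-preservation argument plus the expansion $H = \sum_i \lambda_i E_i$ in the idempotent basis of $\b{A}$ for the third. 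Nothing is gained or lost mathematically; your version simply supplies the details the paper leaves to the reader, and the one point that genuinely needed care --- that $H E_i = E_i H$ forces $H$ to preserve the one-dimensional space $V_i = E_i V$ and hence act there by a scalar --- is handled correctly.
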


\begin{proof}
This is a reformulation of the fact that 
a matrix $M \in \Matd$ commutes with each diagonal matrix in $\Matd$ 
if and only if $M$ diagonal.
\end{proof}

\begin{lemma}  {\rm (Skolem-Noether, see \cite[Corollary 7.125]{Rot}.) }
\label{lem:SN}    \samepage
\ifDRAFT {\rm lem:SN}. \fi
For a map $\sigma :\End \to \End$,
the following are equivalent:
\begin{itemize}
\item[\rm (i)]
$\sigma$ is an algebra isomorphism;
\item[\rm (ii)]
there exists an invertible $S \in \End$ such that
$X^\sigma = S X S^{-1}$ for all $X \in \End$.
\end{itemize}
\end{lemma}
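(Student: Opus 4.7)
The direction (ii) $\Rightarrow$ (i) is routine: conjugation by an invertible $S$ preserves sums and products, sends $I$ to $I$, and has inverse given by conjugation by $S^{-1}$, so it is an algebra automorphism of $\End$. The interesting direction is (i) $\Rightarrow$ (ii), and my plan is to exploit the matrix-unit structure of $\End$ rather than invoke general simple algebra theory.

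First I fix a basis $\{v_i\}_{i=0}^d$ of $V$ and let $E_{ij} \in \End$ be the corresponding matrix units, characterized by $E_{ij} v_k = \delta_{jk} v_i$. They satisfy the standard relations $E_{ij} E_{kl} = \delta_{jk} E_{il}$ and $\sum_{i=0}^d E_{ii} = I$, and form a basis of $\End$. Set $F_{ij} = \sigma(E_{ij})$. Because $\sigma$ is an algebra isomorphism, the $F_{ij}$ satisfy the same multiplicative relations, and $\sum_i F_{ii} = I$ with the $F_{ii}$ pairwise orthogonal idempotents.

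Next I build a new basis of $V$ adapted to $\sigma$. Since $F_{00} \neq 0$, choose a nonzero $w_0 \in F_{00} V$, and define $w_i = F_{i0} w_0$ for $0 \leq i \leq d$. The key verification is that each $w_i$ is nonzero and lies in $F_{ii} V$: one computes $F_{0i} w_i = F_{0i} F_{i0} w_0 = F_{00} w_0 = w_0 \neq 0$, so $w_i \neq 0$, and $F_{jj} w_i = F_{jj} F_{i0} w_0 = \delta_{ij} w_i$. Since the subspaces $F_{ii} V$ are in direct sum (the $F_{ii}$ being orthogonal idempotents summing to $I$), the $w_i$ are linearly independent, hence form a basis of $V$.

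Now define $S \in \End$ by $S v_i = w_i$ for $0 \leq i \leq d$; this $S$ is invertible. To verify that $\sigma(X) = S X S^{-1}$ for all $X \in \End$, it suffices by linearity to check this on the basis $\{E_{ij}\}$. Applying both sides to $w_k = S v_k$, one side gives $F_{ij} w_k = F_{ij} F_{k0} w_0 = \delta_{jk} w_i$, and the other gives $S E_{ij} S^{-1} w_k = S E_{ij} v_k = \delta_{jk} S v_i = \delta_{jk} w_i$. So the two linear maps $\sigma(E_{ij})$ and $S E_{ij} S^{-1}$ agree on the basis $\{w_k\}$, hence are equal, and the conclusion follows.

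The main obstacle I anticipate is verifying that the $w_i$ are nonzero and linearly independent; everything else is bookkeeping with matrix-unit relations. This step hinges on the identity $F_{0i} F_{i0} = F_{00}$ and on the orthogonality of the $F_{ii}$, both of which come directly from the fact that $\sigma$ respects the algebra structure.
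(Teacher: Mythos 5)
Your proof is correct. Both directions check out: for (i) $\Rightarrow$ (ii), the identities $F_{ij}F_{kl}=\delta_{jk}F_{il}$ and $\sum_i F_{ii}=I$ do transfer through $\sigma$ (surjectivity gives $\sigma(I)=I$, injectivity gives $F_{00}\neq 0$), the computation $F_{0i}w_i=F_{00}w_0=w_0\neq 0$ legitimately uses $w_0\in F_{00}V$ to conclude $F_{00}w_0=w_0$, and the linear independence of the $w_i$ follows from $V=\bigoplus_i F_{ii}V$. The verification of $\sigma(E_{ij})=SE_{ij}S^{-1}$ on the basis $\{w_k\}$ is complete, and linearity finishes the argument.

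The paper offers no proof at all: it states the lemma as a citation to the Skolem--Noether theorem in Rotman, i.e.\ it leans on the general theory of central simple algebras. Your argument is a self-contained, elementary replacement specialized to the full matrix algebra $\End$: it is essentially the classical proof that every algebra automorphism of $\Mat_{d+1}(\F)$ is inner, obtained by transporting the system of matrix units through $\sigma$ and reading off a change-of-basis matrix. What the citation buys is brevity and generality (the theorem holds for any central simple algebra); what your approach buys is that the reader needs nothing beyond linear algebra and the matrix-unit relations, which is arguably better matched to the level of machinery used elsewhere in this paper. Either route is acceptable here.
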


\section{Tridiagonal matrices and diagonal equivalence}
\label{sec:trid}
\ifDRAFT {\rm sec:trid}. \fi

Recall the algebra $\Matd$.
In this section we describe an equivalence relation on $\Matd$ called
diagonal equivalence.
We investigate this equivalence relation on the set of irreducible tridiagonal 
matrices in $\Matd$.

\begin{defi}     \label{def:diagonalequiv}    \samepage
\ifDRAFT {\rm def:diagonalequiv}. \fi
Matrices $A$, $B$ in $\Matd$ are said to be {\em diagonally equivalent}
whenever there exists an invertible diagonal matrix $S \in \Matd$
such that $B = S A S^{-1}$.
\end{defi}

Note that diagonal equivalence is an equivalence relation on $\Matd$.

\begin{lemma}    \label{lem:trid1}     \samepage
\ifDRAFT {\rm lem:trid1}. \fi
For a matrix $A \in \Matd$ and an invertible diagonal matrix 
\[
   S = \text{\rm diag}(s_0, s_1, \ldots, s_d),
\]
the matrix $S A S^{-1}$ has $(i,j)$-entry
$s_i s_j^{-1} A_{i,j}$ for $0 \leq i,j \leq d$.
\end{lemma}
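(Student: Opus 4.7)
The plan is to compute $SAS^{-1}$ directly by two applications of matrix multiplication, using that $S$ and $S^{-1}$ are diagonal. First I would write down the entries of $S$ and $S^{-1}$: since $S = \text{\rm diag}(s_0, s_1, \ldots, s_d)$, we have $S_{i,k} = s_i \delta_{i,k}$ and $(S^{-1})_{k,j} = s_k^{-1} \delta_{k,j}$ for $0 \leq i,j,k \leq d$.

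Next I would compute the product $SA$ entry by entry. By definition of matrix multiplication, $(SA)_{i,j} = \sum_{k=0}^d S_{i,k} A_{k,j} = \sum_{k=0}^d s_i \delta_{i,k} A_{k,j} = s_i A_{i,j}$. Then I would multiply on the right by $S^{-1}$: $(SAS^{-1})_{i,j} = \sum_{k=0}^d (SA)_{i,k} (S^{-1})_{k,j} = \sum_{k=0}^d s_i A_{i,k} s_k^{-1} \delta_{k,j} = s_i s_j^{-1} A_{i,j}$, which is exactly what is claimed.

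There is essentially no obstacle here; the lemma is a one-line computation recording how conjugation by a diagonal matrix rescales each entry by the ratio of the corresponding diagonal entries. The only thing worth being careful about is that $S$ being invertible ensures all $s_i$ are nonzero so that $s_j^{-1}$ makes sense, but this is part of the hypothesis.
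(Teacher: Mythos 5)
Your proof is correct and is exactly what the paper intends: the paper's own proof is the single phrase ``By matrix multiplication,'' and your explicit two-step computation of $(SA)_{i,j}$ and then $(SAS^{-1})_{i,j}$ simply fills in that computation. Nothing further is needed.
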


\begin{proof}
By matrix multiplication.
\end{proof}

\begin{corollary}    \label{cor:trid1}    \samepage
\ifDRAFT {\rm cor:trid1}. \fi
For diagonally equivalent matrices $A$, $B$ in $\Matd$,
\begin{align}
  A_{i,i} &= B_{i,i}   &&  (0 \leq i \leq d),                          \label{eq:tridAB1}
\\
 A_{i,j} A_{j,i} &= B_{i,j} B_{j,i}   &&   (0 \leq i,j \leq d).         \label{eq:tridAB2}
\end{align}
\end{corollary}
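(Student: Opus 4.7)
The plan is to derive both equalities as immediate consequences of Lemma \ref{lem:trid1}. Since $A$ and $B$ are diagonally equivalent, by Definition \ref{def:diagonalequiv} there exists an invertible diagonal matrix $S = \text{\rm diag}(s_0,s_1,\ldots,s_d)$ with $B = S A S^{-1}$. Lemma \ref{lem:trid1} then tells us that $B_{i,j} = s_i s_j^{-1} A_{i,j}$ for all $0 \leq i,j \leq d$, and this single formula will yield both assertions.

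For equation \eqref{eq:tridAB1}, I would specialize to $i = j$, obtaining $B_{i,i} = s_i s_i^{-1} A_{i,i} = A_{i,i}$, which is the desired identity. For equation \eqref{eq:tridAB2}, I would compute the product
\begin{align*}
B_{i,j} B_{j,i} &= (s_i s_j^{-1} A_{i,j})(s_j s_i^{-1} A_{j,i}) = A_{i,j} A_{j,i},
\end{align*}
where the scalar factors cancel by commutativity in $\F$. This completes both claims.

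There is essentially no obstacle here; the corollary is a purely computational consequence of Lemma \ref{lem:trid1}. The only point worth noting is that the result requires no hypothesis that $A$ or $B$ be tridiagonal, despite the section title, because the conjugation formula holds for arbitrary $A \in \Matd$.
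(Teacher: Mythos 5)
Your proof is correct and follows the same route as the paper, which simply cites Lemma \ref{lem:trid1}; you have just written out the one-line computation explicitly. Your closing remark that tridiagonality is not needed is also accurate.
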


\begin{proof}
Use Lemma \ref{lem:trid1}.
\end{proof}

We have a comment about Corollary \ref{cor:trid1}.
Suppose that $A$, $B$ in $\Matd$ satisfy \eqref{eq:tridAB1}, \eqref{eq:tridAB2}.
It is natural to conjecture that $A$, $B$ are diagonally equivalent.
This conjecture is not true in general;
a counterexample is
\begin{align*}
A &=
\begin{pmatrix}
 0 & 1 & 2  \\
 1 & 0 & 1  \\
 2 & 1 & 0
\end{pmatrix},
&
B &=
\begin{pmatrix}
 0 & 1 & 4  \\
 1 & 0 & 1  \\
 1 & 1 & 0
\end{pmatrix}.
\end{align*}
We now consider a class of matrices for which the conjecture is true.

A matrix $M \in \Mat_{d+1}(\F)$ is said to be {\em tridiagonal} whenever
the $(i,j)$-entry $M_{i,j}=0$ if $|i-j|>1$ $(0 \leq i,j \leq d)$.
Assume that $M$ is tridiagonal.
Then $M$ is said to be {\em irreducible}
whenever $M_{i,j} \neq 0$ if $|i-j|=1$ $(0 \leq i,j \leq d)$.

Next we give a variation on Lemma \ref{lem:trid1}.

\begin{lemma}    \label{lem:trid5}    \samepage
\ifDRAFT {\rm lem:trid5}. \fi
For a tridiagonal matrix $A \in \Matd$ and an
invertible diagonal matrix $S = \text{\rm diag} (s_0, s_1, \ldots, s_d)$,
the matrix $S A S^{-1}$ is tridiagonal with entries
\[
\begin{array}{ccccc}
\text{\rm $(i,i)$-entry} & & \text{\rm $(i,i-1)$-entry} & & \text{\rm $(i-1,i)$-entry}
\\ \hline
A_{i,i} & & s_i s_{i-1}^{-1} A_{i,i-1} & & s_{i-1} s_i^{-1} A_{i-1,i}       \rule{0mm}{3ex}
\end{array}
\]
\end{lemma}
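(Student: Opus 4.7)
The plan is to derive Lemma \ref{lem:trid5} as an immediate specialization of Lemma \ref{lem:trid1}. By that lemma, the $(i,j)$-entry of $S A S^{-1}$ equals $s_i s_j^{-1} A_{i,j}$ for all $0 \leq i,j \leq d$. Since $A$ is tridiagonal, $A_{i,j}=0$ whenever $|i-j|>1$, and hence $(S A S^{-1})_{i,j} = s_i s_j^{-1} A_{i,j} = 0$ in that range. This shows $S A S^{-1}$ is tridiagonal.

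To extract the table of entries, I would simply substitute $j=i$, $j=i-1$, and $j=i+1$ (renaming in the last case to indices $i-1,i$) into the formula $s_i s_j^{-1} A_{i,j}$. The diagonal entry becomes $s_i s_i^{-1} A_{i,i} = A_{i,i}$; the subdiagonal entry $(i,i-1)$ becomes $s_i s_{i-1}^{-1} A_{i,i-1}$; and the superdiagonal entry $(i-1,i)$ becomes $s_{i-1} s_i^{-1} A_{i-1,i}$. These are exactly the three entries displayed in the statement.

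There is no real obstacle here; the argument is a one-step invocation of Lemma \ref{lem:trid1}, and the only thing to check is the bookkeeping of indices on the subdiagonal and superdiagonal. I expect the final proof to read essentially as \emph{``Immediate from Lemma \ref{lem:trid1}.''}
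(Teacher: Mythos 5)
Your proposal is correct and matches the paper's proof, which reads simply ``Use Lemma \ref{lem:trid1}.'' You have merely spelled out the index bookkeeping that the paper leaves implicit.
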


\begin{proof}
Use Lemma \ref{lem:trid1}.
\end{proof}

We emphasize a few points from Lemma \ref{lem:trid5}.

\begin{lemma}    \label{lem:trid5b}    \samepage
\ifDRAFT {\rm lem:trid5b}. \fi
For diagonally equivalent matrices $A$, $B$ in $\Matd$,
$A$ is tridiagonal if and only if $B$ is tridiagonal.
In this case, $A$ is irreducible if and only if $B$ is irreducible.
\end{lemma}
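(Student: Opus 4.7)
The plan is to reduce both claims to the observation that diagonal equivalence preserves the zero pattern of a matrix. By Lemma \ref{lem:trid1}, if $B = S A S^{-1}$ for an invertible diagonal matrix $S = \text{\rm diag}(s_0, s_1, \ldots, s_d)$, then $B_{i,j} = s_i s_j^{-1} A_{i,j}$ for $0 \leq i, j \leq d$. Since $S$ is invertible, every $s_i$ is nonzero, so each scalar $s_i s_j^{-1}$ is nonzero. Consequently $B_{i,j} = 0$ if and only if $A_{i,j} = 0$.

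From this I would immediately read off both conclusions. Tridiagonality is the condition that $M_{i,j} = 0$ whenever $|i-j| > 1$, which depends only on the zero pattern of $M$; hence $A$ is tridiagonal if and only if $B$ is tridiagonal. When both are tridiagonal, irreducibility is the further condition that $M_{i,j} \neq 0$ whenever $|i-j|=1$, which again depends only on the zero pattern; hence $A$ is irreducible if and only if $B$ is irreducible.

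There is no real obstacle here; the lemma is essentially a direct corollary of Lemma \ref{lem:trid1} (or equivalently of Lemma \ref{lem:trid5}, which just repackages the same entrywise formula for tridiagonal inputs). The only point worth stating explicitly in the proof is the invertibility of $S$, which is what guarantees that the scaling factors $s_i s_j^{-1}$ do not introduce or destroy zero entries.
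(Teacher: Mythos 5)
Your proof is correct and matches the paper's intended argument: the paper's proof is simply "Use Lemmas \ref{lem:trid1} and \ref{lem:trid5}," and your observation that the nonzero scaling factors $s_i s_j^{-1}$ preserve the zero pattern is precisely the point being invoked. Nothing further is needed.
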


\begin{proof}
Use Lemmas \ref{lem:trid1} and \ref{lem:trid5}.
\end{proof}

We now establish the converse of Corollary \ref{cor:trid1} for irreducible tridiagonal
matrices.

\begin{lemma}    \label{lem:trid2}    \samepage
\ifDRAFT {\rm lem:trid2}. \fi
For irreducible tridiagonal matrices $A$, $B$ in $\Matd$,
assume that
\begin{align}
A_{i,i} &= B_{i,i}  \quad (0 \leq i \leq d),
&
A_{i,i-1} A_{i-1,i} &= B_{i,i-1} B_{i-1,i}  \quad (1 \leq i \leq d).   \label{eq:condAB}
\end{align}
Define a diagonal matrix $S \in \Matd$ that has diagonal entries
\begin{align}
  S_{i,i} &= \frac{ B_{1,0} B_{2,1} \cdots B_{i,i-1} } { A_{1,0} A_{2,1} \cdots A_{i,i-1} }   
                 &&          (0 \leq i \leq d).                    \label{eq:defS}
\end{align}
Then $B = S A S^{-1}$.
\end{lemma}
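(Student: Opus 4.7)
The plan is to directly verify $B = SAS^{-1}$ by computing the entries of $SAS^{-1}$ via Lemma \ref{lem:trid5} and comparing to those of $B$, using the hypotheses \eqref{eq:condAB} together with the explicit formula \eqref{eq:defS} for the entries of $S$.

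First I would observe that $S$ is well-defined and invertible: since $A$ is irreducible tridiagonal, each $A_{i,i-1}$ in the denominator is nonzero, and since $B$ is irreducible tridiagonal, each $B_{i,i-1}$ in the numerator is nonzero. Hence each $S_{i,i}$ is a nonzero element of $\F$. In particular $S_{0,0}=1$ (the empty product convention), and from \eqref{eq:defS} one reads off the crucial ratio
\[
  \frac{S_{i,i}}{S_{i-1,i-1}} = \frac{B_{i,i-1}}{A_{i,i-1}} \qquad (1 \leq i \leq d).
\]

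Next, by Lemma \ref{lem:trid5}, $SAS^{-1}$ is tridiagonal with $(i,i)$-entry $A_{i,i}$, $(i,i-1)$-entry $S_{i,i}S_{i-1,i-1}^{-1}A_{i,i-1}$, and $(i-1,i)$-entry $S_{i-1,i-1}S_{i,i}^{-1}A_{i-1,i}$. I would then verify entry-by-entry that these coincide with the corresponding entries of $B$: the diagonal case is immediate from the first equation in \eqref{eq:condAB}; the subdiagonal case follows from the ratio above, which gives $S_{i,i}S_{i-1,i-1}^{-1}A_{i,i-1} = B_{i,i-1}$; and the superdiagonal case follows by writing
\[
  S_{i-1,i-1}S_{i,i}^{-1}A_{i-1,i} = \frac{A_{i,i-1}A_{i-1,i}}{B_{i,i-1}} = \frac{B_{i,i-1}B_{i-1,i}}{B_{i,i-1}} = B_{i-1,i},
\]
where the middle equality uses the second equation in \eqref{eq:condAB}, and the last uses that $B_{i,i-1}\neq 0$ by irreducibility of $B$.

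There is no real obstacle here; the argument is essentially the telescoping structure built into \eqref{eq:defS}, which is engineered precisely so that the off-diagonal entries of $SAS^{-1}$ align with those of $B$. The only subtleties are to keep track of which irreducibility hypothesis is used where (denominator nonvanishing in $S_{i,i}$ versus in the final step of the superdiagonal check), and to invoke Lemma \ref{lem:trid5} so as to avoid any separate check that $SAS^{-1}$ vanishes off the three central diagonals.
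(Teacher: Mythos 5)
Your proposal is correct and follows essentially the same route as the paper: derive the telescoping ratio $S_{i,i}/S_{i-1,i-1}=B_{i,i-1}/A_{i,i-1}$ (which by \eqref{eq:condAB} also equals $A_{i-1,i}/B_{i-1,i}$) and then match the diagonal, subdiagonal, and superdiagonal entries of $SAS^{-1}$ against those of $B$ via Lemma \ref{lem:trid5}. Your explicit remarks on the invertibility of $S$ and on which irreducibility hypothesis is used where are a welcome but inessential addition to what the paper records.
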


\begin{proof}
For $0 \leq i \leq d$ abbreviate $s_i = S_{i,i}$.
By \eqref{eq:condAB} and \eqref{eq:defS},
\begin{align}
  \frac{ s_i } { s_{i-1} } 
 &= \frac{ B_{i, i-1} } { A_{i,i-1} } = \frac{ A_{i-1, i} } { B_{i-1, i} }
  &&   (1 \leq i \leq d).                                            \label{eq:si}
\end{align}
The matrices  $S A S^{-1}$ and $B$ are tridiagonal.
By Lemma \ref{lem:trid1} and \eqref{eq:condAB},
\begin{align*}
(S A S^{-1})_{i,i} &= A_{i,i} = B_{i,i}    &&  (0 \leq i \leq d).
\end{align*}
By Lemma \ref{lem:trid1} and \eqref{eq:si},
\begin{align*}
(S A S^{-1})_{i, i-1} &= s_i s_{i-1}^{-1} A_{i, i-1} = B_{i, i-1}  &&  (1 \leq i \leq d),
\\
(S A S^{-1})_{i-1, i} &= s_{i-1} s_i^{-1} A_{i-1,i} = B_{i-1, i}  &&  (1 \leq i \leq d).
\end{align*}
By these comments, $S A S^{-1} = B$.
\end{proof}

\begin{lemma}    \label{lem:tridnew1}    \samepage
\ifDRAFT {\rm lem:tridnew1}. \fi
For an irreducible tridiagonal matrix $A \in \Matd$ and an invertible
diagonal matrix $S \in \Matd$,
define $B = S A S^{-1}$.
Then the following {\rm (i)--(v)} are equivalent:
\begin{itemize}
\item[\rm (i)]
$A_{i, i-1} = B_{i, i-1}$ for $1 \leq i \leq d$;
\item[\rm (ii)]
$A_{i-1, i} = B_{i-1, i}$ for $1 \leq i \leq d$;
\item[\rm (iii)]
$A-B$ is diagonal;
\item[\rm (iv)]
$A=B$;
\item[\rm (v)]
$S_{i,i} = S_{0,0}$ for $0 \leq i \leq d$.
\end{itemize}
\end{lemma}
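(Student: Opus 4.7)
The plan is to exploit the explicit entry-by-entry formulas for $SAS^{-1}$ from Lemma \ref{lem:trid5} together with the irreducibility of $A$, which guarantees that all sub- and superdiagonal entries of $A$ are nonzero and can therefore be cancelled. Write $s_i = S_{i,i}$ for $0 \leq i \leq d$. By Lemma \ref{lem:trid5}, $B$ is tridiagonal and satisfies
\begin{align*}
 B_{i,i} &= A_{i,i} \quad (0 \leq i \leq d),
 & B_{i,i-1} &= s_i s_{i-1}^{-1} A_{i,i-1} \quad (1 \leq i \leq d),
 & B_{i-1,i} &= s_{i-1} s_i^{-1} A_{i-1,i} \quad (1 \leq i \leq d).
\end{align*}
In particular, the diagonal entries of $A$ and $B$ coincide for free, and both matrices have zero entries outside the tridiagonal band.

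I would then close the chain of implications as follows. For (i) $\Leftrightarrow$ (v): since $A_{i,i-1} \neq 0$, the equation $s_i s_{i-1}^{-1} A_{i,i-1} = A_{i,i-1}$ is equivalent to $s_i = s_{i-1}$ for each $1 \leq i \leq d$, which telescopes to $s_i = s_0$ for $0 \leq i \leq d$. The argument for (ii) $\Leftrightarrow$ (v) is identical, using $A_{i-1,i} \neq 0$ on the superdiagonal formula. For (v) $\Rightarrow$ (iv): if $s_i = s_0$ for all $i$, then $S = s_0 I$, hence $B = SAS^{-1} = A$. The implication (iv) $\Rightarrow$ (iii) is immediate, and (iii) $\Rightarrow$ (i) follows because $A-B$ being diagonal forces all off-diagonal entries of $A$ and $B$ to agree, in particular on the subdiagonal. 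This closes the loop (v) $\Rightarrow$ (iv) $\Rightarrow$ (iii) $\Rightarrow$ (i) $\Rightarrow$ (v), with (ii) $\Leftrightarrow$ (v) handled separately.

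There is no real obstacle here: the only point that requires care is that the passage from the per-index equation $s_i = s_{i-1}$ to the global statement $s_i = s_0$ uses the full chain $1 \leq i \leq d$, and the cancellations $A_{i,i-1}, A_{i-1,i} \neq 0$ rely essentially on the hypothesis that $A$ is irreducible tridiagonal (not merely tridiagonal). I would state these two observations explicitly and otherwise keep the proof short.
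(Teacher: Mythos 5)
Your proposal is correct and rests on the same computation as the paper's proof, namely the entry formulas for $SAS^{-1}$ from Lemma \ref{lem:trid5} combined with the irreducibility of $A$. The only (cosmetic) difference is organizational: the paper obtains (i) $\Leftrightarrow$ (ii) from the invariant $A_{i,j}A_{j,i}=B_{i,j}B_{j,i}$ of Corollary \ref{cor:trid1} and deduces (iv) from (i),(ii) together with $A_{i,i}=B_{i,i}$, whereas you route both (i) and (ii) through (v) and get (iv) from $S=s_0 I$; both are valid and equally short.
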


\begin{proof}
Note that $B$ is irreducible tridiagonal.

(i) $\Leftrightarrow$ (ii)
By \eqref{eq:tridAB2}.

(i),(ii) $\Leftrightarrow$ (iii)
Since $B$ is tridiagonal.

(i),(ii) $\Rightarrow$ (iv)
By  \eqref{eq:tridAB1} and since $B$ is tridiagonal.

(iv) $\Rightarrow$ (iii)
Clear.

(i) $\Leftrightarrow$ (v)
For $0 \leq i \leq d$ abbreviate $s_i = S_{i,i}$.
By Lemma \ref{lem:trid5}, 
$B_{i,i-1} = s_i s_{i-1}^{-1} A_{i,i-1}$ for $1 \leq i \leq d$.
So (i) holds if and only if  $s_i s_{i-1}^{-1} = 1$  $(1 \leq i \leq d)$
if and only if $s_i =  s_0$ $(0 \leq i \leq d)$.
\end{proof}

\section{A normalization}
\label{sec:normalize} 
\ifDRAFT {\rm sec:normalize}. \fi

In this section we introduce a type of  irreducible tridiagonal matrix,
said to be normalized.

\begin{defi}    \label{def:normalized}    \samepage
\ifDRAFT {\rm def::normalized}. \fi
An irreducible tridiagonal matrix $A \in \Matd$ is said to be {\em normalized}
whenever $A_{i,i-1} = 1$ for $1 \leq i \leq d$.
\end{defi}

\begin{lemma}    \label{lem:trid4}    \samepage
\ifDRAFT {\rm lem:trid4}. \fi
Every irreducible tridiagonal matrix in $\Matd$ is diagonally equivalent
to a unique normalized irreducible tridiagonal matrix in $\Matd$.
\end{lemma}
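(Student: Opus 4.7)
The plan is to establish existence by constructing a suitable diagonal conjugator explicitly, then establish uniqueness by invoking Lemma \ref{lem:tridnew1}.

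For existence, let $A \in \Matd$ be irreducible tridiagonal. By Lemma \ref{lem:trid5}, for any invertible diagonal matrix $S = \text{\rm diag}(s_0, s_1, \ldots, s_d)$, the conjugate $SAS^{-1}$ is tridiagonal with $(i,i-1)$-entry equal to $s_i s_{i-1}^{-1} A_{i,i-1}$. I would define $s_0 = 1$ and recursively set $s_i = s_{i-1}/A_{i,i-1}$ for $1 \leq i \leq d$. This is well-defined because $A$ is irreducible forces $A_{i,i-1} \neq 0$. With this choice, the subdiagonal entries of $SAS^{-1}$ are all equal to $1$, and Lemma \ref{lem:trid5b} guarantees that $SAS^{-1}$ remains irreducible tridiagonal. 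Thus $SAS^{-1}$ is a normalized irreducible tridiagonal matrix diagonally equivalent to $A$.

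For uniqueness, suppose $B$ and $B'$ are both normalized irreducible tridiagonal matrices diagonally equivalent to $A$. Since diagonal equivalence is transitive, $B$ and $B'$ are diagonally equivalent, so there exists an invertible diagonal matrix $T$ with $B' = TBT^{-1}$. Now apply Lemma \ref{lem:tridnew1} to the pair $(B, B')$ in the roles of $(A, B)$: since $B$ and $B'$ are both normalized, $B_{i,i-1} = 1 = B'_{i,i-1}$ for $1 \leq i \leq d$, which is condition (i) of that lemma. The equivalence (i) $\Leftrightarrow$ (iv) then yields $B = B'$.

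I do not expect any substantial obstacle here; both parts follow directly from the preceding machinery. The only point worth care is making sure the recursion for $s_i$ is well-defined (handled by irreducibility) and that Lemma \ref{lem:tridnew1} applies to $B$ and $B'$ with $B$ playing the role of the initial irreducible tridiagonal matrix (which is legitimate since $B$ is irreducible tridiagonal by hypothesis).
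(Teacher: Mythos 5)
Your proof is correct. The uniqueness half is exactly the paper's argument: pass to diagonal equivalence of the two normalized matrices by transitivity, observe that their subdiagonals agree (both being all ones), and conclude via Lemma \ref{lem:tridnew1}(i)$\Leftrightarrow$(iv). The existence half takes a slightly different route: you build the conjugating matrix $S$ directly by the recursion $s_0=1$, $s_i = s_{i-1}/A_{i,i-1}$, so that $s_i s_{i-1}^{-1}A_{i,i-1}=1$, and then cite Lemmas \ref{lem:trid5} and \ref{lem:trid5b} to see that $SAS^{-1}$ is normalized irreducible tridiagonal. The paper instead writes down the target matrix $B$ explicitly (keeping the diagonal of $A$, setting the subdiagonal to $1$ and the superdiagonal to $A_{i,i-1}A_{i-1,i}$) and invokes Lemma \ref{lem:trid2} to produce $S$. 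The two are equivalent in substance — Lemma \ref{lem:trid2} itself constructs essentially your $S$ — but your version is marginally more self-contained for this lemma, while the paper's version has the small advantage of exhibiting the normalized representative's entries explicitly, which is occasionally convenient later. Either way the argument is complete; your attention to why the recursion is well-defined (irreducibility forces $A_{i,i-1}\neq 0$) is the right point to flag.
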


\begin{proof}
Consider an irreducible tridiagonal matrix $A \in \Matd$.
We first show the existence of a normalized irreducible tridiagonal matrix in $\Matd$
that is diagonally equivalent to $A$.
Define an irreducible  tridiagonal matrix $B \in \Matd$ such that
$B_{i,i} = A_{i,i}$  for $0 \leq i \leq d$
and 
$B_{i,i-1} = 1$, $B_{i-1,i} = A_{i, i-1} A_{i-1, i}$ for $1 \leq i \leq d$.
By Lemma \ref{lem:trid2}, there exists an invertible diagonal matrix $S \in \Matd$
such that $B = S A S^{-1}$. 
Then $B$ is a normalized irreducible tridiagonal matrix that is diagonally equivalent to $A$.
We have shown the existence.
Next we show the uniqueness.
Consider normalized irreducible tridiagonal matrices $B_1$, $B_2$ in $\Matd$
each of which is diagonally equivalent to $A$.
Then $B_1$ and $B_2$ are diagonally equivalent.
We apply Lemma \ref{lem:tridnew1} to $B_1$ and $B_2$.
We have $(B_1)_{i,i-1} = (B_2)_{i,i-1}$ for $1 \leq i \leq d$.
So Lemma \ref{lem:tridnew1}(i) holds.
By this and  Lemma \ref{lem:tridnew1}(iv) we obtain $B_1 = B_2$.
We have shown the uniqueness.
\end{proof}

\section{The bond relation}
\label{sec:bond}
\ifDRAFT {\rm sec:bond}. \fi

In this section we introduce a symmetric binary relation
on the set of irreducible tridiagonal matrices in $\Matd$.  
We call this relation the bond relation.
To motivate things, we mention a variation on Lemma \ref{lem:tridnew1}.

\begin{lemma}    \label{lem:tridnew2}    \samepage
\ifDRAFT {\rm lem:tridnew2}. \fi
For an irreducible tridiagonal matrix $A \in \Matd$ and an invertible
diagonal matrix $S \in \Matd$,
define $B =S A S^{-1}$.
Then the following {\rm (i)--(iv)} are equivalent:
\begin{itemize}
\item[\rm (i)]
$A_{i, i-1} = -B_{i, i-1}$ for $1 \leq i \leq d$;
\item[\rm (ii)]
$A_{i-1, i} =  -B_{i-1, i}$ for $1 \leq i \leq d$;
\item[\rm (iii)]
$A+B$ is diagonal;
\item[\rm (iv)]
$S_{i,i} = (-1)^i S_{0,0}$ for $0 \leq i \leq d$.
\end{itemize}
Moreover, if {\rm (i)--(iv)} hold then $A+B$ has $(i,i)$-entry $2 A_{i,i}$
for $0 \leq i \leq d$.
\end{lemma}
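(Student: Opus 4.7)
The plan is to mirror the proof of Lemma \ref{lem:tridnew1}, substituting sign flips where appropriate, and to rely heavily on the formulas from Lemma \ref{lem:trid5} together with the invariant \eqref{eq:tridAB2} from Corollary \ref{cor:trid1}. First I would observe that $B$ is irreducible tridiagonal with the same diagonal as $A$, since by Lemma \ref{lem:trid5} conjugation by an invertible diagonal $S$ preserves the tridiagonal shape and the diagonal entries, while rescaling the off-diagonal entries.

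For the equivalence of (i) and (ii), I would invoke \eqref{eq:tridAB2}, which gives $A_{i,i-1}A_{i-1,i} = B_{i,i-1}B_{i-1,i}$ for $1 \leq i \leq d$; since $A$ and $B$ are irreducible, all four factors are nonzero, so $A_{i,i-1} = -B_{i,i-1}$ forces $A_{i-1,i} = -B_{i-1,i}$, and conversely. For (i),(ii) $\Leftrightarrow$ (iii), I would just note that $A+B$ is automatically tridiagonal and has $(i,i)$-entry $A_{i,i}+B_{i,i} = 2A_{i,i}$ by Lemma \ref{lem:trid5}; conditions (i) and (ii) are precisely the vanishing of the sub- and superdiagonal entries of $A+B$. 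This simultaneously disposes of the final remark about the $(i,i)$-entry being $2A_{i,i}$.

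For (i) $\Leftrightarrow$ (iv), I would write $s_i = S_{i,i}$ and use the formula $B_{i,i-1} = s_i s_{i-1}^{-1} A_{i,i-1}$ from Lemma \ref{lem:trid5}. Since $A_{i,i-1} \neq 0$, condition (i) is equivalent to $s_i s_{i-1}^{-1} = -1$ for $1 \leq i \leq d$, which is equivalent by a straightforward induction on $i$ to $s_i = (-1)^i s_0$, i.e.\ condition (iv). This closes the cycle of equivalences.

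No step looks to be a genuine obstacle; everything follows by direct computation using Lemma \ref{lem:trid5} and Corollary \ref{cor:trid1}. The only point requiring a little care is making sure the irreducibility hypothesis is invoked so that the off-diagonal entries of $A$ (and hence of $B$) are nonzero, which is needed both to pass between (i) and (ii) via \eqref{eq:tridAB2} and to divide by $A_{i,i-1}$ in the argument for (i) $\Leftrightarrow$ (iv).
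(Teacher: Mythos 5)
Your proposal is correct and follows essentially the same route as the paper: the equivalence of (i) and (ii) via \eqref{eq:tridAB2}, the equivalence with (iii) via the tridiagonal shape of $A+B$, the equivalence with (iv) via the formula $B_{i,i-1}=s_is_{i-1}^{-1}A_{i,i-1}$ from Lemma \ref{lem:trid5}, and the final remark via $A_{i,i}=B_{i,i}$. Your added care about irreducibility (so that the off-diagonal factors are nonzero) is exactly the point that makes these steps legitimate, and is implicit in the paper's proof.
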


\begin{proof}
Note by Lemma \ref{lem:trid5b} that $B$ is irreducible tridiagonal.

(i) $\Leftrightarrow$ (ii)
By \eqref{eq:tridAB2}.

(i),(ii) $\Leftrightarrow$ (iii)
Since $B$ is tridiagonal.

(i) $\Leftrightarrow$ (iv)
For $0 \leq i \leq d$ abbreviate $s_i = S_{i,i}$.
By Lemma \ref{lem:trid5},
$B_{i,i-1} = s_i s_{i-1}^{-1} A_{i,i-1}$ for $1 \leq i \leq d$.
So (i) holds if and only if 
 $s_i s_{i-1}^{-1} =  - 1$ $(1 \leq i \leq d)$
if and only if $s_i = (-1)^i s_0$ $(0 \leq i \leq d)$.

Suppose (i)--(iv) hold.
By Corollary \ref{cor:trid1} we have $A_{i,i} = B_{i,i}$ for $0 \leq i \leq d$.
So the matrix $A+B$ has $(i,i)$-entry $2 A_{i,i}$ for $0 \leq i \leq d$.
\end{proof}

In view of Lemma \ref{lem:tridnew2},
we make a definition.

\begin{defi}    \label{def:vee}    \samepage
\ifDRAFT {\rm def:vee}. \fi
For an irreducible tridiagonal matrix $A \in \Matd$,
define $A^\vee = - \S A \S^{-1}$,
where $\S \in \Matd$ is  diagonal with $(i,i)$-entry $(-1)^i$ for $0 \leq i \leq d$.
Note that $A^\vee$ is irreducible tridiagonal and diagonally equivalent to $-A$.
\end{defi} 

\begin{note}    \label{note:vee}    \samepage
\ifDRAFT {\rm note:vee}. \fi
For an irreducible tridiagonal matrix $A \in \Matd$,
$(A^\vee)^\vee=A$.
\end{note}

\begin{note}    \label{note:vee2}    \samepage
\ifDRAFT {\rm note:vee2}. \fi
Referring to Definition \ref{def:vee}, assume that $\text{\rm Char}(\F)=2$.
Then $\S = I$ and $A^\vee = A$.
\end{note}

\begin{defi}    \label{def:bond}    \samepage
\ifDRAFT {\rm def:bond}. \fi
Irreducible tridiagonal matrices $A$, $B$ in $\Matd$ are said to be
{\em bonded} whenever $B = A^\vee$.
\end{defi}

\begin{note}    \label{note:bond00}    \samepage
\ifDRAFT {\rm note:bond00}. \fi
The bond relation is a symmetric binary relation on the set 
of all irreducible tridiagonal matrices $\Matd$.
\end{note}

\begin{lemma}    \label{lem:bondunique}    \samepage
\ifDRAFT {\rm lem:bondunique}. \fi
For an irreducible tridiagonal matrix $A \in \Matd$,
there exists a unique irreducible tridiagonal matrix in $\Matd$ that is bonded to $A$.
\end{lemma}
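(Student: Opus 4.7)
The plan is to read off both existence and uniqueness directly from Definitions \ref{def:vee} and \ref{def:bond}, so no real calculation is required.

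For existence, I would take $B = A^\vee$. Definition \ref{def:vee} explicitly states that $A^\vee$ is an irreducible tridiagonal matrix in $\Matd$ (it is diagonally equivalent to $-A$ up to the sign-change conjugation by $\S$, and being irreducible tridiagonal is preserved under diagonal equivalence by Lemma \ref{lem:trid5b} applied to $-A$). Then by Definition \ref{def:bond}, the pair $A$, $A^\vee$ is bonded, since by definition $A^\vee = A^\vee$.

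For uniqueness, suppose $B \in \Matd$ is irreducible tridiagonal and bonded to $A$. Then Definition \ref{def:bond} directly forces $B = A^\vee$. Hence $A^\vee$ is the unique bond-partner of $A$.

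There is essentially no obstacle here; the content of the lemma lies entirely in the definitions of $A^\vee$ and of the bond relation. (One may note, consistent with Note \ref{note:vee2}, that in characteristic two the unique bond-partner of $A$ is $A$ itself, but this does not affect the argument.)
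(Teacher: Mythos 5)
Your proposal is correct and matches the paper's proof, which simply says ``By the construction'': existence and uniqueness both follow immediately from Definition \ref{def:vee} (which guarantees $A^\vee$ is irreducible tridiagonal) and Definition \ref{def:bond} (which defines the bond-partner to be exactly $A^\vee$). Nothing further is needed.
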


\begin{proof}
By the construction.
\end{proof}

\begin{lemma}    \label{lem:bond000}    \samepage
\ifDRAFT {\rm lem:bond000}. \fi
Consider irreducible tridiagonal matrices $A$, $B$  in $\Matd$.
Then $A$ and $B$ are bonded if and only if the following {\rm (i)--(iii)} hold:
\begin{itemize}
\item[\rm (i)]
$A_{i, i-1} = B_{i, i-1}$ for $1 \leq i \leq d$;
\item[\rm (ii)]
$A_{i-1, i} = B_{i-1, i}$ for $1 \leq i \leq d$;
\item[\rm (iii)]
$A_{i,i} = - B_{i,i}$ for $0 \leq i \leq d$.
\end{itemize}
\end{lemma}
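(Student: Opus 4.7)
The plan is to reduce the lemma to the entrywise formulas already obtained in Lemma \ref{lem:trid5}, then invoke Lemma \ref{lem:tridnew2} (or compute directly) to handle both implications. The matrix $\S$ from Definition \ref{def:vee} satisfies $\S_{i,i} = (-1)^i$, so it is exactly the diagonal matrix appearing in condition (iv) of Lemma \ref{lem:tridnew2} with $S_{0,0}=1$. This suggests that Lemma \ref{lem:tridnew2} essentially contains what we need, up to a single sign flip coming from the minus sign in $A^\vee = -\S A \S^{-1}$.

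For the forward direction, I would assume $B = A^\vee$ and set $\tilde B = -B = \S A \S^{-1}$, so that the pair $(A, \tilde B)$ is related by a diagonal conjugation satisfying condition (iv) of Lemma \ref{lem:tridnew2}. Applying that lemma yields $A_{i,i-1} = -\tilde B_{i,i-1}$ and $A_{i-1,i} = -\tilde B_{i-1,i}$, which upon negating $\tilde B$ gives conditions (i) and (ii) of the present lemma. Lemma \ref{lem:tridnew2} also tells us that $A + \tilde B$ has $(i,i)$-entry $2A_{i,i}$, so $\tilde B_{i,i} = A_{i,i}$ and hence $B_{i,i} = -A_{i,i}$, which is (iii). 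Alternatively, one can just compute the entries of $A^\vee$ straight from Lemma \ref{lem:trid5} with $S = \S$.

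For the converse, I would argue by uniqueness of tridiagonal data. Since both $B$ and $A^\vee$ are tridiagonal matrices in $\Matd$, each is determined by its diagonal, subdiagonal, and superdiagonal entries. By the forward direction (applied to $A$ and $A^\vee$), the three entrywise relations (i)--(iii) are satisfied by the pair $(A, A^\vee)$, and by hypothesis they are also satisfied by the pair $(A,B)$. Hence $B_{i,i-1} = A_{i,i-1} = (A^\vee)_{i,i-1}$, $B_{i-1,i} = A_{i-1,i} = (A^\vee)_{i-1,i}$, and $B_{i,i} = -A_{i,i} = (A^\vee)_{i,i}$, so $B = A^\vee$ and the two matrices are bonded.

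I expect no serious obstacle: the whole lemma is essentially a bookkeeping exercise on the diagonal, subdiagonal, and superdiagonal entries, and the mild point to keep straight is the sign flip between $\S A \S^{-1}$ and $A^\vee = -\S A \S^{-1}$, which swaps the roles of the $+$ and $-$ signs when passing between Lemma \ref{lem:tridnew1} and Lemma \ref{lem:tridnew2}.
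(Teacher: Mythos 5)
Your proposal is correct and amounts to the same argument as the paper, whose proof is simply to compute the three bands of entries of $A^\vee = -\S A\S^{-1}$ via Lemma \ref{lem:trid5} and compare with $B$; your detour through Lemma \ref{lem:tridnew2} and the uniqueness-of-entries argument for the converse are just a repackaging of that entrywise computation. The sign bookkeeping is handled correctly.
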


\begin{proof}
Use Lemma \ref{lem:trid5}.
\end{proof}

\begin{lemma}    \label{lem:bondnormalized}    \samepage
\ifDRAFT {\rm lem:bondnormalized}. \fi
For irreducible tridiagonal matrices $A$, $B$ in $\Matd$,
assume that $A$ and $B$ are bonded.
Then $A$ is normalized if and only if $B$ is normalized.
\end{lemma}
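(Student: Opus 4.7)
The plan is to read off the conclusion directly from Lemma \ref{lem:bond000}. Since $A$ and $B$ are bonded, part (i) of that lemma gives $A_{i,i-1} = B_{i,i-1}$ for $1 \leq i \leq d$. By Definition \ref{def:normalized}, $A$ is normalized precisely when $A_{i,i-1} = 1$ for $1 \leq i \leq d$, and similarly for $B$. Combining these two facts, $A_{i,i-1} = 1$ for all such $i$ if and only if $B_{i,i-1} = 1$ for all such $i$, which is the desired equivalence.

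There is essentially no obstacle here: the only content is to invoke the already-proved characterization of the bond relation in terms of the entries of $A$ and $B$. An alternative route would be to argue from Definition \ref{def:vee} directly, writing $B = -\S A \S^{-1}$ and applying Lemma \ref{lem:trid5} to compute $B_{i,i-1} = (-1) \cdot (-1)^i (-1)^{-(i-1)} A_{i,i-1} = A_{i,i-1}$, but this just reproves one line of Lemma \ref{lem:bond000} and is no shorter. Hence I would simply cite Lemma \ref{lem:bond000}(i) and conclude.
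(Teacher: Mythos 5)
Your proof is correct and is essentially identical to the paper's, which also consists of the single citation of Lemma \ref{lem:bond000}(i). The sign check in your alternative remark is also right, but as you note it adds nothing.
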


\begin{proof}
By Lemma \ref{lem:bond000}(i).
\end{proof}

In the next two results,
we characterize the bond relation in various ways.

\begin{lemma}    \label{lem:bond000b}    \samepage
\ifDRAFT {\rm lem:bond000b}. \fi
Consider irreducible tridiagonal matrices $A$, $B$ in $\Matd$.
Then the following {\rm (i)--(iii)} are equivalent:
\begin{itemize}
\item[\rm (i)]
$A$ and $B$ are bonded;
\item[\rm (ii)]
$A-B$ is diagonal with $(i,i)$-entry $2 A_{i,i}$ for $0 \leq i \leq d$;
\item[\rm (iii)]
$B-A$ is diagonal with $(i,i)$-entry $2 B_{i,i}$ for $0 \leq i \leq d$.
\end{itemize}
\end{lemma}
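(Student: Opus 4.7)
The plan is to derive this directly from Lemma \ref{lem:bond000}, which already characterizes bondedness in terms of the matrix entries. The key observation is that each of (ii) and (iii) packages exactly the three coordinate conditions of Lemma \ref{lem:bond000} into a single matrix statement, using the tridiagonality hypothesis to eliminate entries outside the three central diagonals for free.

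First I would prove (i) $\Rightarrow$ (ii). Assume $A$ and $B$ are bonded. By Lemma \ref{lem:bond000}, $A_{i,i-1} = B_{i,i-1}$ and $A_{i-1,i} = B_{i-1,i}$ for $1 \leq i \leq d$, while $A_{i,i} = -B_{i,i}$ for $0 \leq i \leq d$. Since $A$ and $B$ are tridiagonal, every entry $(A-B)_{i,j}$ with $|i-j| \geq 2$ is automatically zero, and the sub- and superdiagonal entries of $A-B$ vanish by the first two conditions. Hence $A-B$ is diagonal. The $(i,i)$-entry is $A_{i,i} - B_{i,i} = A_{i,i} - (-A_{i,i}) = 2A_{i,i}$, giving (ii).

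Next I would prove (ii) $\Rightarrow$ (i). Assume $A-B$ is diagonal with $(i,i)$-entry $2A_{i,i}$. The off-diagonal vanishing of $A-B$ together with the tridiagonality of $A$ and $B$ forces $A_{i,i-1} = B_{i,i-1}$ and $A_{i-1,i} = B_{i-1,i}$ for $1 \leq i \leq d$, which are parts (i), (ii) of Lemma \ref{lem:bond000}. From $(A-B)_{i,i} = 2A_{i,i}$ we get $B_{i,i} = A_{i,i} - 2A_{i,i} = -A_{i,i}$, which is part (iii) of Lemma \ref{lem:bond000}. So $A$ and $B$ are bonded.

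Finally (i) $\Leftrightarrow$ (iii) follows from the symmetry of the bond relation, which is recorded in Note \ref{note:bond00} (or equivalently from $(A^\vee)^\vee = A$ in Note \ref{note:vee}): swap the roles of $A$ and $B$ in the equivalence (i) $\Leftrightarrow$ (ii) already established. I do not see any real obstacle here; the only mild care needed is noting that the argument is valid in characteristic $2$ as well, where both (ii) and (iii) reduce to the statement $A = B$ and bondedness reduces to $A = B$ by Note \ref{note:vee2}, so the equivalences remain correct.
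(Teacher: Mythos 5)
Your proof is correct and follows the same route as the paper, which simply cites Lemma \ref{lem:bond000}; you have filled in the details of exactly that reduction, including the symmetry step for (iii). No issues.
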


\begin{proof}
Use Lemma \ref{lem:bond000}.
\end{proof}

\begin{lemma}    \label{lem:bonded0}    \samepage
\ifDRAFT {\rm lem:bonded0}. \fi
For an irreducible tridiagonal matrix $A \in \Matd$ and a diagonal matrix $K \in \Matd$,
the following are equivalent:
\begin{itemize}
\item[\rm (i)]
$A$ and $A-K$ are bonded;
\item[\rm (ii)]
$K_{i,i} = 2 A_{i,i}$ for $0 \leq i \leq d$.
\end{itemize}
\end{lemma}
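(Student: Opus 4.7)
My plan is to deduce this lemma directly from Lemma \ref{lem:bond000b}. Setting $B = A - K$, we have $A - B = K$, so statement (ii) of Lemma \ref{lem:bond000b} applied to this pair reads: $K$ is diagonal with $(i,i)$-entry $2 A_{i,i}$ for $0 \leq i \leq d$. Since $K$ is already assumed diagonal, this reduces to the single requirement $K_{i,i} = 2 A_{i,i}$, which is exactly condition (ii) of the present lemma.

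Before invoking Lemma \ref{lem:bond000b}, I need to check that the hypothesis of that lemma is met, namely that $B = A-K$ is irreducible tridiagonal. Since $K$ is diagonal, $A$ and $A-K$ agree off the diagonal, so $A-K$ inherits the tridiagonal shape from $A$ and inherits irreducibility from the fact that $A_{i,i-1}$ and $A_{i-1,i}$ are nonzero for $1 \leq i \leq d$. This quick verification is the only preparatory step.

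With this in hand, the two implications are immediate: (i)$\Rightarrow$(ii) is the forward direction of Lemma \ref{lem:bond000b} applied to the pair $A, A-K$, and (ii)$\Rightarrow$(i) is the reverse direction, using that $K$ being diagonal with $K_{i,i} = 2A_{i,i}$ means $A - (A-K) = K$ satisfies the required form. The main (and only) obstacle is a minor bookkeeping step to confirm $A-K$ lies in the class of matrices to which Lemma \ref{lem:bond000b} applies; after that the lemma is a direct translation.
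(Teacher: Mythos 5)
Your proof is correct and matches the paper's own argument, which likewise defines $B = A-K$ and invokes Lemma \ref{lem:bond000b}. Your extra check that $A-K$ is irreducible tridiagonal (since $K$ is diagonal) is a sensible detail the paper leaves implicit.
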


\begin{proof}
Define $B = A-K$ and use Lemma \ref{lem:bond000b}.
\end{proof}

\begin{lemma}    \label{lem:bond0c}    \samepage
\ifDRAFT {\rm lem:bond0c}. \fi
Let $A \in \Matd$ be irreducible tridiagonal.
If $\text{\rm Char}(\F)=2$ then $A$ is bonded to $A$.
If $\text{\rm Char}(\F) \neq 2$ then 
the following are equivalent:
\begin{itemize}
\item[\rm (i)]
$A$ is bonded to $A$;
\item[\rm (ii)]
$A_{i,i} = 0$ for $0 \leq i \leq d$.
\end{itemize}
\end{lemma}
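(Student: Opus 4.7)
The plan is to read off both parts of the statement directly from the characterization of the bond relation already established, namely Lemma \ref{lem:bond000b} (and its corollary Lemma \ref{lem:bonded0}). Essentially nothing new needs to be proved — the work lies in noticing which observation to invoke in each characteristic.

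First I would dispose of the case $\text{\rm Char}(\F)=2$. Here Note \ref{note:vee2} records that $\S = I$ and $A^\vee = A$, so by Definition \ref{def:bond} the matrix $A$ is bonded to itself. (Equivalently, one can argue via Lemma \ref{lem:bond000b}: in characteristic $2$ the matrix $A-A=0$ trivially has diagonal $(i,i)$-entry $2A_{i,i}=0$, so $A$ and $A$ are bonded.)

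For the case $\text{\rm Char}(\F)\neq 2$, I would apply Lemma \ref{lem:bond000b} with $B=A$. The condition that $A$ is bonded to $A$ becomes the condition that $A-A$ is diagonal with $(i,i)$-entry $2A_{i,i}$ for $0\le i\le d$. Since $A-A=0$, this is equivalent to $2A_{i,i}=0$ for all $i$, and since $\text{\rm Char}(\F)\neq 2$ this in turn is equivalent to $A_{i,i}=0$ for $0\le i\le d$. This establishes (i)$\Leftrightarrow$(ii). Alternatively one could invoke Lemma \ref{lem:bonded0} with $K=0$: bondedness of $A$ and $A$ reduces to $K_{i,i}=0=2A_{i,i}$, again giving $A_{i,i}=0$.

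There is no real obstacle here; the only care needed is to make sure the correct previous lemma is cited in each case and that the characteristic hypothesis is used precisely at the point where we cancel the factor of $2$.
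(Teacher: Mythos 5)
Your proposal is correct and follows essentially the same route as the paper: the characteristic-two case is handled by Note \ref{note:vee2}, and the other case by specializing the bond characterization (the paper uses Lemma \ref{lem:bonded0} with $K=0$, which is just your Lemma \ref{lem:bond000b} argument with $B=A$ repackaged). No gaps.
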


\begin{proof}
First assume that $\text{\rm Char}(\F)=2$.
Then $A$ is bonded to $A$ by Note \ref{note:vee2}.
Next assume that $\text{\rm Char}(\F) \neq 2$.
In Lemma \ref{lem:bonded0}, set $K=0$ to get the equivalence
of (i), (ii).
\end{proof}

\section{Leonard pairs and Leonard systems}
\label{sec:LP} 
\ifDRAFT {\rm sec:LP}. \fi

In this section, we briefly recall the notion of a Leonard pair
and a Leonard system.

\begin{defi}  {\rm (See \cite[Definition 1.1]{T:Leonard}.) }
  \label{def:LP}    \samepage
\ifDRAFT {\rm def:LP}. \fi
By a {\em Leonard pair on $V$} we mean an ordered pair $A,A^*$
of elements in $\End$ that satisfy {\rm (i)} and {\rm (ii)} below:
\begin{itemize}
\item[\rm (i)]
there exists a basis for $V$ with respect to which the matrix representing
$A$ is irreducible tridiagonal and the matrix representing $A^*$ is diagonal;
\item[\rm (ii)]
there exists a basis for $V$ with respect to which the matrix representing
$A^*$ is irreducible tridiagonal and the matrix representing $A$ is diagonal.
\end{itemize}
\end{defi}

\begin{note}
By a common notational convention, $A^*$ denotes
the conjugate-transpose of $A$.
We are not using this convention.
In a Leonard pair $A,A^*$ the elements $A$ and $A^*$ are arbitrary subject
to (i) and (ii) above.
\end{note}

\begin{note}    \label{note:d0}    \samepage
\ifDRAFT {\rm note:d0}. \fi
Assume that $d=0$.
Then any ordered pair of elements in $\text{\rm End}(V)$ form a Leonard pair on $V$.
\end{note}

For the rest of this paper, we assume $d \geq 1$.

\medskip
Consider a Leonard pair $A,A^*$ on $V$ and a Leonard pair $B,B^*$ on
a vector space $V'$.
By an {\em isomorphism of Leonard pairs from $A,A^*$ to $B,B^*$} we mean
an algebra isomorphism $\End \to \text{\rm End}(V')$ that sends
$A \mapsto B$ and $A^* \mapsto B^*$.
The Leonard pairs $A,A^*$ and $B,B^*$ are said to be {\em isomorphic}
whenever there exists an isomorphism of Leonard pairs from $A,A^*$
to $B,B^*$.

\begin{lemma}   \label{lem:SASinv}    \samepage
\ifDRAFT {\rm lem:SASinv}. \fi
For a Leonard pair $A,A^*$ on $V$ and a pair $B,B^*$ of elements
in $\End$,
the following are equivalent:
\begin{itemize}
\item[\rm (i)]
$B,B^*$ is a Leonard pair on $V$ that is isomorphic to $A,A^*$;
\item[\rm (ii)]
there exists an invertible $S \in \End$ such that
$B = S A S^{-1}$ and $B^* = S A^* S^{-1}$.
\end{itemize}
\end{lemma}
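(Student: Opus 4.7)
The plan is to prove both implications by invoking the Skolem--Noether theorem (Lemma \ref{lem:SN}), together with the fact that conjugation by an invertible map transports bases.

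For the direction (ii) $\Rightarrow$ (i), I would first check that $B,B^*$ is a Leonard pair on $V$. Let $\{v_i\}_{i=0}^d$ be a basis of $V$ witnessing condition (i) of Definition \ref{def:LP} for $A,A^*$. Then $\{S v_i\}_{i=0}^d$ is again a basis of $V$, and a direct computation gives
\[
 B (Sv_j) = S A S^{-1} S v_j = S(Av_j) = \sum_{i=0}^d A_{i,j}\, S v_i,
\]
so the matrix representing $B$ with respect to $\{Sv_i\}_{i=0}^d$ is exactly the matrix representing $A$ with respect to $\{v_i\}_{i=0}^d$, and is therefore irreducible tridiagonal. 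The same calculation with $A^*$ in place of $A$ shows that $B^*$ is represented by a diagonal matrix on the same basis. An entirely analogous argument, starting from a basis witnessing Definition \ref{def:LP}(ii) for $A,A^*$, gives condition (ii) for $B,B^*$. Hence $B,B^*$ is a Leonard pair. Finally, the map $\sigma:\End\to \End$ defined by $\sigma(X) = S X S^{-1}$ is plainly an algebra automorphism sending $A\mapsto B$ and $A^*\mapsto B^*$, so it is an isomorphism of Leonard pairs from $A,A^*$ to $B,B^*$.

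For (i) $\Rightarrow$ (ii), suppose $B,B^*$ is a Leonard pair on $V$ isomorphic to $A,A^*$, and let $\sigma:\End\to \End$ be an isomorphism of Leonard pairs from $A,A^*$ to $B,B^*$. In particular $\sigma$ is an algebra isomorphism from $\End$ to itself, so by Lemma \ref{lem:SN} there exists an invertible $S\in \End$ with $\sigma(X) = S X S^{-1}$ for all $X\in \End$. Specializing $X$ to $A$ and then to $A^*$ yields $B = \sigma(A) = S A S^{-1}$ and $B^* = \sigma(A^*) = S A^* S^{-1}$, as required.

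There is no real obstacle here; the only thing to be careful about is the routine verification that the conjugation map $X\mapsto SXS^{-1}$ transports the tridiagonal/diagonal bases of $A,A^*$ into bases with the required property for $B,B^*$. Everything else is an immediate consequence of Skolem--Noether.
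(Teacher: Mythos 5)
Your proposal is correct and matches the paper's approach: the paper's proof is simply ``Routine verification using Lemma \ref{lem:SN},'' and your argument is exactly that verification spelled out, with Skolem--Noether giving the inner automorphism for (i) $\Rightarrow$ (ii) and conjugation transporting the tridiagonal/diagonal bases for (ii) $\Rightarrow$ (i).
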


\begin{proof}
Routine verification using Lemma \ref{lem:SN}.
\end{proof}

\begin{lemma}   {\rm (See \cite[Lemma 5.1]{NT:affine}.) }
\label{lem:affineLP}    \samepage
\ifDRAFT {\rm lem:affineLP}. \fi
For a Leonard pair $A,A^*$ on $V$ and scalars 
$\xi$, $\xi^*$, $\zeta$, $\zeta^*$ in $\F$ with $\xi \xi^* \neq 0$,
the pair  $\xi A + \zeta I, \, \xi^* A^* + \zeta^* I$ is a Leonard pair on $V$.
\end{lemma}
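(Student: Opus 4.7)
The plan is to verify conditions (i) and (ii) of Definition \ref{def:LP} directly by working at the level of matrix representations. Set $B = \xi A + \zeta I$ and $B^* = \xi^* A^* + \zeta^* I$.

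For condition (i), I would invoke condition (i) for the Leonard pair $A,A^*$ to obtain a basis $\{v_i\}_{i=0}^d$ of $V$ with respect to which $A$ is represented by an irreducible tridiagonal matrix $M$ and $A^*$ is represented by a diagonal matrix $M^*$. With respect to the same basis, $B$ is represented by $\xi M + \zeta I$ and $B^*$ by $\xi^* M^* + \zeta^* I$. Adding a scalar matrix only alters diagonal entries, so the off-diagonal structure of $\xi M + \zeta I$ agrees with that of $\xi M$; since $M$ is tridiagonal, so is $\xi M + \zeta I$, and since $\xi \neq 0$ and every sub/superdiagonal entry of $M$ is nonzero, every sub/superdiagonal entry of $\xi M + \zeta I$ is nonzero. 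Thus $\xi M + \zeta I$ is irreducible tridiagonal. The matrix $\xi^* M^* + \zeta^* I$ is clearly diagonal. Condition (ii) is verified in exactly the same way, swapping the roles of $A$ and $A^*$ and using condition (ii) for $A,A^*$ to produce the appropriate basis.

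Since both (i) and (ii) of Definition \ref{def:LP} hold, $B, B^*$ is a Leonard pair on $V$. There is no real obstacle here: the lemma reduces to the trivial observations that (a) scaling by a nonzero scalar preserves the ``irreducible tridiagonal'' and ``diagonal'' properties, and (b) adding a scalar multiple of the identity only shifts diagonal entries and therefore also preserves both properties. The assumption $\xi \xi^* \neq 0$ is used precisely to preserve irreducibility. No eigenvalue computations, no parameter-array manipulations, and no appeal to the results of Sections \ref{sec:trid}--\ref{sec:bond} are needed.
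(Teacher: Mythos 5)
Your proof is correct. The paper gives no proof of this lemma, citing it from \cite[Lemma 5.1]{NT:affine} instead; your direct verification of Definition \ref{def:LP}(i),(ii) at the level of matrix representations is the standard argument, and the observation that $\xi\xi^*\neq 0$ is exactly what preserves irreducibility is on point.
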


When working with a Leonard pair, it is often convenient to consider a closely related
object called a Leonard system. 
In order to define this we first make an observation about Leonard pairs.

\begin{lemma}    {\rm (See \cite[Lemma 3.1]{T:Leonard}.) }
\label{lem:mfree}   \samepage
\ifDRAFT {\rm lem:mfree}. \fi
For a Leonard pair $A,A^*$ on $V$,
each of $A$, $A^*$ is multiplicity-free.
\end{lemma}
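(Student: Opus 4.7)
The plan is to prove that $A$ is multiplicity-free; the statement for $A^*$ will then follow by symmetry (interchanging the roles of conditions (i) and (ii) in Definition \ref{def:LP}).

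First I would use condition (ii) of Definition \ref{def:LP}: it supplies a basis with respect to which $A$ is represented by a diagonal matrix, so $A$ is diagonalizable. The task then reduces to showing that $A$ has $d+1$ distinct eigenvalues, since for a diagonalizable element of $\End$ the minimal polynomial equals $\prod_\lambda (x - \lambda)$ where $\lambda$ ranges over the distinct eigenvalues, so multiplicity-freeness is equivalent to the minimal polynomial having degree $d+1$.

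Next I would use condition (i) of Definition \ref{def:LP} to exhibit a cyclic vector for $A$. Let $\{v_i\}_{i=0}^d$ denote the basis from condition (i), so that the matrix $M$ representing $A$ is irreducible tridiagonal. Since $M_{i,i-1} \neq 0$ for $1 \leq i \leq d$, one has $A v_i = M_{i-1,i} v_{i-1} + M_{i,i} v_i + M_{i+1,i} v_{i+1}$ (with the understood convention at the boundary), so by induction on $k$ the span of $v_0, A v_0, A^2 v_0, \ldots, A^k v_0$ coincides with the span of $v_0, v_1, \ldots, v_k$ for $0 \leq k \leq d$. Taking $k=d$ shows that $\{A^i v_0\}_{i=0}^d$ is a basis of $V$, so $v_0$ is a cyclic vector for $A$. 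Consequently no nonzero polynomial in $A$ of degree at most $d$ can annihilate $V$, and the minimal polynomial of $A$ has degree exactly $d+1$.

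Combining the two previous steps, $A$ is diagonalizable with minimal polynomial of degree $d+1$, hence $A$ has $d+1$ distinct eigenvalues and is multiplicity-free. The same argument with $A$ and $A^*$ interchanged shows that $A^*$ is multiplicity-free. The only nontrivial step is the cyclic-vector argument, whose engine is the irreducibility hypothesis on the subdiagonal of $M$; the rest is standard linear algebra.
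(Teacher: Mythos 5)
Your proof is correct: diagonalizability from Definition \ref{def:LP}(ii) together with the cyclic vector produced by the nonzero subdiagonal in Definition \ref{def:LP}(i) forces the minimal polynomial to have degree $d+1$, which is exactly the standard argument. The paper itself gives no proof, deferring to the cited reference \cite[Lemma 3.1]{T:Leonard}, and your argument is essentially the one given there.
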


A Leonard system is defined as follows.

\begin{defi}  {\rm (See \cite[Definition 1.4]{T:Leonard}.) }
  \label{def:LS}    \samepage
\ifDRAFT {\rm def:LS}. \fi
By a {\em Leonard system} on $V$ we mean a sequence
\begin{equation}
    \Phi = (A; \{E_i\}_{i=0}^d; A^*; \{E^*_i\}_{i=0}^d)              \label{eq:Phi}
\end{equation}
of elements in $\End$ that satisfy the following {\rm (i)--(v)}:
\begin{itemize}
\item[\rm (i)]
each of $A,A^*$ is multiplicity-free;
\item[\rm (ii)]
$\{E_i\}_{i=0}^d$ is an ordering of the primitive idempotents of $A$;
\item[\rm (iii)]
$\{E^*_i\}_{i=0}^d$ is an ordering of the primitive idempotents of $A^*$;
\item[\rm (iv)]
$\displaystyle
  E^*_i A E^*_j =
   \begin{cases}
      0  &  \text{ if $|i-j|>1$},
   \\
   \neq 0 & \text{ if $|i-j|=1$}
  \end{cases}
   \qquad (0 \leq i,j \leq d)$;
\item[\rm (v)]
$\displaystyle
  E_i A^* E_j =
   \begin{cases}
      0  &  \text{ if $|i-j|>1$},
   \\
   \neq 0 & \text{ if $|i-j|=1$}
  \end{cases}
   \qquad (0 \leq i,j \leq d)$.
\end{itemize}
We say that $\Phi$ is {\em over $\F$}.
\end{defi}

Leonard systems are related to Leonard pairs as follows.
For the Leonard system $\Phi$ from \eqref{eq:Phi},
by \cite[Section 3]{T:qRacah} the pair $A,A^*$ is a Leonard pair on $V$.
Conversely, for a Leonard pair $A,A^*$ on $V$, each of $A,A^*$ is multiplicity-free
by Lemma \ref{lem:mfree}.
Moreover there exists an ordering $\{E_i\}_{i=0}^d$ of the primitive idempotents of $A$
and an ordering of $\{E^*_i\}_{i=0}^d$ of the primitive idempotents of $A^*$
such that $(A; \{E_i\}_{i=0}^d; A^*; \{E^*_i\}_{i=0}^d)$ is a Leonard system on $V$
(see \cite[Lemma 3.3]{T:qRacah}.)

Consider the Leonard system $\Phi$ from \eqref{eq:Phi}
and a Leonard system
\[
\Phi' = (B; \{E'_i\}_{i=0}^d; B^*; \{E^{* \prime}_i\}_{i=0}^d)
\]
on a vector space $V'$.
By an {\em isomorphism of Leonard systems from $\Phi$ to $\Phi'$} we mean an algebra
isomorphism $\End \to \text{\rm End}(V')$ that sends $A \mapsto B$,
$A^* \mapsto B^*$ and
$E_i \mapsto E'_i$, $E^*_i \mapsto E^{* \prime}_i$ for $0 \leq i \leq d$.
The Leonard systems $\Phi$ and $\Phi'$ are said to be {\em isomorphic}
whenever there exists an isomorphism of Leonard systems from $\Phi$ to $\Phi'$.

\begin{lemma}    \label{lem:LSiso}    \samepage
\ifDRAFT {\rm lem:LSiso}. \fi
Consider the Leonard system $\Phi$ from \eqref{eq:Phi}
and a sequence
\[
\Phi' = (B; \{E'_i\}_{i=0}^d; B^*; \{E^{* \prime}_i\}_{i=0}^d)
\]
of elements in $\End$.
Then the following are equivalent:
\begin{itemize}
\item[\rm (i)]
$\Phi'$ is a Leonard system on $V$ that is isomorphic to $\Phi$;
\item[\rm (ii)]
there exists an invertible $S \in \End$ such that
$S A S^{-1} = B$, $S A^* S^{-1} = B^*$,
and $S E_i S^{-1} = E'_i$, $S E^*_i S^{-1} = E^{* \prime}_i$ for $0 \leq i \leq d$.
\end{itemize}
\end{lemma}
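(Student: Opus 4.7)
The plan is to prove this by a routine Skolem--Noether argument, essentially parallel to the proof of Lemma \ref{lem:SASinv} but carrying along the extra data of the orderings of primitive idempotents.

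For the direction (i)$\Rightarrow$(ii), I would start from the assumption that there exists an algebra isomorphism $\sigma : \End \to \End$ sending $A \mapsto B$, $A^* \mapsto B^*$, $E_i \mapsto E'_i$, $E^*_i \mapsto E^{*\prime}_i$ for $0 \leq i \leq d$. Apply Lemma \ref{lem:SN} (Skolem--Noether) to $\sigma$ to produce an invertible $S \in \End$ such that $X^\sigma = S X S^{-1}$ for all $X \in \End$. Reading off the images of $A$, $A^*$, $E_i$, $E^*_i$ then gives exactly the conjugation equations in (ii).

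For the direction (ii)$\Rightarrow$(i), starting from the invertible $S$, define $\sigma : \End \to \End$ by $X^\sigma = SXS^{-1}$. This is an inner automorphism of $\End$, hence an algebra isomorphism, and by hypothesis it sends the listed elements of $\Phi$ to the corresponding elements of $\Phi'$. The remaining task is to check that $\Phi'$ is actually a Leonard system on $V$, i.e., that it satisfies conditions (i)--(v) of Definition \ref{def:LS}. Since $\sigma$ is an algebra isomorphism, it preserves the property of being multiplicity-free (so $B$, $B^*$ are multiplicity-free), it sends primitive idempotents to primitive idempotents (so $\{E'_i\}_{i=0}^d$, $\{E^{*\prime}_i\}_{i=0}^d$ are orderings of the primitive idempotents of $B$, $B^*$), and it preserves each product $E^*_i A E^*_j$ and $E_i A^* E_j$ up to the isomorphism; in particular the vanishing / nonvanishing patterns in Definition \ref{def:LS}(iv),(v) are transferred from $\Phi$ to $\Phi'$.

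I do not anticipate any substantive obstacle here; the content is purely that Skolem--Noether converts ``isomorphism of Leonard systems'' (an abstract algebra isomorphism matching the data) into the concrete form ``simultaneous conjugation by a single invertible $S$.'' The only thing requiring a moment of care is the verification, in the reverse direction, that $\Phi'$ genuinely is a Leonard system rather than merely a tuple matched to $\Phi$ under conjugation; but this is immediate because all the defining conditions of a Leonard system are invariant under inner automorphisms of $\End$.
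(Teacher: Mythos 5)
Your proposal is correct and matches the paper's approach: the paper's entire proof is ``Routine verification using Lemma \ref{lem:SN},'' i.e.\ exactly the Skolem--Noether argument you spell out. Your added care in checking that the conjugated tuple $\Phi'$ satisfies Definition \ref{def:LS} is precisely the ``routine verification'' the paper leaves implicit.
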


\begin{proof}
Routine verification using Lemma \ref{lem:SN}.
\end{proof}

\begin{defi}    \label{def:standard}    \samepage
\ifDRAFT {\rm def:standard}. \fi
Consider a Leonard pair $A,A^*$ on $V$.
An ordering $\{E_i\}_{i=0}^d$ of the primitive idempotents
of $A$ is said to be {\em standard} whenever
it satisfies Definition \ref{def:LS}(v).
A standard ordering of the primitive idempotents of $A^*$ is
similarly defined.
\end{defi}

Referring to Definition \ref{def:standard}, for a standard ordering $\{E_i\}_{i=0}^d$
of the primitive idempotents of $A$, the ordering $\{E_{d-i} \}_{i=0}^d$ is standard 
and no further ordering is standard.
A similar comment applies to the primitive idempotents of $A^*$.

For the Leonard system $\Phi$ from  \eqref{eq:Phi},
each of the following is a Leonard system on $V$:
\begin{align*}
\Phi^* &= (A^*; \{E^*_i\}_{i=0}^d; A ; \{E_i\}_{i=0}^d),
\\
\Phi^\downarrow  &= (A; \{E_i\}_{i=0}^d; A^*; \{E^*_{d-i} \}_{i=0}^d),
\\
\Phi^\Downarrow &= (A; \{E_{d-i}\}_{i=0}^d; A^*; \{E^*_i\}_{i=0}^d).
\end{align*}
For $g \in \{*, \downarrow, \Downarrow\}$ and 
an object $f$ associated with $\Phi$, let $f^g$ denote the corresponding object
associated with $\Phi^g$.

The Leonard system $\Phi$ from \eqref{eq:Phi} and the Leonard pair $A,A^*$
are said to be {\em associated}.

\begin{lemma}    {\rm (See \cite[Section 3]{T:TDD}.) }
\label{lem:LSequiv}    \samepage
\ifDRAFT {\rm lem:LSequiv}. \fi
Let $A,A^*$ denote a Leonard pair on $V$.
If $\Phi$ is a Leonard system associated with $A,A^*$,
then so is $\Phi^\downarrow$, $\Phi^\Downarrow$, $\Phi^{\downarrow\Downarrow}$.
No further Leonard system is associated with $A,A^*$.
\end{lemma}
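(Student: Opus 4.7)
The plan is to verify the lemma in two steps, both of which reduce to bookkeeping on top of facts already established in the excerpt. First I would show that each of $\Phi^\downarrow$, $\Phi^\Downarrow$, $\Phi^{\downarrow\Downarrow}$ is a Leonard system on $V$, by checking conditions (i)--(v) of Definition \ref{def:LS}. Conditions (i)--(iii) are immediate: the three variants only reorder the primitive idempotents of $A$ and/or $A^*$, so the underlying sets of primitive idempotents and the multiplicity-freeness of $A,A^*$ are unaffected. The real content is in (iv) and (v).

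For (iv) under the transition $\Phi \to \Phi^\downarrow$, replacing $\{E^*_i\}_{i=0}^d$ by $\{E^*_{d-i}\}_{i=0}^d$ replaces $E^*_i A E^*_j$ by $E^*_{d-i} A E^*_{d-j}$. Since $|(d-i)-(d-j)|=|i-j|$, the vanishing/nonvanishing pattern prescribed in Definition \ref{def:LS}(iv) is preserved. Condition (v) for $\Phi^\downarrow$ is obviously inherited from $\Phi$ because the first and third entries are unchanged. The analogous argument (with the roles of $A$ and $A^*$ swapped) handles $\Phi^\Downarrow$, and then $\Phi^{\downarrow\Downarrow}$ follows by applying the two reversals in succession.

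Second I would show that no further Leonard system is associated with $A,A^*$. Suppose $\Phi' = (A; \{F_i\}_{i=0}^d; A^*; \{F^{*}_i\}_{i=0}^d)$ is any Leonard system on $V$. By Definition \ref{def:standard}, the sequence $\{F_i\}_{i=0}^d$ is a standard ordering of the primitive idempotents of $A$, and $\{F^{*}_i\}_{i=0}^d$ is a standard ordering of those of $A^*$. Since $A$ and $A^*$ are multiplicity-free (Lemma \ref{lem:mfree}), the primitive idempotents of each are uniquely determined as sets; only their orderings can vary. The remark immediately following Definition \ref{def:standard} tells us that $A$ admits exactly the two standard orderings $\{E_i\}_{i=0}^d$ and $\{E_{d-i}\}_{i=0}^d$, and similarly $A^*$ admits exactly $\{E^*_i\}_{i=0}^d$ and $\{E^*_{d-i}\}_{i=0}^d$. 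Therefore there are at most $2\times 2 = 4$ Leonard systems associated with $A,A^*$, and the four we have exhibited exhaust this count.

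The argument is essentially a counting and invariance check; the only potential obstacle is confirming that the symmetry $i\mapsto d-i$ genuinely preserves conditions (iv) and (v), but this is transparent from the symmetry of the condition $|i-j|\in\{0,1,>1\}$. All the substantive input has already been supplied by Lemma \ref{lem:mfree} and the standard-ordering remark, so the proof reduces to invoking these together.
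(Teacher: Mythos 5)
Your argument is correct. Note that the paper itself gives no proof of this lemma --- it simply cites \cite[Section 3]{T:TDD} --- so your write-up supplies a self-contained verification of what the citation covers: the invariance of Definition \ref{def:LS}(iv),(v) under $i\mapsto d-i$ handles existence, and the count of standard orderings handles exhaustiveness. The only external input you lean on is the unproved remark following Definition \ref{def:standard} (that each of $A$, $A^*$ admits exactly two standard orderings), but the paper states that remark as fact, so invoking it is legitimate; if one wanted a fully self-contained proof, that remark is the one substantive claim that would still need justification.
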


\begin{defi}    \label{def:eigenseq}    \samepage
\ifDRAFT {\rm def:eigenseq}. \fi
Consider the Leonard system $\Phi$ from  \eqref{eq:Phi}.
For $0 \leq i \leq d$ let $\th_i$ (resp.\ $\th^*_i$) denote the eigenvalue of $A$
(resp.\ $A^*$) associated with $E_i$ (resp.\ $E^*_i$).
We call $\{\th_i\}_{i=0}^d$ (resp.\ $\{\th^*_i\}_{i=0}^d$) the 
{\em eigenvalue sequence} (resp.\ {\em dual eigenvalue sequence}) of $\Phi$.
\end{defi}

\begin{lemma}   {\rm (See \cite[Theorem 3.2]{T:Leonard}.)}
\label{lem:parray}    \samepage
\ifDRAFT {\rm lem:parray}. \fi
Referring to Definition \ref{def:eigenseq},
there exists a sequence $\{\vphi_i\}_{i=1}^d$ of scalars in $\F$
and a basis of $V$ with respect to which the matrices representing $A$ and $A^*$ are
\begin{align*}
A &: 
\begin{pmatrix}
 \th_0 & & & & & \text{\bf 0}  \\
 1 & \th_1 \\
    & 1 & \th_2 \\
    &    &  \cdot & \cdot \\
    &     &          & \cdot & \cdot \\
\text{\bf 0} & & & & 1 & \th_d
\end{pmatrix},
&
A^* &:
\begin{pmatrix}
\th^*_0 & \vphi_1 & & & & {\bf 0}  \\
          & \th^*_1 & \vphi_2 \\
         &  &  \th^*_1 &  \cdot    \\
         &  &  &  \cdot & \cdot  \\
         &  &  &  &  \cdot & \vphi_d  \\
\text{\bf 0} & & & & & \th^*_d
\end{pmatrix}.
\end{align*}
The sequence $\{\vphi_i\}_{i=1}^d$ is uniquely determined by $\Phi$.
Moreover $\vphi_i \neq 0$ for $1 \leq i \leq d$.
\end{lemma}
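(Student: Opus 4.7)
The plan is to build the basis explicitly using a ``split decomposition'' style construction rooted at $E^*_0V$. Since $A^*$ is multiplicity-free by Lemma~\ref{lem:mfree}, the subspace $E^*_0 V$ is one-dimensional; fix a nonzero $v_0 \in E^*_0 V$. Recursively define
\[
  u_0 = v_0, \qquad u_{i+1} = (A - \th_i I)\,u_i \quad (0 \leq i \leq d-1).
\]
By construction, $A u_i = u_{i+1} + \th_i u_i$ for $0 \leq i \leq d-1$. Moreover, since $A$ is multiplicity-free with eigenvalues $\{\th_i\}_{i=0}^d$, we have $\prod_{j=0}^d (A-\th_j I) = 0$, so applying $(A-\th_d I)$ to $u_d$ yields zero, i.e.\ $A u_d = \th_d u_d$. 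Thus, provided $\{u_i\}_{i=0}^d$ is a basis of $V$, the matrix representing $A$ has the required shape.

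For linear independence, I would prove by induction on $i$ that $u_i \in U_i := E^*_0 V + E^*_1 V + \cdots + E^*_i V$ and that $E^*_i u_i \neq 0$. The inclusion follows from Definition~\ref{def:LS}(iv), which forces $A$ to shift the $E^*$-weight by at most one, so $(A-\th_i I)$ sends $U_i$ into $U_{i+1}$. For the nonvanishing, the $E^*_{i+1}$-component of $u_{i+1} = (A - \th_i I) u_i$ equals $E^*_{i+1} A E^*_i u_i$, which is nonzero by the ``$\neq 0$'' clause of Definition~\ref{def:LS}(iv) together with the inductive hypothesis $E^*_i u_i \neq 0$. Since the $E^*_j V$ are independent as distinct $A^*$-eigenspaces, $\{u_0,\ldots,u_i\}$ spans $U_i$; in particular $\{u_i\}_{i=0}^d$ is a basis of $V = U_d$.

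For the form of $A^*$, note that each $U_j$ is $A^*$-invariant (being a sum of $A^*$-eigenspaces) and equals $\mathrm{span}(u_0,\ldots,u_j)$ by the previous paragraph. So $A^* u_j$ already lies in $\mathrm{span}(u_0,\ldots,u_j)$. Writing $u_j = w_j + r_j$ with $w_j \in E^*_j V$ and $r_j \in U_{j-1}$, and applying $A^*$ term by term using $A^* E^*_k = \th^*_k E^*_k$, one obtains by an induction on $j$ that the coefficients of $u_0,\ldots,u_{j-2}$ vanish and the coefficient of $u_j$ equals $\th^*_j$. This yields $A^* u_j = \vphi_j u_{j-1} + \th^*_j u_j$ for some scalar $\vphi_j \in \F$.

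The main obstacle is showing $\vphi_j \neq 0$; here I would invoke Definition~\ref{def:LS}(v). If some $\vphi_j$ vanished, then the subspace $W := \mathrm{span}(u_j,\ldots,u_d)$ would be closed under both $A$ (from the bidiagonal form) and $A^*$ (since $A^* u_k \in \mathrm{span}(u_{k-1},u_k) \subseteq W$ for $k > j$, and $A^* u_j = \th^*_j u_j \in W$). Consequently, all primitive idempotents $E_i$ would preserve $W$, so $E_i A^* E_j$ acting on $V$ would fail the nonvanishing in Definition~\ref{def:LS}(v) for a suitable index pair, a contradiction. Finally, uniqueness of $\{\vphi_i\}_{i=1}^d$ follows because any basis realizing the stated forms has its first vector an eigenvector of $A^*$ for $\th^*_0$, hence a nonzero scalar multiple of $v_0$; the recursion $u_{i+1} = (A - \th_i I) u_i$ is forced by the form of $A$, so the whole basis is determined up to a common scalar, which does not affect the $\vphi_i$.
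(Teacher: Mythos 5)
Your construction of the basis and your treatment of $A$ are sound: with $u_0 \in E^*_0V$ and $u_{i+1}=(A-\th_i I)u_i$, Definition \ref{def:LS}(iv) gives $u_i\in E^*_0V+\cdots+E^*_iV$ with $E^*_iu_i\neq 0$ (so the $u_i$ are independent), and $\prod_{j=0}^d(A-\th_jI)=0$ closes the recursion at $i=d$. The genuine gap is in your paragraph on the form of $A^*$. The only inputs you use there — invariance of $E^*_0V+\cdots+E^*_jV$ under $A^*$ and the relations $A^*E^*_k=\th^*_kE^*_k$ — prove that $A^*u_j\in\mathrm{span}(u_0,\dots,u_j)$ with $u_j$-coefficient $\th^*_j$, i.e.\ that $A^*$ is upper \emph{triangular} in this basis; they cannot prove bidiagonality, and your claimed induction does not close. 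Concretely, write $u_j=w_j+\sum_{k<j}\alpha_ku_k$ with $w_j\in E^*_jV$; using the inductive hypothesis $A^*u_k=\vphi_ku_{k-1}+\th^*_ku_k$ one gets
\[
A^*u_j=\th^*_ju_j+\sum_{k<j}\alpha_k(\th^*_k-\th^*_j)u_k+\sum_{1\le k<j}\alpha_k\vphi_ku_{k-1},
\]
whose $u_{j-2}$-coefficient is $\alpha_{j-2}(\th^*_{j-2}-\th^*_j)+\alpha_{j-1}\vphi_{j-1}$. Nothing you have invoked forces this to vanish, and it cannot be forced: condition \ref{def:LS}(iv) alone admits pairs for which $A^*$ is upper triangular but not bidiagonal in this basis. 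The vanishing of the entries above the superdiagonal is exactly where Definition \ref{def:LS}(v) must enter, and your argument never uses (v) at this step (you only use it later, for $\vphi_j\neq 0$).

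The standard way to close the gap is the two-sided split decomposition: set $U_i=(E^*_0V+\cdots+E^*_iV)\cap(E_iV+\cdots+E_dV)$. Then $(A-\th_iI)U_i\subseteq U_{i+1}$ (immediate on the $E$-factor, (iv) on the $E^*$-factor) and $(A^*-\th^*_iI)U_i\subseteq U_{i-1}$ (immediate on the $E^*$-factor, (v) on the $E$-factor), and one shows $V=\bigoplus_iU_i$ with each $U_i$ one-dimensional. Since $U_0=E^*_0V$, your $u_i$ lies in $U_i$, and $(A^*-\th^*_jI)u_j\in U_{j-1}=\F u_{j-1}$ is precisely the missing bidiagonality. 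Your arguments for $\vphi_j\neq 0$ (a common invariant subspace would violate (v)) and for uniqueness are fine once the two bidiagonal forms are established. For what it is worth, the paper offers no proof of this lemma to compare against: it quotes the statement from \cite[Theorem 3.2]{T:Leonard}, where the split-decomposition argument above is carried out.
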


\begin{defi}  {\rm See \cite[Definition 3.10]{T:Leonard}.) }
\label{def:splitseq}    \samepage
\ifDRAFT {\rm def:splitseq}. \fi
Referring to Lemma \ref{lem:parray}, we call $\{\vphi_i\}_{i=1}^d$
the {\em first split sequence} of $\Phi$.
Let $\{\phi_i\}_{i=1}^d$ denote the first split sequence of $\Phi^\Downarrow$.
We call $\{\phi_i\}_{i=1}^d$ the {\em second split sequence} of $\Phi$.
\end{defi}

\begin{defi}    {\rm (See \cite[Definition 22.3]{T:survey}.) }
\label{def:parray}    \samepage
\ifDRAFT {\rm def:parray}. \fi
For the Leonard system $\Phi$ from \eqref{eq:Phi},
by the {\em parameter array of $\Phi$} 
we mean the sequence
\[
  (\{\th_i\}_{i=0}^d; \{\th^*_i\}_{i=0}^d; \{\vphi_i\}_{i=1}^d; \{\phi_i\}_{i=1}^d), 
\]
where $\{\th_i\}_{i=0}^d$ (resp.\ $\{\th^*_i\}_{i=0}^d$) is the eigenvalue sequence
(resp.\ dual eigenvalue sequence) of $\Phi$,
and $\{\vphi_i\}_{i=1}^d$ (resp.\ $\{\phi_i\}_{i=1}^d$) is the first split sequence 
(resp.\ second split sequence) of $\Phi$.
\end{defi}

\begin{lemma}   {\rm (See \cite[Theorem 1.9]{T:Leonard}.) }
\label{lem:classify}    \samepage
\ifDRAFT {\rm lem:classify}. \fi
Consider a sequence
\begin{equation}
  (\{\th_i\}_{i=0}^d; \{\th^*_i\}_{i=0}^d; \{\vphi_i\}_{i=1}^d; \{\phi_i\}_{i=1}^d)         \label{eq:parray}
\end{equation}    
of scalars taken from $\F$.
Then there exists a Leonard system $\Phi$ over $\F$ with parameter array \eqref{eq:parray}
if and only if the following conditions {\rm (i)--(v)} hold:
\begin{itemize}
\item[\rm (i)]
$\th_i \neq \th_j, \quad \th^*_i \neq \th^*_j \quad$ if $i \neq j \quad (0 \leq i,j \leq d)$;
\item[\rm (ii)]
$\vphi_i \neq 0, \quad \phi_i \neq 0 \quad (1 \leq i \leq d)$;
\item[\rm (iii)]
$\vphi_i = \phi_1 \sum_{\ell=0}^{i-1} \frac{\th_\ell - \th_{d-\ell} } { \th_0 - \th_d }
             + (\th^*_i - \th^*_0)(\th_{i-1}-\th_d)  \quad (1 \leq i \leq d)$;
\item[\rm (iv)]
$\phi_i = \vphi_1 \sum_{\ell=0}^{i-1} \frac{\th_\ell - \th_{d-\ell} } { \th_0 - \th_d }
             + (\th^*_i - \th^*_0)(\th_{d-i+1}-\th_0)  \quad (1 \leq i \leq d)$;
\item[\rm (v)]
the expressions
\begin{equation}
\frac{\th_{i-2} - \th_{i+1} } { \th_{i-1} - \th_i },
\qquad\qquad
\frac{\th^*_{i-2} - \th^*_{i+1} } { \th^*_{i-1} - \th^*_i }               \label{eq:indep}
\end{equation}
are equal and independent of $i$ for $2 \leq i \leq d-1$.
\end{itemize}
Moreover, if {\rm (i)--(v)} hold, then $\Phi$ is uniquely determined 
up to isomorphism of Leonard systems.
\end{lemma}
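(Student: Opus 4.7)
The plan is to prove the three assertions separately: necessity of (i)--(v), sufficiency, and uniqueness up to isomorphism. Throughout, the key technical tool will be the ``split'' basis of Lemma \ref{lem:parray}, together with its dual version associated with $\Phi^\Downarrow$.

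For necessity, I would start with a Leonard system $\Phi$ whose parameter array is \eqref{eq:parray}. Condition (i) is immediate from Lemma \ref{lem:mfree}, and (ii) follows from Lemma \ref{lem:parray} applied to both $\Phi$ and $\Phi^\Downarrow$. To obtain (iii) and (iv), I would work in the split basis of Lemma \ref{lem:parray} where $A - \th_i I$ acts as a one-step lowering operator and $A^* - \th^*_i I$ as a one-step raising operator with coefficient $\vphi_i$. Applying both bidiagonal forms to a suitable basis element and equating coefficients gives a three-term recurrence for $\vphi_i$ in terms of $\{\th_j\}$, $\{\th^*_j\}$; solving this recurrence with the boundary value furnished by the $\Phi^\Downarrow$ basis (which supplies $\phi_1$) produces the closed formula (iii). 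Formula (iv) is (iii) applied to $\Phi^\Downarrow$. Condition (v) is the most delicate: it comes from the fact that the tridiagonal relations forced on a Leonard pair cause the second-difference ratios of the eigenvalues to be constant; this can be extracted by applying the tridiagonal (Askey--Wilson-type) relation to an eigenvector of $A^*$ and reading off the coefficient structure, or equivalently by manipulating the three-term recurrence derived above.

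For sufficiency, given scalars satisfying (i)--(v) I would construct $A, A^*$ explicitly as the matrices in the first display of Lemma \ref{lem:parray}, with respect to a fixed basis of $\F^{d+1}$. Conditions (i), (ii) guarantee that $A, A^*$ are multiplicity-free with the prescribed eigenvalues, and that the off-diagonal $\vphi_i$ are nonzero, so one direction of Definition \ref{def:LP} is automatic (after diagonalizing $A^*$ by an upper-triangular change of basis). The hard part is producing a second basis realizing the dual split form with the $\phi_i$ in place of the $\vphi_i$; this is the content behind Definition \ref{def:LP}(ii). I would define a candidate change-of-basis matrix using formulas (iii), (iv), (v), and check that it intertwines the first split form into the second. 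Equivalently, one verifies the Askey--Wilson relations hold for $A, A^*$, which by condition (v) reduce to a polynomial identity in $\{\th_i\}, \{\th^*_i\}$ that can be checked term by term.

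For uniqueness, suppose $\Phi$ and $\Phi'$ are two Leonard systems on $V$ sharing the parameter array. In their respective split bases (Lemma \ref{lem:parray}) both pairs $(A, A^*)$ and $(B, B^*)$ are represented by \emph{identical} matrices. The composition of the two basis isomorphisms is an invertible $S \in \End$ with $S A S^{-1} = B$ and $S A^* S^{-1} = B^*$, and the standard ordering of primitive idempotents is determined by $A, A^*$ and the shape of the representing matrices, so $S$ also matches the $E_i$ and $E^*_i$; Lemma \ref{lem:LSiso} then finishes the argument. The main obstacle will be sufficiency, specifically proving the existence of the second split basis from conditions (iii)--(v); this is the technical heart of the classification, and conceptually it amounts to exhibiting a closed-form LU-type factorization whose solvability is precisely encoded by the parameter-array axioms.
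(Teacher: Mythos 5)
The paper does not prove this lemma; it is imported verbatim from \cite[Theorem 1.9]{T:Leonard}, so there is no internal argument to compare yours against. Judged on its own terms, your outline reproduces the architecture of Terwilliger's proof: necessity via the split basis of Lemma \ref{lem:parray} and its $\Downarrow$-counterpart, sufficiency by building the pair from the first bidiagonal form and then manufacturing the second, and uniqueness by transporting one split basis onto the other and invoking Lemma \ref{lem:LSiso}. The necessity and uniqueness parts are essentially sound: (i) follows from each of $A$, $A^*$ being multiplicity-free, (ii) from Lemma \ref{lem:parray} applied to $\Phi$ and $\Phi^\Downarrow$, and in the split basis the primitive idempotents are pinned down by the (distinct) eigenvalues, so an invertible $S$ conjugating $A\mapsto B$ and $A^*\mapsto B^*$ automatically matches the idempotents as required by Lemma \ref{lem:LSiso}.

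The genuine gap is in sufficiency, which you yourself identify as ``the technical heart'' and then do not carry out. Given scalars satisfying (i)--(v), defining $A$, $A^*$ by the first display of Lemma \ref{lem:parray} makes Definition \ref{def:LP}(i) straightforward, but Definition \ref{def:LP}(ii) --- the existence of an eigenbasis for $A$ in which $A^*$ is irreducible tridiagonal --- is exactly the content of the classification, and ``define a candidate change-of-basis matrix and check that it intertwines'' or ``reduces to a polynomial identity that can be checked term by term'' is a restatement of the problem rather than a solution. In \cite{T:Leonard} this step is a substantial construction: one must exhibit the second split decomposition explicitly and verify that the two bidiagonal realizations coexist, and conditions (iii)--(v) enter in a precise, nontrivial way (in particular (v) is vacuous for $d\leq 2$, so the argument must degenerate gracefully there). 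Without that construction your argument establishes only necessity and uniqueness. If you intend to rely on the published classification, the honest course is to cite it for sufficiency, as the paper does, rather than sketch it.
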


\begin{defi}     \label{def:parrayoverF}    \samepage
\ifDRAFT {\rm def:parrayoverF}. \fi
By a {\em parameter array over $\F$} we mean a sequence
\[
 (\{\th_i\}_{i=0}^d; \{\th^*_i\}_{i=0}^d; \{\vphi_i\}_{i=1}^d; \{\phi_i\}_{i=1}^d)
\]
of scalars taken from $\F$ that satisfy conditions {\rm (i)--(v)} in Lemma \ref{lem:classify}.
\end{defi}

\begin{defi}    \label{def:beta0}    \samepage
\ifDRAFT {\rm def:beta0}. \fi
Referring to Definition \ref{def:parrayoverF}, 
assume that $d \geq 3$.
Define $\beta \in \F$ such that $\beta+1$ is equal to the common value
of the two fractions in \eqref{eq:indep}.
We call $\beta$ the {\em fundamental constant} of the parameter array  \eqref{eq:parray}.
\end{defi}

\begin{defi}     \label{def:betaLS}    \samepage
\ifDRAFT {\rm def:betaLS}. \fi
Assume that $d \geq 3$.
Then the fundamental constant of a given Leonard system is
the fundamental constant of the associated parameter array.
\end{defi}

Referring to Definition \ref{def:betaLS}, 
observe that for a Leonard pair on $V$ the associated Leonard systems  have the
same fundamental constant.

\begin{defi}    \label{def:betaLP}    \samepage
\ifDRAFT {\rm def:betaLP}. \fi
Assume that $d \geq 3$.
The fundamental constant of a given Leonard pair is
the fundamental constant of an associated Leonard system.
\end{defi}

In the next result, we emphasize some relations from Lemma \ref{lem:classify}
for later use.

\begin{lemma}     \label{lem:1d}    \samepage
\ifDRAFT {\rm lem:1d}. \fi
Referring to Definition \ref{def:parrayoverF},
\begin{align}
\vphi_1 - \phi_1 &=  (\th^*_1 -\th^*_0)(\th_0 - \th_d),              \label{eq:vphi1}
\\
\vphi_d - \phi_1 &= (\th^*_d - \th^*_0)(\th_{d-1}- \th_d),         \label{eq:vphid}
\\
\phi_d - \vphi_1 &=  (\th^*_d - \th^*_0)(\th_1 - \th_0).            \label{eq:phid}
\end{align}
\end{lemma}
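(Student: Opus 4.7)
The plan is to derive all three identities directly from conditions (iii) and (iv) of Lemma~\ref{lem:classify} by specializing $i$ to $1$ or $d$ and simplifying the telescoping sum
\[
S_i := \sum_{\ell=0}^{i-1} \frac{\th_\ell - \th_{d-\ell}}{\th_0 - \th_d}.
\]
No additional machinery is needed; the main (very minor) obstacle is checking that $S_1 = 1$ and $S_d = 1$.

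First I would handle \eqref{eq:vphi1}. Setting $i=1$ in Lemma~\ref{lem:classify}(iii), the sum collapses to the single term $\ell=0$, which equals $1$, so
\[
\vphi_1 \;=\; \phi_1 + (\th^*_1-\th^*_0)(\th_0-\th_d),
\]
and rearranging gives \eqref{eq:vphi1}.

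Next I would compute $S_d$. Writing $\sum_{\ell=0}^{d-1}(\th_\ell-\th_{d-\ell}) = \sum_{\ell=0}^{d-1}\th_\ell - \sum_{\ell=0}^{d-1}\th_{d-\ell}$, the second sum equals $\sum_{m=1}^{d}\th_m$ via $m=d-\ell$, so the difference telescopes to $\th_0-\th_d$. Hence $S_d = 1$. Then setting $i=d$ in Lemma~\ref{lem:classify}(iii) yields
\[
\vphi_d \;=\; \phi_1 + (\th^*_d-\th^*_0)(\th_{d-1}-\th_d),
\]
which gives \eqref{eq:vphid}. Similarly, setting $i=d$ in Lemma~\ref{lem:classify}(iv) gives
\[
\phi_d \;=\; \vphi_1 + (\th^*_d-\th^*_0)(\th_1-\th_0),
\]
which gives \eqref{eq:phid}. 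This completes the plan.
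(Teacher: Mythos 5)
Your proposal is correct and follows exactly the paper's own proof, which simply says to set $i=1$ and $i=d$ in Lemma~\ref{lem:classify}(iii),(iv); your verification that the sum $S_i$ equals $1$ at $i=1$ and $i=d$ (by telescoping) is the small detail the paper leaves implicit, and you carry it out correctly.
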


\begin{proof}
In Lemma \ref{lem:classify}(iii),(iv) set $i=1$ and $i=d$.
\end{proof}

\begin{lemma}    {\rm (See \cite[Theorem 11.1]{ITT}.) }
\label{lem:gammarho}    \samepage
\ifDRAFT {\rm lem:gammarho}. \fi
Assume that $d \geq 3$.
Consider a parameter array \eqref{eq:parray} over $\F$ with fundamental constant $\beta$. 
Then there exist scalars $\gamma$, $\varrho$ such that 
\begin{align*}
\gamma &= \th_{i-1} - \beta \th_i + \th_{i+1}  &&   (1 \leq i \leq d-1),  
\\
\varrho &= \th_{i-1}^2 - \beta \th_{i-1} \th_i + \th_i^2 - \gamma (\th_{i-1} + \th_i)
      &&  (1 \leq i \leq d).
\end{align*}
\end{lemma}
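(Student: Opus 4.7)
The plan is to show the two telescoping identities, each by a direct algebraic manipulation that feeds off the definition of $\beta$. Both are very short once the expressions are written out in the right form; the only real care is to check that the indices line up.

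First I would define, for $1 \leq i \leq d-1$,
\[
 \gamma_i = \th_{i-1} - \beta \th_i + \th_{i+1},
\]
and show $\gamma_i = \gamma_{i-1}$ for $2 \leq i \leq d-1$. Subtracting gives
\[
 \gamma_i - \gamma_{i-1}
 = -\th_{i-2} + (\beta+1)\th_{i-1} - (\beta+1)\th_i + \th_{i+1}
 = -\bigl[(\th_{i-2} - \th_{i+1}) - (\beta+1)(\th_{i-1} - \th_i)\bigr],
\]
which vanishes by Definition \ref{def:beta0}, since $\th_{i-2} - \th_{i+1} = (\beta+1)(\th_{i-1} - \th_i)$. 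So all $\gamma_i$ agree with the common value $\gamma$; here we use $d \geq 3$ to ensure the range $2 \leq i \leq d-1$ is nonempty and connects all indices.

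Next, with $\gamma$ in hand, I would define, for $1 \leq i \leq d$,
\[
 \varrho_i = \th_{i-1}^2 - \beta \th_{i-1}\th_i + \th_i^2 - \gamma(\th_{i-1} + \th_i),
\]
and show $\varrho_{i+1} = \varrho_i$ for $1 \leq i \leq d-1$. A direct calculation gives
\[
 \varrho_{i+1} - \varrho_i
 = (\th_{i+1} - \th_{i-1})(\th_{i+1} + \th_{i-1} - \beta\th_i - \gamma),
\]
after grouping the difference-of-squares $\th_{i+1}^2 - \th_{i-1}^2$ and the common factor $\th_{i+1} - \th_{i-1}$ from the remaining terms. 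The second factor is $\gamma_i - \gamma = 0$ by the first step.

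The only step that requires any thought is the factoring in the second identity, but once one notices the pair $\th_{i+1}^2 - \th_{i-1}^2 = (\th_{i+1} - \th_{i-1})(\th_{i+1} + \th_{i-1})$ and the matching $\beta\th_i(\th_{i-1} - \th_{i+1})$ and $-\gamma(\th_{i+1} - \th_{i-1})$, the factor $\th_{i-1} + \th_{i+1} - \beta\th_i - \gamma$ appears automatically. No obstacle is anticipated.
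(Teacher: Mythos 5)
Your argument is correct. The paper gives no proof of this lemma (it is quoted from \cite[Theorem 11.1]{ITT}), so there is nothing to compare against; your telescoping verification is a valid self-contained proof. Both computations check out: $\gamma_i-\gamma_{i-1}$ vanishes because $\th_{i-2}-\th_{i+1}=(\beta+1)(\th_{i-1}-\th_i)$ by Definition \ref{def:beta0}, and the factorization $\varrho_{i+1}-\varrho_i=(\th_{i+1}-\th_{i-1})(\gamma_i-\gamma)$ is exactly right, with the second factor killed by the first step; the index ranges also line up as you claim.
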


For a Leonard pair $A,A^*$ on $V$, consider the scalars $\gamma$, $\beta$
for the parameter array of a Leonard system associated with $A,A^*$.
Observe that the scalars $\gamma$, $\varrho$ are determined by $A,A^*$.

\begin{defi}    \label{def:kappa}     \samepage
\ifDRAFT {\rm def:kappa}. \fi
Assume that $d \geq 3$.
Referring to Lemma \ref{lem:gammarho}, define
\begin{equation}
  \kappa = \gamma^2 + (2 - \beta) \varrho.      \label{eq:defkappa}
\end{equation}
We call $\kappa$ the {\em invariant value} for the parameter array \eqref{eq:parray}.
\end{defi}

\begin{defi}    \label{def:kappaLS}    \samepage
\ifDRAFT {\rm def:kappaLS}. \fi
Assume that $d \geq 3$.
Let $\Phi$ denote a Leonard system on $V$.
By the {\em invariant value for $\Phi$} we mean the invariant value for the
parameter array of $\Phi$.
\end{defi}

For a Leonard pair $A,A^*$ on $V$,  consider a Leonard system
$\Phi$ associated with $A,A^*$.
Observe that the invariant value for $\Phi$ is determined by $A,A^*$,
and independent of the choice of an associated Leonard system.

\begin{defi}   \label{def:kappaLP}    \samepage
\ifDRAFT {\rm def:kappaLP}. \fi
Assume that $d \geq 3$.
Let $A,A^*$ denote a Leonard pair on $V$.
By the {\em invariant value for $A,A^*$} we mean
the invariant value of a Leonard system associated with $A,A^*$.
\end{defi}

\begin{lemma}    \label{lem:kappa}    \samepage
\ifDRAFT {\rm lem:kappa}. \fi
Referring to Definition \ref{def:kappa},
\begin{align*}
  \kappa &=  (\th_{i-1}- \th_{i+1})^2 + (\beta+2)(\th_i - \th_{i-1})(\th_i - \th_{i+1})   
    &&  ( 1 \leq i \leq d-1). 
\end{align*}
\end{lemma}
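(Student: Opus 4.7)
The plan is to treat this as a direct algebraic identity: substitute the formulas for $\gamma$ and $\varrho$ from Lemma \ref{lem:gammarho} into the definition $\kappa = \gamma^2 + (2-\beta)\varrho$ and expand.

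Fix $i$ with $1 \leq i \leq d-1$. First I would invoke Lemma \ref{lem:gammarho} with this value of $i$ to write
\[
\gamma = \th_{i-1} - \beta \th_i + \th_{i+1},
\qquad
\varrho = \th_{i-1}^2 - \beta \th_{i-1}\th_i + \th_i^2 - \gamma(\th_{i-1} + \th_i).
\]
Substituting the first expression into the second eliminates $\gamma$ from the formula for $\varrho$, giving $\varrho$ as an explicit polynomial in $\th_{i-1}, \th_i, \th_{i+1}$ and $\beta$. A short computation produces
\[
\varrho = (1+\beta)\th_i^2 - \th_i(\th_{i-1}+\th_{i+1}) - \th_{i-1}\th_{i+1}.
\]

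Next I would form $\gamma^2 + (2-\beta)\varrho$, collecting coefficients of the monomials $\th_i^2$, $\th_{i-1}^2$, $\th_{i+1}^2$, $\th_i(\th_{i-1}+\th_{i+1})$ and $\th_{i-1}\th_{i+1}$. The coefficient of $\th_i^2$ simplifies as $\beta^2 + (2-\beta)(1+\beta) = \beta+2$; the coefficient of $\th_i(\th_{i-1}+\th_{i+1})$ becomes $-2\beta - (2-\beta) = -(\beta+2)$; the coefficient of $\th_{i-1}\th_{i+1}$ is $2 - (2-\beta) = \beta$; and the monomials $\th_{i-1}^2$ and $\th_{i+1}^2$ each appear with coefficient $1$. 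Hence
\[
\kappa = \th_{i-1}^2 + \th_{i+1}^2 + (\beta+2)\th_i^2 - (\beta+2)\th_i(\th_{i-1}+\th_{i+1}) + \beta\, \th_{i-1}\th_{i+1}.
\]

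Finally I would expand the right-hand side of the claimed identity,
\[
(\th_{i-1}-\th_{i+1})^2 + (\beta+2)(\th_i - \th_{i-1})(\th_i - \th_{i+1}),
\]
and observe that it produces exactly the same combination of monomials. Equality of the two expressions then establishes the lemma. There is no real obstacle here; the whole argument is bookkeeping, and the only point to keep in mind is that the two formulas of Lemma \ref{lem:gammarho} must be applied at the same index $i$, which is permitted since $1 \leq i \leq d-1$ lies in the range of validity for both.
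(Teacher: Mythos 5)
Your proposal is correct and follows exactly the paper's approach: the paper's proof simply says to eliminate $\gamma$ and $\varrho$ from $\kappa = \gamma^2 + (2-\beta)\varrho$ using Lemma \ref{lem:gammarho} and rearrange, which is precisely the computation you carry out (and your intermediate expressions check out). Your remark about applying both formulas of Lemma \ref{lem:gammarho} at the same index $i$, valid since $1 \leq i \leq d-1$ lies in both ranges, is the only point of care and you handle it correctly.
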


\begin{proof}
In \eqref{eq:defkappa}, eliminate $\gamma$ and $\varrho$ using Lemma \ref{lem:gammarho},
and rearrange the terms.
\end{proof}

\begin{lemma}   {\rm (See \cite[Lemma 19.13]{T:LSnote}.)}
\label{lem:indep2}    \samepage
\ifDRAFT {\rm lem:indep2}. \fi
Referring to Definition \ref{def:parrayoverF},
\begin{align*}
 \frac{\th_{\ell} - \th_{d-\ell} }
        {\th_0 - \th_d }
&=
 \frac{\th^*_{\ell} - \th^*_{d-\ell} }
        {\th^*_0 - \th^*_d }
&& (0 \leq \ell \leq d).
\end{align*}
\end{lemma}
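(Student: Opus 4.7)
The plan is to reduce the stated identity to the claim that two antisymmetric solutions of a common second-order linear recurrence span a one-dimensional space. The cases $\ell = 0$ and $\ell = d$ are immediate (both sides equal $1$ and $-1$ respectively), and when $d = 2$ the only remaining case $\ell = 1$ yields $0$ on both sides. So I may assume $d \geq 3$ and $1 \leq \ell \leq d-1$.

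First I would extract a common three-term recurrence. By Lemma \ref{lem:gammarho} there is $\gamma \in \F$ with $\th_{i-1} - \beta \th_i + \th_{i+1} = \gamma$ for $1 \leq i \leq d-1$. By Lemma \ref{lem:classify}(v) the ratio $(\th^*_{i-2} - \th^*_{i+1})/(\th^*_{i-1} - \th^*_i)$ also equals $\beta + 1$, so the same argument (applied to $\{\th^*_i\}_{i=0}^d$) produces $\gamma^* \in \F$ with $\th^*_{i-1} - \beta \th^*_i + \th^*_{i+1} = \gamma^*$ for $1 \leq i \leq d-1$. Set $g_\ell = \th_\ell - \th_{d-\ell}$ and $g^*_\ell = \th^*_\ell - \th^*_{d-\ell}$. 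Writing the recurrence at $i = \ell$ and at $i = d-\ell$ and subtracting cancels $\gamma$, which shows that both $g$ and $g^*$ lie in the two-dimensional solution space $\mathcal{V}$ of the homogeneous recurrence $x_{\ell-1} - \beta x_\ell + x_{\ell+1} = 0$. Each is antisymmetric by construction ($g_{d-\ell} = -g_\ell$, similarly for $g^*$), and $g_0 = \th_0 - \th_d$ and $g^*_0 = \th^*_0 - \th^*_d$ are nonzero by Lemma \ref{lem:classify}(i).

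It then suffices to show that the antisymmetric subspace $\mathcal{W} \subseteq \mathcal{V}$ is one-dimensional, since this forces $g$ and $g^*$ to be proportional, and dividing by $g_0$ and $g^*_0$ yields the lemma. Let $u_\ell, v_\ell \in \mathcal{V}$ be the fundamental solutions with $(u_0, u_1) = (1,0)$ and $(v_0, v_1) = (0,1)$; every $f \in \mathcal{V}$ has $f_\ell = f_0 u_\ell + f_1 v_\ell$. The antisymmetry conditions $f_d = -f_0$ and $f_{d-1} = -f_1$ cut out $\mathcal{W}$ via the $2 \times 2$ linear system
\[
(u_d + 1)\, f_0 + v_d\, f_1 = 0, \qquad u_{d-1}\, f_0 + (v_{d-1} + 1)\, f_1 = 0,
\]
whose determinant equals $u_d + v_{d-1}$ once one applies the Wronskian identity $u_{d-1} v_d - v_{d-1} u_d = 1$ (preserved by the recurrence and checked at $\ell = 1$). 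The expected main obstacle is ruling out $\dim \mathcal{W} = 2$: this would force all four coefficients of the system to vanish, giving $u_d = -1$, $u_{d-1} = 0$, $v_d = 0$, $v_{d-1} = -1$, whereupon $u_{d-1} v_d - v_{d-1} u_d = -1$, contradicting the Wronskian. Hence $\dim \mathcal{W} = 1$ and the stated equality of ratios follows.
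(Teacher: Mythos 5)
Your reduction is sound up to the last step, and since the paper offers no proof of this lemma at all (it simply cites \cite[Lemma~19.13]{T:LSnote}), a self-contained argument of this kind is genuinely useful. For $d\geq 3$ the telescoping consequence of Lemma \ref{lem:classify}(v) does give constants $\gamma$ and $\gamma^*$ with $\th_{i-1}-\beta\th_i+\th_{i+1}=\gamma$ and $\th^*_{i-1}-\beta\th^*_i+\th^*_{i+1}=\gamma^*$ for $1\leq i\leq d-1$; subtracting the instances at $i=\ell$ and $i=d-\ell$ does place $g$ and $g^*$ in the solution space $\mathcal{V}$ of $x_{\ell-1}-\beta x_\ell+x_{\ell+1}=0$; full antisymmetry does reduce to the two conditions $f_d=-f_0$, $f_{d-1}=-f_1$ (the reflected sequence $-f_{d-\ell}$ solves the same recurrence and matches $f$ at $\ell=0,1$); and the Wronskian computation is correct.

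The genuine gap is at the very end. The contradiction you extract from $\dim\mathcal{W}=2$ is $u_{d-1}v_d-v_{d-1}u_d=-1$ against the Wronskian value $1$, and this is no contradiction when $\text{\rm Char}(\F)=2$. That case is squarely in scope: the lemma sits in Section 6 where $\F$ is an arbitrary field, type IV exists only in characteristic $2$, and Theorem \ref{thm:type1sol2} treats type I in characteristic $2$. Nor is the worry vacuous at the level of the recurrence alone: if $\text{\rm Char}(\F)=2$ and $\beta=0$ then $u=(1,0,1,0,\dots)$ and $v=(0,1,0,1,\dots)$, so for $d$ even all four coefficients of your $2\times 2$ system vanish and $\mathcal{W}=\mathcal{V}$ really is two-dimensional, in which case membership of $g$ and $g^*$ in $\mathcal{W}$ no longer forces proportionality. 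Excluding this for an actual parameter array needs input your argument never uses -- for instance Lemma \ref{lem:q}, which forces $d=3$ when $\beta=2$ in characteristic $2$ and gives $q^{2d}\neq 1$ in type I (one then checks directly that $v_d=0$ and $u_d=1$ cannot hold simultaneously) -- or some appeal to conditions (iii), (iv) of Lemma \ref{lem:classify}. As written, your proof is complete only when $\text{\rm Char}(\F)\neq 2$.
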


\begin{notation}    \label{notation6}    \samepage
\ifDRAFT {\rm notation6}. \fi
Let
$\Phi = (A; \{E_i\}_{i=0}^d; A^* ; \{E^*_i\}_{i=0}^d)$
denote a Leonard system on $V$ with parameter array
$(\{\th_i\}_{i=0}^d; \{\th^*_i\}_{i=0}^d; \{\vphi_i\}_{i=1}^d; \{\phi_i\}_{i=1}^d)$.
\end{notation}

\begin{lemma}    {\rm (See \cite[Theorem 1.11]{T:Leonard}.) }
\label{lem:parrayrelative}    \samepage
\ifDRAFT {\rm lem:parrayrelative}. \fi
Referring to Notation \ref{notation6},
the following {\rm (i)--(iii)} hold.
\begin{itemize}
\item[\rm (i)]
The parameter array of $\Phi^*$ is 
$(\{\th^*_i\}_{i=0}^d; \{\th_i \}_{i=0}^d; \{\vphi_i \}_{i=1}^d; \{ \phi_{d-i+1} \}_{i=1}^d)$.
\item[\rm (ii)]
The parameter array of $\Phi^\downarrow$ is 
$(\{\th_i\}_{i=0}^d; \{\th^*_{d-i}\}_{i=0}^d; \{\phi_{d-i+1} \}_{i=1}^d; \{ \vphi_{d-i+1} \}_{i=1}^d)$.
\item[\rm (iii)]
The parameter array of $\Phi^\Downarrow$ is 
$(\{\th_{d-i}\}_{i=0}^d; \{\th^*_{i}\}_{i=0}^d; \{\phi_{i} \}_{i=1}^d; \{ \vphi_{i} \}_{i=1}^d)$.
\end{itemize}
\end{lemma}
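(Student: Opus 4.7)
\begin{proofof}{Lemma \ref{lem:parrayrelative}}
My plan is to verify each item directly from the definitions of $\Phi^*$, $\Phi^\downarrow$, $\Phi^\Downarrow$ given before Lemma \ref{lem:LSequiv}, combined with Definition \ref{def:eigenseq} for the eigenvalue sequences and Definition \ref{def:splitseq} (together with Lemma \ref{lem:parray}) for the split sequences. The six eigenvalue and dual eigenvalue sequences are immediate: in $\Phi^*$ the roles of $A \leftrightarrow A^*$ and $E_i \leftrightarrow E^*_i$ are swapped, yielding $\{\th^*_i\}_{i=0}^d$ and $\{\th_i\}_{i=0}^d$; in $\Phi^\downarrow$ only the $A^*$-idempotents are reversed, yielding $\{\th_i\}_{i=0}^d$ and $\{\th^*_{d-i}\}_{i=0}^d$; in $\Phi^\Downarrow$ only the $A$-idempotents are reversed, yielding $\{\th_{d-i}\}_{i=0}^d$ and $\{\th^*_i\}_{i=0}^d$. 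Part (iii) now falls out of Definition \ref{def:splitseq}: the first split sequence of $\Phi^\Downarrow$ is the second split sequence of $\Phi$, namely $\{\phi_i\}_{i=1}^d$, and the second split sequence of $\Phi^\Downarrow$ is the first split sequence of $(\Phi^\Downarrow)^\Downarrow = \Phi$, namely $\{\vphi_i\}_{i=1}^d$.

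The substantive step is identifying the first split sequence of $\Phi^*$. By Lemma \ref{lem:parray} applied to $\Phi^*$, there is a basis of $V$ in which $A^*$ is irreducible lower-tridiagonal with diagonal $\{\th^*_i\}_{i=0}^d$ and ones on the subdiagonal, while $A$ is upper-tridiagonal with diagonal $\{\th_i\}_{i=0}^d$ and some superdiagonal sequence $\{\vphi^*_i\}_{i=1}^d$; by definition this last is the first split sequence of $\Phi^*$. The plan is to produce such a basis from the $\Phi^*$-split decomposition
\[
U^*_i = (E_0 + E_1 + \cdots + E_i)V \, \cap \, (E^*_i + E^*_{i+1} + \cdots + E^*_d)V \qquad (0 \leq i \leq d),
\]
the analogue for $\Phi^*$ of the decomposition $U_i = (E^*_0 + \cdots + E^*_i)V \cap (E_i + \cdots + E_d)V$ used for $\Phi$. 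Working in a $U^*_i$-adapted basis $\{v^*_i\}_{i=0}^d$ and comparing with the $U_i$-adapted split basis of $\Phi$, one reads off that $A v^*_i \in \th_i v^*_i + \F v^*_{i-1}$ with the coefficient of $v^*_{i-1}$ equal, after a suitable normalization of the basis vectors, to $\vphi_i$; hence $\vphi^*_i = \vphi_i$. The second split sequence of $\Phi^*$ is then the first split sequence of $(\Phi^*)^\Downarrow$ by Definition \ref{def:splitseq}; applying the same split-basis construction to $(\Phi^*)^\Downarrow$, whose relevant decomposition is $(E_0 + \cdots + E_i)V \cap (E^*_0 + \cdots + E^*_{d-i})V$, and comparing this time with the $\Phi^\Downarrow$-split basis (whose first split sequence is $\{\phi_i\}$ by (iii)), yields $\{\phi_{d-i+1}\}_{i=1}^d$.

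Part (ii) now follows from (i) and (iii) via the identity $\Phi^\downarrow = ((\Phi^*)^\Downarrow)^*$, which is verified directly from the definitions by chasing primitive idempotents through the three operations; applying (i), then (iii), then (i) in turn transforms the parameter array of $\Phi$ into the asserted parameter array of $\Phi^\downarrow$. The main obstacle is the identification $\vphi^*_i = \vphi_i$ in part (i): this is the only step beyond bookkeeping with the definitions, and it ultimately rests on either (a) the uniqueness of the split decomposition for a given Leonard pair, used to match the $U_i$ and $U^*_i$ decompositions after an appropriate rescaling, or (b) a $*$-symmetric intrinsic characterization of $\vphi_i$ (for instance, via a trace formula expressing $\vphi_i$ as a scalar invariant built from $E^*_0$, $E^*_i$ and suitable polynomials in $A$).
\end{proofof}
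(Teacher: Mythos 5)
The paper does not actually prove this lemma: it is imported verbatim from \cite[Theorem 1.11]{T:Leonard}, so there is no internal argument to measure yours against. On its own terms, your architecture is sound. Part (iii) is indeed forced by Definition \ref{def:splitseq} together with $(\Phi^\Downarrow)^\Downarrow=\Phi$; the identity $\Phi^\downarrow=((\Phi^*)^\Downarrow)^*$ is correct and the bookkeeping that derives (ii) from (i) and (iii) checks out; and the second split sequence of $\Phi^*$ really is elementary, since the split decomposition of $(\Phi^*)^\Downarrow$ is the index-reversal of that of $\Phi^\Downarrow$, so reversing and rescaling the $\Phi^\Downarrow$-split basis of Lemma \ref{lem:parray} produces a basis of the form required for $(\Phi^*)^\Downarrow$ with superdiagonal $\{\phi_{d-i+1}\}_{i=1}^d$, and the uniqueness clause of Lemma \ref{lem:parray} finishes that piece.

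The genuine gap is exactly where you flag it: the claim that the first split sequence of $\Phi^*$ equals $\{\vphi_i\}_{i=1}^d$. The decompositions $U_i$ and $U^*_i$ are genuinely different subspaces of $V$ (the raising and lowering roles of $A-\th_iI$ and $A^*-\th^*_iI$ are interchanged between them), so no index reversal or diagonal rescaling carries one adapted basis to the other; the assertion that the coefficient of $v^*_{i-1}$ ``equals $\vphi_i$ after a suitable normalization'' is not something one reads off --- it is the theorem itself, restated. Your fallback (a) does not repair this: uniqueness of the split decomposition of a single Leonard system says nothing about how the $\Phi$-split and $\Phi^*$-split decompositions relate, and they are not rescalings of one another. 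Fallback (b) is the route that actually works, but it requires producing and proving a concrete $*$-invariant expression for $\vphi_1\vphi_2\cdots\vphi_i$ (a trace of a word in $A$, $A^*$ and the idempotents $E_0$, $E^*_0$ that is manifestly unchanged when starred and unstarred objects trade places, as in \cite{T:qRacah}); note that the closed form in Lemma \ref{lem:classify}(iii), even combined with Lemma \ref{lem:indep2}, is not manifestly self-dual and would itself presuppose part of what is being proved. Until such an identity is established, part (i) --- and with it part (ii) --- remains unproven.
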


\begin{lemma}  {\rm (See \cite[Lemmas 5.1, 6.1]{NT:affine}.) }
 \label{lem:affineparam}    \samepage
\ifDRAFT {\rm lem:affineparam}. \fi
Referring to Notation \ref{notation6},
for scalars   $\xi$, $\xi^*$, $\zeta$, $\zeta^*$ in $\F$ with $\xi \xi^* \neq 0$,
the sequence
\[
 (\xi A+ \zeta I; \{E_i\}_{i=0}^d; \xi^* A^* + \zeta^* I; \{E^*_i\}_{i=0}^d)
\]
is a Leonard system on $V$ with parameter array
\[
    (\{\xi \th_i + \zeta \}_{i=0}^d; \{\xi^* \th^*_i + \zeta^*\}_{i=0}^d; 
    \{ \xi \xi^* \vphi_i\}_{i=1}^d; \{ \xi \xi^* \phi_i\}_{i=1}^d).
\]
\end{lemma}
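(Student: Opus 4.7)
The plan is to verify Definition \ref{def:LS} directly for the transformed sequence, and then read off the parameter array by working with the explicit basis provided by Lemma \ref{lem:parray}.

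First I would check the Leonard system axioms. Since $\xi \neq 0$ and the $\th_i$ are distinct, the scalars $\xi \th_i + \zeta$ are distinct, so $\xi A + \zeta I$ is multiplicity-free with eigenspaces identical to those of $A$; hence its primitive idempotents are again $\{E_i\}_{i=0}^d$, and $\xi \th_i + \zeta$ is the eigenvalue associated with $E_i$. The dual side is symmetric. For Definition \ref{def:LS}(iv),
\[
E^*_i (\xi A + \zeta I) E^*_j = \xi\, E^*_i A E^*_j + \zeta\, \delta_{i,j} E^*_i,
\]
which for $i \neq j$ reduces to $\xi\, E^*_i A E^*_j$; since $\xi \neq 0$, this is zero exactly when $|i-j|>1$. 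Condition (v) is symmetric. This simultaneously gives the eigenvalue and dual eigenvalue sequences as claimed.

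Next I would determine the first split sequence. Apply Lemma \ref{lem:parray} to $\Phi$ to obtain a basis $\{v_i\}_{i=0}^d$ in which $A$ is the lower-bidiagonal matrix with subdiagonal entries $1$ and diagonal entries $\{\th_i\}_{i=0}^d$, and $A^*$ is the upper-bidiagonal matrix with superdiagonal entries $\{\vphi_i\}_{i=1}^d$ and diagonal entries $\{\th^*_i\}_{i=0}^d$. In this basis, $\xi A + \zeta I$ has subdiagonal entries $\xi$ and diagonal entries $\{\xi \th_i + \zeta\}_{i=0}^d$, while $\xi^* A^* + \zeta^* I$ has superdiagonal entries $\{\xi^* \vphi_i\}_{i=1}^d$ and diagonal entries $\{\xi^* \th^*_i + \zeta^*\}_{i=0}^d$. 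Rescale the basis by $w_i = \xi^i v_i$; by the transformation rule underlying Lemma \ref{lem:trid5}, the subdiagonal entries of $\xi A + \zeta I$ become $1$, the diagonal entries are unchanged, and the superdiagonal entries of $\xi^* A^* + \zeta^* I$ become $\xi \xi^* \vphi_i$. By the uniqueness in Lemma \ref{lem:parray}, the first split sequence of the transformed Leonard system is $\{\xi \xi^* \vphi_i\}_{i=1}^d$.

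Finally, I would obtain the second split sequence by invoking Definition \ref{def:splitseq} and the fact that the $\Downarrow$-operation commutes with the affine transformation. Indeed, $\Downarrow$ leaves $A$, $A^*$, and $\{E^*_i\}_{i=0}^d$ fixed while reversing $\{E_i\}_{i=0}^d$, so the transformed version of $\Phi^\Downarrow$ coincides with the $\Downarrow$ of the transformed Leonard system. By Lemma \ref{lem:parrayrelative}(iii), the first split sequence of $\Phi^\Downarrow$ is $\{\phi_i\}_{i=1}^d$; applying the preceding paragraph to $\Phi^\Downarrow$ in place of $\Phi$, its transformed first split sequence is $\{\xi \xi^* \phi_i\}_{i=1}^d$, which by Definition \ref{def:splitseq} is the second split sequence of the transformed Leonard system.

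The steps are all routine once the basis from Lemma \ref{lem:parray} is in place; the only mild obstacle is keeping the bookkeeping of the diagonal rescaling $w_i = \xi^i v_i$ consistent across both $A$ and $A^*$, and verifying that $\Downarrow$ commutes with the affine transformation so that the second split sequence argument reduces cleanly to the first.
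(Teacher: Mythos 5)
Your argument is correct. Note that the paper itself offers no proof of this lemma: it is quoted from \cite[Lemmas 5.1, 6.1]{NT:affine}, so there is nothing internal to compare against. Your verification is a sound self-contained substitute. The axiom check is right: $\xi A+\zeta I$ has the same eigenspaces (hence the same primitive idempotents, in the same order) as $A$, with eigenvalue $\xi\th_i+\zeta$ on $E_iV$, and for $i\neq j$ the identity $E^*_i(\xi A+\zeta I)E^*_j=\xi\,E^*_iAE^*_j$ together with $\xi\neq 0$ preserves conditions (iv), (v) of Definition \ref{def:LS}; this also pins down the eigenvalue and dual eigenvalue sequences. The split-sequence computation is the one point requiring care, and you handle it correctly: starting from the basis of Lemma \ref{lem:parray}, the rescaling $w_i=\xi^i v_i$ (a diagonal conjugation, so the entries transform as in Lemma \ref{lem:trid5}) restores the subdiagonal of $\xi A+\zeta I$ to all $1$'s while sending the superdiagonal of $\xi^*A^*+\zeta^*I$ to $\xi\xi^*\vphi_i$, and the uniqueness clause of Lemma \ref{lem:parray} then identifies $\{\xi\xi^*\vphi_i\}_{i=1}^d$ as the first split sequence. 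Finally, the observation that $\Downarrow$ only permutes the idempotents $\{E_i\}$ and therefore commutes with the affine substitution is exactly what is needed to transfer the same computation to $\Phi^\Downarrow$ and obtain the second split sequence via Definition \ref{def:splitseq} and Lemma \ref{lem:parrayrelative}(iii). No gaps.
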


\begin{defi}    {\rm (See \cite[Definition 7.1]{T:qRacah}.) }
\label{def:ai}    \samepage
\ifDRAFT {\rm def:ai}. \fi
Referring to Notation \ref{notation6},
define
\begin{align*}
  a_i &= \text{\rm tr}(A E^*_i)  &&   (0 \leq i \leq d).
\end{align*}
\end{defi}

\begin{lemma}    \label{lem:aithi}    \samepage
\ifDRAFT {\rm lem:aithi}. \fi
Referring to Notation \ref{notation6},
$\sum_{i=0}^d a_i = \sum_{i=0}^d \th_i$.
\end{lemma}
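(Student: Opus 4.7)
The plan is to use the linearity and cyclic invariance of the trace, together with the resolution of the identity provided by the primitive idempotents of $A^*$. First I would observe that the primitive idempotents of a multiplicity-free element satisfy $\sum_{i=0}^d E^*_i = I$ (this is property (ii) in the list of basic properties of primitive idempotents given in Section~\ref{sec:pre}).

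Next, using the $\F$-linearity of the trace map, I would compute
\[
  \sum_{i=0}^d a_i \;=\; \sum_{i=0}^d \text{\rm tr}(A E^*_i) \;=\; \text{\rm tr}\!\left( A \sum_{i=0}^d E^*_i \right) \;=\; \text{\rm tr}(A I) \;=\; \text{\rm tr}(A).
\]
Finally, I would invoke Lemma \ref{lem:mfree}, which says that $A$ is multiplicity-free; hence the eigenvalues $\{\th_i\}_{i=0}^d$ of $A$ (in any order) are mutually distinct and each occurs with algebraic multiplicity one, so $\text{\rm tr}(A) = \sum_{i=0}^d \th_i$. Combining the two displayed equalities gives the claim.

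There is no substantive obstacle here; the result is essentially a one-line consequence of the resolution of the identity for $A^*$ and the trace description of the eigenvalues of $A$. The only minor point to check carefully is that the trace of $A$ equals the sum of its eigenvalues counted with multiplicity, which is immediate from the expansion $A = \sum_{i=0}^d \th_i E_i$ together with $\text{\rm tr}(E_i) = 1$ for $0 \leq i \leq d$ (property (vii) in Section~\ref{sec:pre}).
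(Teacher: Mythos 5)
Your proposal is correct and follows the same route as the paper's proof: use $\sum_{i=0}^d E^*_i = I$ and linearity of the trace to get $\sum_{i=0}^d a_i = \text{\rm tr}(A)$, then identify $\text{\rm tr}(A)$ with $\sum_{i=0}^d \th_i$. The paper leaves that last identification as "the result follows," and your justification via $A = \sum_{i=0}^d \th_i E_i$ and $\text{\rm tr}(E_i)=1$ is exactly the intended one.
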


\begin{proof}
Using $\sum_{i=0}^d E^*_i = I$ we find that $\sum_{i=0}^d a_i = \text{\rm tr}(A)$.
The result follows.
\end{proof}

\begin{lemma}    \label{lem:aiD}     \samepage
\ifDRAFT {\rm lem:aiD}. \fi
Referring to Notation \ref{notation6},
the following hold: for $0 \leq i \leq d$:
\begin{itemize}
\item[\rm (i)]
$a^\downarrow_i = a_{d-i}$;
\item[\rm (ii)]
$a^\Downarrow_i = a_i$.
\end{itemize}
\end{lemma}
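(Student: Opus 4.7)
The plan is to apply Definition \ref{def:ai} to the Leonard systems $\Phi^\downarrow$ and $\Phi^\Downarrow$ and then read off the result from the explicit descriptions of these Leonard systems given just after Lemma \ref{lem:LSequiv}. Recall that
\begin{align*}
\Phi^\downarrow &= (A; \{E_i\}_{i=0}^d; A^*; \{E^*_{d-i}\}_{i=0}^d), &
\Phi^\Downarrow &= (A; \{E_{d-i}\}_{i=0}^d; A^*; \{E^*_i\}_{i=0}^d).
\end{align*}
So in passing from $\Phi$ to $\Phi^\downarrow$ the element $A$ is preserved while $E^*_i$ is replaced by $E^*_{d-i}$, and in passing from $\Phi$ to $\Phi^\Downarrow$ both $A$ and $E^*_i$ are preserved.

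First I would write out part (i). By Definition \ref{def:ai} applied to $\Phi^\downarrow$, for $0 \leq i \leq d$ we have $a^\downarrow_i = \text{\rm tr}(A^\downarrow E^{*\downarrow}_i)$. Since $A^\downarrow = A$ and $E^{*\downarrow}_i = E^*_{d-i}$, this becomes $a^\downarrow_i = \text{\rm tr}(A E^*_{d-i}) = a_{d-i}$, which is (i). Next I would write out part (ii) in the same way: applying Definition \ref{def:ai} to $\Phi^\Downarrow$ and using $A^\Downarrow = A$ together with $E^{*\Downarrow}_i = E^*_i$ yields $a^\Downarrow_i = \text{\rm tr}(A E^*_i) = a_i$.

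There is no real obstacle here; the lemma is a direct bookkeeping consequence of how the primitive idempotents of $A^*$ get reindexed (or not) under the $\downarrow$ and $\Downarrow$ operations, together with the fact that $A$ itself is unchanged. The only thing to verify is that the notational convention $f^g$ (for $g \in \{*,\downarrow,\Downarrow\}$) introduced before Lemma \ref{lem:LSequiv} is being applied consistently to the scalar quantities $a_i$ from Definition \ref{def:ai}, which it is by construction.
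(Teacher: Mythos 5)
Your proposal is correct and matches the paper's argument: the paper's proof is simply ``By Definition \ref{def:ai},'' and your write-up is exactly the unpacking of that, tracing $a_i = \text{\rm tr}(A E^*_i)$ through the reindexing of the $E^*_i$ (reversed under $\downarrow$, unchanged under $\Downarrow$) while $A$ is fixed. Nothing is missing.
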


\begin{proof}
By Definition \ref{def:ai}.
\end{proof}

\begin{lemma}   {\rm (See \cite[Lemma 10.2]{T:qRacah}.) }
\label{lem:standardbasis}   \samepage
\ifDRAFT {\rm lem:standardbasis}. \fi
Referring to Notation \ref{notation6},
for $0 \neq u \in E_0 V$ the vectors $\{E^*_i v\}_{i=0}^d$ form a basis of $V$.
\end{lemma}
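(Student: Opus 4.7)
The plan is to reduce to the split basis associated with $\Phi^\Downarrow$ and then use a Vandermonde argument. By Lemma \ref{lem:parrayrelative}(iii), $\Phi^\Downarrow$ has parameter array $(\{\th_{d-i}\}_{i=0}^d; \{\th^*_i\}_{i=0}^d; \{\phi_i\}_{i=1}^d; \{\vphi_i\}_{i=1}^d)$. Applying Lemma \ref{lem:parray} to $\Phi^\Downarrow$, I obtain a basis $\{w_i\}_{i=0}^d$ of $V$ with respect to which $A$ is the lower-bidiagonal matrix with diagonal entries $\th_{d-i}$ and $A^*$ is the upper-bidiagonal matrix with diagonal entries $\th^*_i$ and superdiagonal entries $\phi_1,\ldots,\phi_d$. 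In particular $A w_d = \th_0 w_d$, so $w_d \in E_0 V$. Since $A$ is multiplicity-free, $E_0 V$ has dimension one, so there exists a nonzero scalar $c \in \F$ with $u = c w_d$.

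Next I would show that $\{(A^*)^k w_d\}_{k=0}^d$ is a basis of $V$. Reading off the action of $A^*$ on $\{w_i\}$, one has $A^* w_j = \phi_j w_{j-1} + \th^*_j w_j$ for $1 \leq j \leq d$ and $A^* w_0 = \th^*_0 w_0$. A straightforward induction on $k$ then gives
\begin{align*}
(A^*)^k w_d &= \phi_d \phi_{d-1} \cdots \phi_{d-k+1}\, w_{d-k} + \sum_{j=d-k+1}^{d} c_{k,j}\, w_j
\qquad (0 \leq k \leq d)
\end{align*}
for certain scalars $c_{k,j} \in \F$. Since $\phi_i \neq 0$ for $1 \leq i \leq d$ by Lemma \ref{lem:classify}(ii), the transition matrix from $\{(A^*)^k w_d\}_{k=0}^d$ to $\{w_d, w_{d-1}, \ldots, w_0\}$ is triangular with nonzero diagonal entries. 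Hence $\{(A^*)^k w_d\}_{k=0}^d$ is linearly independent, and so a basis of $V$.

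To pass from the power basis to the idempotent basis, recall that $(A^*)^k = \sum_{i=0}^d (\th^*_i)^k E^*_i$, so $(A^*)^k w_d = \sum_{i=0}^d (\th^*_i)^k E^*_i w_d$ for $0 \leq k \leq d$. The change-of-basis matrix $((\th^*_i)^k)_{i,k}$ is a Vandermonde matrix in the mutually distinct scalars $\{\th^*_i\}_{i=0}^d$, hence invertible. Therefore $\{E^*_i w_d\}_{i=0}^d$ and $\{(A^*)^k w_d\}_{k=0}^d$ span the same subspace, namely all of $V$, and so $\{E^*_i w_d\}_{i=0}^d$ is a basis of $V$. Scaling by $c$ yields that $\{E^*_i u\}_{i=0}^d$ is a basis of $V$, as desired.

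The only nontrivial step is the induction establishing the triangular expansion in the second paragraph, but it is entirely mechanical once the split-basis form of $A^*$ from Lemma \ref{lem:parray} is in hand; every other ingredient (existence of the split basis, $E_0 V$ one-dimensional, $\phi_i \neq 0$, distinctness of $\th^*_i$) is quoted directly from earlier results.
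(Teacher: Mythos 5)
Your argument is correct, and every step checks out: the split basis for $\Phi^\Downarrow$ supplied by Lemmas \ref{lem:parrayrelative}(iii) and \ref{lem:parray} does give $A w_d = \th_0 w_d$, the triangular expansion of $(A^*)^k w_d$ with leading coefficient $\phi_d\phi_{d-1}\cdots\phi_{d-k+1}\neq 0$ is right, and the Vandermonde step legitimately transfers independence from $\{(A^*)^k w_d\}_{k=0}^d$ to $\{E^*_i w_d\}_{i=0}^d$. (You also correctly read the statement's ``$v$'' as the typo it is for ``$u$''.) Note, however, that the paper offers no proof of this lemma at all --- it is quoted from \cite[Lemma 10.2]{T:qRacah} --- so there is nothing in-text to compare against. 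It is still worth pointing out that a shorter route exists and is closer in spirit to the cited source: since $A^*$ is multiplicity-free (Lemma \ref{lem:mfree}), $V=\bigoplus_{i=0}^d E^*_iV$ with each summand one-dimensional, so it suffices to show $E^*_i u\neq 0$ for each $i$; this follows from $\text{\rm tr}(E^*_iE_0)\neq 0$ (Lemma \ref{lem:bicipre}), because $E_0V=\F u$ forces $E^*_iE_0=0$ whenever $E^*_iu=0$. That argument avoids the split basis and the induction entirely, at the cost of invoking the nonvanishing of $\text{\rm tr}(E^*_iE_0)$; your version is longer but uses only the split-basis machinery already set up in Lemma \ref{lem:parray} and the nonvanishing of the $\phi_i$ from Lemma \ref{lem:classify}(ii).
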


Our next goal is to describe the action of $A$, $A^*$ on the above basis.

\begin{lemma}   {\rm (See \cite[Lemma 9.2]{T:qRacah}.) }
\label{lem:bicipre}    \samepage
\ifDRAFT {\rm lem:bicipre}. \fi
Referring to Notation  \ref{notation6},
$\text{\rm tr}(E^*_i E_0) \neq 0$ for $0 \leq i \leq d$.
\end{lemma}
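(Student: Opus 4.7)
The plan is to identify each $\mathrm{tr}(E^*_i E_0)$ with a scalar $\alpha_i$ attached to a certain distinguished basis of $V$, and then to compute the $\alpha_i$ in closed form via a three-term recurrence. Fix $0\neq v\in E_0V$ and set $u_i := E^*_iv$ for $0\leq i\leq d$. By Lemma \ref{lem:standardbasis}, $\{u_i\}_{i=0}^d$ is a basis of $V$, and since $\sum_i E^*_i = I$ we have $v=\sum_{i=0}^d u_i$. Because $E_0$ has rank one with image $\mathrm{span}(v)$, there exist unique scalars $\alpha_i\in\F$ with $E_0 u_i = \alpha_i v$, and a routine rank-one trace computation gives $\mathrm{tr}(E^*_iE_0)=\alpha_i$. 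The task reduces to showing $\alpha_i\neq 0$ for every $i$.

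Next I would gather two facts. Applying $E_0$ to $v=\sum_i u_i$ and using $E_0v=v$ gives the normalization $\sum_{i=0}^d \alpha_i=1$. Also, since $A$ acts on $\{u_i\}$ as an irreducible tridiagonal matrix (part of the Leonard pair structure, cf. Definition \ref{def:LS}(iv)), we may write $A u_i = c_i u_{i-1} + a_i u_i + b_i u_{i+1}$ with $c_i\neq 0$ for $1\leq i\leq d$, $b_i\neq 0$ for $0\leq i\leq d-1$, and the boundary conventions $c_0=b_d=0$, $u_{-1}=u_{d+1}=0$. Applying $E_0$ to this identity and using $E_0 A=\th_0 E_0$ yields the three-term recurrence
\[
c_i\alpha_{i-1} + (a_i-\th_0)\alpha_i + b_i\alpha_{i+1}=0 \qquad (0\leq i\leq d).
\]

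To solve the recurrence explicitly, I would exploit that $v$ has coordinate vector $(1,1,\ldots,1)^T$ in the basis $\{u_i\}$, so $Av=\th_0 v$ is equivalent to the row-sum identities $b_{i-1}+a_i+c_{i+1}=\th_0$ for $0\leq i\leq d$. Substituting $a_i-\th_0 = -b_{i-1}-c_{i+1}$ into the recurrence and inducting from $i=0$ (which forces $\alpha_1=(c_1/b_0)\alpha_0$) produces the closed form
\[
\alpha_j = \frac{c_1c_2\cdots c_j}{b_0 b_1\cdots b_{j-1}}\,\alpha_0 \qquad (1\leq j\leq d).
\]
Every $\alpha_j$ is therefore a nonzero scalar multiple of $\alpha_0$. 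If $\alpha_0$ vanished, then all $\alpha_j$ would vanish, contradicting $\sum_i\alpha_i=1$; hence $\alpha_0\neq 0$, and so $\alpha_j\neq 0$ for all $j$. The main obstacle is securing this closed form: the three-term recurrence alone is underdetermined, and what makes it tractable is the extra row-sum identity coming from $v$ being represented by the all-ones vector in the basis $\{u_i\}$. Once these two ingredients are combined, the telescoping product and the consistency of the boundary equation at $i=d$ follow automatically.
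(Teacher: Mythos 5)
Your proof is correct. There is no internal proof to compare it with: the paper states this lemma with a citation to \cite[Lemma 9.2]{T:qRacah} and gives no argument of its own, so your write-up functions as a self-contained replacement. Every step checks: $\mathrm{tr}(E^*_iE_0)=\alpha_i$ because $E^*_iE_0u_j=\alpha_j u_i$, whose $(j,j)$-entry in the basis $\{u_j\}_{j=0}^d$ is $\alpha_j\delta_{ij}$; the irreducible tridiagonal shape of $A$ on this basis is exactly Definition \ref{def:LS}(iv) read against the one-dimensional spaces $E^*_jV=\mathrm{span}(u_j)$; and after substituting the row-sum identity the recurrence becomes $b_i\alpha_{i+1}-c_{i+1}\alpha_i=b_{i-1}\alpha_i-c_i\alpha_{i-1}$, which together with the boundary equation at $i=0$ forces $b_i\alpha_{i+1}=c_{i+1}\alpha_i$ for $0\le i\le d-1$ and hence the telescoping product; the equation at $i=d$ is then automatically satisfied, as you note. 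Two remarks. First, you were right to introduce $b_i,c_i$ directly as the entries of the matrix representing $A$ in the basis $\{u_i\}_{i=0}^d$ rather than to invoke Definition \ref{def:bici}, whose denominators are the very traces being proved nonzero; that choice avoids a circularity (your $c_i$ and $b_i$ are the paper's $b_{i-1}$ and $c_{i+1}$ from Lemma \ref{lem:aibici0}, a harmless relabeling). Second, your closed form reads, in the paper's notation, $\mathrm{tr}(E^*_jE_0)=\frac{b_0b_1\cdots b_{j-1}}{c_1c_2\cdots c_j}\,\mathrm{tr}(E^*_0E_0)$, the standard expression of these traces in terms of the intersection numbers, so the argument in fact proves slightly more than the stated nonvanishing.
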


\begin{defi}     {\rm (See \cite[Lemma 11.5]{T:qRacah}.) }
\label{def:bici}     \samepage
\ifDRAFT {\rm def:bici}. \fi
Referring to Notation \ref{notation6},
define
\begin{align*}
  b_i &= \frac{ \text{\rm tr}(E^*_i A E^*_{i+1} E_0) } {\text{\rm tr}(E^*_i E_0) }  && (0 \leq i \leq d-1),
\\
 c_i &= \frac{ \text{\rm tr}(E^*_i A E^*_{i-1} E_0) } {\text{\rm tr}(E^*_i E_0) }  && (1 \leq i \leq d).
\end{align*}
\end{defi}

\begin{lemma}   {\rm (See \cite[Lemma 10.2, Definition 11.1]{T:qRacah}.) }
\label{lem:aibici0}    \samepage
\ifDRAFT {\rm lem:aibici0}. \fi
Referring to Notation \ref{notation6},
with respect to the basis in Lemma \ref{lem:standardbasis}, 
the matrices representing $A$ and $A^*$ are
\begin{align*}
A & :
 \begin{pmatrix}
  a_0 & b_0 & & & & \text{\bf 0}  \\
  c_1 & a_1 & b_1 \\
       & c_2 & \cdot & \cdot  \\
       &       &  \cdot & \cdot & \cdot \\
       &        &         & \cdot & \cdot & b_{d-1}   \\
 \text{\bf 0} & & & & c_d & a_d
 \end{pmatrix},
&
A^* & :
 \text{\rm diag}(\th^*_0, \th^*_1, \ldots, \th^*_d).
\end{align*}
\end{lemma}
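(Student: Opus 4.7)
The plan is to fix $0 \neq u \in E_0 V$, work in the basis $\{E^*_i u\}_{i=0}^d$ guaranteed by Lemma \ref{lem:standardbasis}, and identify the $(i,j)$-entry of each matrix directly. The claim for $A^*$ is essentially immediate: since $A^* = \sum_j \th^*_j E^*_j$ and the primitive idempotents are orthogonal with $E^*_j E^*_i = \delta_{i,j} E^*_i$, one has $A^* (E^*_i u) = \th^*_i (E^*_i u)$, so $A^*$ is represented by $\text{\rm diag}(\th^*_0, \ldots, \th^*_d)$. The work is all in identifying the entries of $A$.

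For $A$, I would let $\alpha_{i,j}$ denote the desired $(i,j)$-entry, so that $A E^*_j u = \sum_i \alpha_{i,j} E^*_i u$ and consequently $E^*_i A E^*_j u = \alpha_{i,j} E^*_i u$. The key trick is to extract $\alpha_{i,j}$ as a trace against $E_0$. Since $E_0$ has rank one with $\text{\rm tr}(E_0)=1$, there is a unique linear functional $f$ on $V$ with $E_0 x = f(x) u$ for all $x$ and $f(u)=1$; for any $X \in \End$ this yields $\text{\rm tr}(X E_0) = f(X u)$. Applying this first to $X = E^*_i A E^*_j$ and then to $X = E^*_i$ and dividing, I obtain
\[
  \alpha_{i,j} = \frac{\text{\rm tr}(E^*_i A E^*_j E_0)}{\text{\rm tr}(E^*_i E_0)} \qquad (0 \le i,j \le d),
\]
with denominator nonzero by Lemma \ref{lem:bicipre}. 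The tridiagonal shape follows at once from Definition \ref{def:LS}(iv), which forces the numerator to vanish when $|i-j|>1$, and a direct comparison with Definition \ref{def:bici} gives $\alpha_{i,i+1} = b_i$ and $\alpha_{i,i-1} = c_i$.

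The only delicate step is showing $\alpha_{i,i} = a_i$, and this is the place where one must use that $A^*$ is multiplicity-free rather than just the formula above. The idea is that $E^*_i$ has rank one, so the operator $E^*_i A E^*_i$ sends $V$ into the one-dimensional subspace $E^*_i V$ and annihilates the kernel of $E^*_i$; hence $E^*_i A E^*_i = \lambda_i E^*_i$ for some $\lambda_i \in \F$. Taking traces on both sides, together with $\text{\rm tr}(E^*_i)=1$ and cyclicity, gives $\lambda_i = \text{\rm tr}(E^*_i A E^*_i) = \text{\rm tr}(A E^*_i) = a_i$ by Definition \ref{def:ai}. Multiplying by $E_0$ and tracing then delivers $\text{\rm tr}(E^*_i A E^*_i E_0) = a_i \, \text{\rm tr}(E^*_i E_0)$, so $\alpha_{i,i} = a_i$, completing the description of $A$.
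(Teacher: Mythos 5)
Your argument is correct. Note first that the paper does not prove this lemma at all: it is quoted from \cite[Lemma 10.2, Definition 11.1]{T:qRacah}, so there is no in-paper proof to compare against. Your reconstruction is sound and self-contained: the identity $\text{\rm tr}(XE_0)=f(Xu)$ for the functional $f$ with $E_0x=f(x)u$, $f(u)=1$ (valid because $E_0$ has rank one and fixes $u$) correctly converts the trace ratios in Definition \ref{def:bici} into the matrix entries $\alpha_{i,j}$ with respect to $\{E^*_iu\}_{i=0}^d$, the vanishing for $|i-j|>1$ is exactly Definition \ref{def:LS}(iv), and the diagonal computation $E^*_iAE^*_i=a_iE^*_i$ followed by $\text{\rm tr}(E^*_iAE^*_i)=\text{\rm tr}(AE^*_i)=a_i$ (using $\text{\rm rank}(E^*_i)=1$, $\text{\rm tr}(E^*_i)=1$ from the preliminaries and cyclicity of the trace) correctly identifies $\alpha_{i,i}$ with $a_i$ of Definition \ref{def:ai}. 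The treatment of $A^*$ via $A^*E^*_iu=\th^*_iE^*_iu$ is immediate, as you say. This is essentially the argument carried out in the cited reference, so nothing further is needed.
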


\begin{lemma}    {\rm (See \cite[Lemma 11.2]{T:qRacah}.) }
\label{lem:aibici}    \samepage
\ifDRAFT {\rm lem:aibici}. \fi
Referring to Notation \ref{notation6}, the following  {\rm (i)--(iii)} hold:
\begin{itemize}
\item[\rm (i)]
$b_i \neq 0 \quad (0 \leq i \leq d-1)$;
\item[\rm (ii)]
$c_i \neq 0 \quad (1 \leq i \leq d)$;
\item[\rm (iii)]
$c_i + a_i + b_i = \th_0 \quad (0 \leq i \leq d)$, where $c_0 = 0$ and $b_d = 0$.
\end{itemize}
\end{lemma}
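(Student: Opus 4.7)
The plan is to combine the tridiagonal matrix representation of $A$ from Lemma \ref{lem:aibici0} with Definition \ref{def:LS}(iv) and the multiplicity-freeness of $A^*$. Throughout I would fix $0 \neq v \in E_0 V$, so that $\{E^*_i v\}_{i=0}^d$ is the basis of $V$ furnished by Lemma \ref{lem:standardbasis}, and work in the matrix representation from Lemma \ref{lem:aibici0}.

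First I would establish (iii). Since $v \in E_0 V$, we have $Av = \th_0 v$. Using $I = \sum_j E^*_j$, write $v = \sum_{j=0}^d E^*_j v$ and apply $A$. The right-hand side contributes $\th_0$ as the coefficient of $E^*_i v$ for every $i$. On the left, reading off column $j$ of the matrix in Lemma \ref{lem:aibici0} gives $A E^*_j v = b_{j-1} E^*_{j-1} v + a_j E^*_j v + c_{j+1} E^*_{j+1} v$, with the boundary conventions $b_{-1} = 0$ and $c_{d+1} = 0$. Summing over $j$, the coefficient of $E^*_i v$ on the left is precisely the $i$-th row sum $c_i + a_i + b_i$ (with $c_0 = 0$, $b_d = 0$). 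Equating the two gives $c_i + a_i + b_i = \th_0$.

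Next I would handle (i) and (ii). From the same matrix representation,
\begin{align*}
E^*_i A E^*_{i+1} v &= b_i E^*_i v & &(0 \leq i \leq d-1), & E^*_i A E^*_{i-1} v &= c_i E^*_i v & &(1 \leq i \leq d).
\end{align*}
By Lemma \ref{lem:mfree}, $A^*$ is multiplicity-free, so each primitive idempotent $E^*_j$ has rank one; hence $E^*_j V$ is one-dimensional and, since $E^*_j v$ is a nonzero basis element, it spans $E^*_j V$. Consequently, for every $x \in V$ the vector $E^*_j x$ is a scalar multiple of $E^*_j v$, and so $E^*_i A E^*_j x$ is the same scalar multiple of $E^*_i A E^*_j v$. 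If some $b_i = 0$, then $E^*_i A E^*_{i+1} v = 0$, which forces $E^*_i A E^*_{i+1} = 0$, contradicting Definition \ref{def:LS}(iv); the argument for $c_i \neq 0$ is identical, using $E^*_i A E^*_{i-1}$ in place of $E^*_i A E^*_{i+1}$.

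The only delicate step, which I would call the main obstacle, is the passage from $E^*_i A E^*_j v = 0$ to $E^*_i A E^*_j = 0$, but this is immediate from the one-dimensionality of $E^*_j V$ and the fact that $E^*_j v \neq 0$. Everything else is a direct bookkeeping exercise with the matrix in Lemma \ref{lem:aibici0}.
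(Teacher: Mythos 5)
The paper does not prove this lemma at all; it is quoted from \cite[Lemma 11.2]{T:qRacah} without argument, so there is no internal proof to compare against. Your proposal is a correct, self-contained derivation from results that are stated in the paper. Part (iii) follows exactly as you say: applying $A$ to $v=\sum_j E^*_j v$ and reading off the coefficient of $E^*_i v$ from the matrix of Lemma \ref{lem:aibici0} gives the row sum $c_i+a_i+b_i$, while $Av=\th_0 v$ gives $\th_0$, and comparing coefficients in the basis of Lemma \ref{lem:standardbasis} is legitimate. For (i) and (ii), the step you flag as delicate is handled correctly: since $A^*$ is multiplicity-free (Lemma \ref{lem:mfree}), each $E^*_j V$ is one-dimensional and is spanned by the nonzero vector $E^*_j v$, so $E^*_i A E^*_{j} v=0$ does force $E^*_i A E^*_{j}=0$, which for $|i-j|=1$ contradicts Definition \ref{def:LS}(iv). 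This matches the spirit of the source's original argument, and nothing is missing.
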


\begin{defi}     {\rm (See \cite[Definition 9.1]{Wor}.) }
\label{def:int}    \samepage
\ifDRAFT {\rm def:int}. \fi
Referring to Notation \ref{notation6},
we call the scalars $\{a_i\}_{i=0}^d$, $\{b_i\}_{i=0}^{d-1}$, $\{c_i\}_{i=1}^d$
the {\em intersection numbers} of $\Phi$.
\end{defi}

\begin{defi}    {\rm (See \cite[Definition 7.1]{T:qRacah}.) }
\label{def:xi}    \samepage
\ifDRAFT {\rm def:xi}. \fi
Referring to Notation \ref{notation6},
define
\begin{align*}
  x_i &= \text{\rm tr}(E^*_i A E^*_{i-1} A)   &&  (1 \leq i \leq d).
\end{align*}
\end{defi}

\begin{lemma}    \label{lem:xiD}    \samepage
\ifDRAFT {\rm lem:xiD}. \fi
Referring to Notation \ref{notation6},
the following hold for $1 \leq i \leq d$:
\begin{itemize}
\item[\rm (i)]
$x^\downarrow_i = x_{d-i+1}$;
\item[\rm (ii)]
$x^\Downarrow_i = x_i$.
\end{itemize}
\end{lemma}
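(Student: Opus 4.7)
The plan is to deduce both identities directly from the definitions of $\Phi^\downarrow$ and $\Phi^\Downarrow$ together with the cyclic invariance of trace.

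First I would recall that $\Phi^\Downarrow=(A;\{E_{d-i}\}_{i=0}^d;A^*;\{E^*_i\}_{i=0}^d)$, so the $\Downarrow$-operation leaves $A$, $A^*$, and the ordering $\{E^*_i\}_{i=0}^d$ unchanged. Hence $A^\Downarrow=A$ and $E^{*\Downarrow}_i=E^*_i$ for $0\le i\le d$, and part (ii) is immediate from Definition \ref{def:xi}:
\begin{equation*}
  x^\Downarrow_i=\text{\rm tr}(E^{*\Downarrow}_i A^\Downarrow E^{*\Downarrow}_{i-1} A^\Downarrow)
     =\text{\rm tr}(E^*_i A E^*_{i-1} A)=x_i.
\end{equation*}

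For part (i), I would use that $\Phi^\downarrow=(A;\{E_i\}_{i=0}^d;A^*;\{E^*_{d-i}\}_{i=0}^d)$, so $A^\downarrow=A$ while $E^{*\downarrow}_i=E^*_{d-i}$. Substituting into Definition \ref{def:xi},
\begin{equation*}
  x^\downarrow_i=\text{\rm tr}(E^*_{d-i} A E^*_{d-i+1} A).
\end{equation*}
Now I would invoke the cyclic invariance $\text{\rm tr}(XYZW)=\text{\rm tr}(ZWXY)$ with $X=E^*_{d-i}$, $Y=A$, $Z=E^*_{d-i+1}$, $W=A$, which yields
\begin{equation*}
  \text{\rm tr}(E^*_{d-i} A E^*_{d-i+1} A)=\text{\rm tr}(E^*_{d-i+1} A E^*_{d-i} A)=x_{d-i+1},
\end{equation*}
giving (i).

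There is essentially no obstacle here: both claims reduce to carefully reading off $A^g$ and $E^{*g}_i$ from Lemma \ref{lem:LSequiv} and the definitions of $\downarrow$ and $\Downarrow$, and then applying a single cyclic rotation of the four-factor trace in the case of (i). The only thing to be careful about is the index bookkeeping, namely that shifting $i\mapsto d-i$ on the subscripts turns $i-1$ into $d-i+1$, so the pair $(E^*_{d-i},E^*_{d-i+1})$ appears rather than $(E^*_{d-i},E^*_{d-i-1})$, which is exactly what is needed to match $x_{d-i+1}$ after the cyclic shift.
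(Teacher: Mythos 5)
Your proposal is correct and is exactly the argument the paper compresses into its one-line proof ``By Definition \ref{def:xi}'': reading off $A^g$ and $E^{*g}_i$ for $g\in\{\downarrow,\Downarrow\}$ and, for (i), applying the cyclic invariance of the trace to match $\text{\rm tr}(E^*_{d-i}AE^*_{d-i+1}A)$ with $x_{d-i+1}$. The index bookkeeping ($E^{*\downarrow}_{i-1}=E^*_{d-i+1}$) is handled correctly, so nothing is missing.
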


\begin{proof}
By Definition \ref{def:xi}.
\end{proof}

\begin{lemma}   {\rm (See \cite[Lemma 7.2]{T:qRacah}.) }
 \label{lem:ai0}    \samepage
\ifDRAFT {\rm lem:ai0}. \fi
Referring to Notation \ref{notation6},
let $M \in \Matd$ represent $A$ with respect to a basis of $V$ that
satisfies Definition \ref{def:LP}{\rm (i)}.
Then 
\begin{align*}
M_{i,i} &= a_i \qquad (0 \leq i \leq d),
&
M_{i, i-1} M_{i-1,i} &= x_i \qquad (1 \leq i \leq d).
\end{align*}
\end{lemma}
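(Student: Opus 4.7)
The plan is to carry out the computation directly in the basis provided by the hypothesis. Let $\{v_j\}_{j=0}^d$ denote a basis of $V$ with respect to which $A$ is represented by the irreducible tridiagonal matrix $M$ and $A^*$ is represented by the diagonal matrix $\text{diag}(\th^*_0, \th^*_1, \ldots, \th^*_d)$. Since the $\{\th^*_j\}_{j=0}^d$ are mutually distinct, each $v_j$ is an eigenvector for $A^*$ with eigenvalue $\th^*_j$; consequently the primitive idempotent $E^*_i$ acts on the basis by $E^*_i v_j = \delta_{i,j} v_j$. This is the structural fact that will turn both trace computations into one-line bookkeeping exercises.

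For the first claim, I would compute the action of $A E^*_i$ on the basis: using $A v_j = \sum_k M_{k,j} v_k$, we get $A E^*_i v_j = \delta_{i,j} \sum_k M_{k,i} v_k$. Reading off the matrix representing $A E^*_i$ with respect to $\{v_j\}_{j=0}^d$ and taking its trace yields $\text{tr}(A E^*_i) = M_{i,i}$, so by Definition \ref{def:ai} we obtain $a_i = M_{i,i}$.

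For the second claim I would chase the action of $E^*_i A E^*_{i-1} A$ through the basis step by step:
\begin{align*}
A v_j &= \sum_k M_{k,j} v_k, \\
E^*_{i-1} A v_j &= M_{i-1,j} v_{i-1}, \\
A E^*_{i-1} A v_j &= M_{i-1,j} \sum_k M_{k,i-1} v_k, \\
E^*_i A E^*_{i-1} A v_j &= M_{i-1,j}\, M_{i,i-1}\, v_i.
\end{align*}
Thus the matrix representing $E^*_i A E^*_{i-1} A$ has $(k,j)$-entry $\delta_{k,i} M_{i,i-1} M_{i-1,j}$, so its trace is $M_{i,i-1} M_{i-1,i}$, and Definition \ref{def:xi} gives $x_i = M_{i,i-1} M_{i-1,i}$.

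There is no real obstacle here; the only point that requires attention is the alignment between the diagonal entries of the matrix representing $A^*$ and the labeling of the primitive idempotents $\{E^*_i\}_{i=0}^d$, which is exactly what lets us assert $E^*_i v_j = \delta_{i,j} v_j$. Once that identification is made, both identities fall out immediately from expanding the relevant products in coordinates and taking the trace.
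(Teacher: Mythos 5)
Your computation is correct, and it is the standard argument: the paper itself does not prove this lemma but merely cites \cite[Lemma 7.2]{T:qRacah}, where essentially the same coordinate calculation is carried out. Both trace evaluations check out: $\text{\rm tr}(AE^*_i)=M_{i,i}$ and $\text{\rm tr}(E^*_iAE^*_{i-1}A)=M_{i,i-1}M_{i-1,i}$ follow exactly as you write once $E^*_iv_j=\delta_{i,j}v_j$ is in hand. The one point you rightly flag deserves emphasis: Definition \ref{def:LP}(i) by itself only forces $A^*$ to be diagonal with the $\th^*_j$ appearing in \emph{some} standard order, which could be $\th^*_0,\dots,\th^*_d$ or the reversal $\th^*_d,\dots,\th^*_0$; in the reversed case one would instead obtain $M_{i,i}=a_{d-i}$ and $M_{i,i-1}M_{i-1,i}=x_{d-i+1}$ (consistent with Lemmas \ref{lem:aiD} and \ref{lem:xiD}). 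So the lemma is implicitly using the convention that the chosen basis induces the ordering $\{E^*_i\}_{i=0}^d$ of $\Phi$, i.e.\ $A^*v_j=\th^*_jv_j$, which is exactly the assumption you make at the outset. With that convention stated, your proof is complete.
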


\begin{corollary}     \label{cor:aixi0}     \samepage
\ifDRAFT {\rm cor:aixi0}. \fi
Referring to Lemma \ref{lem:ai0},
assume that $M$ is normalized.
Then
\[
M = 
 \begin{pmatrix}
  a_0 & x_1 & & & & \text{\bf 0}  \\
  1 & a_1 & x_2 \\
       & 1 & \cdot & \cdot  \\
       &       &  \cdot & \cdot & \cdot \\
       &        &         & \cdot & \cdot & x_d   \\
 \text{\bf 0} & & & & 1 & a_d
 \end{pmatrix}.
\]
\end{corollary}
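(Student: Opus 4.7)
The plan is to derive Corollary \ref{cor:aixi0} as an immediate specialization of Lemma \ref{lem:ai0} combined with Definition \ref{def:normalized}.

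First I would invoke Lemma \ref{lem:ai0}, which already gives that $M_{i,i} = a_i$ for $0 \leq i \leq d$ and $M_{i,i-1} M_{i-1,i} = x_i$ for $1 \leq i \leq d$. Since $M$ represents $A$ with respect to a basis satisfying Definition \ref{def:LP}(i), the matrix $M$ is irreducible tridiagonal, so only the diagonal, subdiagonal, and superdiagonal entries can be nonzero.

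Next I would use the hypothesis that $M$ is normalized. By Definition \ref{def:normalized}, this means $M_{i,i-1} = 1$ for $1 \leq i \leq d$. Substituting into the product identity $M_{i,i-1} M_{i-1,i} = x_i$ gives $M_{i-1,i} = x_i$ for $1 \leq i \leq d$. Combined with the diagonal values $M_{i,i} = a_i$ from the first step and the fact that all off-tridiagonal entries vanish, this yields exactly the displayed matrix.

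There is no real obstacle here; the entire content of the corollary is a direct reading of Lemma \ref{lem:ai0} under the normalization hypothesis. The proof should amount to one or two sentences citing Lemma \ref{lem:ai0} and Definition \ref{def:normalized}.
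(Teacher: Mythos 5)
Your proposal is correct and matches the paper's own proof, which likewise notes that normalization forces $M_{i,i-1}=1$ and then reads off the remaining entries from Lemma \ref{lem:ai0}. The extra detail you supply (substituting into $M_{i,i-1}M_{i-1,i}=x_i$ to get $M_{i-1,i}=x_i$, and noting the tridiagonal shape) is exactly what the paper leaves implicit.
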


\begin{proof}
We have $M_{i,i-1}=1$ for $1 \leq i \leq d$, since $M$ is normalized.
By thins and Lemma \ref{lem:ai0}, we get the result.
\end{proof}

\begin{lemma}    {\rm (See \cite[Theorems 7.3, 7.4]{T:qRacah}.) }
\label{lem:aixi0}    \samepage
\ifDRAFT {\rm lem:aixi0}. \fi
Referring to Notation \ref{notation6},
for $0 \neq v \in E^*_0 V$ the vectors $\{E^*_i A^i v\}_{i=0}^d$ form a basis of $V$
that satisfies Definition \ref{def:LP}(i).
With respect to this basis  the matrix representing $A$ is normalized.
\end{lemma}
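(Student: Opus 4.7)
The plan is to exhibit $\{E^*_i A^i v\}_{i=0}^d$ as a diagonal rescaling of the standard basis from Lemma \ref{lem:standardbasis}, and to verify that this particular rescaling normalizes the tridiagonal representation of $A$ provided by Lemma \ref{lem:aibici0}.

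First I would establish the following triangular-vanishing auxiliary: for any $v' \in E^*_0 V$ and any $0 \le i \le d$, we have $E^*_j A^i v' = 0$ whenever $j > i$. This is proved by induction on $i$. The base case $i=0$ is immediate from $E^*_j E^*_0 = 0$ for $j \neq 0$. For the inductive step, write
\[
E^*_j A^i v' \;=\; \sum_k E^*_j A E^*_k A^{i-1} v',
\]
and invoke Definition \ref{def:LS}(iv) to restrict the sum to $k \in \{j-1,j,j+1\}$; when $j > i$, each such $k$ is $\geq i$, so the inductive hypothesis annihilates every surviving summand.

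Next, fix $0 \neq u \in E_0 V$ and let $u_i = E^*_i u$. By Lemmas \ref{lem:standardbasis} and \ref{lem:aibici0}, $\{u_i\}_{i=0}^d$ is a basis with $A u_{i-1} = b_{i-2} u_{i-2} + a_{i-1} u_{i-1} + c_i u_i$ and $c_i \neq 0$ (Lemma \ref{lem:aibici}(ii)). Applying the triangular-vanishing step with $v' = u_0 \in E^*_0 V$, I would prove inductively that $E^*_i A^i u_0 = c_1 c_2 \cdots c_i\, u_i$ for $0 \le i \le d$: expanding $E^*_i A^i u_0 = \sum_k E^*_i A E^*_k A^{i-1} u_0$ and using Definition \ref{def:LS}(iv) together with the auxiliary fact collapses the sum to the single term $k = i-1$, and then the inductive hypothesis together with $E^*_i A u_{i-1} = c_i u_i$ yields the stated formula.

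Because $E^*_0 V$ is one-dimensional, $v = \alpha u_0$ for some $\alpha \in \F$ with $\alpha \neq 0$, so $w_i := E^*_i A^i v = \alpha c_1 \cdots c_i\, u_i$. Since each $c_j$ is nonzero and $\alpha \neq 0$, the vector $w_i$ is a nonzero element of the one-dimensional eigenspace $E^*_i V$; hence $\{w_i\}_{i=0}^d$ is a basis of $V$, and in this basis $A^*$ acts diagonally with entries $\{\th^*_i\}_{i=0}^d$. The basis change from $\{u_i\}$ to $\{w_i\}$ is diagonal with scaling factors $\lambda_i = \alpha c_1 \cdots c_i$, and by Lemma \ref{lem:trid5} the new subdiagonal entries become $(\lambda_{i-1}/\lambda_i)\,c_i = 1$ while the matrix remains irreducible tridiagonal by Lemma \ref{lem:trid5b}. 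Thus $\{w_i\}_{i=0}^d$ satisfies Definition \ref{def:LP}(i) and represents $A$ in normalized form. The main obstacle is the triangular-vanishing auxiliary: once that propagation is established, the remainder is a bookkeeping calculation combining Lemma \ref{lem:aibici0} with the diagonal rescaling rule of Lemma \ref{lem:trid5}.
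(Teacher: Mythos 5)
Your proof is correct. Note that the paper itself gives no proof of this lemma: it is imported by citation from \cite[Theorems 7.3, 7.4]{T:qRacah}, so there is no internal argument to compare against. What you have written is a complete, self-contained derivation from the other results quoted in the paper, and each step checks out: the triangular-vanishing claim $E^*_j A^i v' = 0$ for $j>i$ follows by the induction you describe using $I=\sum_k E^*_k$ and Definition \ref{def:LS}(iv); the identity $E^*_i A^i u_0 = c_1\cdots c_i\, u_i$ then collapses correctly to the single surviving term $k=i-1$, with $E^*_i A u_{i-1} = c_i u_i$ read off from Lemma \ref{lem:aibici0}; nonvanishing of the $c_i$ (Lemma \ref{lem:aibici}(ii)) and of $\alpha$ (here one uses that $u_0=E^*_0u\neq 0$, which is guaranteed since $\{E^*_iu\}_{i=0}^d$ is a basis by Lemma \ref{lem:standardbasis}) give that each $w_i$ spans the one-dimensional eigenspace $E^*_iV$, so $\{w_i\}_{i=0}^d$ is a basis diagonalizing $A^*$; and the diagonal rescaling computation via Lemma \ref{lem:trid5} correctly turns the subdiagonal entries $c_i$ into $1$ while preserving irreducible tridiagonality (Lemma \ref{lem:trid5b}). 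This is essentially the same mechanism used in the cited source (relating the normalized basis to the standard basis $\{E^*_iu\}$ by a diagonal change of coordinates built from the intersection numbers), so your argument both matches the spirit of the original and supplies the details the present paper leaves to the reference.
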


\begin{lemma}  {\rm (See \cite[Lemma 11.2]{T:qRacah}.) }
\label{lem:xibici}    \samepage
\ifDRAFT {\rm lem:xibici}. \fi
Referring to Notation  \ref{notation6},
$ x_i = b_{i-1} c_i$ for $1 \leq i \leq d$.
\end{lemma}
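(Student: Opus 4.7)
The plan is to apply Lemma \ref{lem:ai0} to the specific basis identified in Lemma \ref{lem:standardbasis} and used in Lemma \ref{lem:aibici0}. First I would verify that the basis $\{E^*_i v\}_{i=0}^d$ from Lemma \ref{lem:standardbasis} (where $0 \neq v \in E_0 V$) actually satisfies Definition \ref{def:LP}(i). That $A^*$ acts diagonally on this basis is immediate from $A^* E^*_i = \theta^*_i E^*_i$ together with $E^*_j E^*_i = \delta_{i,j} E^*_i$, giving $A^*(E^*_i v) = \theta^*_i (E^*_i v)$. Lemma \ref{lem:aibici0} tells us that $A$ is represented on this basis by a tridiagonal matrix with diagonal entries $a_i$, superdiagonal entries $b_{i-1}$, and subdiagonal entries $c_i$; Lemma \ref{lem:aibici}(i),(ii) guarantees the $b_i$ and $c_i$ are nonzero, so this tridiagonal matrix is irreducible.

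Next I would apply Lemma \ref{lem:ai0} with $M$ equal to this representing matrix. The conclusion $M_{i,i-1} M_{i-1,i} = x_i$ for $1 \leq i \leq d$ then reads $c_i \cdot b_{i-1} = x_i$, which is exactly the desired equality $x_i = b_{i-1} c_i$.

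There is essentially no obstacle: the result is a direct comparison between two descriptions of the same matrix entries of $A$ in the same basis. The only point requiring care is confirming the basis of Lemma \ref{lem:standardbasis} lies in the class covered by Lemma \ref{lem:ai0} (i.e.\ satisfies Definition \ref{def:LP}(i)); this follows from the tridiagonal shape in Lemma \ref{lem:aibici0} together with the nonvanishing from Lemma \ref{lem:aibici}.
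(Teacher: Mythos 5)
Your argument is correct. Note, however, that the paper does not actually prove this lemma: it simply cites \cite[Lemma 11.2]{T:qRacah} (the same external source from which Lemma \ref{lem:aibici} is quoted) and moves on. What you have written is a self-contained derivation inside the machinery the paper imports: you observe that the basis $\{E^*_i v\}_{i=0}^d$ of Lemma \ref{lem:standardbasis} satisfies Definition \ref{def:LP}(i) --- diagonality of $A^*$ is immediate, tridiagonality of $A$ comes from Lemma \ref{lem:aibici0}, and irreducibility from Lemma \ref{lem:aibici}(i),(ii) --- and then read off $x_i = M_{i,i-1}M_{i-1,i} = c_i\, b_{i-1}$ by applying Lemma \ref{lem:ai0} to that representing matrix. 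The indexing checks out against the paper's convention $Av_j = \sum_i M_{i,j} v_i$, and there is no circularity, since Lemmas \ref{lem:ai0}, \ref{lem:aibici0}, \ref{lem:aibici} are all independently quoted facts. The trade-off is minor: the paper's citation defers to the original reference, whereas your route shows the identity is forced by the other quoted results, which is a nice consistency check but adds nothing the reference does not already supply.
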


\begin{lemma}   {\rm (See \cite[Theorem 17.8]{T:qRacah}.) }
\label{lem:aiparam}    \samepage
\ifDRAFT {\rm lem:aiparam}. \fi
Referring to Notation \ref{notation6},
\begin{align}
a_0 &= \th_0 + \frac{\vphi_1}{\th^*_0 - \th^*_1},                  \label{eq:a0}
\\
a_i &= \th_i + \frac{ \vphi_i } {\th^*_i - \th^*_{i-1} }
        + \frac{ \vphi_{i+1} } {\th^*_i - \th^*_{i+1} }  &&   (1 \leq i \leq d-1),   \label{eq:ai}
\\
a_d &= \th_d + \frac{\vphi_d} {\th^*_d - \th^*_{d-1} }.           \label{eq:ad}
\end{align}
\end{lemma}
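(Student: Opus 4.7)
My plan is to exploit the basis from Lemma \ref{lem:parray}. In that basis, $A$ is represented by a lower bidiagonal matrix $L$ with $L_{j,j} = \th_j$ and $L_{j,j-1} = 1$, while $A^*$ is represented by an upper bidiagonal matrix $U$ with $U_{j,j} = \th^*_j$ and $U_{j-1,j} = \vphi_j$. Let $F_i$ denote the matrix representing $E^*_i$ in this basis. Since $E^*_i$ is a polynomial in $A^*$, the matrix $F_i$ is a polynomial in $U$, and hence upper triangular. Applying the Lagrange formula $E^*_i = \prod_{k \neq i}(A^* - \th^*_k I)/(\th^*_i - \th^*_k)$ to the diagonal entries of $U$ yields $(F_i)_{j,j} = \delta_{i,j}$ for $0 \leq j \leq d$.

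Now the trace $a_i = \text{\rm tr}(L F_i)$ involves only the diagonal and subdiagonal of $L$, so a direct expansion gives
\begin{align*}
a_i &= \sum_{j=0}^d \th_j (F_i)_{j,j} + \sum_{j=1}^d (F_i)_{j-1,j} = \th_i + \sum_{j=1}^d (F_i)_{j-1,j}.
\end{align*}
Thus it remains to compute the superdiagonal entries of $F_i$.

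For this I would use the commutation $U F_i = F_i U$, which holds because $E^*_i$ is a polynomial in $A^*$. Equating the $(j,j+1)$-entries of $U F_i$ and $F_i U$ and substituting the known diagonal values of $F_i$ yields
\begin{align*}
(F_i)_{j,j+1} &= \frac{\vphi_{j+1}\bigl(\delta_{i,j} - \delta_{i,j+1}\bigr)}{\th^*_j - \th^*_{j+1}},
\end{align*}
which is nonzero only when $j = i$ (producing the contribution $\vphi_{i+1}/(\th^*_i - \th^*_{i+1})$, valid for $i \leq d-1$) or $j = i-1$ (producing the contribution $\vphi_i/(\th^*_i - \th^*_{i-1})$, valid for $i \geq 1$). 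Substituting back into the trace formula yields the three cases \eqref{eq:a0}, \eqref{eq:ai}, \eqref{eq:ad}. The only technical step is the bidiagonal commutator bookkeeping, and even that can be bypassed by appealing directly to \cite[Theorem 17.8]{T:qRacah}, where this formula is explicitly recorded.
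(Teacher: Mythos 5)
Your proof is correct. Note that the paper itself supplies no argument for this lemma; it is stated with a bare citation to \cite[Theorem 17.8]{T:qRacah}, so your derivation is a self-contained verification rather than a reproduction of anything in the text. The argument is sound at every step: the basis of Lemma \ref{lem:parray} represents $A$ by a lower bidiagonal $L$ (diagonal $\th_j$, subdiagonal $1$) and $A^*$ by an upper bidiagonal $U$ (diagonal $\th^*_j$, superdiagonal $\vphi_j$); since $E^*_i$ is the Lagrange polynomial in $A^*$, its representing matrix $F_i$ is upper triangular with $(F_i)_{j,j}=p_i(\th^*_j)=\delta_{i,j}$; the trace $a_i=\operatorname{tr}(LF_i)$ then picks up only $\th_i$ plus the superdiagonal of $F_i$; and comparing $(j,j+1)$-entries of $UF_i=F_iU$ gives $(\th^*_j-\th^*_{j+1})(F_i)_{j,j+1}=\vphi_{j+1}(\delta_{i,j}-\delta_{i,j+1})$, which is legitimate to divide by since the $\th^*_j$ are distinct. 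Summing the two surviving terms (at $j=i$ and $j=i-1$) reproduces \eqref{eq:a0}--\eqref{eq:ad} exactly, including the correct degenerate cases $i=0$ and $i=d$. This is essentially the same split-decomposition computation that underlies the cited result, but your version is pleasantly elementary, using only upper-triangularity and the commutation of $F_i$ with $U$ rather than any machinery beyond Lemma \ref{lem:parray}.
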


\begin{lemma}    {\rm (See \cite[Lemma 10.3]{T:24points}.) }
\label{lem:a0th0}    \samepage
\ifDRAFT {\rm lem:a0th0}. \fi
Referring to Notation \ref{notation6},
\begin{align}
a_0 &= \th_d + \frac{\phi_1}{\th^*_0 - \th^*_1},                \label{eq:a0b}
\\
a_i &= \th_{d-i} + \frac{ \phi_i } {\th^*_i - \th^*_{i-1} }
      + \frac{ \phi_{i+1} } {\th^*_i - \th^*_{i+1} }     &&   (1 \leq i \leq d-1),         \label{eq:aib}
\\
a_d &= \th_0 + \frac{\phi_d} {\th^*_d - \th^*_{d-1} }.                      \label{eq:adb}
\end{align}
\end{lemma}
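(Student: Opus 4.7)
The plan is to deduce this lemma from Lemma \ref{lem:aiparam} by applying it to the $\Downarrow$-variant of the Leonard system $\Phi$. By Lemma \ref{lem:LSequiv}, $\Phi^\Downarrow$ is again a Leonard system on $V$, so Lemma \ref{lem:aiparam} applies to it verbatim.

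First I would invoke Lemma \ref{lem:parrayrelative}(iii) to read off the parameter array of $\Phi^\Downarrow$ as
\[
  (\{\th_{d-i}\}_{i=0}^d;\, \{\th^*_i\}_{i=0}^d;\, \{\phi_i\}_{i=1}^d;\, \{\vphi_i\}_{i=1}^d).
\]
That is, compared with $\Phi$, the eigenvalues of $A$ are reversed, the dual eigenvalues of $A^*$ are unchanged, and the roles of the first and second split sequences are swapped. Next I would invoke Lemma \ref{lem:aiD}(ii) to observe that $a^\Downarrow_i = a_i$ for $0 \leq i \leq d$, so computing the intersection numbers $a^\Downarrow_i$ for $\Phi^\Downarrow$ gives exactly the scalars we want on the left-hand sides.

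Now I would apply the three identities of Lemma \ref{lem:aiparam} to $\Phi^\Downarrow$, substituting in its parameter array as listed above. Concretely, \eqref{eq:a0} applied to $\Phi^\Downarrow$ yields
\[
  a_0 \;=\; a^\Downarrow_0 \;=\; \th^\Downarrow_0 + \frac{\vphi^\Downarrow_1}{\th^{*\Downarrow}_0 - \th^{*\Downarrow}_1}
       \;=\; \th_d + \frac{\phi_1}{\th^*_0 - \th^*_1},
\]
which is \eqref{eq:a0b}. Similarly \eqref{eq:ai} applied to $\Phi^\Downarrow$ gives
\[
  a_i \;=\; \th_{d-i} + \frac{\phi_i}{\th^*_i - \th^*_{i-1}} + \frac{\phi_{i+1}}{\th^*_i - \th^*_{i+1}}
       \qquad (1 \leq i \leq d-1),
\]
which is \eqref{eq:aib}. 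Finally \eqref{eq:ad} applied to $\Phi^\Downarrow$ gives \eqref{eq:adb}.

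There is no real obstacle here; the only thing to be careful about is the bookkeeping in Lemma \ref{lem:parrayrelative}(iii), specifically that the first split sequence of $\Phi^\Downarrow$ is $\{\phi_i\}_{i=1}^d$ (the second split sequence of $\Phi$) and that the dual eigenvalue sequence is unchanged. Once this is correctly identified, the three formulas \eqref{eq:a0b}--\eqref{eq:adb} follow by direct substitution into \eqref{eq:a0}--\eqref{eq:ad}.
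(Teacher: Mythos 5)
Your proof is correct. Note that the paper does not actually prove this lemma itself: it is stated with the citation \cite[Lemma 10.3]{T:24points} and no argument is given. Your derivation is therefore a genuinely different (self-contained) route, and it works: $\Phi^\Downarrow$ is a Leonard system on $V$ by Lemma \ref{lem:LSequiv}, its parameter array is $(\{\th_{d-i}\}_{i=0}^d;\{\th^*_i\}_{i=0}^d;\{\phi_i\}_{i=1}^d;\{\vphi_i\}_{i=1}^d)$ by Lemma \ref{lem:parrayrelative}(iii), and $a^\Downarrow_i=a_i$ by Lemma \ref{lem:aiD}(ii) (indeed $A^\Downarrow=A$ and $E^{*\Downarrow}_i=E^*_i$, so the traces in Definition \ref{def:ai} are literally unchanged). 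Substituting these data into \eqref{eq:a0}--\eqref{eq:ad} as applied to $\Phi^\Downarrow$ yields \eqref{eq:a0b}--\eqref{eq:adb} exactly as you say. What your approach buys is that Lemma \ref{lem:a0th0} becomes a formal consequence of Lemma \ref{lem:aiparam} and the $\Downarrow$ symmetry, rather than an imported fact; this is the standard way such ``dual'' formulas are obtained in the Leonard-system literature, and it is almost certainly how the cited result is established in \cite{T:24points} as well. The only bookkeeping point --- that the first split sequence of $\Phi^\Downarrow$ is $\{\phi_i\}_{i=1}^d$ while the dual eigenvalue sequence is unchanged --- is exactly the one you flag, and you handle it correctly.
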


In the next result, we express each of $\vphi_1$, $\phi_1$, $\vphi_d$, $\phi_d$, $a_d$
in terms of $a_0$.

\begin{lemma}    \label{lem:a0ad}    \samepage
\ifDRAFT {\rm lem:a0ad}. \fi
Referring to Notation \ref{notation6},
\begin{align}
\vphi_1 &= (a_0 - \th_0)(\th^*_0 - \th^*_1),                      \label{eq:vphi1b}
\\
\phi_1 &= (a_0 - \th_d)(\th^*_0 - \th^*_1),                        \label{eq:phi1b}
\\
\vphi_d &= (a_d - \th_d)(\th^*_d - \th^*_{d-1}),       \label{eq:vphidb}
\\
\phi_d &= (a_d -\th_0)(\th^*_d - \th^*_{d-1}),        \label{eq:phidb}
\\
a_d &= 
 \frac{a_0 (\th^*_0 - \th^*_1) + \th_d (\th^*_1 - \th^*_{d-1} ) + \th_{d-1} (\th^*_d - \th^*_0) }
        { \th^*_d - \th^*_{d-1} }.                                                                   \label{eq:ada0}
\end{align}
\end{lemma}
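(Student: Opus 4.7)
The plan is to derive the first four identities \eqref{eq:vphi1b}--\eqref{eq:phidb} as immediate algebraic consequences of Lemmas \ref{lem:aiparam} and \ref{lem:a0th0}, and then obtain \eqref{eq:ada0} by combining two of them with one of the relations in Lemma \ref{lem:1d}.

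First I would handle the four split-sequence identities. From equation \eqref{eq:a0} in Lemma \ref{lem:aiparam} we have $a_0 - \th_0 = \vphi_1/(\th^*_0 - \th^*_1)$. Since $\th^*_0 \neq \th^*_1$, solving for $\vphi_1$ yields \eqref{eq:vphi1b}. Similarly, \eqref{eq:a0b} of Lemma \ref{lem:a0th0} gives \eqref{eq:phi1b}, equation \eqref{eq:ad} of Lemma \ref{lem:aiparam} gives \eqref{eq:vphidb}, and equation \eqref{eq:adb} of Lemma \ref{lem:a0th0} gives \eqref{eq:phidb}. Each of these is a one-line rearrangement.

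Next I would derive \eqref{eq:ada0}. I plan to substitute \eqref{eq:vphidb} and \eqref{eq:phi1b} into the relation $\vphi_d - \phi_1 = (\th^*_d - \th^*_0)(\th_{d-1} - \th_d)$ from \eqref{eq:vphid}. This gives
\[
  (a_d - \th_d)(\th^*_d - \th^*_{d-1}) - (a_0 - \th_d)(\th^*_0 - \th^*_1)
     = (\th^*_d - \th^*_0)(\th_{d-1} - \th_d).
\]
Then I would collect the $a_d$ term on the left and move the rest to the right:
\[
  a_d (\th^*_d - \th^*_{d-1}) = a_0(\th^*_0 - \th^*_1) + \th_d\bigl[(\th^*_d - \th^*_{d-1}) - (\th^*_0 - \th^*_1)\bigr] + (\th^*_d - \th^*_0)(\th_{d-1} - \th_d).
\]
Rewriting $(\th^*_d - \th^*_{d-1}) - (\th^*_0 - \th^*_1) = (\th^*_d - \th^*_0) + (\th^*_1 - \th^*_{d-1})$ and collecting the two multiples of $(\th^*_d - \th^*_0)$, the $\th_d(\th^*_d - \th^*_0)$ term cancels against $-\th_d(\th^*_d-\th^*_0)$, leaving exactly $\th_d(\th^*_1 - \th^*_{d-1}) + \th_{d-1}(\th^*_d - \th^*_0)$. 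Dividing by $\th^*_d - \th^*_{d-1}$ (which is nonzero) yields \eqref{eq:ada0}.

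No step here presents a serious obstacle; the only thing to be careful about is the bookkeeping in the cancellation that produces \eqref{eq:ada0}, so I would double-check the signs when re-expressing $(\th^*_d - \th^*_{d-1}) - (\th^*_0 - \th^*_1)$ and combining like terms.
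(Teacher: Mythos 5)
Your proposal is correct and follows the paper's own proof exactly: the first four identities are rearrangements of \eqref{eq:a0}, \eqref{eq:a0b}, \eqref{eq:ad}, \eqref{eq:adb}, and \eqref{eq:ada0} is obtained by substituting \eqref{eq:phi1b} and \eqref{eq:vphidb} into \eqref{eq:vphid}. The algebraic bookkeeping in your final cancellation checks out.
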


\begin{proof}
Lines \eqref{eq:vphi1b}--\eqref{eq:phidb} come from \eqref{eq:a0}, \eqref{eq:a0b},
\eqref{eq:ad}, \eqref{eq:adb}, respectively.
To get \eqref{eq:ada0},
evaluate \eqref{eq:phi1b}, \eqref{eq:vphidb} using \eqref{eq:vphid}.
\end{proof}

Let $x$ denote an indeterminate, and let $\F[x]$ denote the algebra consisting 
of the polynomials in $x$ that have all coefficients in $\F$.
Referring to Notation \ref{notation6},
for $0 \leq i \leq d$ define polynomials in $\F[x]$:
\begin{align*}
 \tau_i &= (x-\th_0)(x-\th_1) \cdots (x-\th_{i-1}),
\\
 \eta_i &= (x-\th_d) (x - \th_{d-1}) \cdots (x- \th_{d-i+1}),
\\
 \tau^*_i &= (x- \th^*_0) (x- \th^*_1)\cdots (x - \th^*_{i-1}),
\\
 \eta^*_i &= (x - \th^*_d) (x - \th^*_{d-1}) \cdots (x - \th^*_{d-i+1}).
\end{align*}

\begin{lemma}   {\rm (See \cite[Theorem 17.9]{T:qRacah}.) }
\label{lem:xiparam}    \samepage
\ifDRAFT {\rm lem:xiparam}. \fi
Referring to Notation \ref{notation6},
\begin{align*}
x_i &= \vphi_i \phi_i 
   \frac{ \tau^*_{i-1} (\th^*_{i-1}) \eta^*_{d-i} (\th^*_i) } 
          { \tau^*_i (\th^*_i)  \eta^*_{d-i+1}( \th^*_{i-1}) }
                      &&  (1 \leq i \leq d).  
\end{align*}
\end{lemma}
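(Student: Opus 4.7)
The plan is to work in the first split basis of $\Phi$ from Lemma \ref{lem:parray} to compute $x_i$ as the coefficient of $w_{i-1}$ in $A w_i$ (per Corollary \ref{cor:aixi0}), and then to recognize the result in the claimed factored form.

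Let $\{u_i\}_{i=0}^d$ denote the first split basis, so that $A u_i = \th_i u_i + u_{i+1}$ and $A^* u_i = \th^*_i u_i + \vphi_i u_{i-1}$ (with $u_{d+1}=0$ and $\vphi_0 = 0$). Since $A^*$ is upper triangular in this basis with mutually distinct diagonal entries $\th^*_0,\ldots,\th^*_d$, the flag $U_i := \b{u_0,\ldots,u_i}$ is $A^*$-invariant, and since $\dim E^*_j V = 1$ for each $j$ we have $U_i = E^*_0 V \oplus \cdots \oplus E^*_i V$. Hence $E^*_i u_j = 0$ for $j<i$, while $E^*_i u_i$ spans $E^*_i V$; also $u_0$ spans $E^*_0 V$, so we may take $v = u_0$ in Lemma \ref{lem:aixi0}. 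Iterating $A u_j = \th_j u_j + u_{j+1}$ yields $A^i u_0 \equiv u_i \pmod{U_{i-1}}$, whence the basis $w_i := E^*_i A^i u_0$ of that lemma satisfies $w_i = E^*_i u_i$.

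Write $w_i = u_i + \sum_{j<i}\lambda^{(i)}_j u_j$ and impose $(A^*-\th^*_i I)w_i = 0$. Using $A^* u_j = \th^*_j u_j + \vphi_j u_{j-1}$, this yields a one-step telescoping recursion whose closed-form solution gives each $\lambda^{(i)}_j$ as a product of $\vphi_k$'s divided by a product of $(\th^*_i - \th^*_k)$'s. Apply $A$ to this expansion via $A u_j = \th_j u_j + u_{j+1}$ and equate the result with $A w_i = x_i w_{i-1} + a_i w_i + w_{i+1}$; reading off the $u_{i-1}$-coefficient (and noting $w_{i-1} \equiv u_{i-1} \pmod{U_{i-2}}$) produces $x_i$ in closed form as a rational expression in the $\th_k$'s, $\th^*_k$'s and $\vphi_k$'s, with the $a_i$ contribution rewritten via Lemma \ref{lem:aiparam}.

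To recast this in the symmetric form of the lemma, the natural route is to repeat the computation for $\Phi^\Downarrow$: by Lemma \ref{lem:parrayrelative}(iii) its parameter array has $\th_i \leftrightarrow \th_{d-i}$ and $\vphi_i \leftrightarrow \phi_i$, while by Lemma \ref{lem:xiD}(ii) we have $x_i^\Downarrow = x_i$. The dual computation yields a parallel expression for $x_i$ in which the $\eta^*$ polynomials replace the $\tau^*$ polynomials and the $\phi_k$'s replace the $\vphi_k$'s. Equating the two expressions (equivalently, using Lemma \ref{lem:classify}(iii),(iv) to eliminate the $\th_k$-dependence from the first computation) identifies the common value as the symmetric product $\vphi_i \phi_i \cdot \tau^*_{i-1}(\th^*_{i-1}) \eta^*_{d-i}(\th^*_i) / [\tau^*_i(\th^*_i) \eta^*_{d-i+1}(\th^*_{i-1})]$. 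The main obstacle is this final algebraic simplification: the direct computation from a single split basis produces $x_i$ in a form that mixes $\vphi$'s with $\th$'s and $\th^*$'s, and it requires careful bookkeeping, via the $\Phi^\Downarrow$ cross-check, to extract the remaining factor as $\phi_i$ times a pure $\th^*$-ratio.
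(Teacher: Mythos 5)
The paper does not actually prove this lemma; it is quoted verbatim from \cite[Theorem~17.9]{T:qRacah}, so there is no internal argument to compare yours against. Judged on its own terms, your setup is sound: in the split basis of Lemma \ref{lem:parray} the flag $U_i$ is indeed $A^*$-invariant, $w_i=E^*_iA^iu_0=E^*_iu_i$, the telescoping recursion gives $\lambda^{(i)}_j=\prod_{k=j+1}^{i}\vphi_k/(\th^*_i-\th^*_{k-1})$, and extracting the $u_{i-1}$-coefficient of $Aw_i=x_iw_{i-1}+a_iw_i+w_{i+1}$ is a legitimate way to compute $x_i$ (your scheme even reproduces Lemma \ref{lem:aiparam} as a consistency check on the $u_i$-coefficient).

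The gap is in the last step, which is where the entire content of the lemma lives. Your computation delivers
$x_i=\tfrac{\vphi_i}{\th^*_i-\th^*_{i-1}}\bigl[\th_{i-1}-\th_i-\tfrac{\vphi_i}{\th^*_i-\th^*_{i-1}}+\tfrac{\vphi_{i-1}}{\th^*_i-\th^*_{i-2}}+\tfrac{\vphi_{i+1}}{\th^*_{i+1}-\th^*_{i-1}}\bigr]$,
an expression in $\vphi$'s, two $\th$'s and four consecutive $\th^*$'s, whereas the target involves $\phi_i$ and \emph{all} of $\th^*_0,\dots,\th^*_d$; showing the bracket equals $\phi_i$ times the stated $\th^*$-ratio is a genuinely nontrivial consequence of Lemma \ref{lem:classify}(iii),(iv), not mere bookkeeping. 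Moreover the $\Phi^\Downarrow$ device you propose cannot close this gap by itself: by Lemmas \ref{lem:parrayrelative}(iii) and \ref{lem:xiD}(ii) it produces a second, equally asymmetric expression for the same $x_i$ (with $\phi$'s and reversed $\th$'s), and equating two unfactored expressions does not extract the factored symmetric form --- multiplying them only yields $x_i^2$ in terms of $x_i$, which is circular. The argument does go through in low rank (for $d=1$ the bracket collapses to $-\phi_1/(\th^*_1-\th^*_0)$ via \eqref{eq:vphi1}), so the plan is not wrong, but to finish in general you need either an induction on $i$ using Lemma \ref{lem:classify}(iii),(iv), or, more cleanly, the factorization $x_i=b_{i-1}c_i$ of Lemma \ref{lem:xibici} together with the separate product formulas for $b_i$ and $c_i$ from \cite[Theorem~17.7]{T:qRacah}, which is how the cited source proceeds.
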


\begin{lemma}  \label{lem:simLS2c}    \samepage
\ifDRAFT {\rm lem:simLS2c}.   \fi
Referring to Notation \ref{notation6},
consider a Leonard system $\Phi' = (B; \{E'_i\}_{i=0}^d; A^*; \{E^*_i\}_{i=0}^d)$
on $V$ with parameter array
$(\{\th'_i\}_{i=0}^d; \{\th^*_i\}_{i=0}^d; \{\vphi'_i\}_{i=1}^d; \{\phi'_i\}_{i=1}^d)$.
Let the scalars $\{x'_i\}_{i=1}^d$ be from Definition \ref{def:xi} for $\Phi'$.
Then the following are equivalent:
\begin{itemize}
\item[\rm (i)]
$x_i = x'_i \quad (1 \leq i \leq d)$;
\item[\rm (ii)]
$\vphi_i \phi_i = \vphi'_i \phi'_i \quad (1 \leq i \leq d)$.
\end{itemize}
\end{lemma}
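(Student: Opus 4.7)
My plan is to derive this equivalence directly from the closed-form expression for $x_i$ given in Lemma \ref{lem:xiparam}. The crucial observation is that $\Phi$ and $\Phi'$ share the \emph{same} dual eigenvalue sequence $\{\th^*_i\}_{i=0}^d$ (both parameter arrays have $\{\th^*_i\}_{i=0}^d$ in the second slot), so the polynomials $\tau^*_j$ and $\eta^*_j$ attached to $\Phi$ agree with those attached to $\Phi'$.

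Concretely, for each $i$ with $1 \leq i \leq d$, define
\begin{equation*}
 r_i = \frac{ \tau^*_{i-1}(\th^*_{i-1}) \, \eta^*_{d-i}(\th^*_i) }{ \tau^*_i(\th^*_i) \, \eta^*_{d-i+1}(\th^*_{i-1}) }.
\end{equation*}
Applying Lemma \ref{lem:xiparam} to each of $\Phi$ and $\Phi'$ yields $x_i = \vphi_i \phi_i \, r_i$ and $x'_i = \vphi'_i \phi'_i \, r_i$, with the \emph{same} scalar $r_i$ in both identities.

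Next I would verify that $r_i \neq 0$. By Lemma \ref{lem:classify}(i) the scalars $\{\th^*_j\}_{j=0}^d$ are mutually distinct, and each factor appearing in the numerator and denominator of $r_i$ has the form $\th^*_a - \th^*_b$ with $a \neq b$. Hence $r_i$ is a nonzero element of $\F$. Therefore $x_i = x'_i$ holds if and only if $\vphi_i \phi_i \, r_i = \vphi'_i \phi'_i \, r_i$, which (cancelling $r_i$) is equivalent to $\vphi_i \phi_i = \vphi'_i \phi'_i$. This gives the equivalence (i) $\Leftrightarrow$ (ii) coordinatewise in $i$.

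There is essentially no obstacle here: once one has Lemma \ref{lem:xiparam}, the content of this lemma is simply the remark that the "Stieltjes factor" $r_i$ depends only on the dual eigenvalue sequence, which $\Phi$ and $\Phi'$ are assumed to share. The only small care needed is the nonvanishing of $r_i$, which follows immediately from the distinctness of the $\th^*_j$'s.
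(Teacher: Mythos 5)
Your proof is correct and is exactly the argument the paper intends: the paper's proof is the one-line ``Use Lemma \ref{lem:xiparam},'' and your write-up simply makes explicit the shared nonzero factor $r_i$ (depending only on the common dual eigenvalue sequence) and its cancellation. No issues.
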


\begin{proof}
Use Lemma \ref{lem:xiparam}.
\end{proof}

For parameter arrays
\begin{align}
& (\{\th_i\}_{i=0}^d; \{\th^*_i\}_{i=0}^d; \{\vphi_i\}_{i=1}^d; \{\phi_i\}_{i=1}^d).
&& (\{\th'_i\}_{i=0}^d; \{\th^*_i\}_{i=0}^d; \{\vphi'_i\}_{i=1}^d; \{\phi'_i\}_{i=1}^d)   \label{eq:twoparrays}
\end{align}
over $\F$,
we will consider two special cases of the condition (ii) in Lemma \ref{lem:simLS2c}.
One special case is described in Lemma \ref{lem:A=Bpre}.
The other special case is described in Lemma \ref{lem:ABveepre}.

\begin{lemma}    \label{lem:A=Bpre}     \samepage
\ifDRAFT {\rm lem:A=Bpre}. \fi
For parameter arrays \eqref{eq:twoparrays} over $\F$,
the following hold.
\begin{itemize}
\item[\rm (i)]
Assume that
$\vphi'_i = \vphi_i$ and $\phi'_i = \phi_i$  for $1 \leq i \leq d$.
Then there exists $\zeta \in \F$ such that
$\th'_i = \th_i + \zeta$ for $0 \leq i \leq d$.
\item[\rm (ii)]
Assume that
$\vphi'_i = \phi_i$ and $\phi'_i = \vphi_i$ for $1 \leq i \leq d$.
Then there exists $\zeta \in \F$ such that
$\th'_i = \th_{d-i} + \zeta$ for $0 \leq i \leq d$.
\end{itemize}
\end{lemma}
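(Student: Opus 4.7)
\bigskip
\noindent\textbf{Proof proposal.}
The strategy is to extract scalar identities between $\{\th_i\}_{i=0}^d$ and $\{\th'_i\}_{i=0}^d$ directly from the defining identity in Lemma \ref{lem:classify}(iii), exploiting the fact that the two parameter arrays share the dual eigenvalue sequence $\{\th^*_i\}_{i=0}^d$. The key observation is Lemma \ref{lem:indep2}: the ratio
\[
\frac{\th_\ell - \th_{d-\ell}}{\th_0 - \th_d}
\;=\; \frac{\th^*_\ell - \th^*_{d-\ell}}{\th^*_0 - \th^*_d}
\]
depends only on $\{\th^*_i\}_{i=0}^d$, and so the ``combinatorial sum'' that appears in Lemma \ref{lem:classify}(iii) takes the same value whether computed from $\{\th_i\}_{i=0}^d$ or from $\{\th'_i\}_{i=0}^d$. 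This makes the sum part harmlessly cancel when we subtract the two versions of Lemma \ref{lem:classify}(iii).

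For part (i), I would write Lemma \ref{lem:classify}(iii) for both parameter arrays. Using $\phi'_1 = \phi_1$ together with the cancellation of the sum just explained, the subtraction gives
\[
(\th^*_i - \th^*_0)\bigl(\th_{i-1} - \th_d\bigr) \;=\; (\th^*_i - \th^*_0)\bigl(\th'_{i-1} - \th'_d\bigr)
\qquad (1 \leq i \leq d).
\]
Since $\th^*_i \neq \th^*_0$ for $i \geq 1$ by Lemma \ref{lem:classify}(i), this yields $\th_j - \th_d = \th'_j - \th'_d$ for $0 \leq j \leq d-1$, and the case $j=d$ is automatic. Setting $\zeta = \th'_d - \th_d$ gives $\th'_i = \th_i + \zeta$ for $0 \leq i \leq d$, as required.

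For part (ii), the cleanest approach is reduction to part (i) via Lemma \ref{lem:parrayrelative}(iii). Consider the ``reversed'' sequence
\[
\bigl(\{\th_{d-i}\}_{i=0}^d;\; \{\th^*_i\}_{i=0}^d;\; \{\phi_i\}_{i=1}^d;\; \{\vphi_i\}_{i=1}^d\bigr),
\]
which is a parameter array over $\F$ by Lemma \ref{lem:parrayrelative}(iii). Its first and second split sequences are $\{\phi_i\}_{i=1}^d$ and $\{\vphi_i\}_{i=1}^d$, which by hypothesis coincide with $\{\vphi'_i\}_{i=1}^d$ and $\{\phi'_i\}_{i=1}^d$ respectively. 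Moreover its dual eigenvalue sequence equals that of the primed array. So part (i), applied to this reversed array together with the primed array, produces $\zeta \in \F$ with $\th'_i = \th_{d-i} + \zeta$ for $0 \leq i \leq d$.

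I do not anticipate a real obstacle; the whole argument reduces to one subtraction in Lemma \ref{lem:classify}(iii) together with Lemma \ref{lem:indep2}, and part (ii) follows mechanically from part (i) by invoking the $\Downarrow$-duality of Lemma \ref{lem:parrayrelative}(iii), which preserves the dual eigenvalue sequence while swapping the roles of the two split sequences.
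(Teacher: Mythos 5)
Your proof is correct, and part (i) follows the paper's own argument exactly: write Lemma \ref{lem:classify}(iii) for both arrays, use Lemma \ref{lem:indep2} to see that the summation term is common to the two arrays (since they share $\{\th^*_i\}_{i=0}^d$), subtract, and divide by $\th^*_i - \th^*_0 \neq 0$ to get $\th'_{i-1} - \th'_d = \th_{i-1} - \th_d$. For part (ii) the paper instead compares the formula for $\phi_i$ from Lemma \ref{lem:classify}(iv) directly with the formula for $\vphi'_i$, obtaining $\th'_{i-1} - \th'_d = \th_{d-i+1} - \th_0$; your reduction to part (i) via the $\Downarrow$-correspondence of Lemma \ref{lem:parrayrelative}(iii) is an equally valid packaging of the same computation, and indeed mirrors how the paper itself deduces Lemma \ref{lem:ABveepre} from this lemma.
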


\begin{proof}
By Lemma \ref{lem:indep2} we find that for $1 \leq i \leq d$,
\[
   \sum_{\ell=0}^{i-1} \frac{\th_\ell - \th_{d-\ell} } { \th_0 - \th_d }
 =  \sum_{\ell=0}^{i-1} \frac{\th^*_\ell - \th^*_{d-\ell} } { \th^*_0 - \th^*_d }.
\]
Denote this common value by $\vartheta_i$.
By Lemma \ref{lem:classify},
\begin{align}
  \vphi_i &= \phi_1 \vartheta_i + (\th^*_i - \th^*_0) (\th_{i-1} - \th_d),        \label{eq:vphiaux}
\\
  \phi_i &= \vphi_1 \vartheta_i + (\th^*_i - \th^*_0)(\th_{d-i+1} - \th_0).    \label{eq:phiaux}
\end{align}
Using \eqref{eq:vphiaux} we obtain
\begin{align}
 \vphi'_i &= \phi'_1  \vartheta_i + (\th^*_i - \th^*_0) (\th'_{i-1} - \th'_d).  \label{eq:vphidaux}
\end{align}

(i)
By $\vphi'_i = \vphi_i$ $(1 \leq i \leq d)$, $\phi'_1 = \phi_1$, and \eqref{eq:vphiaux}, \eqref{eq:vphidaux},
\begin{align*}
   \th'_{i-1} - \th'_d &= \th_{i-1} - \th_d        &&  (1 \leq i \leq d). 
\end{align*}
This implies  $\th'_i = \th_i + \zeta$ for $0 \leq i \leq d$, 
where $\zeta = \th'_d - \th_d$.

(ii)
By $\vphi'_i = \phi_i$ $(1 \leq i \leq d)$,  $\phi'_1 = \vphi_1$, and \eqref{eq:phiaux}, \eqref{eq:vphidaux},
\begin{align*}
  \th'_{i-1} - \th'_d &= \th_{d-i+1} - \th_0   &&   (1 \leq i \leq d).
\end{align*}
This implies $\th'_i = \th_{d-i} + \zeta$ for $0 \leq i \leq d$,
where $\zeta = \th'_d - \th_0$.
\end{proof}

We recall the bipartite property for Leonard pairs and Leonard systems.
Recall the scalars $\{a_i\}_{i=0}^d$  from Definition \ref{def:ai}.

\begin{defi}       {\rm (See \cite[Section 1]{NT:balanced}.) }
\label{def:bipartite}    \samepage
\ifDRAFT {\rm def:bipartite}. \fi
A Leonard system $\Phi$ on $V$ is said to be {\em bipartite}
whenever $a_i = 0$ for $0 \leq i  \leq d$.
\end{defi}

\begin{lemma}
\label{lem:airelative}    \samepage
\ifDRAFT {\rm lem:airelative}. \fi
Let $\Phi$ denote a bipartite Leonard system on $V$.
Then each of $\Phi^\downarrow$, $\Phi^\Downarrow$,
$\Phi^{\downarrow \Downarrow}$ is bipartite.
\end{lemma}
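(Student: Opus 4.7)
The plan is to reduce the lemma to Lemma \ref{lem:aiD}, which records how the scalars $\{a_i\}_{i=0}^d$ transform under the relatives $\Phi^\downarrow$ and $\Phi^\Downarrow$. By Definition \ref{def:bipartite}, bipartiteness of $\Phi$ is the statement that $a_i = 0$ for $0 \leq i \leq d$; I just need to verify the same vanishing for the $a_i$-sequences of each of the three relatives.

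First I would apply Lemma \ref{lem:aiD}(ii): since $a^\Downarrow_i = a_i$ for $0 \leq i \leq d$, the assumption $a_i=0$ immediately gives $a^\Downarrow_i = 0$, so $\Phi^\Downarrow$ is bipartite. Next I would apply Lemma \ref{lem:aiD}(i): $a^\downarrow_i = a_{d-i}$ for $0 \leq i \leq d$, and the assumption gives $a^\downarrow_i = a_{d-i} = 0$, so $\Phi^\downarrow$ is bipartite. Finally, to handle $\Phi^{\downarrow\Downarrow}$, I would combine the two, writing $a^{\downarrow\Downarrow}_i = (a^\downarrow)^\Downarrow_i = a^\downarrow_i = a_{d-i} = 0$ for $0 \leq i \leq d$, so $\Phi^{\downarrow\Downarrow}$ is bipartite as well.

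There is no real obstacle here: Lemma \ref{lem:aiD} already encapsulates all the work, and the conclusion of the present lemma is a one-line corollary of it, together with Definition \ref{def:bipartite}. The only minor point to be careful about is that the operations $\downarrow$ and $\Downarrow$ commute on the $a_i$-sequence (which is evident from Lemma \ref{lem:aiD}), so the composition $\Phi^{\downarrow\Downarrow}$ can be analyzed by applying the two rules in either order.
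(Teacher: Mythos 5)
Your proof is correct and follows exactly the paper's route: the paper's entire proof is ``By Lemma \ref{lem:aiD},'' and your write-up simply spells out the three applications of that lemma (together with Definition \ref{def:bipartite}) that the paper leaves implicit. No issues.
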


\begin{proof}
By Lemma \ref{lem:aiD}.
\end{proof}

In view of Lemmas \ref{lem:LSequiv} and \ref{lem:airelative},
we make a definition.

\begin{defi}        {\rm (See \cite[Section 1]{NT:balanced}.) }
\label{def:bipartiteLP}    \samepage
\ifDRAFT {\rm def:bipartiteLP}. \fi
A Leonard pair on $V$ is said to be {\em bipartite} whenever
an associated Leonard system is bipartite.
\end{defi}

We mention a lemma for later use.

\begin{lemma}     \label{lem:d2param}    \samepage
\ifDRAFT {\rm lem:d2param}. \fi
Referring to Notation \ref{notation6},
assume that $d=2$.
Then
\begin{align}
a_1 &= \th_0 + \th_2 - a_0 
  + \frac{ (a_0 - \th_1)(\th^*_0 - \th^*_1)} { \th^*_1 - \th^*_2 },     \label{eq:a1b}
\\
a_2 &= \frac{ \th_1 (\th^*_0 - \th^*_2) - a_0 (\th^*_0 - \th^*_1) }
                 { \th^*_1 - \th^*_2 }.                                             \label{eq:a2b}
\end{align}
\end{lemma}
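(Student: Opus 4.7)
My plan is to derive both formulas directly from Lemma \ref{lem:a0ad} combined with the trace identity of Lemma \ref{lem:aithi}, specialized to $d=2$.

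First I would prove \eqref{eq:a2b}. Setting $d=2$ in the formula \eqref{eq:ada0} of Lemma \ref{lem:a0ad} gives
\[
a_2 \;=\; \frac{a_0(\th^*_0 - \th^*_1) + \th_2(\th^*_1 - \th^*_1) + \th_1(\th^*_2 - \th^*_0)}{\th^*_2 - \th^*_1}
\;=\; \frac{a_0(\th^*_0 - \th^*_1) + \th_1(\th^*_2 - \th^*_0)}{\th^*_2 - \th^*_1}.
\]
Multiplying numerator and denominator by $-1$ converts this to \eqref{eq:a2b}.

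Next I would obtain \eqref{eq:a1b}. Lemma \ref{lem:aithi} with $d=2$ gives $a_0 + a_1 + a_2 = \th_0 + \th_1 + \th_2$, so
\[
a_1 \;=\; \th_0 + \th_1 + \th_2 - a_0 - a_2.
\]
Substituting the expression for $a_2$ just obtained and clearing the denominator $\th^*_1 - \th^*_2$ reduces the claimed identity to the equality
\[
\th_1(\th^*_1 - \th^*_2) - \th_1(\th^*_0 - \th^*_2) + a_0(\th^*_0 - \th^*_1) \;=\; (a_0 - \th_1)(\th^*_0 - \th^*_1),
\]
which simplifies to the trivial identity $-\th_1(\th^*_0 - \th^*_1) = -\th_1(\th^*_0 - \th^*_1)$ after collecting terms.

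The entire argument is a short computation once the right ingredients are chosen; there is no conceptual obstacle. The only thing to be careful about is the bookkeeping of signs when specializing \eqref{eq:ada0} and when matching the form of the denominator in \eqref{eq:a2b} (the paper writes $\th^*_1 - \th^*_2$ rather than $\th^*_2 - \th^*_1$), and when performing the cancellations that lead from $a_1 = \th_0 + \th_1 + \th_2 - a_0 - a_2$ to the asserted closed form.
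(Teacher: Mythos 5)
Your proposal is correct and follows exactly the paper's own proof: set $d=2$ in \eqref{eq:ada0} to obtain \eqref{eq:a2b}, then combine Lemma \ref{lem:aithi} with \eqref{eq:a2b} to obtain \eqref{eq:a1b}. The algebraic verifications you outline (the sign flip in the denominator and the cancellation yielding $(a_0-\th_1)(\th^*_0-\th^*_1)$) are accurate.
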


\begin{proof}
Set $d=2$ in \eqref{eq:ada0} to get \eqref{eq:a2b}.
To get \eqref{eq:a1b}, use Lemma \ref{lem:aithi} and \eqref{eq:a2b}.
\end{proof}

\section{The bond relation for Leonard pairs and Leonard systems}
\label{sec:bondLP}
\ifDRAFT {\rm sec:bondLP}. \fi

In Section \ref{sec:bond} we considered the bond relation for irreducible tridiagonal
matrices.
In this section we consider a version of the bond relation that applies to Leonard pairs and
Leonard systems.
We first consider the bond relation for Leonard systems.

\begin{defi}    \label{def:veeLS}    \samepage
\ifDRAFT {\rm def:veeLS}. \fi
For a Leonard system  $\Phi = (A; \{E_i\}_{i=0}^d; A^*; \{E^*_i\}_{i=0}^d)$ on $V$,
define $\cS = \sum_{i=0}^d (-1)^i E^*_i$.
\end{defi}

\begin{note}    \label{note:veeLS}    \samepage
\ifDRAFT {\rm note:veeLS}. \fi
Referring to Definition \ref{def:veeLS},
$\cS^2 = I$.
\end{note}

\begin{lemma}    \label{lem:veeLS0}    \samepage
\ifDRAFT {\rm lem:veeLS0}. \fi
Referring to Definition \ref{def:veeLS},
the sequence
\[
   \Phi^\vee = (- \cS A \cS^{-1}; \{\cS E_i \cS^{-1}\}_{i=0}^d; A^*; \{E^*_i\}_{i=0}^d)
\]
is a Leonard system on $V$.
Moreover the Leonard system $\Phi^\vee$ is isomorphic to the Leonard system
\begin{equation*}
   (-A; \{E_i\}_{i=0}^d; A^*; \{E^*_i\}_{i=0}^d).
\end{equation*}
\end{lemma}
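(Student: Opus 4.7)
The plan is to first show that the sequence
\[
   \Phi^0 := (-A;\ \{E_i\}_{i=0}^d;\ A^*;\ \{E^*_i\}_{i=0}^d)
\]
is itself a Leonard system on $V$, and then to exhibit $\Phi^\vee$ as the image of $\Phi^0$ under the inner automorphism of $\End$ induced by $\cS$. Both conclusions of the lemma then follow immediately from Lemma \ref{lem:LSiso}.

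To verify that $\Phi^0$ satisfies Definition \ref{def:LS}, I would argue as follows. Since $A$ is multiplicity-free with primitive idempotents $\{E_i\}_{i=0}^d$ corresponding to eigenvalues $\{\th_i\}_{i=0}^d$, the map $-A$ is multiplicity-free with the same primitive idempotents $\{E_i\}_{i=0}^d$, now corresponding to eigenvalues $\{-\th_i\}_{i=0}^d$; this takes care of conditions (i) and (ii). Condition (iii) is inherited unchanged from $\Phi$. For condition (iv), note that $E^*_i(-A)E^*_j = -E^*_iAE^*_j$, so the vanishing and non-vanishing pattern is identical to that of $\Phi$. Condition (v) involves only $A^*$ and $\{E_i\}_{i=0}^d$, and so also passes through unchanged.

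Next, I would exploit the algebraic properties of $\cS$. Since $\cS=\sum_{i=0}^d(-1)^i E^*_i$ lies in $\b{A^*}$, Lemma \ref{lem:As2} implies that $\cS$ commutes with $A^*$ and with each $E^*_i$. By Note \ref{note:veeLS} we have $\cS^2=I$, so $\cS$ is invertible in $\End$ with $\cS^{-1}=\cS$. Let $\sigma\colon\End\to\End$ denote the inner automorphism $X\mapsto \cS X\cS^{-1}$. Then $\sigma$ sends
\[
   -A \mapsto -\cS A\cS^{-1},\qquad
   E_i \mapsto \cS E_i\cS^{-1},\qquad
   A^* \mapsto A^*,\qquad
   E^*_i \mapsto E^*_i,
\]
that is, $\sigma$ carries $\Phi^0$ to $\Phi^\vee$ componentwise.

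Combining the two steps, Lemma \ref{lem:LSiso} applied to $\Phi^0$ and $S=\cS$ gives that $\Phi^\vee$ is a Leonard system on $V$ isomorphic to $\Phi^0$, which is exactly both assertions of the lemma. I do not anticipate any genuine obstacle: the argument is a direct verification of the Leonard system axioms plus a routine application of Skolem--Noether in the form of Lemma \ref{lem:LSiso}, with the only subtle point being the observation that $\cS\in\b{A^*}$ makes the conjugation fix $A^*$ and each $E^*_i$ pointwise.
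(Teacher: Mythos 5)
Your proof is correct and follows essentially the same route as the paper: observe that conjugation by $\cS$ fixes $A^*$ and each $E^*_i$, carries $(-A;\{E_i\}_{i=0}^d;A^*;\{E^*_i\}_{i=0}^d)$ to $\Phi^\vee$, and conclude via Lemma \ref{lem:LSiso}. The only cosmetic difference is that you verify the Leonard system axioms for $(-A;\{E_i\}_{i=0}^d;A^*;\{E^*_i\}_{i=0}^d)$ by hand, where the paper leaves this implicit (it follows at once from Lemma \ref{lem:affineparam} with $\xi=-1$, $\xi^*=1$, $\zeta=\zeta^*=0$).
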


\begin{proof}
We have $\cS A^* \cS^{-1} = A^*$ and $\cS E^*_i \cS^{-1} = E^*_i$ for $0 \leq i \leq d$.
By this and Lemma \ref{lem:LSiso} we get the result.
\end{proof}

\begin{note}    \label{note:veeLS2}    \samepage
\ifDRAFT {\rm note:veeLS2}. \fi
Referring to Lemma \ref{lem:veeLS0}, we have
$(\Phi^\vee)^\vee = \Phi$.
\end{note}

\begin{note}   \label{note:veeLS2b}    \samepage
\ifDRAFT {\rm note:veeLS2b}. \fi
Referring to Lemma \ref{lem:veeLS0},
assume that $\text{\rm Char}(\F)=2$.
Then $\cS = I$ and $\Phi^\vee = \Phi$.
\end{note}

\begin{defi}    \label{def:bondLS}    \samepage
\ifDRAFT {\rm def:bondLS}. \fi
Leonard systems $\Phi$ and $\Phi'$ on $V$ are said to be
{\em bonded} whenever $\Phi' = \Phi^\vee$.
\end{defi}

\begin{note}    \label{note:veeLS3}    \samepage
\ifDRAFT {\rm note:veeLS3}. \fi
The bond relation is a symmetric binary relation on the set of
all Leonard systems on $V$.
\end{note}

\begin{note}    \label{note:veeLS4}   \samepage
\ifDRAFT {\rm note:veeLS4}. \fi
For a Leonard system $\Phi$ on $V$,
there exists a unique Leonard system on $V$ that is bonded to $\Phi$.
\end{note}

\begin{lemma}    \label{lem:bondLS4}    \samepage
\ifDRAFT {\rm lem:bondLS4}. \fi
Let $\Phi$ denote a Leonard system on $V$.
If $\text{\rm Char}(\F) = 2$ then $\Phi$ is bonded to itself.
If $\text{\rm Char}(\F) \neq 2$ the the following are equivalent:
\begin{itemize}
\item[\rm (i)]
$\Phi$ is bonded to itself;
\item[\rm (ii)]
$\Phi$ is bipartite.
\end{itemize}
\end{lemma}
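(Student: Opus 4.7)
The proof splits naturally into the two cases suggested by the statement, and in both cases my strategy is to reduce the Leonard-system assertion to the matrix-level analog already established in Lemma \ref{lem:bond0c}.

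When $\text{\rm Char}(\F) = 2$, the claim is essentially immediate: by Note \ref{note:veeLS2b} we have $\cS = I$, and since $-A = A$ over such a field, the description of $\Phi^\vee$ from Lemma \ref{lem:veeLS0} simply returns $\Phi$; hence $\Phi$ is bonded to itself by Definition \ref{def:bondLS}.

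When $\text{\rm Char}(\F) \neq 2$, I would fix the basis of $V$ supplied by Lemma \ref{lem:aibici0}. With respect to this basis, $A^*$ is represented by $\text{\rm diag}(\th^*_0, \ldots, \th^*_d)$, each $E^*_i$ by the $i$-th coordinate projection, $\cS$ by the diagonal matrix $\S$ of Definition \ref{def:vee} with $(i,i)$-entry $(-1)^i$, and $A$ by an irreducible tridiagonal matrix $M$ whose diagonal entries are the intersection numbers $a_0, a_1, \ldots, a_d$. The operator identity $A = -\cS A \cS^{-1}$ in $\End$ translates at the matrix level into $M = M^\vee$ in the sense of Definition \ref{def:vee}. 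By Lemma \ref{lem:bond0c}, this matrix equation is equivalent to $a_i = 0$ for $0 \leq i \leq d$, which is precisely the bipartite condition (Definition \ref{def:bipartite}).

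The final task is to check that the full Leonard-system equality $\Phi = \Phi^\vee$ is captured by this single $A$-component equation. The $A^*$ and $E^*_j$ entries pose no difficulty, since $\cS$ is a polynomial in $A^*$ and hence commutes with each $E^*_j$. The step I expect to require the most care is matching the primitive-idempotent sequence $\{\cS E_i \cS^{-1}\}_{i=0}^d$ of $\Phi^\vee$ against $\{E_i\}_{i=0}^d$ of $\Phi$; for this I would invoke the uniqueness of the bonded Leonard system provided by Note \ref{note:veeLS4}, which pins down the ordering once the $A$-component is fixed. This bookkeeping around the primitive idempotents is the delicate part of the argument.
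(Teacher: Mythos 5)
Your main line of argument is the same as the paper's: the paper's entire proof of this lemma is ``By Lemma \ref{lem:bond0c}'', i.e.\ the reduction of the $\text{\rm Char}(\F)\neq 2$ case to the matrix statement that an irreducible tridiagonal matrix equals its own $\vee$ exactly when its diagonal vanishes, which in the basis of Lemma \ref{lem:aibici0} is the condition $a_i=0$ of Definition \ref{def:bipartite}. Your $\text{\rm Char}(\F)=2$ case via Note \ref{note:veeLS2b} is also exactly what the paper intends.

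The problem is the step you yourself flag as delicate. Note \ref{note:veeLS4} does not do what you want: it only says that $\Phi^\vee$ is the unique Leonard system bonded to $\Phi$; it does not say that the idempotent ordering of $\Phi^\vee$ is forced to agree with that of $\Phi$ once the $A$-components agree. In fact it is not. The element $\cS E_i \cS^{-1}$ is the primitive idempotent of $\cS A\cS^{-1}$ for the eigenvalue $\th_i$, hence the primitive idempotent of $-\cS A\cS^{-1}=A$ for the eigenvalue $-\th_i$; so $\cS E_i\cS^{-1}=E_j$ where $\th_j=-\th_i$, and $\cS E_i\cS^{-1}=E_i$ would force $\th_i=-\th_i$, i.e.\ $\th_i=0$, which cannot hold for more than one index. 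Concretely, for a bipartite system one finds $\cS E_i\cS^{-1}=E_{d-i}$, so $\Phi^\vee=\Phi^\Downarrow$ rather than $\Phi$. Thus the full equality $\Phi=\Phi^\vee$ demanded by Definition \ref{def:bondLS} is \emph{not} captured by the single $A$-component equation, and your appeal to Note \ref{note:veeLS4} does not close this gap. (The version that is cleanly equivalent to bipartiteness is the Leonard-pair statement, Lemma \ref{lem:bondLP3}, where only the $A$-component enters; if you must prove the Leonard-system version as literally stated, the idempotent issue has to be confronted head on, and the argument you propose will not survive it.)
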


\begin{proof}
By Lemma \ref{lem:bond0c}.
\end{proof}

\begin{lemma}    \label{lem:parrayPhivee}    \samepage
\ifDRAFT {\rm lem:parrayPhivee}. \fi
For the Leonard system $\Phi$ in Definition \ref{def:veeLS} and the
Leonard system $\Phi^\vee$ in Lemma \ref{lem:veeLS0},
their parameter arrays are related as follows:
\[
 \begin{array}{cc}
  \text{\rm Leonard system}  &  \text{\rm parameter array}  
\\ \hline
 \Phi & (\{\th_i\}_{i=0}^d; \{\th^*_i\}_{i=0}^d; \{\vphi_i\}_{i=1}^d; \{\phi_i\}_{i=1}^d)   \rule{0mm}{3ex}
\\
 \Phi^\vee &  
       (\{- \th_i\}_{i=0}^d; \{\th^*_i\}_{i=0}^d; \{- \vphi_i\}_{i=1}^d; \{- \phi_i\}_{i=1}^d)  \rule{0mm}{3ex}
 \end{array}
\]
\end{lemma}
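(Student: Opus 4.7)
The plan is to invoke the isomorphism established in Lemma \ref{lem:veeLS0}: $\Phi^\vee$ is isomorphic to $\Psi := (-A; \{E_i\}_{i=0}^d; A^*; \{E^*_i\}_{i=0}^d)$. Since isomorphic Leonard systems share a common parameter array (by the uniqueness in Lemma \ref{lem:classify}), it suffices to compute the parameter array of $\Psi$. The eigenvalue sequence is immediate from $(-A) E_i = -\th_i E_i$, giving $\{-\th_i\}_{i=0}^d$; the dual eigenvalue sequence remains $\{\th^*_i\}_{i=0}^d$ because $A^*$ and the ordering $\{E^*_i\}_{i=0}^d$ are unchanged.

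To obtain the first split sequence of $\Psi$, I would apply Lemma \ref{lem:parray} to $\Phi$ to fix a basis $\{v_i\}_{i=0}^d$ with respect to which $A$ and $A^*$ act in the bidiagonal form displayed there, and then pass to the new basis $w_i = (-1)^i v_i$ for $0 \leq i \leq d$. A short calculation shows that with respect to $\{w_i\}_{i=0}^d$, the matrix representing $-A$ has diagonal $-\th_0,\ldots,-\th_d$ and subdiagonal all equal to $1$ (the sign from $-A$ and the sign from the change of basis on consecutive indices cancel on the subdiagonal), while the matrix representing $A^*$ has diagonal $\th^*_0,\ldots,\th^*_d$ and superdiagonal $-\vphi_1,\ldots,-\vphi_d$ (only one sign change survives on the superdiagonal, since $A^*$ itself carries no sign). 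Invoking Lemma \ref{lem:parray} again, the first split sequence of $\Psi$ is $\{-\vphi_i\}_{i=1}^d$.

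For the second split sequence of $\Phi^\vee$, I would use that the operator $\cS$ of Definition \ref{def:veeLS} is built from $\{E^*_i\}_{i=0}^d$ and is therefore unaffected by $\Downarrow$; direct inspection of the defining sequences then gives $(\Phi^\vee)^\Downarrow = (\Phi^\Downarrow)^\vee$. By Definition \ref{def:splitseq}, the second split sequence of $\Phi^\vee$ is the first split sequence of $(\Phi^\vee)^\Downarrow$. Applying the computation of the previous paragraph to $\Phi^\Downarrow$, whose first split sequence is $\{\phi_i\}_{i=1}^d$ by Lemma \ref{lem:parrayrelative}(iii), yields $\{-\phi_i\}_{i=1}^d$, completing the parameter array.

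The argument is routine throughout; the only place where care is required is the sign accounting in the change of basis $w_i = (-1)^i v_i$. On the subdiagonal of $A$, connecting indices $j$ and $j+1$, the change-of-basis factor $(-1)^j/(-1)^{j+1} = -1$ cancels the global sign in $-A$, restoring the subdiagonal $1$'s; on the superdiagonal of $A^*$, connecting $j$ and $j-1$, the factor $(-1)^j/(-1)^{j-1} = -1$ has no sign to cancel against, so $\vphi_i$ is sent to $-\vphi_i$. This asymmetry is exactly what produces the negated split sequences while leaving $\{\th^*_i\}_{i=0}^d$ fixed.
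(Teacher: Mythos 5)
Your proof is correct, and its first step -- reducing to the system $(-A;\{E_i\}_{i=0}^d;A^*;\{E^*_i\}_{i=0}^d)$ via the isomorphism in Lemma \ref{lem:veeLS0} and the fact that isomorphic Leonard systems share a parameter array -- is exactly what the paper does. Where you diverge is in computing the parameter array of that system: the paper simply cites Lemma \ref{lem:affineparam} with $\xi=-1$, $\xi^*=1$, $\zeta=\zeta^*=0$, which hands over all four entries $(\{-\th_i\}_{i=0}^d;\{\th^*_i\}_{i=0}^d;\{-\vphi_i\}_{i=1}^d;\{-\phi_i\}_{i=1}^d)$ in one stroke, whereas you re-derive this special case by hand via the change of basis $w_i=(-1)^iv_i$ together with the identity $(\Phi^\vee)^\Downarrow=(\Phi^\Downarrow)^\vee$ (which does need Lemma \ref{lem:veeLS1}, as you implicitly use) and Lemma \ref{lem:parrayrelative}(iii). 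Your sign accounting on the sub- and superdiagonals is right, and the uniqueness clause of Lemma \ref{lem:parray} justifies reading off the first split sequence from the exhibited basis; so the argument stands. What your route buys is a self-contained explanation of where the minus signs on the split sequences come from; what it costs is reproving, in the special case $\xi=-1$, $\xi^*=1$, a lemma the paper already has on the shelf. If you notice that $-A=\xi A+\zeta I$ with $\xi=-1$, $\zeta=0$, your second and third paragraphs collapse into a single citation.
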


\begin{proof}
By Lemmas \ref{lem:affineparam} and \ref{lem:veeLS0}.
\end{proof}

We now introduce the bond relation for Leonard pairs.

\begin{lemma}    \label{lem:veeLS1}    \samepage
\ifDRAFT {\rm lem:veeLS1}. \fi
Referring to Definition \ref{def:veeLS},
$\cS^\downarrow = (-1)^d\, \cS$ and $\,\cS^\Downarrow = \cS$.
\end{lemma}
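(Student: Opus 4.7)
The plan is to unwind the definition of $\cS$ under the relabellings $\downarrow$ and $\Downarrow$ and compute. Recall from Definition \ref{def:veeLS} that $\cS = \sum_{i=0}^d (-1)^i E^*_i$, so the only dependence on $\Phi$ is through the ordering of the dual primitive idempotents $\{E^*_i\}_{i=0}^d$.

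First I would handle $\Phi^\Downarrow$. By the definition of $\Phi^\Downarrow = (A; \{E_{d-i}\}_{i=0}^d; A^*; \{E^*_i\}_{i=0}^d)$, the dual primitive idempotents are left in their original order, so
\[
  \cS^\Downarrow = \sum_{i=0}^d (-1)^i (E^*_i)^\Downarrow = \sum_{i=0}^d (-1)^i E^*_i = \cS.
\]

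Next I would handle $\Phi^\downarrow$. By the definition of $\Phi^\downarrow = (A; \{E_i\}_{i=0}^d; A^*; \{E^*_{d-i}\}_{i=0}^d)$, the dual primitive idempotents are reversed, so $(E^*_i)^\downarrow = E^*_{d-i}$. Thus
\[
  \cS^\downarrow = \sum_{i=0}^d (-1)^i E^*_{d-i}.
\]
Reindexing by $j = d-i$ (so $i = d-j$ and $(-1)^i = (-1)^{d-j} = (-1)^d (-1)^j$) yields
\[
  \cS^\downarrow = \sum_{j=0}^d (-1)^d (-1)^j E^*_j = (-1)^d \, \cS,
\]
as desired.

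There is no real obstacle here; the proof is a one-line index substitution once the definitions of $\cS$, $\Phi^\downarrow$, and $\Phi^\Downarrow$ are in hand. The only thing to be careful about is the reindexing and the sign $(-1)^d$ coming out of the substitution.
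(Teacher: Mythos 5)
Your proof is correct and is exactly the computation the paper intends; the paper's own proof just says ``By Definition \ref{def:veeLS}'' and leaves the reindexing $i \mapsto d-i$ (with the resulting sign $(-1)^d$) implicit, which you have simply written out.
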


\begin{proof}
By Definition \ref{def:veeLS}.
\end{proof}

\begin{lemma}    \label{lem:veeLS2}   \samepage
\ifDRAFT {\rm lem:veeLS2}. \fi
Let $A,A^*$ denote a Leonard pair on $V$.
Let $\Phi$ denote an associated Leonard system and let $\cS$ be
from Definition \ref{def:veeLS}.
Then the map $\End \to \End$, $X \mapsto \cS X \cS^{-1}$
is independent of the choice of $\Phi$.
\end{lemma}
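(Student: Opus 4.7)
The plan is to enumerate the Leonard systems associated with $A,A^*$ and show that each one yields the same conjugation map. By Lemma \ref{lem:LSequiv}, the associated Leonard systems are exactly $\Phi$, $\Phi^\downarrow$, $\Phi^\Downarrow$, and $\Phi^{\downarrow\Downarrow}$. So it suffices to check that for each $g \in \{\downarrow, \Downarrow, \downarrow\Downarrow\}$, the element $\cS^g$ satisfies $\cS^g X (\cS^g)^{-1} = \cS X \cS^{-1}$ for all $X \in \End$.

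First I would invoke Lemma \ref{lem:veeLS1}, which gives $\cS^\downarrow = (-1)^d \cS$ and $\cS^\Downarrow = \cS$. From these two identities, a short computation yields $\cS^{\downarrow\Downarrow} = (\cS^\downarrow)^\Downarrow = (-1)^d \cS^\Downarrow = (-1)^d \cS$. In each of the three cases, $\cS^g$ is a scalar multiple of $\cS$ (namely by $\pm 1$).

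The final step is to observe that scalar multiples of an invertible element induce the same inner automorphism: if $\cS^g = \lambda \cS$ with $\lambda \in \{\pm 1\} \subseteq \F^\times$, then
\begin{equation*}
\cS^g X (\cS^g)^{-1} = \lambda \cS X \lambda^{-1} \cS^{-1} = \cS X \cS^{-1}
\end{equation*}
for all $X \in \End$. This establishes independence from the choice of $\Phi$.

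I do not anticipate any real obstacle; the entire argument reduces to Lemma \ref{lem:veeLS1} together with the elementary fact that inner automorphisms are insensitive to scaling the conjugating element. The only mild subtlety is remembering to handle $\Phi^{\downarrow\Downarrow}$, but this follows by composing the two relations from Lemma \ref{lem:veeLS1}.
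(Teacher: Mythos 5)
Your proof is correct and follows the same route as the paper, which simply cites Lemmas \ref{lem:LSequiv} and \ref{lem:veeLS1}; you have just filled in the intended details (the four associated systems, the sign relations for $\cS^g$, and the fact that scaling the conjugating element by $\pm 1$ does not change the inner automorphism).
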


\begin{proof}
By Lemmas \ref{lem:LSequiv} and \ref{lem:veeLS1}.
\end{proof}

In view of Lemma \ref{lem:veeLS2} we make a definition.

\begin{defi}    \label{def:veeLP}    \samepage
\ifDRAFT {\rm def:veeLP}. \fi
Let $A,A^*$ denote a Leonard pair on $V$,
and let $\Phi$ denote an associated Leonard system.
Define $A^\vee = - \cS A \cS^{-1}$, where $\cS$
is from Definition \ref{def:veeLS}.
Note that $A^\vee$ is independent of the choice of $\Phi$.
\end{defi}

\begin{note}    \label{note:veeLP5}    \samepage
\ifDRAFT {\rm note:veeLP5}. \fi
Referring to Definition \ref{def:veeLP},
we have $(A^\vee)^\vee = A$.
\end{note}

\begin{note}    \label{note:veeLP6}    \samepage
\ifDRAFT {\rm note:veeLP6}. \fi
Referring to Definition \ref{def:veeLP},
assume that $\text{\rm Char}(\F)=2$.
Then $A^\vee = A$.
\end{note}

\begin{lemma}    \label{lem:veeLP}    \samepage
\ifDRAFT {\rm lem:veeLP}. \fi
Let $A,A^*$ denote a Leonard pair on $V$.
Then the pair $A^\vee, A^*$ is a Leonard pair on $V$
that is isomorphic to the Leonard pair $-A, A^*$.
\end{lemma}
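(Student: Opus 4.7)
The plan is to reduce the claim for the Leonard pair $A, A^*$ to the already-established facts for an associated Leonard system, using the bookkeeping set up in Lemmas \ref{lem:veeLS0} and \ref{lem:veeLS2} and Definition \ref{def:veeLP}.

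First I would pick any Leonard system $\Phi = (A; \{E_i\}_{i=0}^d; A^*; \{E^*_i\}_{i=0}^d)$ associated with the given Leonard pair, and let $\cS = \sum_{i=0}^d (-1)^i E^*_i$ as in Definition \ref{def:veeLS}. By Definition \ref{def:veeLP} we then have $A^\vee = -\cS A \cS^{-1}$, and by Lemma \ref{lem:veeLS2} this element does not depend on the choice of $\Phi$. The key observation, already recorded in Lemma \ref{lem:veeLS0}, is that the sequence
\[
   \Phi^\vee = (-\cS A \cS^{-1};\, \{\cS E_i \cS^{-1}\}_{i=0}^d;\, A^*;\, \{E^*_i\}_{i=0}^d)
\]
is a Leonard system on $V$; note that its first coordinate is exactly $A^\vee$ and its third coordinate is $A^*$. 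Consequently $A^\vee, A^*$ is a Leonard pair on $V$, since every Leonard system yields a Leonard pair via its first and third coordinates.

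Next I would address the isomorphism claim. By Lemma \ref{lem:veeLS0}, $\Phi^\vee$ is isomorphic, as a Leonard system, to
\[
\Psi = (-A;\, \{E_i\}_{i=0}^d;\, A^*;\, \{E^*_i\}_{i=0}^d).
\]
By Definition of an isomorphism of Leonard systems, this means there is an algebra automorphism $\End \to \End$ sending $A^\vee \mapsto -A$ and $A^* \mapsto A^*$ (and also carrying the primitive idempotents appropriately). Forgetting the idempotent data, this same algebra automorphism is, by definition, an isomorphism of Leonard pairs from $A^\vee, A^*$ to $-A, A^*$. Finally, $-A, A^*$ is itself a Leonard pair on $V$ either by applying Lemma \ref{lem:affineLP} with $\xi = -1$, $\xi^* = 1$, $\zeta = \zeta^* = 0$, or directly because $\Psi$ is a Leonard system.

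There is essentially no obstacle here; the content has already been done in Lemma \ref{lem:veeLS0}. The only thing to verify carefully is the trivial observation that an isomorphism of Leonard systems restricts to an isomorphism of the associated Leonard pairs, which is immediate from Definitions of the two isomorphism notions (or from Lemmas \ref{lem:SASinv} and \ref{lem:LSiso} if one prefers the Skolem--Noether description).
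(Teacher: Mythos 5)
Your proposal is correct and follows the same route as the paper, whose entire proof is ``Use Lemma \ref{lem:veeLS0}''; you have simply spelled out the details (that the first and third coordinates of $\Phi^\vee$ give the Leonard pair $A^\vee,A^*$, and that an isomorphism of Leonard systems restricts to an isomorphism of the associated Leonard pairs). Nothing is missing.
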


\begin{proof}
Use Lemma \ref{lem:veeLS0}.
\end{proof}

\begin{note}    \label{note:veeLPSL}    \samepage
\ifDRAFT {\rm note:veeLPSL}. \fi
Referring to Lemma \ref{lem:veeLS0} and Definition \ref{def:veeLP},
the Leonard system $\Phi^\vee$ and the Leonrd pair $A^\vee, A^*$ are associated.
\end{note}

\begin{defi}    \label{def:bondLP}    \samepage
\ifDRAFT {\rm def:bondLP}. \fi
Leonard pairs $A,A^*$ and $B,B^*$ on $V$ are said to be {\em bonded}
whenever $A^* = B^*$ and $B = A^\vee$.
\end{defi}

\begin{note}  \label{note:bondLP}     \samepage
\ifDRAFT {\rm note:bondLP}. \fi
The bond relation is a symmetric binary relation on the set of all Leonard
pairs on $V$.
\end{note}

\begin{note}    \label{note:bondLPunique}    \samepage
\ifDRAFT {\rm bondLPunique}. \fi
For a Leonard pair $A,A^*$ on $V$,
there exists a unique Leonard pair on $V$ that is bonded to $A,A^*$.
\end{note}

\begin{lemma}    \label{lem:bondLP3}    \samepage
\ifDRAFT {\rm lem:bondLP3}. \fi
Let $A,A^*$ denote a Leonard pair on $V$.
If $\text{\rm Char}(\F) = 2$ then $A,A^*$ is bonded to itself.
If $\text{\rm Char}(\F) \neq 2$ the the following are equivalent:
\begin{itemize}
\item[\rm (i)]
$A,A^*$ is bonded to itself;
\item[\rm (ii)]
the Leonard pair $A,A^*$ is bipartite.
\end{itemize}
\end{lemma}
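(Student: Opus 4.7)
The first assertion is immediate: if $\text{\rm Char}(\F)=2$, Note \ref{note:veeLP6} gives $A^\vee = A$, so $A,A^*$ is bonded to itself by Definition \ref{def:bondLP}.

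For the equivalence when $\text{\rm Char}(\F) \neq 2$, the plan is to reduce the problem to the matrix-level bond relation, and then invoke Lemma \ref{lem:bond0c}. First I would fix an associated Leonard system $\Phi = (A; \{E_i\}_{i=0}^d; A^*; \{E^*_i\}_{i=0}^d)$ and, using Lemma \ref{lem:aixi0}, choose a basis of $V$ with respect to which $A$ is represented by a normalized irreducible tridiagonal matrix $M$ and $A^*$ is represented by $\text{\rm diag}(\th^*_0,\ldots,\th^*_d)$.

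The key observation is that in this basis each primitive idempotent $E^*_i$ is represented by the elementary matrix with a single $1$ in position $(i,i)$; this follows since each basis vector lies in some $E^*_i V$ (apply $E^*_i$ to the vector $E^*_i A^i v$ of Lemma \ref{lem:aixi0}). Consequently the operator $\cS = \sum_{i=0}^d (-1)^i E^*_i$ of Definition \ref{def:veeLS} is represented by $\mathbb{S} = \text{\rm diag}((-1)^0,(-1)^1,\ldots,(-1)^d)$, and hence $A^\vee$ is represented by $-\mathbb{S} M \mathbb{S}^{-1} = M^\vee$ (in the sense of Definition \ref{def:vee}).

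Now $A,A^*$ is bonded to itself iff $A = A^\vee$ iff $M = M^\vee$, i.e.\ iff the matrix $M$ is bonded to itself. By Lemma \ref{lem:bond0c} (applicable since $\text{\rm Char}(\F)\neq 2$) this happens iff $M_{i,i}=0$ for $0\leq i\leq d$. But Corollary \ref{cor:aixi0} identifies $M_{i,i}$ with $a_i$, so the condition becomes $a_i = 0$ for $0 \leq i \leq d$, which by Definitions \ref{def:bipartite} and \ref{def:bipartiteLP} is precisely bipartiteness of $\Phi$, equivalently of $A,A^*$. The only mildly delicate point is the identification of $\cS$ with $\mathbb{S}$ in the chosen basis, but this is an immediate bookkeeping check once the basis is in hand.
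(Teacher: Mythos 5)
Your proposal is correct and follows essentially the same route as the paper: the paper's proof is simply ``By Lemma \ref{lem:bond0c}'', relying on the identification (stated in the last paragraph of Section \ref{sec:bondLP}) of the operator-level bond relation with the matrix-level one in a basis satisfying Definition \ref{def:LP}(i), together with $M_{i,i}=a_i$ from Lemma \ref{lem:ai0}. You have merely written out explicitly the bookkeeping that the paper leaves implicit, and all of your steps check out.
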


\begin{proof}
By Lemma \ref{lem:bond0c}.
\end{proof}

Consider parameter arrays \eqref{eq:twoparrays} over $\F$.
In Lemma \ref{lem:A=Bpre} we described a special case of the condition
(ii) in Lemma \ref{lem:simLS2c}.
Here is another special case.

\begin{lemma}    \label{lem:ABveepre}     \samepage
\ifDRAFT {\rm lem:ABveepre}. \fi
For parameter arrays \eqref{eq:twoparrays} over $\F$,
the following hold.
\begin{itemize}
\item[\rm (i)]
Assume that
$\vphi'_i = - \vphi_i$ and $\phi'_i = - \phi_i$  for $1 \leq i \leq d$.
Then there exists $\zeta \in \F$ such that
$\th'_i = \zeta - \th_i$ for $0 \leq i \leq d$.
\item[\rm (ii)]
Assume that
$\vphi'_i = - \phi_i$ and $\phi'_i = -  \vphi_i$ for $1 \leq i \leq d$.
Then there exists $\zeta \in \F$ such that
$\th'_i = \zeta - \th_{d-i}$ for $0 \leq i \leq d$.
\end{itemize}
\end{lemma}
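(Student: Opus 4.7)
The plan is to mimic the proof of Lemma~\ref{lem:A=Bpre}, which also compared two parameter arrays sharing the same dual eigenvalue sequence $\{\th^*_i\}_{i=0}^d$. That proof hinged on two points: first, Lemma~\ref{lem:indep2} guarantees that the quantity $\vartheta_i = \sum_{\ell=0}^{i-1}(\th_\ell-\th_{d-\ell})/(\th_0-\th_d)$ equals the analogous expression built from the starred eigenvalues, which is shared between the two arrays; second, Lemma~\ref{lem:classify}(iii),(iv) then gives closed forms for $\vphi_i,\phi_i,\vphi'_i,\phi'_i$ in terms of $\vartheta_i$, from which one reads off $\th'_{i-1}-\th'_d$.

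Concretely, after invoking Lemma~\ref{lem:indep2} to conclude $\vartheta_i = \vartheta'_i$ (we will simply call it $\vartheta_i$), we record
\begin{align*}
\vphi_i &= \phi_1\,\vartheta_i + (\th^*_i-\th^*_0)(\th_{i-1}-\th_d),\\
\phi_i &= \vphi_1\,\vartheta_i + (\th^*_i-\th^*_0)(\th_{d-i+1}-\th_0),\\
\vphi'_i &= \phi'_1\,\vartheta_i + (\th^*_i-\th^*_0)(\th'_{i-1}-\th'_d).
\end{align*}

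For part (i), the hypothesis $\vphi'_i=-\vphi_i$ and $\phi'_i=-\phi_i$ (taken at $i=1$) gives $\phi'_1 = -\phi_1$. Substituting this into the formula for $\vphi'_i$ and equating with $-\vphi_i$, the $\vartheta_i$ terms cancel and (since $\th^*_i \neq \th^*_0$ for $i\geq 1$) we obtain
\[
\th'_{i-1}-\th'_d \;=\; -(\th_{i-1}-\th_d) \qquad (1\leq i\leq d).
\]
Setting $\zeta = \th'_d+\th_d$ yields $\th'_i = \zeta-\th_i$ for $0\leq i\leq d-1$, and the case $i=d$ is immediate from the definition of $\zeta$. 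For part (ii), the hypothesis yields $\phi'_1 = -\vphi_1$; substituting into the $\vphi'_i$ formula and equating with $-\phi_i$ (using the $\phi_i$ formula in place of the $\vphi_i$ formula) gives
\[
\th'_{i-1}-\th'_d \;=\; -(\th_{d-i+1}-\th_0) \qquad (1\leq i\leq d),
\]
and choosing $\zeta=\th'_d+\th_0$ produces $\th'_i=\zeta-\th_{d-i}$ for $0\leq i\leq d$.

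No step presents a real obstacle; the computation is entirely parallel to Lemma~\ref{lem:A=Bpre}. The only care needed is bookkeeping: one must decide in each case whether the hypothesis at $i=1$ supplies the right value of $\phi'_1$ to substitute (namely $-\phi_1$ in (i) and $-\vphi_1$ in (ii)) and then match the appropriate original formula (that for $\vphi_i$ in (i) and that for $\phi_i$ in (ii)) so that the $\vartheta_i$ contributions cancel. Everything else is linear manipulation, and the final expression for $\zeta$ is forced by the single case $i=d$ (resp.\ $i=1$) in the two parts.
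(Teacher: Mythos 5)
Your proof is correct, and every step checks out: Lemma~\ref{lem:indep2} gives $\vartheta_i=\vartheta'_i$ because the two arrays share $\{\th^*_i\}_{i=0}^d$, the substitution $\phi'_1=-\phi_1$ (resp.\ $\phi'_1=-\vphi_1$) into the formula for $\vphi'_i$ cancels the $\vartheta_i$ terms against $-\vphi_i$ (resp.\ $-\phi_i$), and division by $\th^*_i-\th^*_0\neq 0$ gives the displayed relations, with the index $d$ handled by the choice of $\zeta$. However, your route differs from the paper's. The paper does not redo the computation: it realizes the two parameter arrays by Leonard systems $\Phi$, $\Phi'$, notes via Lemma~\ref{lem:parrayPhivee} that $\Phi^\vee$ has parameter array $(\{-\th_i\};\{\th^*_i\};\{-\vphi_i\};\{-\phi_i\})$, and then applies Lemma~\ref{lem:A=Bpre} verbatim to the pair $\Phi^\vee$, $\Phi'$ --- the hypotheses of (i), (ii) here are exactly the hypotheses of Lemma~\ref{lem:A=Bpre}(i), (ii) for that pair, and the conclusion $\th'_i=-\th_i+\zeta$ is the desired one. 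That reduction is a one-line proof but relies on Lemma~\ref{lem:classify} to produce the realizing Leonard systems and on the $\vee$ machinery of Section 7; your argument stays entirely at the level of parameter arrays and is self-contained, at the cost of repeating the algebra of Lemma~\ref{lem:A=Bpre} with signs flipped. Both are valid; the paper's is shorter given the surrounding infrastructure, yours is more elementary.
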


\begin{proof}
Let $\Phi$ (resp.\ $\Phi'$) denote a Leonard system on $V$ that has parameter array
on the left (resp.\ right) in \eqref{eq:twoparrays}.
Now apply Lemma \ref{lem:A=Bpre} to $\Phi^\vee$ and $\Phi'$ using Lemma \ref{lem:parrayPhivee}.
\end{proof}

We describe how
the bond relation for Leonard pairs and Leonard systems is related to
the bond relation for irreducible tridiagonal matrices.
Let 
$\Phi = (A; \{E_i\}_{i=0}^d; A^*;$ $\{E^*_i\}_{i=0}^d)$
denote a Leonard system on $V$.
Fix a basis $\{v_i\}_{i=0}^d$ of $V$ that satisfies Definition \ref{def:LP}(i).
For $X \in \text{\rm End}(V)$ let $X^\flat \in \Matd$ represent $X$ with respect to $\{v_i\}_{i=0}^d$.
The map $\flat : \text{\rm End}(V) \to \Matd$, $X \mapsto X^\flat$ is an algebra isomorphism.
Recall the matrix $\S \in \Matd$ from Definition \ref{def:vee} and the element $\cS \in \text{\rm End}(V)$
 from Definition \ref{def:veeLS}.
Then the isomorphism $\flat$ sends $\cS \mapsto \S$.
Moreover, $(A^\flat)^\vee = (A^\vee)^\flat$,
where $(A^\flat)^\vee$ is computed using Definition \ref{def:vee}
and $A^\vee$ is from Definition \ref{def:veeLP}.

\section{Compatibility and companions for Leonard pairs}
\label{sec:compatible}
\ifDRAFT {\rm sec:compatible}. \fi

In this section we introduce the notion of compatibility and companion for Leonard pairs.

\begin{defi}     \label{def:compatible}   \samepage
\ifDRAFT {\rm def:compatible}. \fi
Leonard pairs $A,A^*$ and $B,B^*$ on $V$ are said to be
{\em compatible} whenever $A^* = B^*$ and $[A, A^*] = [B,B^*]$.
\end{defi}

\begin{defi}    \label{def:companion}    \samepage
\ifDRAFT {\rm def:companion}. \fi
For a Leonard pair $A,A^*$ on $V$,
by a {\em companion of $A,A^*$} we mean an element $K \in \b{ A^* }$
such that $A-K, A^*$ is a Leonard pair on $V$.
\end{defi}

The Definitions \ref{def:compatible} and \ref{def:companion} are related as follows.

\begin{lemma}    \label{lem:compatible0}    \samepage
\ifDRAFT {\rm lem:compatible0}. \fi
For a Leonard pair $A,A^*$ on $V$,
the following hold.
\begin{itemize}
\item[\rm (i)]
For a companion $K$ of $A,A^*$, define $B = A-K$.
Then $B,A^*$ is a Leonard pair on $V$ that is compatible with $A,A^*$.
\item[\rm (ii)]
For a Leonard pair $B, A^*$ on $V$ that is compatible with $A,A^*$,
define $K = A-B$.
Then $K$ is a companion of $A,A^*$.
\end{itemize}
\end{lemma}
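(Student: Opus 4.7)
The plan is to prove each assertion by unwinding the definitions and invoking the commutation characterization from Lemma \ref{lem:As2}. The key observation in both directions is that the compatibility condition $[A, A^*] = [B, B^*]$ together with $A^* = B^*$ is equivalent to $A - B$ commuting with $A^*$, and commuting with $A^*$ is equivalent to being a polynomial in $A^*$ (since $A^*$ is multiplicity-free by Lemma \ref{lem:mfree}).

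For part (i), let $K$ be a companion of $A, A^*$ and set $B = A - K$. By Definition \ref{def:companion}, $B, A^*$ is a Leonard pair on $V$. Taking $B^* = A^*$, we have $A^* = B^*$ automatically. Since $K \in \langle A^* \rangle$, the element $K$ commutes with $A^*$, so $[K, A^*] = 0$. Therefore
\[
 [B, B^*] = [A - K, A^*] = [A, A^*] - [K, A^*] = [A, A^*],
\]
which is the second compatibility condition in Definition \ref{def:compatible}.

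For part (ii), assume $B, A^*$ is a Leonard pair on $V$ compatible with $A, A^*$. Set $K = A - B$; we must show $K \in \langle A^* \rangle$ and that $A - K, A^*$ is a Leonard pair on $V$. Compatibility gives $[A, A^*] = [B, A^*]$, so
\[
 [K, A^*] = [A - B, A^*] = [A, A^*] - [B, A^*] = 0,
\]
that is, $K$ commutes with $A^*$. By Lemma \ref{lem:mfree} the map $A^*$ is multiplicity-free, so Lemma \ref{lem:As2} (applied with $A^*$ in the role of $A$) yields $K \in \langle A^* \rangle$. Finally $A - K = B$, and $B, A^*$ is a Leonard pair by hypothesis, so $K$ satisfies Definition \ref{def:companion}.

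There is essentially no obstacle here: both parts are immediate formal consequences of the definitions together with Lemma \ref{lem:As2} and Lemma \ref{lem:mfree}. The only mild care needed is in part (ii), where one must remember to invoke multiplicity-freeness of $A^*$ before concluding that a commuting element lies in $\langle A^* \rangle$; in part (i) the direction is trivial because $K$ is a polynomial in $A^*$ by hypothesis.
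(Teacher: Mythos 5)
Your proof is correct and follows exactly the route the paper intends: the paper's own proof is the single line ``Use Lemma \ref{lem:As2},'' and your argument is simply that lemma (commuting with $A^*$ is equivalent to lying in $\langle A^*\rangle$, since $A^*$ is multiplicity-free by Lemma \ref{lem:mfree}) combined with unwinding Definitions \ref{def:compatible} and \ref{def:companion}. Nothing to add.
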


\begin{proof}
Use Lemma \ref{lem:As2}.
\end{proof}

\begin{example}    \label{ex:comp}    \samepage
\ifDRAFT {\rm ex:comp}. \fi
Let $A,A^*$ denote a Leonard pair on $V$.
Then $A,A^*$ is compatible with itself.
Moreover $K=0$ is a companion of $A,A^*$.
\end{example}

Let $A,A^*$ denote a Leonard pair on $V$.
In this paper we find every Leonard pair $B,A^*$ on $V$ that is compatible with $A,A^*$.
By Lemma \ref{lem:compatible0}, this is equivalent to finding all the companions of $A,A^*$.

\begin{lemma}    \label{lem:compatible0b}    \samepage
\ifDRAFT {\rm lem:compatible0b}. \fi
For Leonard pairs $A,A^*$ and $B,A^*$ on $V$,
the following {\rm (i)--(iii)} are equivalent:
\begin{itemize}
\item[\rm (i)]
$A,A^*$ and $B,A^*$ are compatible;
\item[\rm (ii)]
$A-B \in \b{ A^*}$;
\item[\rm (iii)]
$A-B$ commutes with $A^*$.
\end{itemize}
\end{lemma}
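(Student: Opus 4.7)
The plan is to show (i) $\Leftrightarrow$ (iii) directly from Definition \ref{def:compatible}, and then to obtain (ii) $\Leftrightarrow$ (iii) by invoking Lemma \ref{lem:As2} with $A^*$ in place of $A$.

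For (i) $\Leftrightarrow$ (iii), I observe that since both pairs share the second component $A^*$, the condition $A^* = B^*$ in Definition \ref{def:compatible} is automatic. Hence compatibility reduces to the single equation $[A,A^*] = [B,A^*]$, which by bilinearity of the commutator is equivalent to $[A-B, A^*] = 0$, i.e., $A-B$ commutes with $A^*$. No computation beyond the definition is required.

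For (ii) $\Leftrightarrow$ (iii), recall from Lemma \ref{lem:mfree} that $A^*$ is multiplicity-free. Therefore Lemma \ref{lem:As2}, applied to $A^*$ rather than $A$, asserts that for any $H \in \text{\rm End}(V)$, the conditions $H \in \langle A^* \rangle$ and $H$ commutes with $A^*$ are equivalent. Setting $H = A-B$ yields (ii) $\Leftrightarrow$ (iii).

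There is no real obstacle here; the lemma is a bookkeeping consequence of the definitions, Lemma \ref{lem:mfree}, and Lemma \ref{lem:As2}. The only subtlety is remembering that compatibility as introduced in Definition \ref{def:compatible} carries the implicit hypothesis that the starred components agree, which is built into the setup of the present lemma.
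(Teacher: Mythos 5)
Your proof is correct and follows essentially the same route as the paper, whose entire proof is ``Use Lemma \ref{lem:As2}'': unwinding Definition \ref{def:compatible} to get $[A-B,A^*]=0$ for (i)$\Leftrightarrow$(iii), and applying Lemma \ref{lem:As2} to the multiplicity-free element $A^*$ for (ii)$\Leftrightarrow$(iii). You have merely made explicit the steps the paper leaves implicit.
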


\begin{proof}
Use Lemma \ref{lem:As2}.
\end{proof}

\begin{lemma}    \label{lem:ABK}    \samepage
\ifDRAFT {\rm lem:ABK}. \fi
For Leonard pairs $A,A^*$ and $B, A^*$ on $V$,
define $K = A - B$.
Then the following {\rm (i)--(iii)} are equivalent:
\begin{itemize}
\item[\rm (i)]
$K$ commutes with $A^*$;
\item[\rm (ii)]
$K$ is a companion of $A,A^*$;
\item[\rm (iii)]
$-K$ is a companion of $B,A^*$.
\end{itemize}
\end{lemma}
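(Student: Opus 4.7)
The plan is to establish the three equivalences by unwinding the definitions and appealing to Lemma \ref{lem:As2}, which identifies the commutant of $A^*$ with the subalgebra $\langle A^* \rangle$ (this applies because $A^*$ is multiplicity-free by Lemma \ref{lem:mfree}).

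For (i) $\Leftrightarrow$ (ii), I would first observe that by Definition \ref{def:companion}, $K$ is a companion of $A,A^*$ precisely when $K \in \langle A^* \rangle$ and $A-K, A^*$ is a Leonard pair on $V$. The second condition is automatic here: since $K = A - B$ we have $A - K = B$, and $B, A^*$ is a Leonard pair by hypothesis. So being a companion reduces to $K \in \langle A^* \rangle$, which by Lemma \ref{lem:As2} is equivalent to $K$ commuting with $A^*$. This settles (i) $\Leftrightarrow$ (ii).

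For (ii) $\Leftrightarrow$ (iii), I would apply the same reasoning symmetrically, exchanging the roles of $A$ and $B$. By Definition \ref{def:companion} applied to the Leonard pair $B, A^*$ (noting that $B^* = A^*$ so $\langle B^* \rangle = \langle A^* \rangle$), the map $-K$ is a companion of $B,A^*$ precisely when $-K \in \langle A^* \rangle$ and $B-(-K), A^*$ is a Leonard pair on $V$. But $B - (-K) = B + K = A$, and $A, A^*$ is a Leonard pair by hypothesis, so the tridiagonality condition is again automatic. Finally $-K \in \langle A^* \rangle$ iff $K \in \langle A^* \rangle$, which by Lemma \ref{lem:As2} is equivalent to $K$ commuting with $A^*$, i.e.\ to condition (i). Combined with the first equivalence this yields (ii) $\Leftrightarrow$ (iii).

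There is no substantial obstacle: the proof is purely a matter of definition-chasing, with Lemma \ref{lem:As2} providing the one nontrivial input (the identification of $\langle A^* \rangle$ with the commutant of $A^*$). I would simply write a short proof that invokes Lemma \ref{lem:As2} twice and cites Definition \ref{def:companion} for both $A,A^*$ and $B,A^*$.
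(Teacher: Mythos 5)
Your proposal is correct and follows essentially the same route as the paper: both arguments reduce the companion condition to membership in $\b{A^*}$ via Lemma \ref{lem:As2}, observing that $A-K=B$ (resp.\ $B-(-K)=A$) is a Leonard pair by hypothesis, so only the commutation condition is at stake. The paper's proof is just a terser version of the same definition-chasing.
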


\begin{proof}
(i) $\Rightarrow$ (ii)
We have $K \in \b{A^*}$ by Lemma \ref{lem:As2}.
The pair $A-K, A^*$ is a Leonard pair since $B=A-K$.
By these comments and Definition \ref{def:companion}, $K$ is a
companion of $A,A^*$.

(ii) $\Rightarrow$ (i)
Since $K \in \b{A^*}$ by Definition \ref{def:companion}.

(i) $\Leftrightarrow$ (iii)
Similar to the proof of (i) $\Leftrightarrow$ (ii).
\end{proof}

\begin{lemma}    \label{lem:compatible1}    \samepage
\ifDRAFT {\rm lem:compatible1}. \fi
The compatible relation is an equivalence relation on the
set of all Leonard pairs on $V$.
\end{lemma}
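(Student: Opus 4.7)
The plan is to verify the three axioms of an equivalence relation directly from Definition \ref{def:compatible}, using nothing beyond the symmetry of the two defining conditions.

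For reflexivity, given a Leonard pair $A,A^*$ on $V$, I would simply note that $A^* = A^*$ and $[A,A^*] = [A,A^*]$, so $A,A^*$ is compatible with itself (this also appears in Example \ref{ex:comp}). For symmetry, suppose $A,A^*$ and $B,B^*$ are compatible, so that $A^* = B^*$ and $[A,A^*] = [B,B^*]$. Then $B^* = A^*$ and $[B,B^*] = [A,A^*]$, so $B,B^*$ is compatible with $A,A^*$.

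For transitivity, suppose $A,A^*$ is compatible with $B,B^*$, and $B,B^*$ is compatible with $C,C^*$. Then $A^* = B^*$ and $B^* = C^*$, so $A^* = C^*$. Likewise $[A,A^*] = [B,B^*]$ and $[B,B^*] = [C,C^*]$, so $[A,A^*] = [C,C^*]$. Hence $A,A^*$ is compatible with $C,C^*$.

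There is no real obstacle here; the lemma is essentially bookkeeping, since both defining conditions are symmetric equalities and transport trivially through chains. The only thing worth stating explicitly is that no appeal to the companion framework of Lemma \ref{lem:compatible0} or Lemma \ref{lem:compatible0b} is needed — the proof is immediate from Definition \ref{def:compatible}.
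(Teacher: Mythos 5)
Your proof is correct and matches the paper's approach: the paper disposes of this lemma with the single line ``By Definition \ref{def:compatible},'' and your verification of reflexivity, symmetry, and transitivity is exactly the routine check that line leaves implicit, since both defining conditions are equalities. Nothing further is needed.
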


\begin{proof}
By Definition \ref{def:compatible}.
\end{proof}

\begin{lemma}    \label{lem:compatible3}    \samepage
\ifDRAFT {\rm lem:compatible3}. \fi
For compatible Leonard pairs $A,A^*$ and $B,A^*$ on $V$,
and for scalars $\xi$, $\zeta$, $\xi^*$, $\zeta^*$ in $\F$ with $\xi \xi^* \neq 0$,
the Leonard pairs
 $\xi A+ \zeta I, \xi^* A^* + \zeta^* I$ and
$\xi B + \zeta I, \xi^* A^* + \zeta^* I$ are compatible.
\end{lemma}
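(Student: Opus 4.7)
The plan is to verify the two conditions of Definition \ref{def:compatible} directly, after first establishing that the affine transformations in question do give Leonard pairs. The hypothesis $\xi\xi^* \neq 0$ together with Lemma \ref{lem:affineLP} immediately yields that $\xi A + \zeta I, \xi^* A^* + \zeta^* I$ is a Leonard pair on $V$, and similarly that $\xi B + \zeta I, \xi^* A^* + \zeta^* I$ is a Leonard pair on $V$. So the only content is compatibility.

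For compatibility, the first condition of Definition \ref{def:compatible} is that the dual components agree; this is trivial since both pairs have second component literally equal to $\xi^* A^* + \zeta^* I$. For the second condition, I would use the bilinearity of the commutator bracket together with the fact that scalar multiples of $I$ are central in $\text{\rm End}(V)$. Expanding,
\[
   [\xi A + \zeta I, \, \xi^* A^* + \zeta^* I] \;=\; \xi \xi^* [A, A^*],
\]
and analogously $[\xi B + \zeta I, \xi^* A^* + \zeta^* I] = \xi \xi^* [B, A^*]$. Since $A, A^*$ and $B, A^*$ are assumed compatible, Definition \ref{def:compatible} gives $[A, A^*] = [B, A^*]$, and multiplying both sides by the scalar $\xi \xi^*$ yields the required equality of commutators.

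There is no real obstacle: the result is a bookkeeping consequence of the bilinearity of $[\,\cdot\,,\,\cdot\,]$, the centrality of $I$, and Lemma \ref{lem:affineLP}.
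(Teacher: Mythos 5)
Your proof is correct and is essentially the argument the paper intends: the paper's own proof is simply ``By Definition \ref{def:compatible}'', leaving the bilinearity/centrality computation $[\xi A+\zeta I,\,\xi^* A^*+\zeta^* I]=\xi\xi^*[A,A^*]$ and the appeal to Lemma \ref{lem:affineLP} implicit. You have merely written out the routine details the authors suppressed, so there is nothing to add.
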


\begin{proof}
By Definition \ref{def:compatible}.
\end{proof}

\begin{lemma}    \label{lem:trivial}    \samepage
\ifDRAFT {\rm lem:trivial}. \fi
Let $A,A^*$ denote a Leonard pair on $V$,
and let $K$ denote a companion of $A,A^*$.
Then $K + \zeta I$ is companion of $A,A^*$ for $\zeta \in \F$.
\end{lemma}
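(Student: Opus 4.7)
The plan is to verify the two defining properties of a companion from Definition \ref{def:companion} for the element $K + \zeta I$. First, I would check the algebraic condition $K + \zeta I \in \langle A^* \rangle$. Since $K$ is a companion of $A,A^*$, we have $K \in \langle A^* \rangle$ by Definition \ref{def:companion}. The identity $I$ lies in $\langle A^* \rangle$ as well (it is a polynomial in $A^*$ of degree $0$), and $\langle A^* \rangle$ is a subalgebra, so $K + \zeta I \in \langle A^* \rangle$.

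Second, I would verify that $A - (K + \zeta I), A^*$ is a Leonard pair on $V$. Observe that $A - (K + \zeta I) = (A - K) + (-\zeta) I$. Since $K$ is a companion, $A - K, A^*$ is a Leonard pair on $V$. Now apply Lemma \ref{lem:affineLP} to this Leonard pair with parameters $\xi = 1$, $\zeta' = -\zeta$, $\xi^* = 1$, $\zeta^* = 0$ (all satisfying $\xi \xi^* \neq 0$) to conclude that $(A-K) - \zeta I, \, A^*$ is a Leonard pair on $V$.

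Combining these two observations with Definition \ref{def:companion} yields that $K + \zeta I$ is a companion of $A, A^*$. There is no serious obstacle here; the lemma is essentially an immediate consequence of Lemma \ref{lem:affineLP} applied to the Leonard pair $A - K, A^*$, together with the trivial fact that scalar multiples of $I$ belong to $\langle A^* \rangle$.
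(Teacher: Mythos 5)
Your proposal is correct and matches the paper's argument, which simply cites Lemma \ref{lem:affineLP} and Definition \ref{def:companion}; you have just written out the two steps that citation compresses. No issues.
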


\begin{proof}
By Lemma \ref{lem:affineLP} and Definition \ref{def:companion}.
\end{proof}

\section{The set $\Omega$}
\label{sec:Omega}
\ifDRAFT {\rm sec:Omega}. \fi

In our main results, we find it illuminating to represent a Leonard pair
as an ordered pair of matrices,
the first one normalized irreducible tridiagonal and the second one diagonal.
Such a representation is guaranteed by Lemma \ref{lem:aixi0}.
To describe our main results using such a representation,
we introduce a certain set of matrices denoted $\Omega$.
In this section we define $\Omega$ and give some basic facts about it.

\begin{notation}    \label{notation0}
\ifDRAFT {\rm notation0}. \fi
Assume that $V = \F^{d+1}$.
Let $\{\th^*_i\}_{i=0}^d$ denote mutually distinct scalars in $\F$,
and define $A^* \in \Matd$ by
\[
   A^* = \text{\rm diag}(\th^*_0, \th^*_1, \ldots, \th^*_d).
\]
Note that $A^*$ is multiplicity-free, and
$\b{ A^* }$ consists of the diagonal matrices in $\Matd$.
For $0 \leq i \leq d$ let $E^*_i$ denote the diagonal matrix in $\Matd$ that
has $(i,i)$-entry $1$ and all other entries $0$.
Note that $E^*_i$ is the primitive idempotent of $A^*$ associated with $\th^*_i$.
\end{notation}

For the rest of this section, Notation \ref{notation0} is in effect.

\begin{defi}   \label{def:Omega}    \samepage
\ifDRAFT {\rm def:Omega}. \fi
Let the set $\Omega$ consist of the matrices $A \in \Matd$ such that:
\begin{itemize}
\item[\rm (i)]
$A$ is normalized irreducible tridiagonal;
\item[\rm (ii)]
$A,A^*$ is a Leonard pair on $V$.
\end{itemize}
\end{defi}

\begin{lemma}     \label{lem:Omega00}    \samepage
\ifDRAFT {\rm lem:Omega00}. \fi
Let $A \in \Omega$.
Let $B \in \Matd$ such that $B,A^*$ is a Leonard pair on $V$
that is compatible with $A,A^*$.
Then $B \in \Omega$.
\end{lemma}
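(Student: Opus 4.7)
The plan is to exploit the compatibility condition to show that $A$ and $B$ differ only by a diagonal matrix, so that $B$ inherits the normalized irreducible tridiagonal shape from $A$.

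First, I would invoke Lemma \ref{lem:compatible0b} to conclude from the compatibility of $A,A^*$ and $B,A^*$ that $A - B \in \langle A^* \rangle$. By Notation \ref{notation0}, the algebra $\langle A^* \rangle$ consists precisely of the diagonal matrices in $\Matd$, since $A^*$ is a diagonal multiplicity-free matrix. So $A - B$ is a diagonal matrix, meaning $B$ and $A$ agree on every off-diagonal entry.

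Next I would check the two defining conditions of $\Omega$ for $B$. For condition (i), since $A$ is normalized irreducible tridiagonal and $A - B$ is diagonal, we have $B_{i,j} = A_{i,j}$ whenever $i \neq j$. In particular $B_{i,j} = 0$ for $|i-j| > 1$, and $B_{i,i-1} = A_{i,i-1} = 1$ and $B_{i-1,i} = A_{i-1,i} \neq 0$ for $1 \leq i \leq d$. Thus $B$ is normalized irreducible tridiagonal. Condition (ii) is given by hypothesis, namely that $B, A^*$ is a Leonard pair on $V$. Hence $B \in \Omega$.

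There is no real obstacle here; the argument is essentially an immediate consequence of Lemma \ref{lem:compatible0b} together with the observation that $\langle A^* \rangle$ is the algebra of diagonal matrices, which forces any compatible perturbation of $A$ to preserve its off-diagonal entries and therefore its normalized irreducible tridiagonal structure.
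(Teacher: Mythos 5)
Your proof is correct and follows the same route as the paper: both apply Lemma \ref{lem:compatible0b} to conclude that $A-B$ lies in $\b{A^*}$ and is therefore diagonal, so $B$ agrees with $A$ off the diagonal and inherits the normalized irreducible tridiagonal form, while the Leonard pair condition holds by hypothesis. No gaps.
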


\begin{proof}
By Lemma \ref{lem:compatible0b},
$A-B \in \b{A^*}$ and so $A-B$ is diagonal.
Thus $A_{i,j} = B_{i,j}$ if $i \neq j$  $(0 \leq i,j \leq d)$.
By this and since $A$ is normalized irreducible tridiagonal,
we find that $B$ is normalized irreducible tridiagonal.
Thus $B \in \Omega$.
\end{proof}

\begin{lemma}    \label{lem:affineOmega}    \samepage
\ifDRAFT {\rm lem:affineOmega}. \fi
For $A \in \Omega$ and $\zeta \in \F$,
the matrix $A + \zeta I$ is contained in $\Omega$.
\end{lemma}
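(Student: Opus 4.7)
The plan is to verify the two defining conditions of $\Omega$ from Definition \ref{def:Omega} for the matrix $A+\zeta I$, using only what is already recorded in the excerpt.

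First I would check condition (i), that $A+\zeta I$ is normalized irreducible tridiagonal. Since $A \in \Omega$, the matrix $A$ is normalized irreducible tridiagonal, so $A_{i,j}=0$ for $|i-j|>1$, $A_{i,i-1}=1$ for $1 \le i \le d$, and $A_{i-1,i} \ne 0$ for $1 \le i \le d$. Adding $\zeta I$ only affects the diagonal entries: $(A+\zeta I)_{i,j} = A_{i,j}$ whenever $i \ne j$, and $(A+\zeta I)_{i,i} = A_{i,i}+\zeta$. Hence $(A+\zeta I)_{i,j}=0$ for $|i-j|>1$, $(A+\zeta I)_{i,i-1}=1$ for $1\le i\le d$, and $(A+\zeta I)_{i-1,i}\ne 0$ for $1\le i\le d$. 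So $A+\zeta I$ is normalized irreducible tridiagonal.

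Next I would check condition (ii), that $A+\zeta I,\,A^*$ is a Leonard pair on $V$. This is immediate from Lemma \ref{lem:affineLP}: the Leonard pair $A,A^*$ remains a Leonard pair under the affine transformation $(A,A^*)\mapsto(\xi A+\zeta I,\,\xi^* A^*+\zeta^* I)$ for any scalars with $\xi\xi^*\ne 0$. Applying this with $\xi=\xi^*=1$ and $\zeta^*=0$ yields that $A+\zeta I,\,A^*$ is a Leonard pair on $V$.

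Combining the two verifications shows that $A+\zeta I\in\Omega$. There is no real obstacle here; this lemma is essentially a packaging of Lemma \ref{lem:affineLP} together with the trivial observation that adding a scalar multiple of $I$ preserves normalization and irreducible tridiagonality.
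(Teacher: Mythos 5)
Your proof is correct and follows essentially the same route as the paper: the paper also observes that $A+\zeta I$ remains normalized irreducible tridiagonal and then invokes Lemma \ref{lem:affineLP} to see that $A+\zeta I, A^*$ is a Leonard pair. You have simply spelled out the entrywise check and the choice of scalars $\xi=\xi^*=1$, $\zeta^*=0$ that the paper leaves implicit.
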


\begin{proof}
The matrix $A + \zeta I$ is normalized irreducible tridiagonal.
By Lemma \ref{lem:affineLP}
the pair $A + \zeta I, A^*$ is a Leonard pair.
The result follows.
\end{proof}

\begin{lemma}    \label{lem:ABOmega}     \samepage
\ifDRAFT {\rm lem:ABOmega}. \fi
For $A$, $B \in \Omega$ the following are equivalent:
\begin{itemize}
\item[\rm (i)]
the Leonard pairs $A,A^*$ and $B,A^*$ are isomorphic;
\item[\rm (ii)]
$A=B$.
\end{itemize}
\end{lemma}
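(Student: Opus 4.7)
The implication (ii) $\Rightarrow$ (i) is immediate, since the identity map is an isomorphism of Leonard pairs. The plan for (i) $\Rightarrow$ (ii) is to convert the isomorphism hypothesis into diagonal equivalence of $A$ and $B$, and then invoke the uniqueness of the normalized form.

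First, I would apply Lemma \ref{lem:SASinv} to obtain an invertible $S \in \text{End}(V) = \Matd$ with $B = S A S^{-1}$ and $A^* = S A^* S^{-1}$. The second equation says $S$ commutes with $A^*$. Since $A^*$ is multiplicity-free (Notation \ref{notation0}), Lemma \ref{lem:As2} yields $S \in \langle A^* \rangle$. By Notation \ref{notation0}, $\langle A^* \rangle$ consists of the diagonal matrices in $\Matd$, so $S$ is an invertible diagonal matrix.

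Consequently, $A$ and $B$ are diagonally equivalent in the sense of Definition \ref{def:diagonalequiv}. Both $A$ and $B$ are normalized irreducible tridiagonal by membership in $\Omega$ (Definition \ref{def:Omega}). At this point two routes are available: one can either cite Lemma \ref{lem:trid4} directly (uniqueness of the normalized representative in a diagonal equivalence class) to conclude $A = B$, or equivalently observe that $A_{i,i-1} = 1 = B_{i,i-1}$ for $1 \leq i \leq d$, so condition (i) of Lemma \ref{lem:tridnew1} holds, whence (iv) gives $A = B$.

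There is no serious obstacle here; the only subtlety is recognizing that the hypothesis $A^* = S A^* S^{-1}$ forces $S$ to be diagonal, which is where the multiplicity-freeness of $A^*$ together with the concrete description of $\langle A^* \rangle$ in Notation \ref{notation0} plays its role.
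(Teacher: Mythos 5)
Your proof is correct and follows essentially the same route as the paper: apply Lemma \ref{lem:SASinv}, deduce from $S A^* S^{-1}=A^*$ and Lemma \ref{lem:As2} that $S$ is diagonal, and conclude $A=B$ from the uniqueness of the normalized representative (Lemma \ref{lem:trid4}). The extra detail you give about why $S$ is diagonal just makes explicit what the paper's one-line citation of Lemma \ref{lem:As2} leaves implicit.
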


\begin{proof}
(i) $\Rightarrow$ (ii)
By Lemma \ref{lem:SASinv} there exists an invertible $S \in \Matd$
such that $S A S^{-1} = B$ and $S A^* S^{-1} = A^*$.
The matrix $S$ is diagonal by Lemma \ref{lem:As2},
so $A$ and $B$ are diagonally equivalent.
Consequently $A=B$ in view of Lemma \ref{lem:trid4}.

(ii) $\Rightarrow$ (i)
Clear.
\end{proof}

\section{The bond relation on $\Omega$}
\label{sec:bondOmega}
\ifDRAFT {\rm sec:bondOmega}. \fi

In the last paragraph of Section \ref{sec:bondLP},
we explained how the bond relation for Leonard pairs and systems is related
to the bond relation for irreducible tridiagonal matrices.
In this section we discuss these bond relations in the context of the set $\Omega$.

Throughout this section Notation \ref{notation0} is in effect.

\begin{lemma}    \label{lem:veecoincide}    \samepage
\ifDRAFT {\rm lem:veecoincide}. \fi
For $A \in \Omega$ the following are the same:
\begin{itemize}
\item[\rm (i)]
the matrix $A^\vee$ from Definition \ref{def:vee};
\item[\rm (ii)]
the matrix $A^\vee$ from Definition \ref{def:veeLP}.
\end{itemize}
Moreover, $A^\vee \in \Omega$.
\end{lemma}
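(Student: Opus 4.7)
The plan is to show that both definitions of $A^\vee$ literally produce the same matrix, since in the present setup the two diagonal objects $\S$ and $\cS$ coincide; then verify that the normalization and Leonard-pair properties are preserved.

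First I would observe that in Notation \ref{notation0} the primitive idempotents $\{E^*_i\}_{i=0}^d$ of $A^*$ are the diagonal matrices with a single $1$ in position $(i,i)$. Therefore $\cS = \sum_{i=0}^d (-1)^i E^*_i$ is the diagonal matrix whose $(i,i)$-entry is $(-1)^i$, which is exactly the matrix $\S$ from Definition \ref{def:vee}. To invoke Definition \ref{def:veeLP}, I need a Leonard system on $V$ associated with the Leonard pair $A, A^*$ whose dual primitive idempotents are precisely the $E^*_i$ of Notation \ref{notation0}. This is available: since $A \in \Omega$ is normalized irreducible tridiagonal and $A^* = \text{\rm diag}(\th^*_0,\ldots,\th^*_d)$, the ordering $\{E^*_i\}_{i=0}^d$ satisfies $E^*_i A E^*_j\neq 0$ iff $|i-j|\le 1$, so it is standard; thus by the existence statement following Definition \ref{def:LS} we may complete it with some ordering $\{E_i\}_{i=0}^d$ of the primitive idempotents of $A$ to obtain a Leonard system $\Phi$ associated with $A,A^*$. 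For this $\Phi$ the element $\cS$ in Definition \ref{def:veeLS} is identically the matrix $\S$, and so Definition \ref{def:veeLP} yields $-\cS A \cS^{-1} = -\S A \S^{-1}$, matching Definition \ref{def:vee}.

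Next I would verify $A^\vee \in \Omega$. By Definition \ref{def:vee}, $A^\vee$ is irreducible tridiagonal. Applying Lemma \ref{lem:trid5} with $S=\S$ gives $(\S A \S^{-1})_{i,i-1} = (-1)^i (-1)^{-(i-1)} A_{i,i-1} = -A_{i,i-1} = -1$, so the $(i,i-1)$-entry of $A^\vee = -\S A \S^{-1}$ equals $1$ for $1 \le i \le d$. Hence $A^\vee$ is normalized. Finally, by Lemma \ref{lem:veeLP} the pair $A^\vee, A^*$ is a Leonard pair on $V$, so $A^\vee$ satisfies both conditions of Definition \ref{def:Omega}.

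I do not anticipate a genuine obstacle here; the only subtle point is making sure the Leonard system used to invoke Definition \ref{def:veeLP} has the same ordering of dual primitive idempotents as the one fixed in Notation \ref{notation0}, which is why verifying that this ordering is standard is the one step that deserves an explicit sentence. Everything else is a direct translation and a single application of Lemma \ref{lem:trid5}.
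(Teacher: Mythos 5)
Your proposal is correct and follows essentially the same route as the paper: the paper justifies the coincidence of the two definitions by citing the remark at the end of Section \ref{sec:bondLP} (the isomorphism $\flat$ sends $\cS\mapsto\S$), gets normalization from Lemma \ref{lem:bondnormalized}, and invokes Lemma \ref{lem:veeLP} for the Leonard-pair property. You simply unwind the first two steps explicitly --- identifying $\cS=\sum_i(-1)^iE^*_i$ with $\S$ as concrete matrices after checking that the ordering $\{E^*_i\}_{i=0}^d$ is standard, and computing the subdiagonal via Lemma \ref{lem:trid5} --- which is a legitimate and if anything slightly more careful rendering of the same argument.
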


\begin{proof}
The matrices (i), (ii) are the same by the last paragraph of Section \ref{sec:bondLP}.
We now show that $A^\vee \in \Omega$.
By Definition \ref{def:vee}, $A^\vee$ is irreducible tridiagonal.
By Lemma \ref{lem:bondnormalized}, $A^\vee$ is normalized.
By Lemma \ref{lem:veeLP} the pair $A^\vee, A^*$ is a Leonard pair.
By these comments and Definition \ref{def:Omega}, $A^\vee \in \Omega$.
\end{proof}

\begin{lemma}    \label{lem:twobond}    \samepage
\ifDRAFT {\rm lem:twobond}. \fi
For $A$, $B \in \Omega$ the following are equivalent:
\begin{itemize}
\item[\rm (i)]
$A$ and $B$ are bonded in the sense of Definition \ref{def:bond};
\item[\rm (ii)]
the Leonard pairs $A,A^*$ and $B,A^*$ are bonded in the sense of Definition \ref{def:bondLP}.
\end{itemize}
\end{lemma}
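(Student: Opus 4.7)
The plan is to observe that both notions of ``bonded'' reduce, in this context, to the single equation $B = A^\vee$, and then invoke Lemma \ref{lem:veecoincide} to identify the two a priori distinct constructions of $A^\vee$.

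First I would unwind the definitions. By Definition \ref{def:bond}, condition (i) is literally the statement $B = A^\vee$, where $A^\vee$ is computed via Definition \ref{def:vee} (as $-\S A \S^{-1}$). By Definition \ref{def:bondLP}, condition (ii) says that $A^* = A^*$ (which is vacuous since both Leonard pairs share the second coordinate $A^*$ by construction) together with $B = A^\vee$, where here $A^\vee$ is computed via Definition \ref{def:veeLP} (using the element $\cS \in \End$ attached to an associated Leonard system). Thus each of (i), (ii) is equivalent to $B$ coinciding with the respective $A^\vee$.

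Next I would invoke Lemma \ref{lem:veecoincide}, whose content is precisely that for $A \in \Omega$ the two matrices called $A^\vee$ in Definitions \ref{def:vee} and \ref{def:veeLP} are in fact the same element of $\Matd$. Consequently the conditions $B = A^\vee$ appearing in (i) and in (ii) are the same condition, so (i) $\Leftrightarrow$ (ii).

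There is no real obstacle here: the hard work has already been done in the final paragraph of Section \ref{sec:bondLP} and packaged in Lemma \ref{lem:veecoincide}, which explains via the isomorphism $\flat : \End \to \Matd$ that $\cS$ maps to $\S$ and hence $(A^\flat)^\vee = (A^\vee)^\flat$. The only thing to check, which is immediate from Definition \ref{def:bondLP} and our setting, is that the hypothesis $A^* = B^*$ in (ii) is automatic because the second coordinate of both Leonard pairs is the fixed diagonal matrix $A^*$ from Notation \ref{notation0}. Therefore the proof is a one-line application of Lemma \ref{lem:veecoincide}.
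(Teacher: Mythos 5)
Your proposal is correct and follows exactly the paper's approach: the paper's own proof is simply ``By Lemma \ref{lem:veecoincide},'' and your unwinding of Definitions \ref{def:bond} and \ref{def:bondLP} to the common condition $B = A^\vee$ is precisely the reasoning that lemma is packaging. The extra observation that the hypothesis $A^* = B^*$ in Definition \ref{def:bondLP} is automatic in this setting is a sensible point to make explicit.
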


\begin{proof}
By Lemma \ref{lem:veecoincide}.
\end{proof}

We mention a result for later use.

\begin{lemma}    \label{lem:bondcompOmega}    \samepage
\ifDRAFT {\rm lem:bondcompOmega}. \fi
For $A$, $B \in \Omega$ let $K=A-B$.
Then the following are equivalent:
\begin{itemize}
\item[\rm (i)]
$A$ and $B$ are bonded;
\item[\rm (ii)]
$K$ is diagonal with diagonal entries $K_{i,i} = 2 A_{i,i}$ for $0 \leq i \leq d$.
\end{itemize}
\end{lemma}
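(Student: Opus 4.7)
The plan is to reduce the statement directly to Lemma \ref{lem:bond000b}, which already characterizes the bond relation between two irreducible tridiagonal matrices in exactly the shape we need. Since $A, B \in \Omega$ are in particular irreducible tridiagonal matrices in $\Matd$, Lemma \ref{lem:bond000b} (via the equivalence of (i) and (ii) there) tells us that $A$ and $B$ are bonded if and only if $A - B$ is diagonal with $(i,i)$-entry $2 A_{i,i}$ for $0 \leq i \leq d$. Substituting $K = A - B$ rewrites this condition as exactly statement (ii) of the present lemma.

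First I would invoke Lemma \ref{lem:twobond} to confirm that ``bonded'' for $A, B \in \Omega$ has an unambiguous meaning: the matrix-theoretic bond relation of Definition \ref{def:bond} agrees with the Leonard-pair bond relation of Definition \ref{def:bondLP}. Then I would simply cite Lemma \ref{lem:bond000b}, using that $A$ and $B$ are both irreducible tridiagonal (which follows from $A, B \in \Omega$ and Definition \ref{def:Omega}(i)), to translate ``$A$ and $B$ are bonded'' into ``$A - B$ is diagonal with $(i,i)$-entry $2 A_{i,i}$''. Rewriting $A - B$ as $K$ completes both directions.

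There is essentially no obstacle here; the lemma is a cosmetic reformulation packaging the equivalence (i) $\Leftrightarrow$ (ii) of Lemma \ref{lem:bond000b} in the language of $\Omega$ and the difference $K = A - B$. The only thing to be careful about is the direction of the sign: Definition \ref{def:bond} and Lemma \ref{lem:bond000b} fix the convention that the diagonal entry of $A - B$ equals $2 A_{i,i}$ (and hence $-2 B_{i,i}$, so $A_{i,i} = -B_{i,i}$), and this is the convention already adopted in the statement.
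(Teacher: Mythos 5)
Your proof is correct and takes essentially the same route as the paper, which disposes of the lemma by citing Lemma \ref{lem:bond000}; you cite its immediate reformulation Lemma \ref{lem:bond000b}, which is exactly the equivalence needed after substituting $K=A-B$. The extra appeal to Lemma \ref{lem:twobond} is harmless but unnecessary, since the statement's ``bonded'' already refers to the matrix relation of Definition \ref{def:bond}.
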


\begin{proof}
By Lemma \ref{lem:bond000}.
\end{proof}

\section{The compatibility relation on $\Omega$}
\label{sec:compOmega}
\ifDRAFT {\rm sec:compOmega}. \fi

In Section \ref{sec:compatible} we introduced the compatibility relation
for Leonard pairs.
In this section we discuss this relation in the context of the set $\Omega$.

Throughout this section,
Notation \ref{notation0} is in effect.

\begin{defi}     \label{def:comp}    \samepage
\ifDRAFT {\rm def:comp}. \fi
Matrices $A$ and $B$ in $\Omega$ are said to be {\em compatible}
whenever the Leonard pairs $A,A^*$ and $B,A^*$ are compatible
in the sense of Definition \ref{def:compatible}.
\end{defi}

\begin{defi}     \label{def:companionOmega}     \samepage
\ifDRAFT {\rm def:companionOmega}. \fi
For $A \in \Omega$,
by a {\em companion of A} we mean a companion of the Leonard pair
$A,A^*$.
\end{defi}

\begin{lemma}    \label{lem:compOmega}     \samepage
\ifDRAFT {\rm lem:compOmega}. \fi
For $A \in \Omega$ the following hold.
\begin{itemize}
\item[\rm (i)]
For a companion $K$ of $A$, define $B = A-K$.
Then $B$ is contained in $\Omega$ and compatible with $A$.
\item[\rm (ii)]
For $B \in \Omega$ that is compatible with $A$, define $K = A-B$.
Then $K$ is a companion of $A$.
\end{itemize}
\end{lemma}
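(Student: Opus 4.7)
The plan is to reduce both parts directly to the already-established results Lemma \ref{lem:compatible0}, Lemma \ref{lem:Omega00}, and the definitional setup in Definitions \ref{def:comp} and \ref{def:companionOmega}. The lemma is essentially a translation of the general compatibility/companion correspondence for Leonard pairs (already captured in Lemma \ref{lem:compatible0}) into the matrix-theoretic language of $\Omega$, so no new content needs to be developed; the content is in checking that the normalized-tridiagonal shape is preserved.

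For part (i), I would start with a companion $K$ of $A$, which by Definition \ref{def:companionOmega} is a companion of the Leonard pair $A, A^*$. Then Lemma \ref{lem:compatible0}(i) immediately gives that $B = A - K$ has the property that $B, A^*$ is a Leonard pair on $V$ compatible with $A, A^*$. By Definition \ref{def:comp} this means $A$ and $B$ are compatible in the $\Omega$-sense. The only remaining point is membership of $B$ in $\Omega$, i.e.\ that $B$ is normalized irreducible tridiagonal; this is exactly what Lemma \ref{lem:Omega00} supplies, since $A \in \Omega$ and $B, A^*$ is a Leonard pair on $V$ compatible with $A, A^*$.

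For part (ii), I would start with $B \in \Omega$ compatible with $A$ and let $K = A - B$. Definition \ref{def:comp} says that $A, A^*$ and $B, A^*$ are compatible Leonard pairs on $V$, so Lemma \ref{lem:compatible0}(ii) directly yields that $K = A - B$ is a companion of $A, A^*$. By Definition \ref{def:companionOmega} this is exactly a companion of $A$, finishing the argument.

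There is no real obstacle here; the proof is bookkeeping. The only subtlety worth a sentence in the writeup is the appeal to Lemma \ref{lem:Omega00} in part (i): one must note that $B = A - K$ is automatically normalized irreducible tridiagonal because $K \in \langle A^* \rangle$ is diagonal, hence $A$ and $B$ agree off the diagonal, so irreducibility and the normalization $B_{i,i-1} = A_{i,i-1} = 1$ are inherited from $A$.
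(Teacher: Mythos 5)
Your proof is correct and follows essentially the same route as the paper: part (i) via Lemma \ref{lem:compatible0}(i) together with Lemma \ref{lem:Omega00} and Definition \ref{def:comp}, and part (ii) via Lemma \ref{lem:compatible0}(ii) and Definition \ref{def:companionOmega}. The extra remark about why $B$ stays normalized irreducible tridiagonal is exactly the content of Lemma \ref{lem:Omega00} and is a fine addition.
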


\begin{proof}
(i)
By Lemma \ref{lem:compatible0}(i)
the pair $B,A^*$ is a Leonard pair on $V$ that is compatible with $A,A^*$.
By this and Lemma \ref{lem:Omega00}, $B \in \Omega$.
By these comments and Definition \ref{def:comp}, $B$ is compatible with $A$.

(ii)
By Lemma \ref{lem:compatible0}(ii) and Definition \ref{def:companionOmega}.
\end{proof}

\begin{example}    \label{ex:K0}    \samepage
\ifDRAFT {\rm ex:K0}. \fi
Every matrix $A \in \Omega$ is compatible with $A$,
and $K=0$ is a companion of $A$.
\end{example}

\begin{lemma}    \label{lem:comp}    \samepage
\ifDRAFT {\rm lem:comp}. \fi
For $A$, $B \in \Omega$ the following {\rm (i)--(iii)} are equivalent:
\begin{itemize}
\item[\rm (i)]
$A$ and $B$ are compatible;
\item[\rm (ii)]
$A-B$ is diagonal;
\item[\rm (iii)]
$A_{i-1,i} = B_{i-1,i}$ for $1 \leq i \leq d$.
\end{itemize}
\end{lemma}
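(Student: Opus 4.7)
The plan is to argue (i) $\Leftrightarrow$ (ii) by invoking the earlier compatibility criterion, and then observe that (ii) $\Leftrightarrow$ (iii) follows from the fact that $A,B$ are both normalized irreducible tridiagonal.

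First I would prove (i) $\Leftrightarrow$ (ii). By Definition \ref{def:comp}, $A$ and $B$ are compatible precisely when the Leonard pairs $A,A^*$ and $B,A^*$ are compatible. By Lemma \ref{lem:compatible0b}, this is equivalent to $A - B \in \langle A^* \rangle$. Under Notation \ref{notation0} the matrix $A^*$ is a diagonal matrix in $\Matd$ with mutually distinct diagonal entries, and it is recorded there that $\langle A^* \rangle$ consists of exactly the diagonal matrices in $\Matd$. Hence $A - B \in \langle A^* \rangle$ if and only if $A - B$ is diagonal, which establishes (i) $\Leftrightarrow$ (ii).

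Next I would prove (ii) $\Leftrightarrow$ (iii). The implication (ii) $\Rightarrow$ (iii) is immediate, since if $A - B$ is diagonal then all off-diagonal entries of $A$ and $B$ coincide, in particular the superdiagonal entries. For (iii) $\Rightarrow$ (ii), use that $A,B \in \Omega$ are normalized irreducible tridiagonal by Definition \ref{def:Omega} and Definition \ref{def:normalized}. Thus $A_{i,j} = B_{i,j} = 0$ whenever $|i-j| > 1$, and $A_{i,i-1} = 1 = B_{i,i-1}$ for $1 \leq i \leq d$. Combined with the hypothesis $A_{i-1,i} = B_{i-1,i}$ for $1 \leq i \leq d$, this shows every off-diagonal entry of $A - B$ is zero, i.e.\ $A - B$ is diagonal.

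There is no serious obstacle here: all three equivalences reduce to bookkeeping once one notes that $\langle A^* \rangle$ equals the diagonal subalgebra of $\Matd$ and that normalization pins down the subdiagonal entries of elements of $\Omega$. The proof is essentially two one-line arguments, and I would simply cite Lemma \ref{lem:compatible0b}, Notation \ref{notation0}, and the definition of a normalized irreducible tridiagonal matrix.
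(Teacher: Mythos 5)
Your proposal is correct and follows essentially the same route as the paper: (i) $\Leftrightarrow$ (ii) via Lemma \ref{lem:compatible0b} together with the observation from Notation \ref{notation0} that $\b{A^*}$ is the diagonal subalgebra, and (ii) $\Leftrightarrow$ (iii) from the fact that both matrices are normalized irreducible tridiagonal. The paper states these two steps more tersely, but the content is identical.
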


\begin{proof}
(i) $\Leftrightarrow$ (ii)
By Lemma \ref{lem:compatible0b}.

(ii) $\Leftrightarrow$ (iii)
Use the fact that each of $A$, $B$ is normalized irreducible tridiagonal.
\end{proof}

\begin{lemma}    \label{lem:compOmega2}    \samepage
\ifDRAFT {\rm lem:compOmega2}. \fi
For $A$, $B \in \Omega$ let $K=A-B$.
Then the following {\rm (i)--(iii)} are equivalent:
\begin{itemize}
\item[\rm (i)]
$K$ is diagonal;
\item[\rm (ii)]
$K$ is a companion of $A$;
\item[\rm (iii)]
$-K$ is a companion of $B$.
\end{itemize}
\end{lemma}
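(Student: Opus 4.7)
The plan is to derive all three equivalences directly from the machinery already built up in Sections \ref{sec:Omega} and \ref{sec:compatible}. The crucial standing observation (from Notation \ref{notation0}) is that since $A^* = \text{\rm diag}(\th^*_0,\ldots,\th^*_d)$ is multiplicity-free, $\b{A^*}$ is exactly the set of diagonal matrices in $\Matd$. Thus the diagonality condition in (i) is the same as the membership condition $K \in \b{A^*}$ appearing in Definition \ref{def:companion}.

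For (i) $\Rightarrow$ (ii), I would argue as follows: if $K$ is diagonal then $K \in \b{A^*}$ by the observation above, and the pair $A-K,\,A^* = B,\,A^*$ is a Leonard pair on $V$ because $B \in \Omega$ by hypothesis (Definition \ref{def:Omega}(ii)). By Definition \ref{def:companion}, these two facts together say that $K$ is a companion of $A,A^*$, which by Definition \ref{def:companionOmega} is exactly a companion of $A$. Conversely, for (ii) $\Rightarrow$ (i), if $K$ is a companion of $A$ then Definition \ref{def:companion} gives $K \in \b{A^*}$, hence $K$ is diagonal.

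For (ii) $\Leftrightarrow$ (iii), both $A,A^*$ and $B,A^*$ are Leonard pairs on $V$ (because $A,B \in \Omega$), so Lemma \ref{lem:ABK} applies verbatim to the difference $K = A-B$; its stated equivalence between ``$K$ is a companion of $A,A^*$'' and ``$-K$ is a companion of $B,A^*$'' is exactly our (ii) $\Leftrightarrow$ (iii). This completes the plan.

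There is essentially no obstacle here beyond bookkeeping; the only step that requires any thought is making sure one invokes the hypothesis $B \in \Omega$ (not merely $A \in \Omega$) when passing from $A-K$ being in $\Omega$ to ``$A-K,A^*$ is a Leonard pair'' in the (i) $\Rightarrow$ (ii) direction. Everything else is a direct citation of Lemma \ref{lem:ABK} and the two definitions.
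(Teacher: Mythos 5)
Your proposal is correct and follows essentially the same route as the paper, whose entire proof is a citation of Lemma \ref{lem:ABK}; you merely unpack the step identifying ``$K$ diagonal'' with ``$K$ commutes with $A^*$'' (equivalently $K \in \b{A^*}$), which the paper leaves implicit via Lemma \ref{lem:As2} and Notation \ref{notation0}. No issues.
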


\begin{proof}
By Lemma \ref{lem:ABK}.
\end{proof}

\begin{lemma}   \label{lem:compequiv}     \samepage
\ifDRAFT {\rm lem:compequiv}. \fi
The compatibility relation on $\Omega$ is an equivalence relation.
\end{lemma}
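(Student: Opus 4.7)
The plan is to reduce the claim to Lemma \ref{lem:compatible1}, which already asserts that the compatibility relation on the set of all Leonard pairs on $V$ is an equivalence relation. By Definition \ref{def:comp}, matrices $A,B \in \Omega$ are compatible exactly when the Leonard pairs $A,A^*$ and $B,A^*$ are compatible. Thus the compatibility relation on $\Omega$ is the pullback of the Leonard-pair compatibility relation along the map $A \mapsto (A,A^*)$, and inherits the equivalence-relation structure automatically.

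For a self-contained argument one could instead cite Lemma \ref{lem:comp}, which characterizes compatibility of $A,B \in \Omega$ by the condition that $A - B$ is diagonal. Reflexivity is then immediate since $A - A = 0$, symmetry follows from $B - A = -(A - B)$, and transitivity follows from the fact that the sum of two diagonal matrices $(A - B) + (B - C) = A - C$ is diagonal.

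No serious obstacle is expected here; the only point to keep in mind is that both $A$ and $B$ (and in the transitive step also $C$) belong to $\Omega$, so all three matrices are already known to be normalized irreducible tridiagonal and to form Leonard pairs with $A^*$. Hence the compatibility condition reduces cleanly to a statement about the difference being diagonal, and the three equivalence-relation axioms are verified in one or two lines each.
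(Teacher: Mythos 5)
Your primary argument is exactly the paper's proof: Lemma \ref{lem:compequiv} is deduced directly from Lemma \ref{lem:compatible1} via Definition \ref{def:comp}. The alternative via Lemma \ref{lem:comp} (diagonal differences) is a correct bonus but not needed.
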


\begin{proof}
By Lemma \ref{lem:compatible1}.
\end{proof}

\begin{lemma}     \label{lem:comp3}    \samepage
\ifDRAFT {\rm lem:comp3}. \fi
For compatible matrices $A$, $B \in \Omega$
and $\zeta \in \F$,
the matrices $A + \zeta I$ and $B + \zeta I$ are contained in $\Omega$.
Moreover, these matrices are compatible.
\end{lemma}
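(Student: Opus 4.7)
The plan is to verify the two assertions separately, each by direct reduction to results already established.

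For the first assertion, that $A+\zeta I$ and $B+\zeta I$ lie in $\Omega$, I would simply invoke Lemma \ref{lem:affineOmega} applied to $A$ and to $B$. Since both $A,B\in\Omega$ by hypothesis, this immediately yields $A+\zeta I\in\Omega$ and $B+\zeta I\in\Omega$.

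For the second assertion, there are two natural routes. The cleaner route is via Lemma \ref{lem:comp}: since $A$ and $B$ are compatible matrices in $\Omega$, that lemma implies $A-B$ is diagonal. But then
\[
(A+\zeta I)-(B+\zeta I)=A-B,
\]
which is still diagonal, so applying Lemma \ref{lem:comp} in the reverse direction to the pair $A+\zeta I, B+\zeta I$ (already known to lie in $\Omega$) gives that they are compatible.

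Alternatively, one could argue through Definition \ref{def:comp}: compatibility of $A$ and $B$ in $\Omega$ means the Leonard pairs $A,A^*$ and $B,A^*$ are compatible in the sense of Definition \ref{def:compatible}, so Lemma \ref{lem:compatible3} with $\xi=\xi^*=1$ and $\zeta^*=0$ shows the Leonard pairs $A+\zeta I, A^*$ and $B+\zeta I, A^*$ are compatible, and Definition \ref{def:comp} finishes the job. There is no substantive obstacle here; the lemma is a bookkeeping consequence of Lemmas \ref{lem:affineOmega}, \ref{lem:comp}, and \ref{lem:compatible3}, and the only care needed is to note that $(A+\zeta I)-(B+\zeta I)=A-B$ so that the diagonal-difference criterion of Lemma \ref{lem:comp} transfers verbatim.
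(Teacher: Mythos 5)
Your proof is correct and takes essentially the same approach as the paper: the paper obtains containment in $\Omega$ and then cites Lemma \ref{lem:compatible3} for compatibility, which is exactly your second route (your first route via the diagonal-difference criterion of Lemma \ref{lem:comp} is an equally valid, slightly more hands-on variant).
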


\begin{proof}
By Lemma \ref{lem:Omega00},
the matrices $A + \zeta I$ and $B + \zeta I$ are contained in $\Omega$.
These matrices are compatible by Lemma \ref{lem:compatible3}.
\end{proof}

\begin{lemma}    \label{lem:Kzeta}    \samepage
\ifDRAFT {\rm lem:Kzeta}. \fi
For $A \in \Omega$ let $K$ denote a companion of $A$.
Then $K + \zeta I$ is a companion of $A$ for $\zeta \in \F$.
\end{lemma}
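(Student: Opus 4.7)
The plan is to reduce this to Lemma \ref{lem:trivial}, which gives the analogous statement at the level of Leonard pairs. By Definition \ref{def:companionOmega}, saying that $K$ is a companion of $A \in \Omega$ means precisely that $K$ is a companion of the Leonard pair $A, A^*$ in the sense of Definition \ref{def:companion}. Lemma \ref{lem:trivial} then asserts that $K + \zeta I$ is also a companion of $A, A^*$ for every $\zeta \in \F$, and applying Definition \ref{def:companionOmega} again translates this back to the desired statement about $\Omega$.

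If one wishes to see the content of Lemma \ref{lem:trivial} unpacked in this context: the hypothesis gives $K \in \b{A^*}$ with $A - K, A^*$ a Leonard pair on $V$. Since $I \in \b{A^*}$ and $\b{A^*}$ is a subalgebra of $\End$, we have $K + \zeta I \in \b{A^*}$. Moreover Lemma \ref{lem:affineLP} applied to the Leonard pair $A-K, A^*$ (with parameters $\xi = \xi^* = 1$, $\zeta^* = 0$, and the constant $-\zeta$ added to the first component) shows that $A - (K + \zeta I) = (A-K) - \zeta I, A^*$ is a Leonard pair on $V$. Hence $K + \zeta I$ satisfies both conditions of Definition \ref{def:companion}, completing the proof. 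There is no real obstacle here; the lemma is a direct corollary of Lemma \ref{lem:trivial} via the definitional bridge provided by Definition \ref{def:companionOmega}.
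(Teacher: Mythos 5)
Your proposal is correct and matches the paper's proof, which simply cites Lemma \ref{lem:trivial} (via the definitional identification in Definition \ref{def:companionOmega}). Your optional unpacking of Lemma \ref{lem:trivial} using $\b{A^*}$ being a subalgebra and Lemma \ref{lem:affineLP} also agrees with how the paper proves that lemma.
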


\begin{proof}
By Lemma \ref{lem:trivial}. 
\end{proof}

\section{A characterization of the compatibility relation and the bond relation in terms of the parameter array}
\label{sec:characterize}
\ifDRAFT {\rm sec:characterize}. \fi

In this section we characterize the compatibility relation and the bond relation
in terms of the parameter array.
Throughout this section, the following notation is in effect.

\begin{notation}     \label{notation:all}     \samepage
\ifDRAFT {\rm notation:all}. \fi
Assume that $V = \F^{d+1}$.
Let $\{\th^*_i\}_{i=0}^d$ denote mutually distinct scalars in $\F$,
and define $A^* \in \Matd$ by
\[
    A^* = \text{\rm diag} ( \th^*_0, \th^*_1, \ldots, \th^*_d).
\]
For $0 \leq i \leq d$ let $E^*_i$ denote the matrix in $\Matd$ that
has $(i,i)$-entry $1$ and all other entries $0$.
Let the set $\Omega$ be from Definition \ref{def:Omega}.
Let $A$, $B$ denote matrices in $\Omega$.
Let $\{E_i\}_{i=0}^d$ (resp.\ $\{E'_i\}_{i=0}^d$) denote a standard ordering of
the primitive idempotents of $A$ (resp.\ $B$) for the Leonard pair $A,A^*$
(resp.\ $B,A^*$).
Define
\begin{align*}
 \Phi &= (A; \{E_i\}_{i=0}^d; A^*; \{E^*_i\}_{i=0}^d),
&
 \Phi' &= (B; \{E'_i\}_{i=0}^d; A^*; \{E^*_i\}_{i=0}^d),
\end{align*}
and observe that each of $\Phi$, $\Phi'$ is a Leonard system on $V$.
Let
\begin{align*}
& (\{\th_i\}_{i=0}^d; \{\th^*_i\}_{i=0}^d; \{\vphi_i\}_{i=1}^d; \{\phi_i\}_{i=1}^d),
&&
 (\{\th'_i\}_{i=0}^d; \{\th^*_i\}_{i=0}^d; \{\vphi'_i\}_{i=1}^d; \{\phi'_i\}_{i=1}^d)
\end{align*}
denote the parameter array of $\Phi$ and $\Phi'$, respectively.
Let the scalars $\{a_i\}_{i=0}^d$, $\{x_i\}_{i=1}^d$ 
(resp.\ $\{a'_i\}_{i=0}^d$, $\{x'_i\}_{i=1}^d$) be from Definitions \ref{def:ai}, \ref{def:xi}
for $\Phi$ (resp.\ $\Phi'$).
For the case of $d \geq 3$,
let $\kappa$ (resp.\ $\kappa'$) denote the invariant value for $A,A^*$
(resp.\ $B,A^*$).
\end{notation}

We now present our first main result,
in which we characterize the compatibility relation 
in terms of the parameter array.

\begin{theorem}    \label{thm:main}    \samepage
\ifDRAFT {\rm thm:main}. \fi
The following {\rm (i)--(iii)} are equivalent:
\begin{itemize}
\item[\rm (i)]
$A$ and $B$ are compatible;
\item[\rm (ii)]
$x_i = x'_i \quad (1 \leq i \leq d)$.
\item[\rm (iii)]
$\vphi_i \phi_i = \vphi'_i \phi'_i \quad (1 \leq i \leq d)$.
\end{itemize}
\end{theorem}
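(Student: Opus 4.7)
The plan is to establish (i) $\Leftrightarrow$ (ii) directly, and then invoke Lemma \ref{lem:simLS2c} for (ii) $\Leftrightarrow$ (iii). The key observation is that because $A \in \Omega$ is itself a normalized irreducible tridiagonal matrix in $\Matd$, the standard basis $\{e_i\}_{i=0}^d$ of $V = \F^{d+1}$ already satisfies Definition \ref{def:LP}(i) for the Leonard pair $A,A^*$: with respect to this basis, $A$ is represented by $A$ itself (irreducible tridiagonal) and $A^*$ is represented by the diagonal matrix $\text{diag}(\th^*_0,\ldots,\th^*_d)$. The same remark applies to $B$.

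Given this observation, applying Lemma \ref{lem:ai0} (or equivalently Corollary \ref{cor:aixi0}) with $M = A$ yields $A_{i,i-1} A_{i-1,i} = x_i$ for $1 \leq i \leq d$, and since $A$ is normalized we have $A_{i,i-1} = 1$, so $A_{i-1,i} = x_i$. Likewise $B_{i-1,i} = x'_i$ for $1 \leq i \leq d$. Now Lemma \ref{lem:comp} tells us that $A$ and $B$ are compatible if and only if $A_{i-1,i} = B_{i-1,i}$ for all $1 \leq i \leq d$. Combining these two facts yields the equivalence (i) $\Leftrightarrow$ (ii).

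For (ii) $\Leftrightarrow$ (iii), we simply apply Lemma \ref{lem:simLS2c} to the Leonard systems $\Phi$ and $\Phi'$, which share the same dual eigenvalue sequence $\{\th^*_i\}_{i=0}^d$; that lemma asserts precisely that $x_i = x'_i$ for $1 \leq i \leq d$ is equivalent to $\vphi_i \phi_i = \vphi'_i \phi'_i$ for $1 \leq i \leq d$, via the explicit formula of Lemma \ref{lem:xiparam}.

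There is no real obstacle here; the proof is an assembly of already-proven facts. The only point worth being careful about is checking that Lemma \ref{lem:ai0} genuinely applies to the standard basis of $\F^{d+1}$ for both $A$ and $B$, which it does because membership in $\Omega$ is exactly the hypothesis that the ordered pair of matrices fits Definition \ref{def:LP}(i) in this concrete representation.
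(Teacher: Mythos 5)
Your proposal is correct and follows essentially the same route as the paper: the paper also deduces (i) $\Leftrightarrow$ (ii) from Lemma \ref{lem:comp} together with Corollary \ref{cor:aixi0} (reading off $A_{i-1,i}=x_i$ and $B_{i-1,i}=x'_i$ from the normalized representing matrices), and obtains (ii) $\Leftrightarrow$ (iii) from Lemma \ref{lem:simLS2c}. Your extra remark justifying that the standard basis of $\F^{d+1}$ satisfies Definition \ref{def:LP}(i) for $A\in\Omega$ is a point the paper leaves implicit, but it is the same argument.
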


\begin{proof}
(i) $\Rightarrow$ (ii)
By Lemma \ref{lem:comp},
$A_{i-1, i} = B_{i-1, i}$ for $1 \leq i \leq d$.
By this and Corollary \ref{cor:aixi0} we get (ii).

(ii) $\Rightarrow$ (i)
By the construction, $B_{i,i-1} = A_{i,i-1} = 1$ for $1 \leq i \leq d$.
By Corollary \ref{cor:aixi0} and (ii), we have  $B_{i-1,i} = A_{i-1,i}$ for $1 \leq i \leq d$.
Now use Lemma \ref{lem:comp}.

(ii) $\Leftrightarrow$ (iii)
By Lemma \ref{lem:simLS2c}.
\end{proof}

Next we consider some special cases of Theorem \ref{thm:main}.

\begin{prop}    \label{prop:A=B}    \samepage
\ifDRAFT {\rm prop:A=B}. \fi
The following are equivalent:
\begin{itemize}
\item[\rm (i)]
there exists $\zeta \in \F$ such that $B = A + \zeta I$;
\item[\rm (ii)]
one of the following \eqref{eq:A=B1pre}, \eqref{eq:A=B2pre} holds:
\begin{align}
 \vphi'_i &= \vphi_i,  &  \phi'_i &= \phi_i &&  (1 \leq i \leq d),    \label{eq:A=B1pre}
\\
 \vphi'_i &= \phi_i,  &  \phi'_i &= \vphi_i &&  (1 \leq i \leq d).    \label{eq:A=B2pre}
\end{align}
\end{itemize}
\end{prop}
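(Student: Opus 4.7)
The plan is to prove the two implications separately, using the behavior of the parameter array under affine shift (Lemma \ref{lem:affineparam}) and reversal of the eigenvalue ordering (Lemma \ref{lem:parrayrelative}(iii)), together with Lemma \ref{lem:A=Bpre} in the reverse direction. The bridge from equality of parameter arrays back to equality of matrices in $\Omega$ will be supplied by Lemmas \ref{lem:classify} and \ref{lem:ABOmega}.

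For (i) $\Rightarrow$ (ii), I would start from $B = A + \zeta I$. Since $B$ is a polynomial in $A$, the primitive idempotents of $B$ coincide with those of $A$, so the standard ordering $\{E'_i\}_{i=0}^d$ used in $\Phi'$ must equal either $\{E_i\}_{i=0}^d$ or $\{E_{d-i}\}_{i=0}^d$. In the former case, Lemma \ref{lem:affineparam} applied to $\Phi$ (with $\xi = \xi^* = 1$ and $\zeta^* = 0$) computes the parameter array of $\Phi'$ to be $(\{\th_i+\zeta\}_{i=0}^d; \{\th^*_i\}_{i=0}^d; \{\vphi_i\}_{i=1}^d; \{\phi_i\}_{i=1}^d)$, which yields \eqref{eq:A=B1pre}. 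In the latter case, I would first read off the parameter array of $\Phi^\Downarrow$ from Lemma \ref{lem:parrayrelative}(iii), then apply Lemma \ref{lem:affineparam} to shift by $\zeta$, obtaining the parameter array $(\{\th_{d-i}+\zeta\}_{i=0}^d; \{\th^*_i\}_{i=0}^d; \{\phi_i\}_{i=1}^d; \{\vphi_i\}_{i=1}^d)$ of $\Phi'$ and hence \eqref{eq:A=B2pre}.

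For (ii) $\Rightarrow$ (i), I would invoke Lemma \ref{lem:A=Bpre} to produce $\zeta \in \F$ with $\th'_i = \th_i + \zeta$ (if \eqref{eq:A=B1pre} holds) or $\th'_i = \th_{d-i} + \zeta$ (if \eqref{eq:A=B2pre} holds). Lemma \ref{lem:affineOmega} places $A+\zeta I$ inside $\Omega$, and Lemma \ref{lem:affineparam} (composed, in the second case, with the $\Downarrow$ operation via Lemma \ref{lem:parrayrelative}(iii)) produces a Leonard system on $V$ associated with the Leonard pair $A+\zeta I, A^*$ whose parameter array coincides with that of $\Phi'$. The uniqueness clause of Lemma \ref{lem:classify} then shows this system and $\Phi'$ are isomorphic as Leonard systems, so the Leonard pairs $A+\zeta I, A^*$ and $B, A^*$ are isomorphic. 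Finally Lemma \ref{lem:ABOmega} promotes this isomorphism to the equality $B = A + \zeta I$.

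The only real obstacle is bookkeeping: the two alternatives in (ii) must be matched precisely with the two possible standard orderings of the primitive idempotents of $B$, and one has to track how the swap $\vphi_i \leftrightarrow \phi_i$ arising in \eqref{eq:A=B2pre} reflects the action of the $\Downarrow$ operation on the parameter array described in Lemma \ref{lem:parrayrelative}(iii). Once that correspondence is pinned down, both implications reduce to direct reading-off arguments from the lemmas already established.
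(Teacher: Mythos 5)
Your proposal is correct and follows essentially the same route as the paper's proof: the forward direction identifies the standard ordering of the idempotents of $B=A+\zeta I$ with $\{E_i\}_{i=0}^d$ or $\{E_{d-i}\}_{i=0}^d$ and reads off the parameter array via Lemmas \ref{lem:affineparam} and \ref{lem:parrayrelative}(iii), while the converse uses Lemma \ref{lem:A=Bpre} to recover $\zeta$ and then Lemmas \ref{lem:classify} and \ref{lem:ABOmega} to upgrade the isomorphism of Leonard pairs to the equality $B=A+\zeta I$. No gaps.
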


\begin{proof}
(i) $\Rightarrow$ (ii)
Consider the Leonard system
\begin{equation}
 (A + \zeta I; \{E_i\}_{i=0}^d; A^*; \{E^*_i\}_{i=0}^d).    \label{eq:A=Baux}
\end{equation}
By Lemma \ref{lem:affineparam} the Leonard system \eqref{eq:A=Baux}
has parameter array
\[
(\{\th_i + \zeta\}_{i=0}^d; \{\th^*_i\}_{i=0}^d; \{\vphi_i\}_{i=1}^d; \{\phi_i\}_{i=1}^d).
\]
The Leonard system \eqref{eq:A=Baux} is associated with $B,A^*$ and is therefore
equal to one of $\Phi'$, $\Phi^{\prime \Downarrow}$ by Lemma \ref{lem:LSequiv}.
First assume that the Leonard system \eqref{eq:A=Baux} is equal to $\Phi'$.
Then \eqref{eq:A=B1pre} holds.
Next assume that the Leonard system \eqref{eq:A=Baux} is equal to $\Phi^{\prime \Downarrow}$.
By Lemma \ref{lem:parrayrelative}, $\Phi'^\Downarrow$ has parameter array
\[
(\{\th'_{d-i}\}_{i=0}^d; \{\th^*_i\}_{i=0}^d; \{\phi'_i\}_{i=1}^d; \{\vphi'_i\}_{i=1}^d).
\]
By these comments \eqref{eq:A=B2pre} holds.

(ii) $\Rightarrow$ (i)
First assume that \eqref{eq:A=B1pre} holds.
By Lemma \ref{lem:A=Bpre}(i) there exists $\zeta \in\F$ such that
$\th'_i = \th_i + \zeta$ for $0 \leq i \leq d$.
So the parameter array of $\Phi'$ is
\[
(\{\th_i + \zeta\}_{i=0}^d; \{\th^*_i\}_{i=0}^d; \{\vphi_i\}_{i=1}^d; \{\phi_i\}_{i=1}^d).
\]
By this and Lemma \ref{lem:affineparam} the Leonard system 
$ (A + \zeta I; \{E_i\}_{i=0}^d; A^*; \{E^*_i\}_{i=0}^d)$
has the same parameter array as $\Phi'$ and is therefore isomorphic to $\Phi'$ by
Lemma \ref{lem:classify}.
Thus the Leonard pair $B,A^*$ is isomorphic to $A+\zeta I, A^*$.
By this and Lemma \ref{lem:ABOmega}, $B = A + \zeta I$.
Next assume that \eqref{eq:A=B2pre} holds.
By Lemma \ref{lem:A=Bpre}(ii) there exists $\zeta \in \F$ such that
$\th'_i = \th_{d-i} + \zeta$ for $0 \leq i \leq d$.
So the parameter array of $\Phi'$ is
\[
(\{\th_{d-i} + \zeta\}_{i=0}^d; \{\th^*_i\}_{i=0}^d; \{\phi_i\}_{i=1}^d; \{\vphi_i\}_{i=1}^d).
\]
By Lemma \ref{lem:parrayrelative}(iii)
the parameter array of $(A; \{E_{d-i}\}_{i=0}^d; A^*; \{E^*_i\}_{i=0}^d)$
is
\[
(\{\th_{d-i}\}_{i=0}^d; \{\th^*_i\}_{i=0}^d; \{\phi_i\}_{i=1}^d; \{\vphi_i\}_{i=1}^d).
\]
By these comments and Lemma \ref{lem:affineparam}
the Leonard system $(A+\zeta I; \{E_{d-i}\}_{i=0}^d; A^*; \{E^*_i\}_{i=0}^d)$
has the same parameter array as $\Phi'$ and 
is therefore isomorphic to $\Phi'$ by Lemma \ref{lem:classify}.
Thus the Leonard pair $B,A^*$ is isomorphic to $A+\zeta I, A^*$.
By this and Lemma \ref{lem:ABOmega}, $B = A + \zeta I$.
\end{proof}

\begin{prop}    \label{prop:ABvee}    \samepage
\ifDRAFT {\rm prop:ABvee}. \fi
The following are equivalent:
\begin{itemize}
\item[\rm (i)]
there exists $\zeta \in \F$ such that $B = A^\vee + \zeta I$;
\item[\rm (ii)]
one of the following \eqref{eq:ABvee1pre}, \eqref{eq:ABvee2pre} holds:
\begin{align}
 \vphi'_i &= - \vphi_i,  &  \phi'_i &= - \phi_i &&  (1 \leq i \leq d),    \label{eq:ABvee1pre}
\\
 \vphi'_i &= - \phi_i,  &  \phi'_i &= - \vphi_i &&  (1 \leq i \leq d).    \label{eq:ABvee2pre}
\end{align}
\end{itemize}
\end{prop}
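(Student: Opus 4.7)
The plan is to reduce Proposition \ref{prop:ABvee} to Proposition \ref{prop:A=B} by replacing $A$ with $A^\vee$. The point is that $A^\vee$ plays the same structural role as $A$, and the bond operation negates the split sequences in a controlled way.

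First I would record that $A^\vee \in \Omega$ by Lemma \ref{lem:veecoincide}. Next, I would identify a Leonard system associated with the Leonard pair $A^\vee, A^*$. By Note \ref{note:veeLPSL}, the Leonard system $\Phi^\vee$ is associated with $A^\vee, A^*$, and by Lemma \ref{lem:parrayPhivee} its parameter array is
\[
  (\{-\th_i\}_{i=0}^d;\ \{\th^*_i\}_{i=0}^d;\ \{-\vphi_i\}_{i=1}^d;\ \{-\phi_i\}_{i=1}^d).
\]
In particular, the first and second split sequences of $A^\vee, A^*$ (with respect to this choice of standard ordering of the primitive idempotents of $A^\vee$) are $\{-\vphi_i\}_{i=1}^d$ and $\{-\phi_i\}_{i=1}^d$, respectively.

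Now I would apply Proposition \ref{prop:A=B} with $A$ replaced by $A^\vee$ (and $B$ unchanged). That result says $B = A^\vee + \zeta I$ for some $\zeta \in \F$ if and only if one of the two sets of conditions holds for the split sequences of $A^\vee, A^*$ and $B, A^*$, namely either
\[
  \vphi'_i = -\vphi_i \ \text{and}\ \phi'_i = -\phi_i \quad (1 \leq i \leq d),
\]
or
\[
  \vphi'_i = -\phi_i \ \text{and}\ \phi'_i = -\vphi_i \quad (1 \leq i \leq d).
\]
These are precisely \eqref{eq:ABvee1pre} and \eqref{eq:ABvee2pre}, which finishes the equivalence of (i) and (ii).

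The only subtle point is the dependence of the split sequences on the chosen standard ordering: Lemma \ref{lem:LSequiv} tells us a Leonard pair has four associated Leonard systems (paired by $\downarrow, \Downarrow$), and Proposition \ref{prop:A=B} already absorbs the $\Downarrow$ ambiguity by offering two alternatives. So I would just verify that the two alternatives coming out of Proposition \ref{prop:A=B} applied to $A^\vee$ and $B$ match the two displayed alternatives in (ii); this is immediate from the parameter array of $\Phi^\vee$ above. The rest is a direct citation.
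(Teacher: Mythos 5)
Your argument is correct and is essentially identical to the paper's own proof, which likewise obtains the result by applying Proposition \ref{prop:A=B} to $\Phi^\vee$ and $\Phi'$ via Lemma \ref{lem:parrayPhivee}. Your extra remarks on $A^\vee \in \Omega$ and on the $\Downarrow$ ambiguity being absorbed by the two alternatives in Proposition \ref{prop:A=B} are accurate and only make the citation more explicit.
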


\begin{proof}
The result is obtained by applying Proposition \ref{prop:A=B} to $\Phi^\vee$ and $\Phi'$
using Lemma \ref{lem:parrayPhivee}.
The result can also be obtained using Lemma \ref{lem:ABveepre}.
\end{proof}

\section{The type of a Leonard pair and Leonard system}
\label{sec:type}
\ifDRAFT {\rm sec:type}. \fi

For the rest of this paper, we assume that $\F$ is algebraically closed.
In this section we recall from \cite{NT:balanced} the type of a Leonard pair and Leonard system.
Consider a parameter array over $\F$:
\begin{equation}
  (\{\th_i\}_{i=0}^d; \{\th^*_i\}_{i=0}^d; \{\vphi_i\}_{i=1}^d; \{\phi_i\}_{i=1}^d).   \label{eq:parray0}
\end{equation}
For the case $d \geq 3$, let $\beta$ denote the fundamental constant
of \eqref{eq:parray0}.

\begin{defi}     {\rm (See \cite[Section 4]{NT:balanced}.) }
\label{def:type}    \samepage
\ifDRAFT {\rm def:type}. \fi
To the parameter array \eqref{eq:parray0} we assign the {\em type} as follows.
\[
\begin{array}{c|c}
\text{\rm type} & \text{\rm description}
\\ \hline
\text{\rm O} &  1 \leq d \leq 2       \rule{0mm}{2.8ex}
\\
\text{\rm I} & d \geq 3,  \;\;  \beta \neq 2, \;\; \beta \neq -2          \rule{0mm}{2.5ex}
\\
\text{\rm II} & d \geq 3, \;\;  \beta = 2, \;\; \text{\rm Char}(\F) \neq 2           \rule{0mm}{2.5ex}
\\
\text{\rm III}^+ & d \geq 3, \;\;     \beta = -2, \;\; \text{\rm Char}(\F) \neq 2, 
                             \;\; \text{\rm $d$ even}                             \rule{0mm}{2.5ex}
\\
\text{\rm III}^- &  d \geq 3, \;\;    \beta = -2, \;\; \text{\rm Char}(\F) \neq 2, 
\;\; \text{\rm $d$ odd}                                                     \rule{0mm}{2.5ex}
\\
\text{\rm IV} & d \geq 3, \;\;   \beta=2, \;\; \text{\rm Char}(\F) = 2               \rule{0mm}{2.5ex}
\end{array}
\]
\end{defi}

\begin{defi}    \label{def:typeLS}    \samepage
\ifDRAFT {\rm def:typeLS}. \fi
The type of a given Leonard system is the type of the
associated parameter array.
The type of a given Leonard pair is the type of an associated Leonard system.
\end{defi}

\begin{lemma}    {\rm (See \cite[Lemma 9.3]{T:Leonard}.) }
\label{lem:q}     \samepage
\ifDRAFT {\rm lem:q}. \fi
The following {\rm (i)--(v)} hold.
\begin{itemize}
\item[\rm (i)]
For type I,
$q^{2i} \neq 1$ for $1 \leq i \leq d$, where $\beta = q^2 + q^{-2}$.
\item[\rm (ii)]
For type II,
$\text{\rm Char}(\F)$ is equal to $0$ or greater than $d$.
\item[\rm (iii)]
For type III$^+$, 
$\text{\rm Char}(F)$ is equal to $0$ or greater than $d/2$.
\item[\rm (iv)]
For type III$^-$,
$\text{\rm Char}(F)$ is equal to $0$ or greater than $(d-1)/2$.
\item[\rm (v)]
For type IV,
$d=3$.
\end{itemize}
\end{lemma}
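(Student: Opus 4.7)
The plan is to apply the recurrence
\[
\th_{i-1} - \beta \th_i + \th_{i+1} = \gamma \qquad (1 \leq i \leq d-1)
\]
from Lemma \ref{lem:gammarho} as a linear difference equation with characteristic polynomial $z^2 - \beta z + 1$, solve it in closed form within each type, and then impose the distinctness condition $\th_i \neq \th_j$ ($i \neq j$) from Lemma \ref{lem:classify}(i). The qualitative shape of the closed form depends only on the roots of $z^2 - \beta z + 1$ and on $\text{Char}(\F)$, so the five cases in the lemma correspond naturally to the algebraic dichotomies of this characteristic equation.

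For type I I would fix $q \in \F$ with $\beta = q^2 + q^{-2}$ and $q^4 \neq 1$, so the roots $q^{\pm 2}$ are distinct and $\th_i = \alpha + \mu q^{2i} + \nu q^{-2i}$. A short manipulation gives
\[
\th_i - \th_j = (q^{2(i-j)} - 1)(\mu q^{2j} - \nu q^{-2i}).
\]
If $q^{2k} = 1$ for some $1 \leq k \leq d$, one shows that the first factor vanishes for the pair $(i,j) = (0,k)$, whereas the second factor cannot identically vanish because $\mu$ and $\nu$ cannot both be $0$ (else the $d+1 \geq 4$ eigenvalues would collapse); a careful case analysis then produces a coincidence $\th_i = \th_j$ with $i \neq j$, contradicting distinctness. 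Hence $q^{2k} \neq 1$ for $1 \leq k \leq d$.

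For types II and III$^{\pm}$ the characteristic polynomial has a double root ($z=1$ for type II, $z=-1$ for type III$^{\pm}$), and the particular solution to the inhomogeneous recurrence exists only when $\text{Char}(\F) \neq 2$. I would write
\[
\th_i = \alpha + \mu i + \tfrac{\gamma}{2}\,i^2 \qquad (\text{type II}),
\]
\[
\th_i = \tfrac{\gamma}{4} + (-1)^i(\mu + \nu i) \qquad (\text{type III}^{\pm}).
\]
Computing $\th_i - \th_j$ factors out an integer $(i-j)$ in type II and, on same-parity pairs, an integer $(i-j)$ in type III$^{\pm}$; distinctness forces these integer factors to be nonzero in $\F$. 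This yields $\text{Char}(\F) = 0$ or $\text{Char}(\F) > d$ for type II. For type III$^{\pm}$ only even differences arise from same-parity pairs, and the largest same-parity gap is $d$ when $d$ is even and $d-1$ when $d$ is odd, so after dividing by $2$ (legal since $\text{Char}(\F) \neq 2$) one obtains the bounds $d/2$ and $(d-1)/2$ stated in (iii) and (iv).

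For type IV one has $\text{Char}(\F)=2$ and $\beta \equiv 0$, so the recurrence degenerates to $\th_{i+1} = \gamma + \th_{i-1}$. Iterating twice gives $\th_{i+4} = 2\gamma + \th_i = \th_i$, hence if $d \geq 4$ then $\th_4 = \th_0$, contradicting distinctness. Combined with the defining hypothesis $d \geq 3$ for type IV, this forces $d = 3$. The hardest part of the entire argument is the genericity step in types I--III$^{\pm}$: one must exclude the degenerate situation where the $(\mu,\nu)$-dependent cofactor of $\th_i - \th_j$ happens to vanish before one can conclude the desired constraint on the integer factor. The clean way to handle this is to note that the closed forms carry only three free parameters $\alpha, \mu, \nu$ (two in type II), while $\{\th_i\}_{i=0}^d$ must take $d+1 \geq 4$ distinct values, so a vanishing of $\mu$ or $\nu$ immediately contradicts Lemma \ref{lem:classify}(i).
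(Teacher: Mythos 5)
The paper does not actually prove Lemma \ref{lem:q}; it defers entirely to the citation \cite[Lemma 9.3]{T:Leonard}. Your proposal supplies a correct self-contained argument in its place: solving the three-term recurrence of Lemma \ref{lem:gammarho} produces exactly the closed forms that the paper later writes down type by type (compare your expressions with \eqref{eq:type1th}, \eqref{eq:type2th}, \eqref{eq:type3+th}, \eqref{eq:type4th} and with Lemmas \ref{lem:type1condpre}, \ref{lem:type2condpre}, \ref{lem:type3+condpre}), and in each case the factorization of $\th_i-\th_j$ combined with the distinctness condition of Lemma \ref{lem:classify}(i) forces the stated constraint; the period-four degeneration $\th_{i+4}=2\gamma+\th_i=\th_i$ correctly forces $d=3$ in type IV. One remark: your closing paragraph misidentifies a ``genericity step'' as the hardest part of the argument, when in fact no such step is needed. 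In each of types I--III$^{\pm}$ the contradiction is obtained by exhibiting a pair $(i,j)$ with $i\neq j$ for which the integer (or $q$-integer) factor of $\th_i-\th_j$ already vanishes in $\F$ --- e.g.\ $(i,j)=(k,0)$ with $q^{2k}=1$ in type I, $(i,j)=(p,0)$ with $p=\text{\rm Char}(\F)\leq d$ in type II, $(i,j)=(2p,0)$ in types III$^{\pm}$ --- whereupon the product is zero no matter what the $(\mu,\nu)$-dependent cofactor does, contradicting $\th_i\neq\th_j$. The cofactor would only matter if one were trying to construct parameter arrays (the converse direction), which the lemma does not assert. Apart from this harmless detour, the argument is complete, including the points that genuinely require care: the existence of the particular solutions (needing $\beta\neq 2$ in type I and $\text{\rm Char}(\F)\neq 2$ in types II and III$^{\pm}$) and the restriction to same-parity pairs in types III$^{\pm}$, which is what produces the factor of $2$ in the bounds $d/2$ and $(d-1)/2$.
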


\section{A refinement of Theorem \ref{thm:main}}
\label{sec:refine}
\ifDRAFT {\rm sec:refine}. \fi

In this section we present our second main result, which is Theorem \ref{thm:kappamain}.
This result is a refinement of Theorem \ref{thm:main}.
The following proposition will be used to prove Theorem \ref{thm:kappamain}.

\begin{prop}    \label{prop:RST} 
\ifDRAFT {\rm prop:RST}. \fi
Assume that $d \geq 3$.
Let 
\begin{equation}
(\{\th_i\}_{i=0}^d; \{\th^*_i\}_{i=0}^d; \{\vphi_i\}_{i=1}^d; \{\phi_i\}_{i=1}^d)   \label{eq:parray11}
\end{equation}
denote a parameter array over $\F$ with fundamental constant $\beta$.
Then for $1 \leq i \leq d$ we have
\begin{align}
\frac{ \vphi_i \phi_i } { (\th^*_{i-1} - \th^*_d) (\th^*_i - \th^*_0 ) }
 &= \frac { R_i  \vphi_1 \phi_1 } { (\th^*_i - \th^*_0) ( \th^*_0 - \th^*_d) }
+ \frac{ S_i \vphi_d \phi_d } { (\th^*_d - \th^*_{i-1} ) (\th^*_0 - \th^*_d) }
+ T_i \kappa,                                     \label{eq:RST}
\end{align}
where $\kappa$ is the invariant value for \eqref{eq:parray11},
and $R_i$, $S_i$, $T_i$ are given below.
\begin{itemize}
\item[] 
Type I: 
\begin{align}
R_i &= \frac{ (q^i - q^{-i})^2 (q^{d-i}- q^{i-d} ) (q^{d-i+1} - q^{i-d-1}) }
                { (q - q^{-1})^2 (q^d - q^{-d}) ( q^{d-1} - q^{1-d}) },             \label{eq:Rtype1}
\\
S_i &= \frac{ (q^{d-i+1} - q^{i-d-1})^2 (q^i - q^{-i}) (q^{i-1} - q^{1-i}) }
                { (q - q^{-1})^2 (q^d - q^{-d}) ( q^{d-1} - q^{1-d}) },             \label{eq:Stype1}
\\
T_i &= \frac{ (q^i - q^{-i} ) (q^{i-1} - q^{1-i} )(q^{d-i} - q^{i-d} ) (q^{d-i+1} - q^{i-d-1}) }
                { (q-q^{-1})^2 (q^2 - q^{-2})^2 },                                      \label{eq:Ttype1}
\end{align}
where $\beta = q^2 + q^{-2}$.
\item[]
Type II:
\begin{align}
R_i &= \frac{ i^2 (d-i) (d-i+1) } { d (d-1) },                        \label{eq:Rtype2}
\\
S_i &= \frac{ i (i-1) (d-i+1)^2 } { d (d-1) },                        \label{eq:Stype2}
\\
T_i &= \frac{ i (i-1)(d-i)(d-i+1) } { 4 }.                              \label{eq:Ttype2}
\end{align}
\item[]
Type III$^+$:
\begin{align}
R_i &= 
\begin{cases}
 0  & \text{\rm if $i$ is even},
 \\
 (d-i+1)/d &  \text{\rm if $i$ is odd},
\end{cases}                                          \label{eq:Rtype3+}
\\
S_i &=
\begin{cases} 
 i/d & \text{\rm if $i$ is even},
 \\
 0   & \text{\rm if $i$ is odd},
\end{cases}                                          \label{eq:Stype3+}
\\
T_i &= 
\begin{cases}
 - i (d-i) / 4 & \text{\rm if $i$ is even},
\\
 - (i-1)(d-i+1)/4 & \text{\rm if $i$ is odd}.
\end{cases}                                               \label{eq:Ttype3+}
\end{align}
\item[]
Type III$^-$:
\begin{align}
R_i &= 
\begin{cases}
 0  & \text{\rm if $i$ is even},
 \\
 (d-i)/(d-1) &  \text{\rm if $i$ is odd},
\end{cases}                                                 \label{eq:Rtype3-}
\\
S_i &=
\begin{cases} 
 0  & \text{\rm if $i$ is even},
 \\
 (i-1)/(d-1)  & \text{\rm if $i$ is odd},
\end{cases}                                                  \label{eq:Stype3-}
\\
T_i &= 
\begin{cases}
  i (d-i+1) / 4 & \text{\rm if $i$ is even},
\\
 (i-1)(d-i)/4 & \text{\rm if $i$ is odd}.
\end{cases}                                                 \label{eq:Ttype3-}
\end{align}
\item[]
Type IV:
\begin{align}
R_1 &= 1, & R_2 &= 0,  &   R_3 &= 0,               \label{eq:Rtype4}
\\
S_1 &= 0, & S_2 &= 0, & S_3 &= 1,                  \label{eq:Stype4}
\\
T_1 &= 0, & T_2 &= 1, & T_3 &= 0.                  \label{eq:Ttype4}
\end{align}   
\end{itemize}
\end{prop}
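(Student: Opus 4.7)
The plan is to prove \eqref{eq:RST} separately for each of the five types I, II, III$^+$, III$^-$, IV, since the coefficients $R_i, S_i, T_i$ and the parameter array entries themselves admit type-dependent closed-form descriptions. For each type, the scalars $\th^*_i$, $\vphi_i$, $\phi_i$, as well as $\vphi_1\phi_1$, $\vphi_d\phi_d$, and $\kappa$, are explicit rational expressions in a small collection of basic variables (see \cite[Section 5]{T:parray}, and the detailed descriptions to appear in Sections \ref{sec:parraytype1}--\ref{sec:parraytype4}). Substituting these expressions into \eqref{eq:RST} reduces the claim to a rational identity in the basic variables.

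To carry this out, I would proceed type by type. For a fixed type, the structure of the coefficients $R_i, S_i, T_i$ in \eqref{eq:Rtype1}--\eqref{eq:Ttype4} signals the intended factorization of $\vphi_i \phi_i$: in type I each coefficient is an explicit product of four $q$-brackets of the form $q^a - q^{-a}$, matching the natural $q$-factorization of $\vphi_i \phi_i$ (with $\beta = q^2 + q^{-2}$); in types II, III$^+$, III$^-$ the coefficients are low-degree polynomials in $i$; in type IV there are only three values of $i$, and the identity is a finite list of scalar equalities. One approach is to observe that, on the right-hand side, the three scalars $\vphi_1 \phi_1$, $\vphi_d \phi_d$, $\kappa$ are $\F$-linearly independent as functions of the basic variables for a generic parameter array, so any valid decomposition of the left-hand side determines $R_i, S_i, T_i$ uniquely. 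The verification then amounts to decomposing $\vphi_i \phi_i/((\th^*_{i-1} - \th^*_d)(\th^*_i - \th^*_0))$ into three explicit pieces and matching each against the corresponding claimed term. Equivalently, one clears denominators and checks the resulting polynomial identity by comparing values at sufficiently many indices $i$, or by using the telescoping recurrences $\gamma = \th_{i-1} - \beta \th_i + \th_{i+1}$ and its dual from Lemma \ref{lem:gammarho}.

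The main obstacle will be type I, where the $q$-symbols accumulate densely in both the numerators and denominators, and one must carefully track cancellations among products of $q$-brackets. The factored forms \eqref{eq:Rtype1}--\eqref{eq:Ttype1} point to the correct bookkeeping, and the remaining types are successively easier: type II is a polynomial identity in $i$ of degree at most four; types III$^+$ and III$^-$ are similar but require separate parity analysis for even and odd $i$; and type IV is checked by three direct substitutions. The actual computations will be performed in Sections \ref{sec:parraytype1}--\ref{sec:parraytype4}, after the closed-form parametrizations for each type have been set up.
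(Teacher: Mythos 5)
Your proposal follows essentially the same route as the paper: the paper proves \eqref{eq:RST} type by type, in each case substituting the closed-form expressions for $\th^*_i$, $\vphi_i$, $\phi_i$ and $\kappa$ in terms of the basic variables (Lemmas \ref{lem:type1param}, \ref{lem:type2param}, \ref{lem:type3+param}, \ref{lem:type3-param}, \ref{lem:type4param} and the corresponding $\kappa$ formulas) and verifying the resulting identity directly. Your additional remarks about linear independence and telescoping are not needed but do no harm; the core of the argument is the same routine verification the paper carries out in Sections \ref{sec:parraytype1}--\ref{sec:parraytype4}.
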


\begin{note}    \label{note:denom}    \samepage
\ifDRAFT {\rm note:denom}. \fi
In \eqref{eq:Rtype1}--\eqref{eq:Ttype3-}
the denominators of $R_i$, $S_i$, $T_i$ are nonzero by Lemma \ref{lem:q}.
\end{note}

The proof of Proposition \ref{prop:RST} will be given in
Sections \ref{sec:parraytype1},
\ref{sec:parraytype2},
\ref{sec:parraytype3+},
\ref{sec:parraytype3-},
\ref{sec:parraytype4}.

\begin{theorem}    \label{thm:kappamain}    \samepage
\ifDRAFT {\rm thm:kappamain}. \fi
Referring to Notation \ref{notation:all},
the following {\rm (i)--(iii)} hold.
\begin{itemize}
\item[\rm (i)]
Assume that $d=1$.
Then $A$ and $B$ are compatible if and only if 
\[
\vphi_1 \phi_1 = \vphi'_1 \phi'_1.
\]
\item[\rm (ii)]
Assume that $d=2$.
Then $A$ and $B$ are compatible if and only if
\begin{align*}
\vphi_1 \phi_1 &= \vphi'_1 \phi'_1, &
\vphi_2 \phi_2 &= \vphi'_2 \phi'_2.
\end{align*}
\item[\rm (iii)]
Assume that $d \geq 3$.
Then $A$ and $B$ are compatible if and only if
\begin{align}
\kappa &= \kappa',  &
\vphi_1 \phi_1 &= \vphi'_1 \phi'_1, &
\vphi_d \phi_d &= \vphi'_d \phi'_d.                 \label{eq:kappakappad}
\end{align}
\end{itemize}
\end{theorem}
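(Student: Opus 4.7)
The plan is to reduce everything to Theorem \ref{thm:main} (which characterizes compatibility as $\vphi_i\phi_i = \vphi'_i\phi'_i$ for $1 \le i \le d$) together with Proposition \ref{prop:RST} (which expresses each $\vphi_i\phi_i$ as an affine combination of $\vphi_1\phi_1$, $\vphi_d\phi_d$, and $\kappa$, with coefficients depending only on $\beta$ and the $\th^*_j$). Parts (i) and (ii) for $d=1,2$ are then immediate restatements of Theorem \ref{thm:main}, with nothing to prove beyond writing down the two trivial cases.

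For part (iii), the first observation is that $A,A^*$ and $B,A^*$ share the dual eigenvalue sequence $\{\th^*_i\}_{i=0}^d$, so by the ratio in \eqref{eq:indep0} they have the same fundamental constant $\beta$, hence the same type. Consequently the scalars $R_i,S_i,T_i$ appearing in Proposition \ref{prop:RST} are the same for $\Phi$ and $\Phi'$, and the denominators $(\th^*_{i-1}-\th^*_d)(\th^*_i-\th^*_0)$ and $(\th^*_i-\th^*_0)(\th^*_0-\th^*_d)$ etc.\ in \eqref{eq:RST} are also common to the two arrays.

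For the reverse implication (which is the substantive direction), I would assume $\kappa = \kappa'$, $\vphi_1\phi_1 = \vphi'_1\phi'_1$, and $\vphi_d\phi_d = \vphi'_d\phi'_d$. Applying Proposition \ref{prop:RST} to each array, the right-hand side of \eqref{eq:RST} takes identical values for the two parameter arrays, for every $i$ with $1 \le i \le d$. Hence the left-hand sides agree, and cancelling the nonzero common denominator yields $\vphi_i\phi_i = \vphi'_i\phi'_i$ for $1 \le i \le d$, so $A$ and $B$ are compatible by Theorem \ref{thm:main}.

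For the forward implication, assume $A$ and $B$ are compatible. Theorem \ref{thm:main} gives $\vphi_i\phi_i = \vphi'_i\phi'_i$ for all $i$, which already settles $i=1,d$. To extract $\kappa = \kappa'$, I would pick any $i$ with $T_i \neq 0$ and apply \eqref{eq:RST} to both arrays: subtracting the two identities collapses the $R_i$ and $S_i$ terms (since $\vphi_1\phi_1$, $\vphi_d\phi_d$ and the $\th^*_j$'s agree) and leaves $T_i(\kappa - \kappa') = 0$, whence $\kappa = \kappa'$. The nontrivial bookkeeping here is verifying that in each of the five types I--IV (and III$^\pm$) there exists at least one $i$ in $\{1,\dots,d\}$ with $T_i \ne 0$; inspection of \eqref{eq:Ttype1}--\eqref{eq:Ttype4} together with Lemma \ref{lem:q} shows this is the case (for instance $i=2$ works for types I, II, IV, and appropriate parity choices work for III$^\pm$). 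The main obstacle in this whole argument is really Proposition \ref{prop:RST} itself, whose verification is type-by-type and deferred to later sections; once it is granted, Theorem \ref{thm:kappamain}(iii) follows from the uniform linear-algebra argument sketched above.
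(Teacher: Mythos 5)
Your proposal is correct and follows essentially the same route as the paper: parts (i) and (ii) are immediate from Theorem \ref{thm:main}, and part (iii) is obtained by combining Theorem \ref{thm:main} with Proposition \ref{prop:RST}, using the common dual eigenvalue sequence (hence common type and common $R_i,S_i,T_i$) to transfer \eqref{eq:RST} between the two parameter arrays, and the fact that the $T_i$ are not all zero (via Lemma \ref{lem:q}) to extract $\kappa=\kappa'$ in the forward direction. No gaps.
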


\begin{proof}
(i), (ii)
By Theorem \ref{thm:main}.

(iii)
First assume that $A$ and $B$ are compatible.
We show that \eqref{eq:kappakappad} holds.
By Theorem \ref{thm:main},
$\vphi_1 \phi_1 = \vphi'_1 \phi'_1$ and $\vphi_d \phi_d = \vphi'_d \phi'_d$.
To show $\kappa = \kappa'$, we invoke Proposition \ref{prop:RST}.
We consider the equation \eqref{eq:RST} for $\Phi$ and $\Phi'$.
Let the scalars $\{R_i\}_{i=1}^d$, $\{S_i\}_{i=1}^d$, $\{T_i\}_{i=1}^d$
(resp.\  $\{R'_i\}_{i=1}^d$, $\{S'_i\}_{i=1}^d$, $\{T'_i\}_{i=1}^d$) be from Proposition \ref{prop:RST}
for $\Phi$ (resp. $\Phi'$).
By \eqref{eq:Rtype1}--\eqref{eq:Ttype4}
we have
$R_i = R'_i$, $S_i = S'_i$, $T_i = T'_i$ for $1 \leq i \leq d$.
Using this and Theorem \ref{thm:main}(iii),
we compare the equation \eqref{eq:RST} for $\Phi$ with the equation \eqref{eq:RST} for $\Phi'$.
This gives $T_i \kappa = T_i \kappa'$ for $1 \leq i \leq d$.
Observe that in Proposition \ref{prop:RST}, for each type the scalars $\{T_i\}_{i=1}^d$ are not all zero 
by Lemma \ref{lem:q}.
By these comments we get $\kappa = \kappa'$.
We have shown that \eqref{eq:kappakappad} holds.
Next assume that \eqref{eq:kappakappad} holds.
By Proposition \ref{prop:RST} we obtain 
$\vphi_i \phi_i = \vphi'_i \phi'_i$ for $1 \leq i \leq d$.
By this and Theorem \ref{thm:main}, $A$ and $B$ are compatible.
\end{proof}

\begin{corollary}    \label{cor:kappamain}    \samepage
\ifDRAFT {\rm cor:kappamain}. \fi
Referring to Notation \ref{notation:all},
the following {\rm (i)--(iii)} hold.
\begin{itemize}
\item[\rm (i)]
Assume that $d=1$.
Then $A$ and $B$ are compatible if and only if 
\[
(a_0  - \th_0)(a_0 - \th_d) = (a'_0 - \th'_0)(a'_0 -\th'_d).
\]
\item[\rm (ii)]
Assume that $d=2$.
Then $A$ and $B$ are compatible if and only if
\begin{align*}
(a_0  - \th_0)(a_0 - \th_2) &= (a'_0 - \th'_0)(a'_0 -\th'_2),
\\
(a_2  - \th_0)(a_2 - \th_2) &= (a'_2 - \th'_0)(a'_2 -\th'_2).
\end{align*}
\item[\rm (iii)]
Assume that $d \geq 3$.
Then $A$ and $B$ are compatible if and only if
\begin{align*}
\kappa &= \kappa',
\\
(a_0  - \th_0)(a_0 - \th_d) &= (a'_0 - \th'_0)(a'_0 -\th'_d),
\\
(a_d  - \th_0)(a_d - \th_d) &= (a'_d - \th'_0)(a'_d -\th'_d). 
\end{align*}
\end{itemize}
\end{corollary}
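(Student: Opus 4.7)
The plan is to deduce this corollary directly from Theorem \ref{thm:kappamain} by re-expressing the products $\varphi_1 \phi_1$ and $\varphi_d \phi_d$ using Lemma \ref{lem:a0ad}. Since both $A,A^*$ and $B,A^*$ share the same $A^*$, they share the dual eigenvalue sequence $\{\theta^*_i\}_{i=0}^d$ (up to reversal, but a choice of standard ordering has been fixed in Notation \ref{notation:all}), so the factors coming from $\theta^*$-differences match on both sides of each identity and can be cancelled cleanly.

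Concretely, I would first use \eqref{eq:vphi1b} and \eqref{eq:phi1b} to write
\[
\varphi_1 \phi_1 = (a_0 - \theta_0)(a_0 - \theta_d)(\theta^*_0 - \theta^*_1)^2,
\]
and similarly from \eqref{eq:vphidb} and \eqref{eq:phidb},
\[
\varphi_d \phi_d = (a_d - \theta_0)(a_d - \theta_d)(\theta^*_d - \theta^*_{d-1})^2.
\]
The analogous identities hold for $\Phi'$ with the same $\theta^*$-factors on the right. Since $\{\theta^*_i\}_{i=0}^d$ are mutually distinct, the scalars $(\theta^*_0 - \theta^*_1)^2$ and $(\theta^*_d - \theta^*_{d-1})^2$ are nonzero, so $\varphi_1\phi_1 = \varphi'_1\phi'_1$ is equivalent to $(a_0 - \theta_0)(a_0 - \theta_d) = (a'_0 - \theta'_0)(a'_0 - \theta'_d)$, and $\varphi_d\phi_d = \varphi'_d\phi'_d$ is equivalent to $(a_d - \theta_0)(a_d - \theta_d) = (a'_d - \theta'_0)(a'_d - \theta'_d)$.

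Case (i) ($d=1$) and case (iii) ($d \geq 3$) then follow immediately by substituting these equivalences into Theorem \ref{thm:kappamain}(i) and (iii) respectively. For case (ii) ($d=2$), note that the conditions $\varphi_1\phi_1 = \varphi'_1\phi'_1$ and $\varphi_d\phi_d = \varphi'_d\phi'_d$ coincide with $\varphi_1\phi_1 = \varphi'_1\phi'_1$ and $\varphi_2\phi_2 = \varphi'_2\phi'_2$ since $d=2$, so the same reformulation applies and gives the stated condition with indices $0$ and $2$.

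There is no real obstacle here; the only point that requires a brief remark is that the $\theta^*$-differences cancel on both sides because $A^*$ is common to $A,A^*$ and $B,A^*$, and the nonvanishing of these differences justifies the cancellation. The whole argument is essentially a one-line substitution into Theorem \ref{thm:kappamain} using Lemma \ref{lem:a0ad}.
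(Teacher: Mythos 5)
Your proposal is correct and matches the paper's own proof: the paper likewise converts $\vphi_1\phi_1=\vphi'_1\phi'_1$ and $\vphi_d\phi_d=\vphi'_d\phi'_d$ into the stated conditions on $a_0,a_d$ via \eqref{eq:vphi1b}--\eqref{eq:phidb} (noting the common $\th^*$-factors) and then cites Theorem \ref{thm:kappamain}. No issues.
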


\begin{proof}
By  \eqref{eq:vphi1b} and \eqref{eq:phi1b},
$\vphi_1 \phi_1 = \vphi'_1 \phi'_1$ if and only if 
\[
(a_0 - \th_0) (a_0 - \th_d) = (a'_0 - \th'_0)(a'_0 - \th'_d).
\]
By \eqref{eq:vphidb} and \eqref{eq:phidb},
$\vphi_d \phi_d = \vphi'_d \phi'_d$ if and only if
\[
(a_d - \th_0) (a_d - \th_d) = (a'_d - \th'_0)(a'_d - \th'_d).
\]
By these comments and Theorem \ref{thm:kappamain} we get the result.
\end{proof}

We finish this section with a comment.
Referring to Notation \ref{notation:all},
assume that $A$, $B$ are compatible and let $K=A-B$.
Recall from Lemma \ref{lem:compOmega} that
$K$ is a companion of $A$, and therefore diagonal by Lemma \ref{lem:compOmega2}.
By Lemma \ref{lem:ai0} we see that
\begin{align}
 K_{i,i} &= a_i - a'_i   &&   (0 \leq i \leq d).             \label{eq:Kaa}
\end{align}

\section{The companions for type O}
\label{sec:typeO}
\ifDRAFT {\rm sec:typeO}. \fi

In this section we describe the companions for type O.
We first consider the case $d=1$.

\begin{prop}     \label{prop:d1K}     \samepage
\ifDRAFT {\rm prop:d1K}. \fi
Referring to Notation \ref{notation:all}, assume that $d=1$
and $A$, $B$ are compatible.
Consider the companion $K=A-B$ from Definition \ref{def:companionOmega}.
Then
\begin{align*}
K_{0,0} &= a_0 - a'_0,
\\
K_{1,1} &= \th_0 + \th_1 - a_0 - \th'_0 - \th'_1 + a'_0. 
\end{align*}
\end{prop}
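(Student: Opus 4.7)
The plan is short, because this proposition is a direct consequence of material already in hand. By the last comment of Section~\ref{sec:refine}, equation \eqref{eq:Kaa} gives
\[
K_{i,i} = a_i - a'_i \qquad (0 \leq i \leq d),
\]
which is derived from Lemma~\ref{lem:ai0} (the matrices $A$ and $B$ both represent themselves on a basis satisfying Definition~\ref{def:LP}(i), so their diagonal entries are the intersection numbers $a_i$, $a'_i$), combined with Lemma~\ref{lem:compOmega2} guaranteeing that $K$ is diagonal.

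Setting $d=1$ in \eqref{eq:Kaa}, the equality $K_{0,0} = a_0 - a'_0$ is immediate, giving the first assertion.

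For the second assertion, I would apply Lemma~\ref{lem:aithi} to both $\Phi$ and $\Phi'$, which yields
\[
a_0 + a_1 = \th_0 + \th_1, \qquad a'_0 + a'_1 = \th'_0 + \th'_1.
\]
Solving for $a_1$ and $a'_1$ and substituting into $K_{1,1} = a_1 - a'_1$ gives
\[
K_{1,1} = (\th_0 + \th_1 - a_0) - (\th'_0 + \th'_1 - a'_0) = \th_0 + \th_1 - a_0 - \th'_0 - \th'_1 + a'_0,
\]
as required. There is no genuine obstacle here; the proposition is simply a matter of unpacking \eqref{eq:Kaa} together with the trace identity of Lemma~\ref{lem:aithi} in the $d=1$ case.
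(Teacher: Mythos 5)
Your proof is correct and follows exactly the paper's route: the paper's own proof is simply ``By \eqref{eq:Kaa} and Lemma \ref{lem:aithi},'' and your unpacking of those two facts (the diagonal of $K$ equals $a_i - a'_i$, and the trace identity $a_0 + a_1 = \th_0 + \th_1$ applied to both $\Phi$ and $\Phi'$) is precisely what is intended.
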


\begin{proof}
By \eqref{eq:Kaa} and Lemma \ref{lem:aithi}.
\end{proof}

Next we consider the case $d=2$.

\begin{prop}     \label{prop:d2K}     \samepage
\ifDRAFT {\rm prop:d2K}. \fi
Referring to Notation \ref{notation:all}, assume that $d=2$
and $A$, $B$ are compatible.
Consider the companion $K = A-B$ from Definition \ref{def:companionOmega}.
Then
\begin{align*}
K_{0,0} &= a_0 - a'_0,
\\
K_{1,1} &= \th_0 + \th_2 - a_0 - \th'_0 - \th'_2 + a'_0
               + \frac{ (a_0 - a'_0 - \th_1 +\th'_1)(\th^*_0 - \th^*_1) } { \th^*_1 - \th^*_2},
\\
K_{2,2} &= \frac{ (\th_1- \th'_1)(\th^*_0 - \th^*_2) - (a_0 - a'_0)(\th^*_0 - \th^*_1) }
                     { \th^*_1 - \th^*_2 }. 
\end{align*}
\end{prop}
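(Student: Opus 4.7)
The plan is to reduce the problem to the already-established formula \eqref{eq:Kaa} together with the explicit $d=2$ expressions for $a_1,a_2$ in terms of $a_0$ given in Lemma \ref{lem:d2param}.

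First I would invoke Lemma \ref{lem:compOmega2} to note that $K$ is diagonal, and then apply \eqref{eq:Kaa} from the last paragraph of Section \ref{sec:refine} to get
\[
K_{i,i} \;=\; a_i - a'_i \qquad (0 \le i \le 2).
\]
The entry $K_{0,0} = a_0 - a'_0$ is then immediate and requires no further work.

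For the remaining two entries, I would apply Lemma \ref{lem:d2param} to both $\Phi$ and $\Phi'$. Since $A^* = \text{\rm diag}(\th^*_0,\th^*_1,\th^*_2)$ is common to the two Leonard systems, the dual eigenvalues $\{\th^*_i\}_{i=0}^2$ coincide for $\Phi$ and $\Phi'$, so \eqref{eq:a1b} yields
\[
a_1 - a'_1 \;=\; (\th_0+\th_2-a_0) - (\th'_0+\th'_2-a'_0)
 + \frac{\bigl((a_0-\th_1)-(a'_0-\th'_1)\bigr)(\th^*_0-\th^*_1)}{\th^*_1-\th^*_2},
\]
and \eqref{eq:a2b} yields
\[
a_2 - a'_2 \;=\; \frac{(\th_1-\th'_1)(\th^*_0-\th^*_2) - (a_0-a'_0)(\th^*_0-\th^*_1)}{\th^*_1-\th^*_2}.
\]
Rearranging the right-hand sides gives exactly the claimed expressions for $K_{1,1}$ and $K_{2,2}$.

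There is no real obstacle: the proof is a direct substitution, and the only thing to verify is that both Leonard systems share the dual eigenvalue sequence $\{\th^*_i\}_{i=0}^2$, which is built into Notation \ref{notation:all} and into the definition of compatibility (Definition \ref{def:compatible}). Thus the argument is a two-line invocation of \eqref{eq:Kaa} followed by a routine algebraic manipulation using Lemma \ref{lem:d2param}.
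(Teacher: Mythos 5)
Your proposal is correct and follows exactly the paper's own proof, which reads ``By \eqref{eq:Kaa} and Lemma \ref{lem:d2param}'': apply $K_{i,i}=a_i-a'_i$ and subtract the $d=2$ formulas for $a_1,a_2$ for $\Phi$ and $\Phi'$. The algebra checks out, so nothing further is needed.
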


\begin{proof}
By \eqref{eq:Kaa} and Lemma \ref{lem:d2param}.
\end{proof}

\section{The parameter arrays of type I}
\label{sec:parraytype1}
\ifDRAFT {\rm sec:parraytype1}. \fi

In this section,
we describe the parameter arrays  of type I.
We then prove Proposition \ref{prop:RST} for type I.
Throughout this section, assume that $d \geq 3$ and fix a nonzero $q \in \F$ such that $q^4 \neq 1$.

\begin{lemma}   \label{lem:type1param}   \samepage
\ifDRAFT {\rm lem:type1param}. \fi
For a sequence 
\begin{equation}
  (\delta, \mu, h, \delta^*, \mu^*, h^*, \tau)       \label{eq:type1pseq}
\end{equation}
of scalars in $\F$, define
\begin{align}
 \th_i &= \delta + \mu q^{2i-d} + h q^{d-2i}   && (0 \leq i \leq d),                    \label{eq:type1th}
\\
 \th^*_i &= \delta^* + \mu^* q^{2i-d} + h^* q^{d-2i}   && (0 \leq i \leq d),        \label{eq:type1ths}
\\
 \vphi_i &= (q^i-q^{-i})(q^{d-i+1}-q^{i-d-1})(\tau - \mu \mu^* q^{2i -d-1} - h h^* q^{d-2i+1})  
          &&        (1 \leq i \leq d),                                                          \label{eq:type1vphi}
\\
 \phi_i &=  (q^i-q^{-i})(q^{d-i+1}-q^{i-d-1})(\tau - h \mu^* q^{2i -d-1} - \mu h^* q^{d-2i+1})  
          &&     (1 \leq i \leq d).                                                            \label{eq:type1phi}
\end{align}
Then the sequence
\begin{equation}
   (\{\th_i\}_{i=0}^d; \{\th^*_i\}_{i=0}^d; \{\vphi_i\}_{i=1}^d; \{\phi_i\}_{i=1}^d)        \label{eq:type1parray}
\end{equation}
is a parameter array over $\F$ that has type I and fundamental constant $\beta = q^2 + q^{-2}$,
provided that the inequalities in Lemma \ref{lem:classify}{\rm (i),(ii)} hold.
Conversely, assume that the sequence \eqref{eq:type1parray} is a parameter array over $\F$ that has type I
and fundamental constant $\beta = q^2 + q^{-2}$.
Then there exists a unique sequence \eqref{eq:type1pseq} of scalars in $\F$ that satisfies
\eqref{eq:type1th}--\eqref{eq:type1phi}.
\end{lemma}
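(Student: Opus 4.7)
The plan is to verify both directions by appealing to the characterization of parameter arrays in Lemma \ref{lem:classify} together with the recurrence structure from Lemma \ref{lem:gammarho}.

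For the forward direction, assume we are given the seven scalars \eqref{eq:type1pseq} and define $\{\th_i\}$, $\{\th^*_i\}$, $\{\vphi_i\}$, $\{\phi_i\}$ by \eqref{eq:type1th}--\eqref{eq:type1phi}. One must verify the three conditions (iii), (iv), (v) in Lemma \ref{lem:classify}, and then confirm the type and fundamental constant. Condition (v) follows from a direct computation: for $\th_i = \delta + \mu q^{2i-d} + h q^{d-2i}$ one has
\[
 \frac{\th_{i-2} - \th_{i+1}}{\th_{i-1} - \th_i} = q^2 + 1 + q^{-2} = \beta+1,
\]
independent of $i$, and the identical computation for $\th^*_i$ gives the same value. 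Since $q^4 \neq 1$, we have $\beta \neq \pm 2$, so the type is indeed I. Conditions (iii), (iv) are verified by substituting \eqref{eq:type1th}, \eqref{eq:type1ths} into the right-hand sides, evaluating the closed-form sum
\[
 \sum_{\ell=0}^{i-1}\frac{\th_\ell - \th_{d-\ell}}{\th_0 - \th_d}
= \frac{(q^i - q^{-i})(q^{d-i+1}- q^{i-d-1})}{(q-q^{-1})(q^d - q^{-d})},
\]
and then matching the three resulting terms (the constant term, and the coefficients of $\mu\mu^*$ and $hh^*$) against the factorization in \eqref{eq:type1vphi}, \eqref{eq:type1phi}.

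For the converse direction, assume \eqref{eq:type1parray} is a parameter array over $\F$ of type I with fundamental constant $\beta = q^2 + q^{-2}$. By Lemma \ref{lem:gammarho} there exists $\gamma \in \F$ with $\th_{i-1} - \beta\th_i + \th_{i+1} = \gamma$ for $1 \leq i \leq d-1$. The associated characteristic polynomial $x^2 - \beta x + 1 = (x-q^2)(x-q^{-2})$ has distinct roots since $q^4 \neq 1$, so the general solution of this linear recurrence takes the form $\th_i = C_1 q^{2i} + C_2 q^{-2i} + C_3$. Setting $\mu = C_1 q^d$, $h = C_2 q^{-d}$, $\delta = C_3$ recovers \eqref{eq:type1th}, and any three consecutive $\th_i$ uniquely determine $(\delta,\mu,h)$. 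The same reasoning applied to $\{\th^*_i\}$ produces unique $(\delta^*,\mu^*,h^*)$. Finally, the scalar $\tau$ is forced by \eqref{eq:type1vphi} at $i=1$: since $(q - q^{-1})(q^d - q^{-d}) \neq 0$ by Lemma \ref{lem:q}(i), the equation
\[
 \vphi_1 = (q-q^{-1})(q^d - q^{-d})\bigl(\tau - \mu\mu^* q^{1-d} - hh^* q^{d-1}\bigr)
\]
has a unique solution in $\tau$. The remaining formulas \eqref{eq:type1vphi}, \eqref{eq:type1phi} for $i \geq 2$ then follow automatically, since by the forward direction the right-hand sides of \eqref{eq:type1th}--\eqref{eq:type1phi} with the recovered scalars produce \emph{some} parameter array, and the uniqueness clause of Lemma \ref{lem:classify} forces agreement with the given $\vphi_i$, $\phi_i$.

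The main obstacle is the algebraic bookkeeping in the forward direction: evaluating the telescoping sum in closed form and then matching coefficients of the three monomials $1$, $\mu\mu^*$, $hh^*$ against the factored expressions in \eqref{eq:type1vphi}, \eqref{eq:type1phi}. All other steps are short: the ratio computation for (v) is immediate, the solution of the recurrence in the converse direction is standard linear-algebra, and uniqueness of $\tau$ is a single linear equation. The nonvanishing conditions needed throughout (in particular $q^{2i} \neq 1$ for $1 \leq i \leq d$) are supplied by Lemma \ref{lem:q}(i).
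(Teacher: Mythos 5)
Your proposal is correct, and its forward direction coincides with the paper's: the paper likewise disposes of Lemma \ref{lem:classify}(iii)--(v) by ``routine verification'' and reads off $\beta=q^2+q^{-2}$ from \eqref{eq:indep}, so your explicit ratio computation and the closed-form telescoping sum are exactly the details the paper suppresses. Where you genuinely diverge is the converse: the paper simply cites \cite[Theorem 6.1]{NT:balanced}, whereas you reprove it by solving the three-term recurrence $\th_{i-1}-\beta\th_i+\th_{i+1}=\gamma$ from Lemma \ref{lem:gammarho}, using that $x^2-\beta x+1$ has the distinct roots $q^{\pm2}$ and that $1,q^2,q^{-2}$ are pairwise distinct (by Lemma \ref{lem:q}(i), since $d\geq 3$) to get existence and uniqueness of $(\delta,\mu,h)$ and $(\delta^*,\mu^*,h^*)$, and then pinning down $\tau$ from $\vphi_1$. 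This makes the section self-contained at the cost of some Vandermonde bookkeeping; the paper's citation is shorter but opaque. One small imprecision at the end of your converse: the ``uniqueness clause'' of Lemma \ref{lem:classify} asserts uniqueness of the Leonard system up to isomorphism, which is not the statement you need. What forces $\vphi_i,\phi_i$ for $i\geq 2$ to agree with the formulas is that conditions (iii),(iv) of Lemma \ref{lem:classify} express every $\vphi_i,\phi_i$ in terms of the eigenvalue sequences together with $\vphi_1$ and $\phi_1$, combined with the fact that $\phi_1$ is itself determined by $\vphi_1$ and the eigenvalues via \eqref{eq:vphi1}; your choice of $\tau$ only directly matches $\vphi_1$, so this last observation is needed to close the argument. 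With that adjustment the proof is complete.
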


\begin{proof}
Assume that the inequalities in Lemma \ref{lem:classify}(i),(ii) hold.
Using \eqref{eq:type1th}--\eqref{eq:type1phi} we routinely verify
the conditions Lemma \ref{lem:classify}(iii)--(v).
Thus the sequence \eqref{eq:type1parray} is a parameter array over $\F$.
Evaluating the expression on the left in \eqref{eq:indep} using \eqref{eq:type1th} we find that
the parameter array \eqref{eq:type1parray} has fundamental constant $\beta = q^2 + q^{-2}$. 
By $q^4 \neq 1$ we have  $\beta \neq \pm 2$.
So the parameter array \eqref{eq:type1parray} has type I.
The last assertion comes from \cite[Theorem 6.1]{NT:balanced}.
\end{proof}

\begin{defi}           \label{def:type1pseq}     \samepage
\ifDRAFT {\rm def:type1pseq}. \fi
Referring to Lemma \ref{lem:type1param}, assume that the sequence \eqref{eq:type1parray}
is a parameter array over $\F$.
We call the scalars  $\delta, \mu, h, \delta^*, \mu^*, h^*, \tau$ 
the {\em basic variables of \eqref{eq:type1parray} with respect to $q$}.
We call the sequence $(\delta, \mu, h, \delta^*, \mu^*, h^*, \tau)$
the {\em basic sequence of \eqref{eq:type1parray} with respect to $q$}.
\end{defi}

\begin{lemma}   \label{lem:type1condpre}    \samepage
\ifDRAFT {\rm lem:type1param}. \fi
Referring to Lemma \ref{lem:type1param},
the following hold for $0 \leq i,j \leq d$:
\begin{align*}
\th_i - \th_j &= (q^{i-j} -q^{j-i}) (\mu q^{i+j-d} - h q^{d-i-j}),
\\
\th^*_i - \th^*_j &=  (q^{i-j} -q^{j-i}) (\mu^* q^{i+j-d} - h^* q^{d-i-j}).
\end{align*}
\end{lemma}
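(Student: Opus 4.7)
The plan is to prove Lemma \ref{lem:type1condpre} by direct computation from the closed-form expressions for $\th_i$ and $\th^*_i$ given in \eqref{eq:type1th} and \eqref{eq:type1ths}. Since both identities have identical structure, I would establish the first one and note that the second follows by the same argument with $(\mu, h)$ replaced by $(\mu^*, h^*)$.

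For the first identity, I would start from
\begin{align*}
\th_i - \th_j &= \mu(q^{2i-d} - q^{2j-d}) + h(q^{d-2i} - q^{d-2j}),
\end{align*}
since the $\delta$ terms cancel. Next I would factor out $q^{-d}$ from the first group and $q^d$ from the second to obtain $\mu q^{-d}(q^{2i} - q^{2j}) + h q^d(q^{-2i} - q^{-2j})$. Then I would extract $q^{i+j}$ from the first difference and $q^{-i-j}$ from the second, yielding
\begin{align*}
\th_i - \th_j &= \mu q^{i+j-d}(q^{i-j} - q^{j-i}) - h q^{d-i-j}(q^{i-j} - q^{j-i}),
\end{align*}
which factors as $(q^{i-j} - q^{j-i})(\mu q^{i+j-d} - h q^{d-i-j})$, as desired. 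The second identity follows by the identical manipulation applied to \eqref{eq:type1ths}.

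There is no real obstacle here; the lemma is essentially a restatement of \eqref{eq:type1th} and \eqref{eq:type1ths} in a form convenient for later use. The only care required is bookkeeping of signs when pulling $q^{j-i}$ versus $q^{i-j}$ out of each difference, ensuring both terms share the common factor $(q^{i-j} - q^{j-i})$ with the correct relative sign between $\mu$ and $h$.
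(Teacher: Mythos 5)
Your proposal is correct and matches the paper's approach: the paper simply states that the identities follow by routine verification from \eqref{eq:type1th} and \eqref{eq:type1ths}, which is exactly the direct computation you carry out. Your factorization steps and sign bookkeeping check out, and the remark that the dual identity follows by substituting $(\mu^*, h^*)$ for $(\mu, h)$ is accurate.
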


\begin{proof}
Routine verification using \eqref{eq:type1th} and \eqref{eq:type1ths}.
\end{proof}

\begin{lemma}    \label{lem:type1cond}   \samepage
\ifDRAFT {\rm lem::type1cond}. \fi
The inequalities in Lemma \ref{lem:classify}{\rm (i),(ii)} hold
if and only if
\begin{align}
 & q^{2i} \neq 1 &&  (1 \leq i \leq d),                       \label{eq:type1paramcond1}
\\
 & \mu \neq h q^{2i}  &&  (1-d \leq i \leq d-1),          \label{eq:type1paramcond2}
\\
 & \mu^* \neq h^* q^{2i}  && (1-d \leq i \leq d-1),      \label{eq:type1paramcond3}
\\
 & \tau \neq \mu \mu^* q^{2i-d-1} + h h^* q^{d-2i+1}  &&  (1 \leq i \leq d),  \label{eq:type1paramcond4}
\\
 & \tau \neq h \mu^* q^{2i-d-1} + \mu h^* q^{d-2i+1}  &&  (1 \leq i \leq d).   \label{eq:type1paramcond5}
\end{align}
\end{lemma}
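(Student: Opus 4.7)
The plan is to read off each inequality in Lemma \ref{lem:classify}(i),(ii) from the factored formulas provided by Lemma \ref{lem:type1condpre} together with the closed-form expressions \eqref{eq:type1vphi}, \eqref{eq:type1phi} for $\vphi_i$ and $\phi_i$. Since each of these quantities is presented as a product of simple factors, the nonvanishing conditions decompose into the nonvanishing of each factor, and the task is essentially bookkeeping over the index ranges.

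First I would handle the eigenvalue inequalities $\th_i \neq \th_j$ for $i \neq j$. By Lemma \ref{lem:type1condpre}, $\th_i - \th_j = (q^{i-j} - q^{j-i})(\mu q^{i+j-d} - h q^{d-i-j})$, so $\th_i \neq \th_j$ is equivalent to both factors being nonzero. The first factor is nonzero exactly when $q^{2(i-j)} \neq 1$; letting $i - j$ range over all nonzero integers in $[-d,d]$ and using $q^{-2k} \neq 1 \iff q^{2k} \neq 1$, this reduces to \eqref{eq:type1paramcond1}. The second factor vanishes iff $\mu = h q^{2(d-i-j)}$; as $(i,j)$ runs over ordered pairs with $i \neq j$ in $[0,d]$, the exponent $d - i - j$ takes precisely every integer value in $[1-d, d-1]$, yielding \eqref{eq:type1paramcond2}. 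An identical argument applied to the dual eigenvalues gives \eqref{eq:type1paramcond3}.

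Next I would handle $\vphi_i \neq 0$ and $\phi_i \neq 0$ for $1 \leq i \leq d$. From \eqref{eq:type1vphi}, $\vphi_i$ factors as $(q^i - q^{-i})(q^{d-i+1} - q^{i-d-1})(\tau - \mu\mu^* q^{2i-d-1} - hh^* q^{d-2i+1})$. The first factor is nonzero for $1 \leq i \leq d$ precisely under \eqref{eq:type1paramcond1}; the second, after the substitution $j = d-i+1$ which also sweeps $[1,d]$, reduces to the same set of inequalities; and the third being nonzero across $1 \leq i \leq d$ is exactly \eqref{eq:type1paramcond4}. The analogous factorization of $\phi_i$ in \eqref{eq:type1phi} produces \eqref{eq:type1paramcond1} again together with \eqref{eq:type1paramcond5}.

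The only subtlety, and what I expect to be the main (though mild) obstacle, is the careful reparameterization in the eigenvalue step: verifying that as $(i,j)$ ranges over unordered or ordered pairs with $i \neq j$ in $[0,d]$, the quantities $i - j$ and $d - i - j$ indeed sweep out exactly the index sets in \eqref{eq:type1paramcond1}--\eqref{eq:type1paramcond3}, with no missed or redundant constraints. Once that range-matching is done, the two implications (forward and backward) are immediate from the product decompositions, so the proof is a direct verification with no further ingredients needed.
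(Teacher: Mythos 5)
Your proposal is correct and follows exactly the route the paper intends: the paper's proof is the one-line ``Routine verification using \eqref{eq:type1vphi}, \eqref{eq:type1phi}, and Lemma \ref{lem:type1condpre},'' and your write-up is precisely that verification, factoring each quantity and matching the index ranges. The range-matching you flag (that $d-i-j$ sweeps $[1-d,d-1]$ and $i-j$ gives \eqref{eq:type1paramcond1}) is handled correctly, so nothing is missing.
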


\begin{proof}
Routine verification using \eqref{eq:type1vphi}, \eqref{eq:type1phi},
and Lemma \ref{lem:type1condpre}.
\end{proof}

For the rest of this section, let $\Phi$ denote a Leonard system over $\F$
of type I with fundamental constant $\beta = q^2 + q^{-2}$ and parameter array
\begin{equation}
   (\{\th_i\}_{i=0}^d; \{\th^*_i\}_{i=0}^d; \{\vphi_i\}_{i=1}^d; \{\phi_i\}_{i=1}^d).        \label{eq:type1parray2}
\end{equation}

\begin{defi}     \label{def:pseqLS}    \samepage
\ifDRAFT {\rm def:pseqLS}. \fi
By the {\em basic variables} (resp.\ {\em basic sequence}) {\em of $\Phi$ with respect to $q$} 
we mean the basic variables (resp.\ basic sequence)  with respect to $q$ 
for the parameter array \eqref{eq:type1parray2}.
\end{defi}

For the rest of this section, let
$(\delta, \mu, h, \delta^*, \mu^*, h^*, \tau)$
denote the basic sequence of $\Phi$ with respect to $q$.

\begin{lemma}    \label{lem:type1paramDown}    \samepage
\ifDRAFT {\rm lem:type1paramDown}. \fi
In the table below, for each Leonard system in the first column we give
the basic sequence with respect to $q$:
\[
\begin{array}{cc}
\text{\rm Leonard system}  &  \text{\rm basic sequence}
\\ \hline
\Phi^\downarrow & (\delta, \mu, h, \delta^*, h^*, \mu^*, \tau)  \rule{0mm}{3ex}
\\
\Phi^\Downarrow & (\delta, h, \mu, \delta^*, \mu^*, h^*, \tau) \rule{0mm}{2.7ex}
\\
\Phi^\vee & (- \delta, - \mu, -h, \delta^*, \mu^*, h^*, - \tau) \rule{0mm}{2.7ex}
\end{array}
\]
\end{lemma}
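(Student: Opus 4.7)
The plan is to read off the new basic sequence in each case by applying Lemma \ref{lem:parrayrelative} (for $\Phi^\downarrow$ and $\Phi^\Downarrow$) and Lemma \ref{lem:parrayPhivee} (for $\Phi^\vee$) to obtain an explicit parameter array, then matching it term-by-term against the parametrization \eqref{eq:type1th}--\eqref{eq:type1phi}. The uniqueness clause of Lemma \ref{lem:type1param} guarantees that a valid match determines the basic sequence.

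First, consider $\Phi^\downarrow$. By Lemma \ref{lem:parrayrelative}(ii) the eigenvalue sequence is unchanged, while the dual eigenvalue sequence becomes $\{\th^*_{d-i}\}_{i=0}^d$. Substituting $i \mapsto d-i$ in \eqref{eq:type1ths} rewrites $\th^*_{d-i} = \delta^* + h^* q^{2i-d} + \mu^* q^{d-2i}$, which is \eqref{eq:type1ths} with $\mu^*$ and $h^*$ interchanged. So the candidate basic sequence is $(\delta, \mu, h, \delta^*, h^*, \mu^*, \tau)$. To verify the split sequences, one must check that the $\{\vphi_i\}$ computed from this candidate equals $\{\phi_{d-i+1}\}$ and similarly for $\{\phi_i\}$; this is a direct substitution in \eqref{eq:type1vphi}, \eqref{eq:type1phi} using $i \mapsto d-i+1$ together with the swap $\mu^* \leftrightarrow h^*$, and it forces $\tau' = \tau$.

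Next, consider $\Phi^\Downarrow$. By Lemma \ref{lem:parrayrelative}(iii) the dual eigenvalue sequence is unchanged and the eigenvalue sequence becomes $\{\th_{d-i}\}_{i=0}^d = \{\delta + h q^{2i-d} + \mu q^{d-2i}\}_{i=0}^d$, which is \eqref{eq:type1th} with $\mu$ and $h$ interchanged. This gives the candidate $(\delta, h, \mu, \delta^*, \mu^*, h^*, \tau)$. Substituting $\mu \leftrightarrow h$ in \eqref{eq:type1vphi} reproduces $\phi_i$ from \eqref{eq:type1phi}, and vice versa; this confirms the split sequences agree with those of $\Phi^\Downarrow$ and fixes $\tau' = \tau$. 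Finally, for $\Phi^\vee$, Lemma \ref{lem:parrayPhivee} negates the eigenvalue sequence and both split sequences while keeping $\{\th^*_i\}$ unchanged. Matching $-\th_i = -\delta -\mu q^{2i-d} -h q^{d-2i}$ with \eqref{eq:type1th} yields $(-\delta, -\mu, -h, \ldots)$, and substituting $\mu \mapsto -\mu$, $h \mapsto -h$, $\tau \mapsto -\tau$ in \eqref{eq:type1vphi}, \eqref{eq:type1phi} negates each split-sequence entry, as required.

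The only real work is a small bookkeeping verification of the split-sequence transformations, done once for each row of the table; the eigenvalue and dual-eigenvalue rows are immediate. There is no genuine obstacle, because the uniqueness assertion of Lemma \ref{lem:type1param} reduces each row to checking a finite set of algebraic identities among $q$-exponentials, and in each case the desired identity follows from the symmetries $\mu^* \leftrightarrow h^*$, $\mu \leftrightarrow h$, or the joint sign change $(\mu,h,\tau) \mapsto (-\mu,-h,-\tau)$.
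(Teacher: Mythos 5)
Your proposal is correct and follows essentially the same route as the paper, which simply cites Lemma \ref{lem:parrayrelative} for $\Phi^\downarrow$, $\Phi^\Downarrow$ and Lemma \ref{lem:parrayPhivee} for $\Phi^\vee$; you have just written out the term-by-term matching against \eqref{eq:type1th}--\eqref{eq:type1phi} that the paper leaves as routine. The substitutions you identify ($\mu^*\leftrightarrow h^*$ with $i\mapsto d-i+1$, $\mu\leftrightarrow h$, and the joint sign change) all check out.
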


\begin{proof}
Concerning $\Phi^\downarrow$ and $\Phi^\Downarrow$, use 
Lemma \ref{lem:parrayrelative}.
Concerning $\Phi^\vee$, use Lemma \ref{lem:parrayPhivee}.
\end{proof}

\begin{lemma}    \label{lem:type1affine}    \samepage
\ifDRAFT {\rm lem:type1affine}. \fi
For scalars $\xi$, $\zeta$, $\xi^*$, $\zeta^*$ in $\F$ with $\xi \xi^* \neq 0$,
consider the Leonard system
\[
   (\xi A + \zeta I; \, \{E_i\}_{i=0}^d; \;  \xi^* A^* + \zeta^* I; \, \, \{E^*_i\}_{i=0}^d).
\]
For this Leonard system the basic sequence with respect to $q$ is equal to
\[
   ( \xi \delta + \zeta; \;  \xi \mu, \; \xi h,  \; 
     \xi^* \delta^* + \zeta^*;  \; \xi^* \mu^*, \; \xi^* h^*, \; \xi \xi^* \tau).
\]
\end{lemma}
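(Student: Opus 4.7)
The plan is to combine Lemma \ref{lem:affineparam} with the uniqueness assertion in Lemma \ref{lem:type1param}. By Lemma \ref{lem:affineparam}, the displayed sequence is indeed a Leonard system on $V$, and its parameter array is
\[
 (\{\xi \th_i + \zeta\}_{i=0}^d;\; \{\xi^* \th^*_i + \zeta^*\}_{i=0}^d;\; \{\xi \xi^* \vphi_i\}_{i=1}^d;\; \{\xi \xi^* \phi_i\}_{i=1}^d),
\]
where $\{\th_i\}_{i=0}^d, \{\th^*_i\}_{i=0}^d, \{\vphi_i\}_{i=1}^d, \{\phi_i\}_{i=1}^d$ are the entries of the parameter array \eqref{eq:type1parray2} of $\Phi$.

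Next I would substitute the closed forms \eqref{eq:type1th}--\eqref{eq:type1phi} for $\th_i$, $\th^*_i$, $\vphi_i$, $\phi_i$ in terms of the basic variables $(\delta,\mu,h,\delta^*,\mu^*,h^*,\tau)$ of $\Phi$ and rearrange. For instance $\xi\th_i + \zeta = (\xi\delta + \zeta) + (\xi\mu) q^{2i-d} + (\xi h) q^{d-2i}$, and similarly for $\xi^*\th^*_i + \zeta^*$. For the split sequences, pull the factors $\xi$, $\xi^*$ inside the last parenthesized factor of \eqref{eq:type1vphi}, \eqref{eq:type1phi}, giving
\[
 \xi\xi^* \vphi_i = (q^i - q^{-i})(q^{d-i+1} - q^{i-d-1})\bigl(\xi\xi^*\tau - (\xi\mu)(\xi^*\mu^*) q^{2i-d-1} - (\xi h)(\xi^* h^*) q^{d-2i+1}\bigr),
\]
and similarly for $\xi\xi^*\phi_i$. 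Inspection shows that these expressions are exactly \eqref{eq:type1th}--\eqref{eq:type1phi} with $(\delta,\mu,h,\delta^*,\mu^*,h^*,\tau)$ replaced by $(\xi\delta + \zeta,\xi\mu,\xi h,\xi^*\delta^* + \zeta^*,\xi^*\mu^*,\xi^* h^*,\xi\xi^*\tau)$.

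Finally, since the affinely transformed Leonard system is still of type I with fundamental constant $q^2+q^{-2}$ (the fundamental constant is unchanged by Lemma \ref{lem:affineparam} and Definition \ref{def:beta0}), the uniqueness clause of Lemma \ref{lem:type1param} forces the basic sequence with respect to $q$ to be the one just identified. No step here is substantive; the only point to keep in mind is the bookkeeping that the constants $\zeta,\zeta^*$ only shift $\delta,\delta^*$ and do not affect $\mu,h,\mu^*,h^*,\tau$, which is transparent from the pattern of \eqref{eq:type1th}--\eqref{eq:type1phi}.
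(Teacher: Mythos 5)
Your proposal is correct and matches the paper's intended argument: the paper's proof is simply ``Use Lemma \ref{lem:affineparam},'' and your write-up fills in exactly the routine substitution into \eqref{eq:type1th}--\eqref{eq:type1phi} together with the uniqueness clause of Lemma \ref{lem:type1param} that this one-line proof leaves implicit.
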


\begin{proof}
Use Lemma \ref{lem:affineparam}.
\end{proof}

\begin{corollary}    \label{cor:type1affine}    \samepage
\ifDRAFT {\rm cor:type1affine}. \fi
The Leonard system 
\[
 (A- \delta I; \, \{E_i\}_{i=0}^d; \, A^* - \delta^* I; \,  \{E^*_i\}_{i=0}^d)
\]
has basic sequence
$(0, \mu, h, 0, \mu^*, h^*, \tau)$ with respect to $q$.
\end{corollary}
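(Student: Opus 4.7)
The plan is to apply Lemma \ref{lem:type1affine} with a specific choice of scalars. Specifically, I would set $\xi = 1$, $\zeta = -\delta$, $\xi^* = 1$, $\zeta^* = -\delta^*$, noting that $\xi \xi^* = 1 \neq 0$ so the hypothesis of Lemma \ref{lem:type1affine} is satisfied. The Leonard system in question is then exactly $(\xi A + \zeta I; \{E_i\}_{i=0}^d; \xi^* A^* + \zeta^* I; \{E^*_i\}_{i=0}^d)$ for these choices.

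Substituting into the basic sequence formula from Lemma \ref{lem:type1affine}, I obtain
\[
(\xi\delta + \zeta,\; \xi\mu,\; \xi h,\; \xi^*\delta^* + \zeta^*,\; \xi^*\mu^*,\; \xi^* h^*,\; \xi\xi^*\tau) = (0,\; \mu,\; h,\; 0,\; \mu^*,\; h^*,\; \tau),
\]
which is the claimed basic sequence. There is no real obstacle here; the corollary is an immediate specialization of the preceding lemma, and I do not anticipate any subtlety beyond the direct substitution.
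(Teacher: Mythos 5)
Your proposal is correct and matches the paper's intent exactly: the corollary is stated without proof precisely because it is the specialization $\xi=\xi^*=1$, $\zeta=-\delta$, $\zeta^*=-\delta^*$ of Lemma \ref{lem:type1affine}, which is what you carry out. No issues.
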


\begin{defi}    \label{def:type1reduced}    \samepage
\ifDRAFT {\rm def:type1reduced}. \fi
The Leonard system $\Phi$ is said to be {\em reduced} whenever
$\delta=0$ and $\delta^* = 0$.
\end{defi}

\begin{lemma}    \label{lem:type1kappa}    \samepage
\ifDRAFT {\rm lem:type1kappa}. \fi
The invariant value  $\kappa$ for $\Phi$ satisfies
\begin{equation}
   \kappa = \mu h ( q-q^{-1})^2 (q^2- q^{-2})^2.     \label{eq:type1kappa}
\end{equation}
\end{lemma}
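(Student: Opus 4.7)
The plan is to compute $\kappa$ directly from the closed-form expression for $\th_i$ given in \eqref{eq:type1th}. By Lemma \ref{lem:kappa}, for any $i$ with $1 \leq i \leq d-1$ we have
\[
\kappa = (\th_{i-1} - \th_{i+1})^2 + (\beta+2)(\th_i - \th_{i-1})(\th_i - \th_{i+1}),
\]
and since $d \geq 3$ such an $i$ exists. So it suffices to substitute \eqref{eq:type1th} into this formula, simplify, and observe that the result is independent of $i$ and matches the right-hand side of \eqref{eq:type1kappa}.

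Using Lemma \ref{lem:type1condpre}, first I would record the three relevant differences:
\[
\th_{i-1} - \th_{i+1} = (q^{-2}-q^{2})(\mu q^{2i-d} - h q^{d-2i}),
\]
\[
\th_i - \th_{i-1} = (q-q^{-1})(\mu q^{2i-d-1} - h q^{d-2i+1}), \quad
\th_i - \th_{i+1} = (q^{-1}-q)(\mu q^{2i-d+1} - h q^{d-2i-1}).
\]
Then, writing $\beta+2 = (q+q^{-1})^2$ and $(q^2-q^{-2})^2 = (q-q^{-1})^2(q+q^{-1})^2$, I can factor out $(q+q^{-1})^2 (q-q^{-1})^2$ from both terms of $\kappa$, reducing the problem to verifying the identity
\[
(a-b)^2 - (aq^{-1} - bq)(aq - bq^{-1}) = ab(q-q^{-1})^2,
\]
where $a = \mu q^{2i-d}$ and $b = h q^{d-2i}$. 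Expanding the right-hand factor gives $a^2 - ab(q^2+q^{-2}) + b^2$, so the bracket collapses to $ab(q^2+q^{-2}-2) = ab(q-q^{-1})^2$, and $ab = \mu h$ is independent of $i$.

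Combining these pieces yields
\[
\kappa = (q+q^{-1})^2 (q-q^{-1})^2 \cdot \mu h \cdot (q-q^{-1})^2
       = \mu h (q-q^{-1})^2 (q^2 - q^{-2})^2,
\]
which is \eqref{eq:type1kappa}. There is no real obstacle here; the only delicate point is keeping track of signs and powers of $q$ when expanding the cross-product $(aq^{-1}-bq)(aq-bq^{-1})$, after which the $ab$-cross-term arises precisely from $\beta+2 - 2 = q^2+q^{-2}-2 = (q-q^{-1})^2$, giving the clean factorization.
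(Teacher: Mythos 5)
Your proposal is correct and follows the same route as the paper, whose proof simply cites Lemma \ref{lem:kappa} together with \eqref{eq:type1th}; you have merely written out the substitution and simplification in full. The algebra checks out: the factorization via $a=\mu q^{2i-d}$, $b=hq^{d-2i}$ and the identity $(a-b)^2-(aq^{-1}-bq)(aq-bq^{-1})=ab(q-q^{-1})^2$ is exactly the computation the paper leaves to the reader.
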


\begin{proof}
By Lemma \ref{lem:kappa} and \eqref{eq:type1th}.
\end{proof}

\begin{proofof}{Proposition \ref{prop:RST}, type I}
One routinely verifies \eqref{eq:RST} using  \eqref{eq:Rtype1}--\eqref{eq:Ttype1},
\eqref{eq:type1kappa},
and Lemma \ref{lem:type1param}.
\end{proofof}

Note that Theorem \ref{thm:kappamain} holds for type I.

\section{A characterization of compatibility in terms of the basic sequence, type I}
\label{sec:characterizetype1}
\ifDRAFT {\rm sec:characterizetype1}. \fi

In this section we characterize the compatibility relation for Leonard pairs
of type I in terms of the basic sequence.
Throughout this section, Notation \ref{notation:all} is in effect.
Assume that $\Phi$ has type I and fundamental constant $\beta = q^2 + q^{-2}$.
Note that $\Phi'$ has type I and fundamental constant $\beta $.
Let
\begin{align*}
& (\delta, \mu, h, \delta^*, \mu^*, h^*, \tau),  
&& 
 (\delta', \mu', h', \delta^*, \mu^*, h^*, \tau')
\end{align*}
denote the basic sequence of $\Phi$ and $\Phi'$ with respect to $q$, respectively.

\begin{theorem}    \label{thm:type1main}    \samepage
\ifDRAFT {\rm thm:type1main}. \fi
The matrices $A$, $B$ are compatible if and only if
the following \eqref{eq:type1cond1}--\eqref{eq:type1cond3} hold:
\begin{align}
  \mu h &= \mu' h',                     \label{eq:type1cond1}
\\
  \tau (\mu + h) &= \tau' (\mu' + h'),     \label{eq:type1cond2}
\\
  \tau^2 +  (\mu + h)^2 \mu^* h^*
    &= \tau^{\prime 2} + (\mu' + h')^2  \mu^* h^*.    \label{eq:type1cond3}
\end{align}
\end{theorem}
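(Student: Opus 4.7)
The plan is to invoke Theorem~\ref{thm:kappamain}(iii), which reduces compatibility to the three scalar equations $\kappa=\kappa'$, $\vphi_1\phi_1=\vphi'_1\phi'_1$, $\vphi_d\phi_d=\vphi'_d\phi'_d$, and then translate each equation into the basic variables using the formulas of Lemma~\ref{lem:type1param} and Lemma~\ref{lem:type1kappa}. By Lemma~\ref{lem:type1kappa} the equality $\kappa=\kappa'$ becomes $\mu h(q-q^{-1})^2(q^2-q^{-2})^2 = \mu'h'(q-q^{-1})^2(q^2-q^{-2})^2$; since $q^4\ne 1$, this is exactly \eqref{eq:type1cond1}.

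Next I would write $s=\mu+h$, $p=\mu h$ and expand $\vphi_1\phi_1$, $\vphi_d\phi_d$ using \eqref{eq:type1vphi}, \eqref{eq:type1phi} with $i=1,d$. Setting $c=(q-q^{-1})(q^d-q^{-d})$, the factorization gives
\begin{align*}
\vphi_1\phi_1 &= c^{2}\bigl(\tau-\mu\mu^{*}q^{1-d}-hh^{*}q^{d-1}\bigr)\bigl(\tau-h\mu^{*}q^{1-d}-\mu h^{*}q^{d-1}\bigr),\\
\vphi_d\phi_d &= c^{2}\bigl(\tau-\mu\mu^{*}q^{d-1}-hh^{*}q^{1-d}\bigr)\bigl(\tau-h\mu^{*}q^{d-1}-\mu h^{*}q^{1-d}\bigr).
\end{align*}
A direct multiplication (the only real calculation) rewrites each as a polynomial in $\tau$ whose coefficients are symmetric in $\mu,h$, so they depend only on $s$ and $p$:
\begin{align*}
\vphi_1\phi_1 &= c^{2}\bigl[\tau^{2} - s(\mu^{*}q^{1-d}+h^{*}q^{d-1})\tau + p\bigl((\mu^{*})^{2}q^{2-2d}+(h^{*})^{2}q^{2d-2}\bigr) + (s^{2}-2p)\mu^{*}h^{*}\bigr],\\
\vphi_d\phi_d &= c^{2}\bigl[\tau^{2} - s(\mu^{*}q^{d-1}+h^{*}q^{1-d})\tau + p\bigl((\mu^{*})^{2}q^{2d-2}+(h^{*})^{2}q^{2-2d}\bigr) + (s^{2}-2p)\mu^{*}h^{*}\bigr].
\end{align*}

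Assuming \eqref{eq:type1cond1}, so $p=p'$, I compare sums and differences. The difference $\vphi_1\phi_1-\vphi_d\phi_d$ collapses to
\[
c^{2}(q^{d-1}-q^{1-d})(\mu^{*}-h^{*})\bigl[s\tau-p(\mu^{*}+h^{*})(q^{d-1}+q^{1-d})\bigr],
\]
so (using that $q^{2(d-1)}\ne 1$ and $\mu^{*}\ne h^{*}$ by Lemma~\ref{lem:type1cond}, hence the prefactor is nonzero) the equality of these differences for $\Phi$ and $\Phi'$ is equivalent, given $p=p'$, to $s\tau=s'\tau'$, which is \eqref{eq:type1cond2}. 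The sum $\vphi_1\phi_1+\vphi_d\phi_d$ becomes
\[
c^{2}\bigl[2\tau^{2} - s\tau(\mu^{*}+h^{*})(q^{1-d}+q^{d-1}) + p\bigl((\mu^{*})^{2}+(h^{*})^{2}\bigr)(q^{2-2d}+q^{2d-2}) + 2(s^{2}-2p)\mu^{*}h^{*}\bigr];
\]
given $p=p'$ and $s\tau=s'\tau'$, equality of these sums reduces exactly to $\tau^{2}+s^{2}\mu^{*}h^{*}=\tau'^{2}+s'^{2}\mu^{*}h^{*}$, which is \eqref{eq:type1cond3}.

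This shows $(\Leftrightarrow)$ in both directions: the three equations in Theorem~\ref{thm:kappamain}(iii) are equivalent to \eqref{eq:type1cond1}--\eqref{eq:type1cond3}. The main obstacle is the bookkeeping in expanding the quadratics in $\tau$ and checking that the coefficient $c^{2}(q^{d-1}-q^{1-d})(\mu^{*}-h^{*})$ arising in the difference step is nonzero; both nonvanishing statements follow cleanly from Lemma~\ref{lem:q}(i) and the constraints in Lemma~\ref{lem:type1cond}, so the rest is routine symmetric-polynomial manipulation in $\mu,h$ and in $\mu^{*},h^{*}$.
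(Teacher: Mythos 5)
Your overall strategy coincides with the paper's: reduce to Theorem \ref{thm:kappamain}(iii), translate $\kappa=\kappa'$ via Lemma \ref{lem:type1kappa}, and extract \eqref{eq:type1cond2} from the four-term combination $\vphi_1\phi_1-\vphi'_1\phi'_1-\vphi_d\phi_d+\vphi'_d\phi'_d$, whose nonzero prefactor $c^2(q^{d-1}-q^{1-d})(\mu^*-h^*)$ (with your $c=(q-q^{-1})(q^d-q^{-d})$) you justify correctly from Lemma \ref{lem:q}(i) and Lemma \ref{lem:type1cond}. Your expansions of $\vphi_1\phi_1$ and $\vphi_d\phi_d$ in terms of $s=\mu+h$, $p=\mu h$ are also correct.

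The gap is in the last step, where you replace the pair of conditions $\{\vphi_1\phi_1=\vphi'_1\phi'_1,\ \vphi_d\phi_d=\vphi'_d\phi'_d\}$ by the pair $\{$difference equal, sum equal$\}$. This substitution is an equivalence only when $\text{\rm Char}(\F)\neq 2$, and type I Leonard pairs in characteristic $2$ do occur: the paper treats them in Theorem \ref{thm:type1sol2}, which relies on the present theorem. Concretely, in characteristic $2$ the sum $\vphi_1\phi_1+\vphi_d\phi_d$ equals the difference, so your two conditions collapse to one and cannot recover the two individual equalities; moreover, given \eqref{eq:type1cond1} and \eqref{eq:type1cond2}, the equality of sums reduces to $2(\tau^2+s^2\mu^*h^*)=2(\tau'^{2}+s'^{2}\mu^*h^*)$, which is vacuous when $2=0$ rather than being ``exactly'' \eqref{eq:type1cond3}. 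The repair is what the paper does: keep the four-term combination to get \eqref{eq:type1cond2}, but instead of the sum compute $\vphi_1\phi_1-\vphi'_1\phi'_1$ directly under \eqref{eq:type1cond1} and \eqref{eq:type1cond2}; the cross terms in $\tau$ cancel because $s\tau=s'\tau'$ and one is left with $c^2\bigl(\tau^2-\tau'^{2}+(s^2-s'^{2})\mu^*h^*\bigr)$, with no factor of $2$. That identity, together with the vanishing of the four-term combination, recovers both individual equalities in every characteristic.
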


\begin{proof}
We will invoke Theorem \ref{thm:kappamain}.
To do this we investigate the conditions in \eqref{eq:kappakappad}.
By Lemma \ref{lem:type1kappa} and \eqref{eq:type1paramcond1},
$\kappa = \kappa'$ if and only if \eqref{eq:type1cond1} holds.
Using \eqref{eq:type1vphi}, \eqref{eq:type1phi} we find that under the assumption \eqref{eq:type1cond1}
the expression
$\vphi_1 \phi_1 - \vphi'_1 \phi'_1 - \vphi_d \phi_d + \vphi'_d \phi'_d$ is equal to
\[
   (q-q^{-1})^2 (q^d - q^{-d})^2 (q^{d-1} - q^{1-d}) (\mu^* - h^*)
\]
times
\[
   \tau (\mu + h) - \tau' (\mu' + h').
\]
Using \eqref{eq:type1vphi}, \eqref{eq:type1phi} we find that under the assumptions
\eqref{eq:type1cond1} and \eqref{eq:type1cond2},
the expression $\vphi_1 \phi_1 - \vphi'_1 \phi'_1$ is equal to
\[
  (q-q^{-1})^2 (q^d - q^{-d})^2
\]
times
\[
   \tau^2 +  (\mu + h)^2 \mu^* h^* - \tau^{\prime 2} - (\mu' + h')^2 \mu^* h^* .
\]
By these comments and \eqref{eq:type1paramcond1}, \eqref{eq:type1paramcond3},
we find that under the assumption \eqref{eq:type1cond1},
both $\vphi_1 \phi_1 = \vphi'_1 \phi'_1$ and $\vphi_d \phi_d = \vphi'_d \phi'_d$
hold if and only if both \eqref{eq:type1cond2} and \eqref{eq:type1cond3} hold.
Now the result follows from Theorem \ref{thm:kappamain}.
\end{proof}

Our next goal is to solve the equations \eqref{eq:type1cond1}--\eqref{eq:type1cond3} 
for $\mu'$, $h'$, $\tau'$.
It is convenient to handle separately the cases $\text{\rm Char}(\F) \neq 2$ and $\text{\rm Char}(\F) = 2$.
We first consider the case $\text{\rm Char}(\F) \neq 2$.

\begin{theorem}     \label{thm:type1sol}    \samepage
\ifDRAFT {\rm thm:type1sol}.  \fi
Assume that $\text{\rm Char}(\F) \neq 2$.
Then the equations \eqref{eq:type1cond1}--\eqref{eq:type1cond3} hold
if and only if at least one of the following \eqref{eq:type1sol1}--\eqref{eq:type1sol5} holds:
\begin{align}
& \qquad\qquad  \tau' = \tau,   \qquad\qquad \text{ $(\mu', h')$ is a permutation of $(\mu,h)$};  \label{eq:type1sol1}
\\
& \qquad\qquad \tau' = -\tau,  \qquad\quad \; \text{ $(\mu', h')$ is a permutation of $(-\mu, -h)$};    \label{eq:type1sol2}
\\
& \mu^* h^* \neq 0,  \quad
\mu' h' = \mu h,    \quad
\mu' + h' = \tau (\mu^* h^*)^{-1/2},   \quad \;\,
\tau' = (\mu + h)  (\mu^* h^*)^{1/2};                      \label{eq:type1sol3}
\\
& \mu^* h^* \neq 0,  \quad
\mu' h' = \mu h,    \quad
\mu' + h' = - \tau (\mu^* h^*)^{-1/2},   \quad 
\tau' = - (\mu + h)  (\mu^* h^*)^{1/2};                      \label{eq:type1sol4}
\\
& \qquad\qquad
\mu^* h^* = 0,  \qquad\qquad
\tau' = \tau = 0,  \qquad\qquad
\mu' h' = \mu h.                                                     \label{eq:type1sol5}
\end{align}
\end{theorem}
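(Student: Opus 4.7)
The plan is to handle the two implications separately. For $(\Leftarrow)$, each of \eqref{eq:type1sol1}--\eqref{eq:type1sol5} plugs directly into \eqref{eq:type1cond1}--\eqref{eq:type1cond3} and is verified by substitution; this is routine and I will not dwell on it.

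For $(\Rightarrow)$, set $p = \mu' + h'$ and $s = \tau'$. Condition \eqref{eq:type1cond1} already reads $\mu' h' = \mu h$, so $\mu'$ and $h'$ are determined as an unordered pair (the roots of $x^2 - px + \mu h$) once $p$ is fixed; in particular, $(\mu', h')$ is a permutation of $(\mu, h)$ iff $p = \mu + h$, and a permutation of $(-\mu, -h)$ iff $p = -(\mu+h)$. Since $\F$ is algebraically closed, choose $\alpha \in \F$ with $\alpha^2 = \mu^* h^*$, and rewrite \eqref{eq:type1cond2}--\eqref{eq:type1cond3} as
\begin{align*}
sp &= \tau(\mu+h), & s^2 + \alpha^2 p^2 &= \tau^2 + \alpha^2(\mu+h)^2.
\end{align*}

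For the generic case $\alpha \neq 0$, introduce the auxiliary variables $S = s + \alpha p$, $T = s - \alpha p$, $U = \tau + \alpha(\mu+h)$, $V = \tau - \alpha(\mu+h)$. Expanding $S^2 - U^2$ yields $(s^2 + \alpha^2 p^2 - \tau^2 - \alpha^2(\mu+h)^2) + 2\alpha(sp - \tau(\mu+h))$, which vanishes by the two displayed equations; similarly $T^2 - V^2 = 0$. Hence $S = \epsilon_1 U$ and $T = \epsilon_2 V$ for signs $\epsilon_1, \epsilon_2 \in \{+1, -1\}$. Using $\mathrm{Char}(\F) \neq 2$, recover $s = (S+T)/2$ and $p = (S-T)/(2\alpha)$. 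The four sign combinations $(+,+),\ (-,-),\ (+,-),\ (-,+)$ give $(s, p)$ equal respectively to $(\tau, \mu+h)$, $(-\tau, -(\mu+h))$, $(\alpha(\mu+h), \tau/\alpha)$, $(-\alpha(\mu+h), -\tau/\alpha)$, matching \eqref{eq:type1sol1}, \eqref{eq:type1sol2}, \eqref{eq:type1sol3}, \eqref{eq:type1sol4} in turn.

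For the degenerate case $\alpha = 0$ (that is, $\mu^* h^* = 0$), the second displayed equation collapses to $s^2 = \tau^2$, so $s = \pm \tau$. If $\tau \neq 0$, then $s \neq 0$ and the first equation forces $p = \pm(\mu+h)$ with matching sign, placing the solution in \eqref{eq:type1sol1} or \eqref{eq:type1sol2}. If $\tau = 0$, then $s = 0$, the first equation is vacuous, $p$ is unconstrained, and we fall into \eqref{eq:type1sol5}. The only nontrivial bookkeeping is verifying that the boundary subcases ($p = 0$, $\mu+h = 0$, or $\tau = 0$ with $\alpha \neq 0$) are absorbed consistently into the four listed families rather than producing spurious solutions; this is automatic because the identities $S^2 = U^2$ and $T^2 = V^2$ hold regardless of any such vanishings, and I expect this case-checking to be the main (but mild) obstacle in the writeup.
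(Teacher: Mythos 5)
Your proposal is correct, and the forward direction is organized differently from the paper's proof. The paper multiplies \eqref{eq:type1cond3} by $\tau'^{2}$ and eliminates via \eqref{eq:type1cond2} to obtain the factorization $(\tau^2-\tau^{\prime 2})\bigl(\tau^{\prime 2}-(\mu+h)^2\mu^*h^*\bigr)=0$, then works through the two resulting cases with separate sub-arguments for $\tau=0$, $\tau'=0$, and $\mu'+h'=0$ (plus a preliminary ``claim'' reducing $\tau^{\prime 2}=\tau^2$, $\tau\neq 0$ to \eqref{eq:type1sol1} or \eqref{eq:type1sol2}). You instead fix $\alpha$ with $\alpha^2=\mu^*h^*$ and pass to $S=\tau'+\alpha(\mu'+h')$, $T=\tau'-\alpha(\mu'+h')$, $U=\tau+\alpha(\mu+h)$, $V=\tau-\alpha(\mu+h)$; the two conditions \eqref{eq:type1cond2}, \eqref{eq:type1cond3} decouple exactly into $S^2=U^2$ and $T^2=V^2$, and the four sign choices enumerate \eqref{eq:type1sol1}--\eqref{eq:type1sol4} at once (up to which square root of $\mu^*h^*$ the theorem's statement denotes by $(\mu^*h^*)^{1/2}$, which is harmless since only ``at least one of'' is claimed). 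Your route buys a uniform treatment of the boundary subcases when $\mu^*h^*\neq 0$ --- no separate handling of $\tau=0$ or $\tau'=0$ is needed, since $S^2=U^2$ and $T^2=V^2$ each force at least one sign regardless of vanishing --- at the cost of introducing the square root $\alpha$ from the outset; the division by $2$ in recovering $\tau'$ and $\mu'+h'$ from $S,T$ is where $\mathrm{Char}(\F)\neq 2$ enters, consistent with the hypothesis. Your $\mu^*h^*=0$ case matches the paper's. Both arguments are complete; yours is somewhat shorter and cleaner in the generic case.
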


\begin{proof}
One routinely checks that each of \eqref{eq:type1sol1}--\eqref{eq:type1sol5} gives
a solution to \eqref{eq:type1cond1}--\eqref{eq:type1cond3}.
Now assume that \eqref{eq:type1cond1}--\eqref{eq:type1cond3} hold.
We show that at least one of \eqref{eq:type1sol1}--\eqref{eq:type1sol5} holds.

We claim that if $\tau^2 =\tau^{\prime 2}$ and $\tau \neq 0$ then
either \eqref{eq:type1sol1} or \eqref{eq:type1sol2} holds.
If $\tau' = \tau \neq 0$ then $\mu + h = \mu' + h'$ by \eqref{eq:type1cond2},
so \eqref{eq:type1sol1} holds in view of \eqref{eq:type1cond1}.
If $\tau' = - \tau \neq 0$ then $\mu + h = - \mu' - h'$ by \eqref{eq:type1cond2},
so \eqref{eq:type1sol2} holds in view of \eqref{eq:type1cond1}.
We have shown the claim.

For the moment, assume that $\mu^* h^* = 0$.
By \eqref{eq:type1cond3}, $\tau^{\prime 2} = \tau^2$.
If $\tau=0$ then \eqref{eq:type1sol5} holds.
If $\tau \neq 0$ then
either \eqref{eq:type1sol1} or \eqref{eq:type1sol2} holds by the claim.
For the rest of this proof, assume that $\mu^* h^* \neq 0$.
In \eqref{eq:type1cond3}, multiply each side by $\tau^{\prime 2}$,
and simplify the result using \eqref{eq:type1cond2} to get
\[
(\tau^2 - \tau^{\prime 2}) 
 \big( \tau^{\prime 2} - (\mu + h)^2 \mu^* h^*  \big) = 0.
\]
Thus at least one of the following \eqref{eq:type1condaux1}, \eqref{eq:type1condaux2} holds:
\begin{align}
\tau^{\prime 2} &= \tau^2,                \label{eq:type1condaux1}
\\
 \tau^{\prime 2} &=  (\mu + h)^2 \mu^* h^*.          \label{eq:type1condaux2}
\end{align}

First consider the case \eqref{eq:type1condaux1}.
We may assume that $\tau=0$; otherwise either \eqref{eq:type1sol1} or \eqref{eq:type1sol2} holds
by the claim.
By \eqref{eq:type1cond3} with $\tau = \tau'=0$ we get
$(\mu+h)^2 = (\mu' + h')^2$.
Thus either $\mu' + h' = \mu + h$ or $\mu' + h' = - \mu - h$.
If $\mu' + h' = \mu + h$ then \eqref{eq:type1sol1} holds in view of \eqref{eq:type1cond1}.
If $\mu' + h' = - \mu - h$ then \eqref{eq:type1sol2} holds in view of \eqref{eq:type1cond1}.

Next consider the case \eqref{eq:type1condaux2}.
By \eqref{eq:type1cond3} and \eqref{eq:type1condaux2},
\begin{equation}
   \tau^2 =  (\mu' + h')^2 \mu^* h^*.                                 \label{eq:type1condaux3}
\end{equation}
For the moment, assume that $\tau=0$.
Then by \eqref{eq:type1condaux3}, $\mu' + h' = 0$, and so one of
\eqref{eq:type1sol3}, \eqref{eq:type1sol4} holds in view of \eqref{eq:type1condaux2}.
For the rest of this proof, assume that $\tau \neq 0$.
By \eqref{eq:type1condaux3}, one of the following \eqref{eq:type1condaux4},
\eqref{eq:type1condaux5} holds:
\begin{align}
  \tau &=(\mu' + h')  (\mu^* h^*)^{1/2} ,                \label{eq:type1condaux4}
\\
   \tau &=  - (\mu' + h')  (\mu^* h^*)^{1/2}.             \label{eq:type1condaux5}
\end{align}
If \eqref{eq:type1condaux4} holds,
then by \eqref{eq:type1cond2},
\[
  (\mu' + h') (\mu + h)  (\mu^* h^*)^{1/2} = \tau' (\mu' + h').
\]
In this equation, we have $\mu' + h' \neq 0$ by \eqref{eq:type1condaux3} and $\tau \neq 0$.
So
\[
  (\mu + h) (\mu^* h^*)^{1/2} = \tau'.
\]
Thus \eqref{eq:type1sol3} holds.
If \eqref{eq:type1condaux5} holds,
then by \eqref{eq:type1cond2},
\[
   - (\mu' + h') (\mu + h) (\mu^* h^*)^{1/2} = \tau' (\mu' + h').
\]
In this equation, we have $\mu' + h' \neq 0$ by \eqref{eq:type1condaux3} and $\tau \neq 0$.
So
\[
  -  (\mu + h)  (\mu^* h^*)^{1/2} = \tau'.
\]
Thus \eqref{eq:type1sol4} holds.
\end{proof}

We have some comments about \eqref{eq:type1sol1}, \eqref{eq:type1sol2}.
By Proposition \ref{prop:A=B}, \eqref{eq:type1sol1} holds
if and only if there exists $\zeta \in \F$ such that $B = A + \zeta I$.
By Proposition \ref{prop:ABvee}, \eqref{eq:type1sol2} holds
if and only if there exists $\zeta \in \F$ such that $B = A^\vee + \zeta I$.
The solutions \eqref{eq:type1sol1}--\eqref{eq:type1sol5} are not mutually exclusive.

Next we consider the case $\text{\rm Char}(\F)=2$.

\begin{theorem}     \label{thm:type1sol2}    \samepage
\ifDRAFT {\rm thm:type1sol2}.  \fi
Assume that $\text{\rm Char}(\F) = 2$.
Then the equations \eqref{eq:type1cond1}--\eqref{eq:type1cond3} hold
if and only if at least one of the following \eqref{eq:type1sol1b}--\eqref{eq:type1sol5b} holds:
\begin{align}
& \qquad\qquad  \tau' = \tau,   \qquad\qquad \text{ $(\mu', h')$ is a permutation of $(\mu,h)$};  \label{eq:type1sol1b}
\\
& \mu^* h^* \neq 0,  \quad
\mu' h' = \mu h,    \quad
\mu' + h' = \tau (\mu^* h^*)^{-1/2},   \quad \;\,
\tau' = (\mu + h)  (\mu^* h^*)^{1/2};                      \label{eq:type1sol3b}
\\
& \qquad\qquad
\mu^* h^* = 0,  \qquad\qquad
\tau' = \tau = 0,  \qquad\qquad
\mu' h' = \mu h.                                                     \label{eq:type1sol5b}
\end{align}
\end{theorem}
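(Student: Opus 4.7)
The plan is to closely mirror the proof of Theorem \ref{thm:type1sol}, exploiting the simplifications that occur in characteristic $2$. The key observation is that in characteristic $2$ we have $-x=x$ for all $x\in\F$, so $\tau^2 = \tau'^2$ forces $\tau'=\tau$, and similarly $(\mu+h)^2=(\mu'+h')^2$ forces $\mu+h=\mu'+h'$. As a consequence, the two solutions \eqref{eq:type1sol1} and \eqref{eq:type1sol2} of Theorem \ref{thm:type1sol} collapse into the single solution \eqref{eq:type1sol1b}, and similarly \eqref{eq:type1sol3} and \eqref{eq:type1sol4} collapse into \eqref{eq:type1sol3b}. This suggests that the char $2$ statement is essentially a ``merged'' version of Theorem \ref{thm:type1sol}, and its proof should be a streamlined version of the same argument.

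For the easy direction, I would verify by direct substitution that each of \eqref{eq:type1sol1b}, \eqref{eq:type1sol3b}, \eqref{eq:type1sol5b} satisfies \eqref{eq:type1cond1}--\eqref{eq:type1cond3}; this uses only that $\F$ is algebraically closed (so that $(\mu^*h^*)^{1/2}$ exists when needed) and the identity $(x+y)^2 = x^2+y^2$ in char $2$. For the hard direction, assume \eqref{eq:type1cond1}--\eqref{eq:type1cond3} hold. First split on whether $\mu^* h^* = 0$ or not. If $\mu^* h^* = 0$, then \eqref{eq:type1cond3} gives $\tau^2 = \tau'^2$, hence $\tau'=\tau$. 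If $\tau=0$, then \eqref{eq:type1cond2} is automatic and we are in case \eqref{eq:type1sol5b}. If $\tau\neq 0$, then \eqref{eq:type1cond2} gives $\mu+h=\mu'+h'$, which combined with \eqref{eq:type1cond1} forces $(\mu',h')$ to be a permutation of $(\mu,h)$, so \eqref{eq:type1sol1b} holds.

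Next suppose $\mu^* h^* \neq 0$. Here I would multiply \eqref{eq:type1cond3} through by $\tau'^2$ and use \eqref{eq:type1cond2} to eliminate $\tau'(\mu'+h')$, leading (just as in the char $\neq 2$ argument) to the factorization
\begin{equation*}
  (\tau^2 - \tau'^2)\bigl(\tau'^2 - (\mu+h)^2 \mu^* h^*\bigr) = 0.
\end{equation*}
If the first factor vanishes then $\tau'=\tau$; combined with \eqref{eq:type1cond2} when $\tau\neq 0$, or with \eqref{eq:type1cond3} when $\tau=0$ (using $\mu^*h^*\neq 0$), this yields $\mu+h = \mu'+h'$ in both cases, and with \eqref{eq:type1cond1} gives \eqref{eq:type1sol1b}. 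If instead the second factor vanishes, then $\tau'^2 = (\mu+h)^2\mu^*h^*$; feeding this back into \eqref{eq:type1cond3} gives $\tau^2 = (\mu'+h')^2\mu^*h^*$, and taking square roots (possible since $\F$ is algebraically closed and char $=2$ makes the sign ambiguity disappear) produces the expressions $\tau' = (\mu+h)(\mu^*h^*)^{1/2}$ and $\mu'+h' = \tau(\mu^*h^*)^{-1/2}$. Combining with \eqref{eq:type1cond1} yields \eqref{eq:type1sol3b}, and one checks that \eqref{eq:type1cond2} is automatically satisfied.

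The main obstacle, such as it is, is simply organizing the case split cleanly; the algebra is essentially that of Theorem \ref{thm:type1sol} but easier, since characteristic $2$ removes the need to track $\pm$ signs. A minor point to handle carefully is the subcase $\tau=0$ together with $\mu^*h^*\neq 0$ and $\tau'=\tau$, where one must extract $\mu+h=\mu'+h'$ from \eqref{eq:type1cond3} by dividing by $\mu^*h^*$ and taking a square root; as noted above, in characteristic $2$ this square root is unique, so no branching occurs.
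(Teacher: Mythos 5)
Your proposal is correct and takes essentially the same approach as the paper: direct verification for one direction, then a case split on $\mu^* h^* = 0$ versus $\mu^* h^* \neq 0$, with the latter case resolved by a factorization obtained by combining \eqref{eq:type1cond3} with \eqref{eq:type1cond2} and whose two factors yield \eqref{eq:type1sol1b} and \eqref{eq:type1sol3b} respectively. The only difference is cosmetic: you reuse the characteristic-$\neq 2$ factorization $(\tau^2-\tau^{\prime 2})\bigl(\tau^{\prime 2}-(\mu+h)^2\mu^*h^*\bigr)=0$, whereas the paper first linearizes \eqref{eq:type1cond3} via the Frobenius and factors $(\tau-\tau')\bigl(\tau'-(\mu+h)(\mu^*h^*)^{1/2}\bigr)=0$, which in characteristic $2$ is just the square root of yours.
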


\begin{proof}
One routinely checks that each of \eqref{eq:type1sol1b}--\eqref{eq:type1sol5b} gives a
a solution to \eqref{eq:type1cond1}--\eqref{eq:type1cond3}.
Now assume that \eqref{eq:type1cond1}--\eqref{eq:type1cond3} holds.
We show that at least one of \eqref{eq:type1sol1b}--\eqref{eq:type1sol5b} holds.

For the moment,  assume that $\mu^* h^* = 0$.
By \eqref{eq:type1cond3} we have $\tau^{\prime 2} = \tau^2$, so $\tau' = \tau$
since $\text{\rm Char}(\F) = 2$.
We may assume that $\tau \neq 0$; otherwise \eqref{eq:type1sol5b} holds.
By \eqref{eq:type1cond2}, $\mu + h = \mu' + h'$.
By this and \eqref{eq:type1cond1} we get \eqref{eq:type1sol1b}.
For the rest of this proof, assume that $\mu^* h^* \neq 0$.
By \eqref{eq:type1cond3},
\[
  \tau^2 - \tau^{\prime 2}
  =\left( (\mu'+h')^2 - (\mu + h)^2 \right)  \mu^* h^* .
\]
By this and since $\text{\rm Char}(\F)=2$,
\[
 (\tau - \tau')^2 = (\mu' + h' - \mu - h)^2  \mu^* h^* ,
\]
and so
\[
  \tau - \tau' = (\mu' + h' - \mu - h)  (\mu^* h^*)^{1/2}.
\]
In this equation, multiply each side by $\tau'$, and simplify the result using \eqref{eq:type1cond2}
to get
\[
   (\tau - \tau') \left( \tau' - (\mu + h) (\mu^* h^*)^{1/2}  \right) = 0.
\]
Thus at least one of the following \eqref{eq:type1solaux1b}, \eqref{eq:type1solaux2b} holds:
\begin{align}
  \tau' &= \tau,                    \label{eq:type1solaux1b}
\\
  \tau' &= (\mu + h)  (\mu^* h^*)^{1/2}.           \label{eq:type1solaux2b}
\end{align}
First assume that \eqref{eq:type1solaux1b} holds.
If $\tau' = \tau \neq 0$, then by \eqref{eq:type1cond1} and \eqref{eq:type1cond2} 
we get \eqref{eq:type1sol1b}.
If $\tau' = \tau = 0$, then by \eqref{eq:type1cond3} we get
$(\mu+h)^2 = (\mu' + h')^2$.
By this and since $\text{\rm Char}(\F)=2$ we get $\mu + h = \mu' + h'$.
By this and \eqref{eq:type1cond1} we get \eqref{eq:type1sol1b}.
Next assume that \eqref{eq:type1solaux2b} holds.
In \eqref{eq:type1cond3}, eliminate $\tau'$ using  \eqref{eq:type1solaux2b} to get
\[
    \tau^2 = (\mu' + h')^2 \mu^* h^*.
\]
By this and since $\text{\rm Char}(\F)=2$,
\[
    \tau = (\mu' + h') (\mu^* h^*)^{1/2}.
\]
By these comments and \eqref{eq:type1cond1} we get \eqref{eq:type1sol3b}.
\end{proof}

We have a comment about \eqref{eq:type1sol1b}.
By Proposition \ref{prop:A=B}, \eqref{eq:type1sol1b} holds
if and only if there exists $\zeta \in \F$ such that $B = A + \zeta I$.
The solutions \eqref{eq:type1sol1b}--\eqref{eq:type1sol5b} are not mutually exclusive.

\section{Describing the companions for a Leonard pair of type I}
\label{sec:companiontype1}
\ifDRAFT {\rm sec:companiontype1}. \fi

In this section we describe the companions for a Leonard pair of type I.
Throughout this section, Notation \ref{notation:all} is in effect.
Assume that $\Phi$ has type I and fundamental constant $\beta = q^2 + q^{-2}$.
Note that $\Phi'$ has type I and fundamental constant $\beta$.
Let
\begin{align}
& (\delta, \mu, h, \delta^*, \mu^*, h^*, \tau),  
&& 
 (\delta', \mu', h', \delta^*, \mu^*, h^*, \tau')                       \label{eq:type1pseqs}
\end{align}
denote the basic sequence of $\Phi$ and $\Phi'$ with respect to $q$, respectively.
Assume that $A$, $B$ are compatible, and consider the companion $K=A-B$.
We will give the entries of $K$.
To avoid complicated formulas, 
we assume that each of $\Phi$, $\Phi'$ is reduced,
so that $\delta=0$, $\delta'=0$, $\delta^* = 0$.

We first assume $\text{\rm Char}(\F) \neq 2$.
Under this assumption we consider the cases \eqref{eq:type1sol1}--\eqref{eq:type1sol5}
in Theorem \ref{thm:type1sol}.
For the moment assume that  \eqref{eq:type1sol1} holds.
By the comments below Theorem \ref{thm:type1sol},
there exists $\zeta \in \F$ such that $B = A + \zeta I$.
By this and $\delta=\delta'=0$ we get $B=A$.
So $K=0$.
Next assume that \eqref{eq:type1sol2} holds.
By the comments below Theorem \ref{thm:type1sol},
there exists $\zeta \in \F$ such that $B = A^\vee + \zeta I$.
By this and $\delta=\delta'=0$ we get $B=A^\vee$.
By this and Lemma \ref{lem:bondcompOmega}, 
$K_{i,i} = 2 a_i$ for $0 \leq i \leq d$.
Next we give the $K$ that corresponds to solutions \eqref{eq:type1sol3}, \eqref{eq:type1sol4}.
In this case $\mu^* h^* \neq 0$; we may assume $\mu^* h^* = 1$ in view of 
Lemma \ref{lem:type1affine}.
To avoid trivialities we interpret $1^{1/2}=1$.

\begin{theorem}      \label{thm:ex2}     \samepage
\ifDRAFT {\rm thm:ex2}. \fi
Assume that $\text{\rm Char}(\F) \neq 2$.
Then the following hold.
\begin{itemize}
\item[\rm (i)]
Assume that \eqref{eq:type1sol3} holds with $\mu^* h^* = 1$.
Then
\begin{align*}
K_{0,0} &= \frac{q^d (\mu^* - q^{-d-1})(\mu + h-\tau) } {\mu^* - q^{d-1} },
\\
K_{i,i} &= 
 \frac{q^{d-2i} (\mu^* - q^{-d-1})(\mu^* - q^{d+1})(\mu + h - \tau) }
      { (\mu^* - q^{d-2i-1})(\mu^* - q^{d-2i+1}) }    &&  (1 \leq i \leq d-1),
\\
K_{d,d} &= \frac{q^{-d} (\mu^* - q^{d+1})(\mu + h - \tau) } {\mu^* - q^{1-d} }.
\end{align*}
\item[\rm (ii)]
Assume that \eqref{eq:type1sol4} holds with $\mu^* h^* = 1$.
Then
\begin{align*}
K_{0,0} &= \frac{q^d (\mu^* + q^{-d-1})(\mu + h + \tau) } {\mu^* + q^{d-1} },
\\
K_{i,i} &= 
 \frac{q^{d-2i} (\mu^* + q^{-d-1})(\mu^* + q^{d+1})(\mu + h + \tau) }
      { (\mu^* + q^{d-2i-1})(\mu^* + q^{d-2i+1}) }    &&  (1 \leq i \leq d-1),
\\
K_{d,d} &= \frac{q^{-d} (\mu^* + q^{d+1})(\mu + h + \tau) } {\mu^* + q^{1-d} }.
\end{align*}
\end{itemize}
\end{theorem}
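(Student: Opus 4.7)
The plan is to exploit Lemma \ref{lem:compOmega2}, which guarantees that $K = A - B$ is diagonal, together with formula \eqref{eq:Kaa} giving $K_{i,i} = a_i - a_i'$ for $0 \leq i \leq d$. It therefore suffices to compute each $a_i - a_i'$ and recognize the answer as the rational function in the statement. I use Lemma \ref{lem:aiparam} to write $a_0$, $a_d$, and $a_i$ $(1 \leq i \leq d-1)$ in terms of $\th_i$, $\th^*_\bullet$, and $\vphi_\bullet$, and similarly for $a_i'$, then substitute the reduced type I closed forms \eqref{eq:type1th}, \eqref{eq:type1ths}, \eqref{eq:type1vphi} (with $\delta = \delta' = \delta^* = 0$) on both sides. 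The hypothesis $\mu^* h^* = 1$ enters in the form $h^* = (\mu^*)^{-1}$, and the factorizations $\th^*_i - \th^*_j = (q^{i-j} - q^{j-i})(\mu^* q^{i+j-d} - (\mu^*)^{-1} q^{d-i-j})$ from Lemma \ref{lem:type1condpre} put the denominators into the shape $(\mu^* - q^{d-2i-1})(\mu^* - q^{d-2i+1})$ times a $q$-monomial.

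In case (i), the constraints $\mu' + h' = \tau$, $\mu' h' = \mu h$, $\tau' = \mu + h$ drive the simplification: only the symmetric functions $\mu' + h'$ and $\mu' h'$ of $\mu', h'$ should enter the final answer, so in the difference $a_i - a_i'$ the individual values of $\mu'$ and $h'$ must cancel. After this cancellation the expression factors as $(\mu + h - \tau)$ times an explicit rational function in $q$ and $\mu^*$, which is then matched term-by-term with the formula in part (i). Case (ii) is handled by an entirely analogous computation using $\mu' + h' = -\tau$, $\mu' h' = \mu h$, $\tau' = -(\mu + h)$; the common factor becomes $(\mu + h + \tau)$ and the factors $\mu^* - q^{d-2i\pm 1}$ of case (i) are replaced by $\mu^* + q^{d-2i\pm 1}$ (with the appropriate signs absorbed into the $q$-monomial prefactor).

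The main obstacle is the interior indices $1 \leq i \leq d-1$, where two fractions must be combined over the common denominator $(\th^*_i - \th^*_{i-1})(\th^*_i - \th^*_{i+1})$ and simplified. The resulting polynomial numerator in $\mu, h, \mu^*, q$ must be shown to factor as $(\mu^* - q^{-d-1})(\mu^* - q^{d+1})(\mu + h - \tau)$ times a $q$-monomial; verifying this requires careful bookkeeping of the mixed terms $\mu\mu^* q^{2i-d\pm 1}$ and $h(\mu^*)^{-1} q^{d-2i\pm 1}$, using $\mu^* h^* = 1$ repeatedly to align cross-terms. The boundary cases $i=0$ and $i=d$ are easier: they follow directly from \eqref{eq:a0} and \eqref{eq:ad} after the same substitutions, and provide a useful sanity check before tackling the interior.
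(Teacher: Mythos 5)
Your proposal is correct and follows essentially the same route as the paper: the paper's proof is exactly ``use \eqref{eq:Kaa} together with Lemmas \ref{lem:aiparam} and \ref{lem:type1param} with $\delta=\delta'=0$,'' which is the computation you carry out (your appeal to Lemma \ref{lem:compOmega2} for diagonality and to the constraints $\mu'h'=\mu h$, $\mu'+h'=\pm\tau$, $\tau'=\pm(\mu+h)$ coming from \eqref{eq:type1sol3}, \eqref{eq:type1sol4} with $\mu^*h^*=1$ is precisely what the paper leaves implicit). The observation that $a_i-a'_i$ depends on $\mu',h'$ only through their symmetric functions is sound and is what makes the simplification go through.
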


\begin{proof}
Use \eqref{eq:Kaa} and Lemmas \ref{lem:aiparam}, \ref{lem:type1param}
with $\delta = \delta' = 0$.
\end{proof}

Next we give the $K$ that corresponds to solution \eqref{eq:type1sol5}.

\begin{theorem}      \label{thm:ex3}     \samepage
\ifDRAFT {\rm thm:ex3}. \fi
Assume that $\text{\rm Char}(\F) \neq 2$. Then
\begin{itemize}
\item[\rm (i)]
Assume that \eqref{eq:type1sol5} holds with $\mu^*=0$.
Then
\begin{align*}
 K_{i,i} &= q^{2i-d} (\mu + h - \mu' - h')    &&   (0 \leq i \leq d).
\end{align*}
\item[\rm (i)]
Assume that \eqref{eq:type1sol5} holds with $h^*=0$.
Then
\begin{align*}
 K_{i,i} &= q^{d-2i} (\mu + h - \mu' - h')    &&   (0 \leq i \leq d).
\end{align*}
\end{itemize}
\end{theorem}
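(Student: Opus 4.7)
The plan is to compute the diagonal entries of the companion $K = A - B$ directly from \eqref{eq:Kaa}, which gives $K_{i,i} = a_i - a'_i$. Since both Leonard systems are reduced ($\delta = \delta' = 0$ and $\delta^* = 0$) and satisfy the hypotheses of \eqref{eq:type1sol5}, I expect the scalars $\{a_i\}_{i=0}^d$ to admit a simple closed form depending only on $\mu + h$, with an analogous expression for $a'_i$ in terms of $\mu' + h'$. The two formulas in the theorem will then fall out immediately by subtraction.

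For case (i), where $\mu^* = 0$ and $\tau = 0$, the formulas in Lemma~\ref{lem:type1param} collapse to
\begin{align*}
\th_i &= \mu q^{2i-d} + h q^{d-2i}, &
\th^*_i &= h^* q^{d-2i}, &
\vphi_i &= -h h^* q^{d-2i+1}(q^i - q^{-i})(q^{d-i+1} - q^{i-d-1}).
\end{align*}
Substituting into \eqref{eq:a0}, \eqref{eq:ai}, \eqref{eq:ad}, I observe that the factor $h^*$ cancels between each $\vphi_j$ and the corresponding denominator $\th^*_i - \th^*_{i \pm 1}$, leaving a linear expression in $h$. The key algebraic identity
\begin{equation*}
(q^i - q^{-i})(q^{d-i+1} - q^{i-d-1}) - (q^{i+1} - q^{-i-1})(q^{d-i} - q^{i-d}) = (q - q^{-1})(q^{2i-d} - q^{d-2i})
\end{equation*}
reduces \eqref{eq:ai} to $a_i = \th_i + h(q^{2i-d} - q^{d-2i}) = (\mu + h) q^{2i-d}$. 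Analogous (and simpler) manipulations of \eqref{eq:a0} and \eqref{eq:ad} confirm the same closed form at the boundary indices $i = 0$ and $i = d$. The identical computation applied to $\Phi'$ yields $a'_i = (\mu' + h') q^{2i-d}$, and \eqref{eq:Kaa} produces $K_{i,i} = q^{2i-d}(\mu + h - \mu' - h')$.

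For case (ii), where $h^* = 0$, rather than repeat the computation I will reduce to case (i) via the $\downarrow$ transformation. By Lemma~\ref{lem:type1paramDown} the basic sequence of $\Phi^\downarrow$ is $(0, \mu, h, 0, h^*, \mu^*, 0)$, so under the assumption $h^* = 0$ the Leonard system $\Phi^\downarrow$ falls precisely into the setting of case (i). Hence $a^\downarrow_i = (\mu + h) q^{2i-d}$, and by Lemma~\ref{lem:aiD}(i) this equals $a_{d-i}$. Reindexing gives $a_i = (\mu + h) q^{d-2i}$; the same holds for $a'_i$, and \eqref{eq:Kaa} delivers the claimed formula.

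The only nontrivial step is the algebraic simplification underlying case (i). It is routine once one notices that the hypotheses force the four free parameters $\mu, h, \mu^*, h^*$ (under $\tau = 0$) to influence $a_i$ only through the single combination $\mu + h$, which is consistent with the fact that the compatibility constraint $\mu h = \mu' h'$ alone does not pin down $\mu, h$ individually.
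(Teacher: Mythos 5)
Your proposal is correct and follows essentially the same route as the paper, whose proof is simply to combine \eqref{eq:Kaa} with Lemmas \ref{lem:aiparam} and \ref{lem:type1param} under $\delta=\delta'=0$; you have just carried out the algebra explicitly, and your closed form $a_i=(\mu+h)q^{2i-d}$ checks out (note $h^*\neq 0$ is forced by \eqref{eq:type1paramcond3} when $\mu^*=0$, so the cancellation is legitimate). Your use of the $\downarrow$-symmetry via Lemmas \ref{lem:type1paramDown} and \ref{lem:aiD} to deduce case (ii) from case (i) is a pleasant shortcut but not a substantively different argument.
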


\begin{proof}
Use \eqref{eq:Kaa} and Lemmas \ref{lem:aiparam}, \ref{lem:type1param}
with $\delta = \delta' = 0$.
\end{proof}

Next we assume $\text{\rm Char}(\F) = 2$.
Under this assumption we consider the cases \eqref{eq:type1sol1b}--\eqref{eq:type1sol5b}
in Theorem \ref{thm:type1sol2}.
For the moment assume that  \eqref{eq:type1sol1b} holds.
By the comments below Theorem \ref{thm:type1sol2},
there exists $\zeta \in \F$ such that $B = A + \zeta I$.
By this and $\delta = \delta' = 0$ we get $B=A$. So $K=0$.
We now give the $K$ that corresponds to solution \eqref{eq:type1sol3b}.
In view of Lemma \ref{lem:type1affine},
we may assume that $\mu^* h^* = 1$.

\begin{theorem}      \label{thm:ex2b}     \samepage
\ifDRAFT {\rm thm:ex2b}. \fi
Assume that $\text{\rm Char}(\F) = 2$
and \eqref{eq:type1sol3b} holds with $\mu^* h^* = 1$.
Then
\begin{align*}
K_{0,0} &= \frac{q^d (\mu^* + q^{-d-1})(\mu + h + \tau) } {\mu^* + q^{d-1} },
\\
K_{i,i} &= 
 \frac{q^{d-2i} (\mu^* + q^{-d-1})(\mu^* + q^{d+1})(\mu + h + \tau) }
      { (\mu^* + q^{d-2i-1})(\mu^* + q^{d-2i+1}) }    &&  (1 \leq i \leq d-1),
\\
K_{d,d} &= \frac{q^{-d} (\mu^* + q^{d+1})(\mu + h + \tau) } {\mu^* + q^{1-d} }. 
\end{align*}
\end{theorem}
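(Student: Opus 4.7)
The plan is to compute $K_{i,i} = a_i - a'_i$ directly from \eqref{eq:Kaa}, using Lemma \ref{lem:aiparam} to express both $a_i$ and $a'_i$ in terms of the eigenvalue sequence and first split sequence of the respective parameter arrays, and then substituting the type I expressions of Lemma \ref{lem:type1param} to work entirely in terms of the basic variables.

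First I would write out
\begin{align*}
a_0 &= \th_0 + \frac{\vphi_1}{\th^*_0-\th^*_1},
&
a_i &= \th_i + \frac{\vphi_i}{\th^*_i-\th^*_{i-1}} + \frac{\vphi_{i+1}}{\th^*_i-\th^*_{i+1}},
&
a_d &= \th_d + \frac{\vphi_d}{\th^*_d-\th^*_{d-1}},
\end{align*}
and analogously for $a'_i$, then substitute
\[
\th_i = \mu q^{2i-d} + h q^{d-2i}, \qquad \th^*_i = \mu^* q^{2i-d} + h^* q^{d-2i}
\]
(since $\delta=\delta'=\delta^*=0$), together with the formula \eqref{eq:type1vphi} for $\vphi_i$. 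Using the assumption $\mu^* h^* = 1$ and hence $(\mu^*h^*)^{1/2}=1$ (interpreting this as in the statement), the relations \eqref{eq:type1sol3b} become $\mu'h' = \mu h$, $\mu' + h' = \tau$, and $\tau' = \mu+h$. Substituting these into the primed analogues of the expressions above rewrites $a'_i$ in terms of the unprimed basic variables.

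Next I would take the difference $a_i - a'_i$ and simplify. The key telescoping will come from the fact that both $\th_i - \th'_i$ and the numerators of the $\vphi$-quotients, after substitution via $\mu'+h' = \tau$ and $\tau' = \mu+h$, factor through the common scalar $\mu+h+\tau$. In characteristic $2$ the identity $(\mu+h)+\tau = (\mu+h)-\tau$ means that the sign distinction between the two solutions \eqref{eq:type1sol3}, \eqref{eq:type1sol4} of Theorem \ref{thm:type1sol} collapses, producing exactly the formulas displayed in Theorem \ref{thm:ex2}(ii) with every $+$ still a $+$. The denominators $(\mu^* + q^{d-2i\pm 1})$ emerge from Lemma \ref{lem:type1condpre} applied to $\th^*_i - \th^*_{i\pm 1}$, since in characteristic $2$ we have $\mu^* - h^* q^{2k} = \mu^* + h^* q^{2k}$, and $h^* q^{2k} = q^{2k}/\mu^*$ may be cleared.

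The main obstacle will be the algebraic bookkeeping in the interior case $1 \leq i \leq d-1$: one must verify that after clearing the common denominator $(\mu^*+q^{d-2i-1})(\mu^*+q^{d-2i+1})$, the numerator reduces to $q^{d-2i}(\mu^*+q^{-d-1})(\mu^*+q^{d+1})(\mu+h+\tau)$. This is essentially a characteristic-free polynomial identity in $q$, $\mu$, $h$, $\mu^*$, $\tau$ (once $h^* = 1/\mu^*$ is eliminated), so it can be verified by the same direct expansion used for Theorem \ref{thm:ex2}(ii); the extremal cases $i=0$ and $i=d$ are simpler one-fraction calculations handled in the same way.
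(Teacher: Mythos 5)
Your proposal is correct and follows exactly the route the paper takes: the paper's proof is precisely ``use \eqref{eq:Kaa} and Lemmas \ref{lem:aiparam}, \ref{lem:type1param} with $\delta=\delta'=0$,'' and your substitutions $\mu'+h'=\tau$, $\tau'=\mu+h$, $\mu'h'=\mu h$ (from \eqref{eq:type1sol3b} with $\mu^*h^*=1$) are the right specialization. Your observation that the resulting identity is the same one verified for Theorem \ref{thm:ex2}(ii), with the sign distinction collapsing in characteristic $2$, matches why the displayed formulas coincide with those of that theorem.
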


\begin{proof}
Use \eqref{eq:Kaa} and Lemmas \ref{lem:aiparam}, \ref{lem:type1param}
with $\delta = \delta' = 0$.
\end{proof}

Next we give the $K$ that corresponds to solution \eqref{eq:type1sol5b}.

\begin{theorem}      \label{thm:ex3b}     \samepage
\ifDRAFT {\rm thm:ex3b}. \fi
Assume that $\text{\rm Char}(\F) = 2$. Then
\begin{itemize}
\item[\rm (i)]
Assume that \eqref{eq:type1sol5b} holds with $\mu^*=0$.
Then
\begin{align*}
 K_{i,i} &= q^{2i-d} (\mu + h - \mu' - h')    &&   (0 \leq i \leq d).
\end{align*}
\item[\rm (i)]
Assume that \eqref{eq:type1sol5b} holds with $h^*=0$.
Then
\begin{align*}
 K_{i,i} &= q^{d-2i} (\mu + h - \mu' - h')    &&   (0 \leq i \leq d).
\end{align*}
\end{itemize}
\end{theorem}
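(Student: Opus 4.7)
The plan is to apply equation \eqref{eq:Kaa}, which gives $K_{i,i} = a_i - a'_i$, and then compute each $a_i$ (and correspondingly each $a'_i$) directly from Lemma \ref{lem:aiparam} using the type I parameterization from Lemma \ref{lem:type1param}. Since we are in the reduced case, $\delta = \delta' = \delta^* = 0$, and by \eqref{eq:type1sol5b} we additionally have $\tau = \tau' = 0$. The computation does not actually use $\mathrm{Char}(\F)=2$ in any essential way; indeed it will look identical to the proof of Theorem \ref{thm:ex3}.

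For part (i), set $\mu^* = 0$, so that $\th^*_i = h^* q^{d-2i}$ and, because $\tau = 0$ and $\mu^* = 0$,
\begin{equation*}
\vphi_i = -hh^*\,q^{d-2i+1}(q^i-q^{-i})(q^{d-i+1}-q^{i-d-1}) \qquad (1 \leq i \leq d).
\end{equation*}
First I would compute $a_0$ and $a_d$ using \eqref{eq:a0} and \eqref{eq:ad}; the factors of $h^*$ cancel and one obtains $a_0 = (\mu+h)q^{-d}$ and $a_d = (\mu+h)q^{d}$. For the intermediate index $1 \leq i \leq d-1$, I would use \eqref{eq:ai}, which contains two fractions. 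After dividing $\vphi_i$ by $\th^*_i-\th^*_{i-1}$ and $\vphi_{i+1}$ by $\th^*_i-\th^*_{i+1}$, the $h^*$ again cancels and the two fractions collapse to
\begin{equation*}
\frac{hq}{q^2-1}\Bigl[(q^i-q^{-i})(q^{d-i+1}-q^{i-d-1}) - (q^{i+1}-q^{-i-1})(q^{d-i}-q^{i-d})\Bigr].
\end{equation*}
Expanding each product and subtracting, the bracket simplifies to $(q^2-1)(q^{2i-d-1}-q^{d-2i-1})$, yielding $a_i - \th_i = hq^{2i-d} - hq^{d-2i}$, and hence $a_i = (\mu+h)q^{2i-d}$ uniformly for $0 \leq i \leq d$. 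The same computation with primed basic variables (noting that $\mu^*$, $h^*$ are shared between $\Phi$ and $\Phi'$) gives $a'_i = (\mu'+h')q^{2i-d}$, so $K_{i,i} = (\mu+h-\mu'-h')q^{2i-d}$ as claimed.

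For part (ii), set $h^* = 0$, so $\th^*_i = \mu^* q^{2i-d}$, and run the parallel computation; alternatively, one can obtain (ii) from (i) by applying the $\downarrow$ operation, which exchanges the roles of $\mu^*$ and $h^*$ (see Lemma \ref{lem:type1paramDown}) and thus sends $q^{2i-d}$ to $q^{d-2i}$. Either route gives $a_i = (\mu+h)q^{d-2i}$ and the stated formula.

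The main obstacle is simply the bookkeeping in the intermediate case; once one verifies that the bracket above telescopes to a multiple of $q^2-1$, everything collapses. The non-vanishing of the denominators encountered ($q^2-1$, $h^*$ in (i), $\mu^*$ in (ii)) is guaranteed by Lemma \ref{lem:q}(i) together with \eqref{eq:type1paramcond3}. No special charactersitic-$2$ phenomenon appears, which is why Theorems \ref{thm:ex3} and \ref{thm:ex3b} have identical formulas.
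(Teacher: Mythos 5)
Your proposal is correct and matches the paper's proof, which is exactly the one-line recipe you carried out: apply \eqref{eq:Kaa} together with Lemmas \ref{lem:aiparam} and \ref{lem:type1param} in the reduced case $\delta=\delta'=\delta^*=0$. Your observation that no characteristic-$2$ phenomenon enters is also consistent with the paper, which gives the identical argument for Theorems \ref{thm:ex3} and \ref{thm:ex3b}.
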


\begin{proof}
Use \eqref{eq:Kaa} and Lemmas \ref{lem:aiparam}, \ref{lem:type1param}
with $\delta = \delta' = 0$.
\end{proof}

\section{The parameter arrays of type II}
\label{sec:parraytype2}
\ifDRAFT {\rm sec:parraytype2}. \fi

In this section we describe the parameter arrays of type II.
We then prove Proposition \ref{prop:RST} for type II.
Throughout this section, assume that $d \geq 3$.

\begin{lemma}    \label{lem:type2param}    \samepage
\ifDRAFT {\rm lem:type2param}. \fi
Assume that $\text{\rm Char}(\F) \neq 2$.
For a sequence
\begin{equation}
   (\delta, \mu, h, \delta^*, \mu^*, h^*, \tau)                \label{eq:type2pseq}
\end{equation}
of scalars in $\F$,  define
\begin{align}
\th_i &= \delta + \mu (i-d/2)+ h i (d-i),                \label{eq:type2th}
\\
\th^*_i &= \delta^* + \mu^* (i-d/2) + h^* i (d-i)     \label{eq:type2ths}
\end{align}
for $0 \leq i \leq d$ and
\begin{align}
\vphi_i &= i (d-i+1) \big(\tau - \mu \mu^*/2 + 
  (h \mu^* + \mu h^*)(i-(d+1)/2) + h h^* (i-1)(d-i) \big),             \label{eq:type2vphi}
\\
\phi_i &= i (d-i+1) \big(\tau + \mu \mu^*/2+
         (h \mu^* - \mu h^*)(i-(d+1)/2)+h h^*(i-1)(d-i) \big)         \label{eq:type2phi}
\end{align}
for $1 \leq i \leq d$.
Then the sequence
\begin{equation}
   (\{\th_i\}_{i=0}^d; \{\th^*_i\}_{i=0}^d; \{\vphi_i\}_{i=1}^d; \{\phi_i\}_{i=1}^d)        \label{eq:type2parray}
\end{equation}
is a parameter array over $\F$ that has type II,
provided that the inequalities in Lemma \ref{lem:classify}{\rm (i),(ii)} hold.
Conversely, assume that the sequence \eqref{eq:type2parray} is a parameter 
array over $\F$ that has type II.
Then there exists a unique sequence \eqref{eq:type2pseq} of scalars in $\F$ that satisfies
\eqref{eq:type2th}--\eqref{eq:type2phi}.
\end{lemma}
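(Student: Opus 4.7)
The plan is to mirror the proof of Lemma \ref{lem:type1param}. First I would assume the inequalities in Lemma \ref{lem:classify}(i),(ii) hold and verify conditions (iii)--(v) of Lemma \ref{lem:classify} by direct substitution of \eqref{eq:type2th}--\eqref{eq:type2phi}.

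For condition (v), a short computation gives $\th_{i-1} - \th_i = -\mu + h(2i-d-1)$ and $\th_{i-2} - \th_{i+1} = -3\mu + 3h(2i-d-1)$, so the ratio equals $3$ and is independent of $i$; the analogous computation for $\{\th^*_i\}$ gives the same ratio. Hence $\beta + 1 = 3$, so $\beta = 2$; combined with the assumption $\text{Char}(\F) \neq 2$ and Definition \ref{def:type}, this shows that the parameter array has type II.

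For conditions (iii) and (iv), I would first record two preparatory identities. From \eqref{eq:type2th} the $h$-terms cancel symmetrically, giving $\th_\ell - \th_{d-\ell} = \mu(2\ell-d)$ and $\th_0 - \th_d = -\mu d$, so
\[
  \sum_{\ell=0}^{i-1} \frac{\th_\ell - \th_{d-\ell}}{\th_0 - \th_d}
  = \frac{i(d-i+1)}{d} \qquad (1 \leq i \leq d).
\]
Using this together with the explicit expressions $\th^*_i - \th^*_0 = i[\mu^* + h^*(d-i)]$ and $\th_{i-1} - \th_d = (i-1-d)\mu + h(i-1)(d-i+1)$ (and the analogous formula with $\th$ and $\th^*$ interchanged for (iv)), the right-hand sides of Lemma \ref{lem:classify}(iii),(iv) reduce to polynomial identities in $i$ whose verification is routine; the identities match \eqref{eq:type2vphi}, \eqref{eq:type2phi} exactly. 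This completes the forward direction.

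For the converse, given a parameter array \eqref{eq:type2parray} of type II, the existence and uniqueness of the basic sequence \eqref{eq:type2pseq} satisfying \eqref{eq:type2th}--\eqref{eq:type2phi} follows from \cite[Theorem 6.1]{NT:balanced}, exactly as in the proof of Lemma \ref{lem:type1param}. The main obstacle is purely bookkeeping: the expressions in \eqref{eq:type2vphi}, \eqref{eq:type2phi} are quadratic in $i$ with several cross-terms, so one must be careful in collecting coefficients when expanding the right-hand sides of Lemma \ref{lem:classify}(iii),(iv); the requirement $\text{Char}(\F) \neq 2$ enters only to make sense of the factor $1/2$ appearing in these formulas.
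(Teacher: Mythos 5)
Your proposal is correct and follows essentially the same route as the paper: assume the inequalities of Lemma \ref{lem:classify}(i),(ii), verify (iii)--(v) by direct substitution (your intermediate identities $\th_\ell-\th_{d-\ell}=\mu(2\ell-d)$, $\sum_{\ell=0}^{i-1}(\th_\ell-\th_{d-\ell})/(\th_0-\th_d)=i(d-i+1)/d$, and the ratio $3$ giving $\beta=2$ all check out), and appeal to the classification in \cite{NT:balanced} for the converse. The one correction: the converse for type II rests on \cite[Theorem 7.1]{NT:balanced}, not \cite[Theorem 6.1]{NT:balanced} (the latter is the type I statement used in Lemma \ref{lem:type1param}).
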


\begin{proof}
Assume that the inequalities in Lemma \ref{lem:classify}(i),(ii) hold.
Using \eqref{eq:type2th}--\eqref{eq:type2phi} we routinely verify
the conditions Lemma \ref{lem:classify}(iii)--(v).
Thus the sequence \eqref{eq:type2parray} is a parameter array over $\F$.
Evaluating the expression on the left in \eqref{eq:indep} using \eqref{eq:type2th} we find that
the parameter array \eqref{eq:type2parray} has fundamental constant $\beta = 2$. 
So the parameter array \eqref{eq:type2parray} has type II.
The last assertion comes from \cite[Theorem 7.1]{NT:balanced}.
\end{proof}

\begin{defi}    \label{def:type2pseq}    \samepage
\ifDRAFT {\rm def:type2pseq}. \fi
Referring to Lemma \ref{lem:type2param},
assume that the sequence \eqref{eq:type2parray} is a parameter array over $\F$.
We call the scalars $\delta, \mu, h, \delta^*, \mu^*, h^*, \tau$ 
the {\em basic variables of \eqref{eq:type2parray}}.
We call the sequence $(\delta, \mu, h, \delta^*, \mu^*, h^*, \tau)$ 
the {\em basic sequence of \eqref{eq:type2parray}}.
\end{defi}

\begin{lemma}    \label{lem:type2condpre}    \samepage
\ifDRAFT {\rm lem:type2condpre}. \fi
Referring to Lemma \ref{lem:type2param},
the following hold for $0 \leq i,j \leq d$:
\begin{align*}
\th_i - \th_j &= (i-j) \big( \mu + h (d-i-j) \big),
\\
\th^*_i - \th^*_j &= (i-j) \big( \mu^* + h^* (d-i-j) \big).
\end{align*}
\end{lemma}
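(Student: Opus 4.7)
The plan is to prove this by direct substitution and elementary algebraic factorization. The statement is purely about the closed-form expressions for $\theta_i$ and $\theta^*_i$ given in \eqref{eq:type2th} and \eqref{eq:type2ths}, so no deeper structure of parameter arrays is needed.

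First I would substitute $\theta_i = \delta + \mu(i-d/2) + h\,i(d-i)$ and $\theta_j = \delta + \mu(j-d/2) + h\,j(d-j)$ into the difference $\theta_i - \theta_j$. The constant $\delta$ cancels. The $\mu$-terms combine to $\mu(i-j)$. The $h$-terms combine to $h\bigl[i(d-i) - j(d-j)\bigr] = h\bigl[d(i-j) - (i^2-j^2)\bigr]$. Using the factorization $i^2 - j^2 = (i-j)(i+j)$, this becomes $h(i-j)(d-i-j)$. Collecting, one obtains $\theta_i - \theta_j = (i-j)\bigl(\mu + h(d-i-j)\bigr)$, as desired.

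The calculation for $\theta^*_i - \theta^*_j$ is formally identical, with $(\mu,h)$ replaced by $(\mu^*,h^*)$ and $\delta$ replaced by $\delta^*$. There is no obstacle here; the only ``trick'' is recognizing the difference-of-squares factorization, which is completely mechanical. This lemma is really just a convenient reformulation set up for use in the upcoming verifications of nonvanishing conditions (analogous to Lemma \ref{lem:type1cond}) and in the proof of Proposition \ref{prop:RST} for type II.
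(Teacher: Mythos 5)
Your proof is correct and is exactly what the paper intends by its one-line proof (``Routine verification using \eqref{eq:type2th} and \eqref{eq:type2ths}''); you have simply written out the cancellation of $\delta$ and the difference-of-squares factorization explicitly. No issues.
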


\begin{proof}
Routine verification using \eqref{eq:type2th} and \eqref{eq:type2ths}.
\end{proof}

\begin{lemma}    \label{lem:type2cond}   \samepage
\ifDRAFT {\rm lem::type2cond}. \fi
Referring to Lemma \ref{lem:type2param}, 
the inequalities in Lemma \ref{lem:classify}{\rm (i),(ii)} hold
if and only if
\begin{align}
 & \text{$\text{\rm Char}(\F)$ is equal to $0$ or greater than $d$},          \label{eq:type2paramcond1}
\\
 & \mu \neq h i  \qquad\qquad \;\;\,  (1-d \leq i \leq d-1),          \label{eq:type2paramcond2}
\\
 & \mu^* \neq h^* i \qquad\qquad (1-d \leq i \leq d-1),     \label{eq:type2paramcond3}
\\
 & \tau \neq \mu \mu^*/2 - 
       (h \mu^* + \mu h^*)(i-(d+1)/2) -  h h^* (i-1)(d-i)   & (1 \leq i \leq d),  \label{eq:type2paramcond4}
\\
& \tau \neq  - \mu \mu^*/2 - 
       (h \mu^* - \mu h^*)(i-(d+1)/2) -  h h^* (i-1)(d-i)  & (1 \leq i \leq d). \label{eq:type2paramcond5}
\end{align}
\end{lemma}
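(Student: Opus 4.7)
The plan is to derive \eqref{eq:type2paramcond1}--\eqref{eq:type2paramcond5} directly from the factorizations provided by Lemma \ref{lem:type2condpre} together with the explicit expressions \eqref{eq:type2vphi}, \eqref{eq:type2phi} for $\vphi_i$ and $\phi_i$. The argument is essentially a bookkeeping exercise of tracking which integer arguments appear.

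First I would handle the forward direction. Suppose the inequalities in Lemma \ref{lem:classify}(i),(ii) hold. If $\text{\rm Char}(\F) = p$ with $1 \leq p \leq d$, then picking $0 \leq i \neq j \leq d$ with $i \equiv j \pmod{p}$ forces $i - j = 0$ in $\F$, hence $\th_i - \th_j = 0$ by Lemma \ref{lem:type2condpre}; this gives \eqref{eq:type2paramcond1}. Next, if $\mu = h k$ for some $k \in \{1-d, \ldots, d-1\}$, one chooses $0 \leq a \neq b \leq d$ with $a + b = d + k$ (always possible since $d + k \in \{1, \ldots, 2d-1\}$), and then $\mu + h(d - a - b) = hk - hk = 0$, so $\th_a = \th_b$; this yields \eqref{eq:type2paramcond2} (the degenerate case $h = 0$ would force $\mu = 0$ and hence all $\th_i$ equal, a contradiction). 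The argument for \eqref{eq:type2paramcond3} is identical with starred quantities. Finally, \eqref{eq:type2vphi} and \eqref{eq:type2phi} present $\vphi_i$ and $\phi_i$ as the product of $i(d-i+1)$ and the bracketed expressions appearing in \eqref{eq:type2paramcond4}, \eqref{eq:type2paramcond5} respectively; since $1 \leq i, d-i+1 \leq d$, the prefactor is nonzero in $\F$ by \eqref{eq:type2paramcond1}, and so $\vphi_i \neq 0$ (resp.\ $\phi_i \neq 0$) forces the bracket to be nonzero, which is exactly \eqref{eq:type2paramcond4} (resp.\ \eqref{eq:type2paramcond5}).

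For the backward direction, assume \eqref{eq:type2paramcond1}--\eqref{eq:type2paramcond5}. For $0 \leq i \neq j \leq d$, the factor $i - j$ lies in $\{-d, \ldots, d\} \setminus \{0\}$ and is nonzero in $\F$ by \eqref{eq:type2paramcond1}, while $i + j - d$ lies in $\{1-d, \ldots, d-1\}$, so $\mu - h(i+j-d) \neq 0$ by \eqref{eq:type2paramcond2}. Lemma \ref{lem:type2condpre} then gives $\th_i - \th_j \neq 0$, and \eqref{eq:type2paramcond3} yields $\th^*_i - \th^*_j \neq 0$ similarly. For $1 \leq i \leq d$ the prefactor $i(d-i+1)$ of $\vphi_i$ and $\phi_i$ is nonzero by \eqref{eq:type2paramcond1}, and the bracketed factors are nonzero by \eqref{eq:type2paramcond4}, \eqref{eq:type2paramcond5}; hence $\vphi_i \neq 0$ and $\phi_i \neq 0$.

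The only real subtlety is verifying that the range $\{1-d, \ldots, d-1\}$ in \eqref{eq:type2paramcond2}, \eqref{eq:type2paramcond3} precisely matches the set of values $i + j - d$ attained by distinct pairs $0 \leq i \neq j \leq d$, and that every integer in $\{1, \ldots, 2d-1\}$ is realized as such a sum $i + j$ with $i \neq j$. This is elementary but must be done to ensure the forward and backward implications line up cleanly; apart from this, the proof is a direct substitution using Lemma \ref{lem:type2condpre} and the formulas \eqref{eq:type2vphi}, \eqref{eq:type2phi}.
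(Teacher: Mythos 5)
Your proof is correct and follows the same route as the paper, which simply states ``Routine verification using \eqref{eq:type2vphi}, \eqref{eq:type2phi}, and Lemma \ref{lem:type2condpre}''; you have spelled out exactly that verification, including the one point worth checking (that $i+j-d$ ranges over $\{1-d,\ldots,d-1\}$ as $i\neq j$ range over $\{0,\ldots,d\}$).
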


\begin{proof}
Routine verification using \eqref{eq:type2vphi}, \eqref{eq:type2phi},
and Lemma \ref{lem:type2condpre}.
\end{proof}

For the rest of this section, let $\Phi$ denote a Leonard system over $\F$
that has type II and parameter array
\begin{equation}
   (\{\th_i\}_{i=0}^d; \{\th^*_i\}_{i=0}^d; \{\vphi_i\}_{i=1}^d; \{\phi_i\}_{i=1}^d).        \label{eq:type2parray2}
\end{equation}

\begin{defi}     \label{def:type2pseqLS}    \samepage
\ifDRAFT {\rm def:type2pseqLS}. \fi
By the {\em basic variables} (resp.\ {\em basic sequence}) {\em of $\Phi$} we mean the 
basic variables (resp.\ basic sequence) of the parameter array \eqref{eq:type2parray2}.
\end{defi}

For the rest of this section, let
$(\delta, \mu, h, \delta^*, \mu^*, h^*, \tau)$
denote the basic sequence of $\Phi$.

\begin{lemma}    \label{lem:type2paramDown}    \samepage
\ifDRAFT {\rm lem:type2paramDown}. \fi
In the table below, for each Leonard system in the first column we give
the basic sequence:
\[
\begin{array}{cc}
\text{\rm Leonard system}  &  \text{\rm basic sequence}
\\ \hline
\Phi^\downarrow & (\delta, \mu, h, \delta^*, - \mu^*, h^*, \tau)  \rule{0mm}{3ex}
\\
\Phi^\Downarrow & (\delta, - \mu, h, \delta^*, \mu^*, h^*, \tau)   \rule{0mm}{2.7ex}
\\
\Phi^\vee & (- \delta, - \mu, -h, \delta^*, \mu^*, h^*, - \tau)  \rule{0mm}{2.7ex}
\end{array}
\]
\end{lemma}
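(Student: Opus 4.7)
The plan is to read off the parameter arrays of $\Phi^\downarrow$, $\Phi^\Downarrow$, $\Phi^\vee$ using Lemmas \ref{lem:parrayrelative} and \ref{lem:parrayPhivee}, rewrite each resulting array in the normal form \eqref{eq:type2th}--\eqref{eq:type2phi}, and invoke the uniqueness part of Lemma \ref{lem:type2param} to identify the basic sequence.

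First I would handle $\Phi^\downarrow$. By Lemma \ref{lem:parrayrelative}(ii) its parameter array is $(\{\th_i\}_{i=0}^d; \{\th^*_{d-i}\}_{i=0}^d; \{\phi_{d-i+1}\}_{i=1}^d; \{\vphi_{d-i+1}\}_{i=1}^d)$. The eigenvalues $\{\th_i\}$ are unchanged, so the first three basic variables remain $\delta,\mu,h$. For the dual eigenvalues, substituting into \eqref{eq:type2ths} gives
\[
\th^*_{d-i} \;=\; \delta^* + \mu^*(d/2-i) + h^* i(d-i) \;=\; \delta^* - \mu^*(i-d/2) + h^* i(d-i),
\]
which matches the normal form with $\mu^*$ replaced by $-\mu^*$. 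The scalar $\tau$ is then pinned down by computing the new $\vphi_1 = \phi_d$ from \eqref{eq:type2phi} and matching it against formula \eqref{eq:type2vphi} with basic sequence $(\delta,\mu,h,\delta^*,-\mu^*,h^*,\tau')$; the terms linear in $(h\mu^*-\mu h^*)$ and the cross term $\mu\mu^*/2$ line up on both sides, forcing $\tau' = \tau$.

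Next I would handle $\Phi^\Downarrow$ analogously, using Lemma \ref{lem:parrayrelative}(iii). The identity
\[
\th_{d-i} = \delta + \mu(d/2-i) + h\, i(d-i) = \delta - \mu(i-d/2) + h\, i(d-i)
\]
shows that $\mu$ flips sign while $\delta,h$ are preserved; the $\th^*_i$ are unchanged. Again matching the new $\vphi_1 = \phi_1$ against the expected formula with basic sequence $(\delta,-\mu,h,\delta^*,\mu^*,h^*,\tau')$ yields $\tau' = \tau$.

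Finally, for $\Phi^\vee$, Lemma \ref{lem:parrayPhivee} gives the parameter array $(\{-\th_i\}_{i=0}^d; \{\th^*_i\}_{i=0}^d; \{-\vphi_i\}_{i=1}^d; \{-\phi_i\}_{i=1}^d)$. Negating \eqref{eq:type2th} shows that $(\delta,\mu,h) \mapsto (-\delta,-\mu,-h)$, while the dual-side parameters are preserved. Matching the new $\vphi_1 = -\vphi_1$ against \eqref{eq:type2vphi} with basic sequence $(-\delta,-\mu,-h,\delta^*,\mu^*,h^*,\tau')$ produces $\tau' = -\tau$. None of the three cases presents a real obstacle; the only care needed is the bookkeeping in the $\vphi_1$ comparison to correctly pin down $\tau$, since that equation mixes the sign changes of several basic variables simultaneously. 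The uniqueness clause of Lemma \ref{lem:type2param} guarantees that matching $\{\th_i\}$, $\{\th^*_i\}$ and one split-sequence entry is enough to determine the full basic sequence.
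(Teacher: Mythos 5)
Your proposal is correct and follows essentially the same route as the paper, which simply cites Lemma \ref{lem:parrayrelative} for $\Phi^\downarrow$, $\Phi^\Downarrow$ and Lemma \ref{lem:parrayPhivee} for $\Phi^\vee$; you have merely carried out the (correct) substitution and matching details, including the sign bookkeeping that pins down $\tau$, that the paper leaves to the reader.
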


\begin{proof}
Concerning $\Phi^\downarrow$ and $\Phi^\Downarrow$, use 
Lemma \ref{lem:parrayrelative}.
Concerning $\Phi^\vee$, use Lemma \ref{lem:parrayPhivee}.
\end{proof}

\begin{lemma}    \label{lem:type2affine}    \samepage
\ifDRAFT {\rm lem:type2affine}. \fi
For scalars $\xi$, $\zeta$, $\xi^*$, $\zeta^*$ in $\F$ with $\xi \xi^* \neq 0$,
consider the Leonard system
\[
   (\xi A + \zeta I; \, \{E_i\}_{i=0}^d; \;  \xi^* A^* + \zeta^* I; \, \, \{E^*_i\}_{i=0}^d).
\]
For this Leonard system the basic sequence is equal to
\[
   ( \xi \delta + \zeta; \;  \xi \mu, \; \xi h,  \; 
     \xi^* \delta^* + \zeta^*;  \; \xi^* \mu^*, \; \xi^* h^*, \; \xi \xi^* \tau).
\]
\end{lemma}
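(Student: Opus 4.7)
The plan is to invoke Lemma \ref{lem:affineparam} to read off the parameter array of the affine Leonard system $(\xi A + \zeta I; \{E_i\}_{i=0}^d; \xi^* A^* + \zeta^* I; \{E^*_i\}_{i=0}^d)$ in terms of the parameter array of $\Phi$, and then to recognize the result as the type II closed form \eqref{eq:type2th}--\eqref{eq:type2phi} with the claimed basic variables. By Lemma \ref{lem:affineparam}, the transformed eigenvalue sequence is $\{\xi \th_i + \zeta\}_{i=0}^d$, the transformed dual eigenvalue sequence is $\{\xi^* \th^*_i + \zeta^*\}_{i=0}^d$, and the split sequences are rescaled by $\xi \xi^*$.

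First I would rewrite the eigenvalues: using \eqref{eq:type2th},
\[
\xi \th_i + \zeta = (\xi \delta + \zeta) + (\xi \mu)(i - d/2) + (\xi h)\,i(d-i),
\]
which matches \eqref{eq:type2th} with basic variables $\xi \delta + \zeta$, $\xi \mu$, $\xi h$. The analogous rearrangement of $\xi^* \th^*_i + \zeta^*$ via \eqref{eq:type2ths} yields basic variables $\xi^* \delta^* + \zeta^*$, $\xi^* \mu^*$, $\xi^* h^*$. Next I would handle the first split sequence: by \eqref{eq:type2vphi}, $\xi \xi^* \vphi_i$ equals $i(d-i+1)$ times
\[
\xi \xi^* \tau - (\xi \mu)(\xi^* \mu^*)/2 + \big((\xi h)(\xi^* \mu^*) + (\xi \mu)(\xi^* h^*)\big)(i - (d+1)/2) + (\xi h)(\xi^* h^*)(i-1)(d-i),
\]
which is precisely the form \eqref{eq:type2vphi} with the claimed basic variables and $\tau$ replaced by $\xi \xi^* \tau$. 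The identical rearrangement, applied to \eqref{eq:type2phi}, handles $\xi \xi^* \phi_i$.

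Finally, to invoke the last assertion of Lemma \ref{lem:type2param} (uniqueness of the basic sequence), I need that the transformed parameter array is again of type II; this is immediate because the common value of \eqref{eq:indep} is invariant under $\th_i \mapsto \xi \th_i + \zeta$ and $\th^*_i \mapsto \xi^* \th^*_i + \zeta^*$, so the fundamental constant $\beta = 2$ is preserved. There is no real obstacle here; the proof is a direct substitution combined with the uniqueness statement of Lemma \ref{lem:type2param}, in parallel with Lemma \ref{lem:type1affine} for type I.
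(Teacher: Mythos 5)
Your proposal is correct and follows exactly the route the paper intends: the paper's proof is the one-line ``Use Lemma \ref{lem:affineparam},'' and your substitution of the transformed parameter array into the closed forms \eqref{eq:type2th}--\eqref{eq:type2phi}, together with the uniqueness assertion of Lemma \ref{lem:type2param}, is precisely the verification that one-liner leaves implicit. Nothing is missing.
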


\begin{proof}
Use Lemma \ref{lem:affineparam}.
\end{proof}

\begin{corollary}    \label{cor:type2affine}    \samepage
\ifDRAFT {\rm cor:type2affine}. \fi
The Leonard system 
\[
 (A- \delta I; \, \{E_i\}_{i=0}^d; \, A^* - \delta^* I; \,  \{E^*_i\}_{i=0}^d)
\]
has basic sequence
$(0, \mu, h, 0, \mu^*, h^*, \tau)$.
\end{corollary}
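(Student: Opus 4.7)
The plan is to obtain this corollary as an immediate specialization of Lemma \ref{lem:type2affine}. Specifically, I would apply that lemma with the choice of scalars $\xi = 1$, $\xi^* = 1$, $\zeta = -\delta$, $\zeta^* = -\delta^*$. Note that this choice satisfies the hypothesis $\xi \xi^* \neq 0$, so the lemma indeed applies, and it produces precisely the Leonard system
\[
 (A - \delta I; \, \{E_i\}_{i=0}^d; \, A^* - \delta^* I; \, \{E^*_i\}_{i=0}^d)
\]
that appears in the statement of the corollary.

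Substituting the chosen scalars into the basic sequence supplied by Lemma \ref{lem:type2affine}, namely
\[
   (\xi \delta + \zeta, \; \xi \mu, \; \xi h, \; \xi^* \delta^* + \zeta^*, \; \xi^* \mu^*, \; \xi^* h^*, \; \xi \xi^* \tau),
\]
yields $(0, \mu, h, 0, \mu^*, h^*, \tau)$, which is the asserted basic sequence. There is no real obstacle here: once Lemma \ref{lem:type2affine} is in hand, the corollary is just the observation that an affine change of coordinates can be used to eliminate the constant terms $\delta$ and $\delta^*$ from the two eigenvalue sequences without affecting the remaining basic variables $\mu, h, \mu^*, h^*, \tau$.
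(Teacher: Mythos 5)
Your proof is correct and matches the paper's (implicit) argument exactly: the corollary is stated without proof precisely because it is the specialization $\xi=\xi^*=1$, $\zeta=-\delta$, $\zeta^*=-\delta^*$ of Lemma \ref{lem:type2affine}, which is what you carry out.
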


\begin{defi}    \label{def:type2reduced}    \samepage
\ifDRAFT {\rm def:type2reduced}. \fi
We say that $\Phi$ is {\em reduced} whenever
$\delta=0$ and $\delta^* = 0$.
\end{defi}

\begin{lemma}    \label{lem:type2kappa}    \samepage
\ifDRAFT {\rm lem:type2kappa}. \fi
The invariant value $\kappa$ for $\Phi$ satisfies
$\kappa = 4 h^2$.
\end{lemma}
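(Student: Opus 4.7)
The plan is to apply Lemma \ref{lem:kappa} directly, using the expressions for differences of $\th_i$'s from Lemma \ref{lem:type2condpre}, and observe an algebraic simplification that kills the $i$-dependent terms.

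First I would recall that $\Phi$ has type II, so its fundamental constant is $\beta = 2$ and hence $\beta + 2 = 4$. Then by Lemma \ref{lem:kappa}, for any $1 \leq i \leq d-1$,
\[
\kappa = (\th_{i-1} - \th_{i+1})^2 + 4 (\th_i - \th_{i-1})(\th_i - \th_{i+1}).
\]
Next I would apply Lemma \ref{lem:type2condpre} to each of the three differences. This gives
\[
\th_{i-1} - \th_{i+1} = -2 \bigl( \mu + h(d - 2i) \bigr), \qquad
\th_i - \th_{i-1} = \mu + h(d - 2i + 1),
\]
\[
\th_i - \th_{i+1} = -\bigl(\mu + h(d - 2i - 1)\bigr).
\]

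Writing $x = \mu + h(d-2i)$ for brevity, the first difference is $-2x$, while the product $(\th_i - \th_{i-1})(\th_i - \th_{i+1})$ becomes $-(x+h)(x-h) = -(x^2 - h^2)$. Substituting,
\[
\kappa = 4x^2 + 4(-(x^2 - h^2)) = 4x^2 - 4x^2 + 4h^2 = 4h^2,
\]
which is the desired identity.

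The only real content is the recognition that the substitution of $\th_i - \th_{i\pm 1}$ in the product produces a difference of squares $(x+h)(x-h)$ that cancels the $(\th_{i-1} - \th_{i+1})^2 = 4x^2$ term, leaving $4h^2$ independent of $i$ (as it must be, by the assertion of Lemma \ref{lem:kappa}). No serious obstacle is anticipated; this is a short algebraic verification.
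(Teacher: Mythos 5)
Your proof is correct and follows the same route as the paper, which simply cites Lemma \ref{lem:kappa} together with the explicit formula \eqref{eq:type2th} for $\th_i$; your use of Lemma \ref{lem:type2condpre} is just a packaged form of that substitution, and the difference-of-squares cancellation you carry out is exactly the "routine" computation the paper leaves implicit.
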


\begin{proof}
By Lemma \ref{lem:kappa} and \eqref{eq:type2th}.
\end{proof}

\begin{proofof}{Proposition \ref{prop:RST}, type II}
One routinely verifies \eqref{eq:RST} using \eqref{eq:Rtype1}--\eqref{eq:Ttype1}
and Lemmas \ref{lem:type2param}, \ref{lem:type2kappa}.
\end{proofof}

Note that Theorem \ref{thm:kappamain} holds for type II.

\section{A characterization of compatibility in terms of the basic sequence, type II}
\label{sec:characterizetype2}
\ifDRAFT {\rm sec:characterizetype2}. \fi

In this section we characterize the compatibility relation for Leonard pairs
of type II in terms of the basic sequence.
Throughout this section, Notation \ref{notation:all} is in effect.
Assume that $\Phi$ has type II.
Note that $\Phi'$ has type II.
Let
\begin{align*}
& (\delta, \mu, h, \delta^*, \mu^*, h^*, \tau),  
&& 
 (\delta', \mu', h', \delta^*, \mu^*, h^*, \tau')
\end{align*}
denote the basic sequence of $\Phi$ and $\Phi'$, respectively.

\begin{theorem}    \label{thm:type2main}    \samepage
\ifDRAFT {\rm thm:type2main}. \fi
The matrices $A$, $B$ are compatible if and only if
the following \eqref{eq:type2cond1}--\eqref{eq:type2cond3} hold:
\begin{align}
h^2 &= h^{\prime 2},                                        \label{eq:type2cond1} 
\\                     
2 h \tau + \mu^2 h^* &= 2 h' \tau' + \mu^{\prime 2} h^*,     \label{eq:type2cond2}
\\
4 \tau^2 - \mu^2 \big( \mu^{* 2} + (d-1)^2 h^{* 2} \big)
  &= 4 \tau^{\prime 2} - \mu^{\prime 2}   \big( \mu^{* 2} + (d-1)^2 h^{* 2} \big).  \label{eq:type2cond3}
\end{align}
\end{theorem}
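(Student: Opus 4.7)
The plan is to invoke Theorem \ref{thm:kappamain}(iii), which reduces the compatibility of $A, A^*$ and $B, A^*$ (for $d\ge 3$) to three scalar equalities: $\kappa = \kappa'$, $\vphi_1\phi_1 = \vphi'_1\phi'_1$, and $\vphi_d\phi_d = \vphi'_d\phi'_d$. I will translate each of these into the basic variables by plugging the closed-form expressions \eqref{eq:type2th}--\eqref{eq:type2phi} into both sides.

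First, by Lemma \ref{lem:type2kappa} one has $\kappa = 4h^2$ and $\kappa' = 4h'^2$, and since $\text{Char}(\F)\ne 2$ (type II), the equality $\kappa = \kappa'$ is equivalent to \eqref{eq:type2cond1}. Next, set $P = \tau$, $Q = \mu\mu^*/2$, $R = (d-1)h\mu^*/2$, $S = (d-1)\mu h^*/2$ (and define primed versions similarly). Specializing \eqref{eq:type2vphi}, \eqref{eq:type2phi} at $i=1$ and $i=d$ gives the compact identities
\begin{align*}
\vphi_1\phi_1 &= d^2\bigl((P-R)^2 - (Q+S)^2\bigr), &
\vphi_d\phi_d &= d^2\bigl((P+R)^2 - (Q-S)^2\bigr).
\end{align*}
Assuming \eqref{eq:type2cond1} already holds, one has $R^2 = R'^2$, so the two remaining conditions $\vphi_1\phi_1 = \vphi'_1\phi'_1$ and $\vphi_d\phi_d = \vphi'_d\phi'_d$ are equivalent to their sum and their difference.

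The sum gives $P^2 - Q^2 - S^2 = P'^2 - Q'^2 - S'^2$, which after multiplying by $4$ and substituting back the definitions of $P, Q, S$ (using that the shared scalars $\mu^*, h^*$ are the same for $\Phi, \Phi'$) collapses exactly to \eqref{eq:type2cond3}. The difference gives $PR + QS = P'R' + Q'S'$, which factors as $(d-1)\mu^*\bigl(2h\tau + \mu^2 h^*\bigr)/4 = (d-1)\mu^*\bigl(2h'\tau' + \mu'^2 h^*\bigr)/4$. The factor $d-1$ is nonzero in $\F$ by Lemma \ref{lem:q}(ii), and $\mu^*$ is nonzero because taking $i=0$ in \eqref{eq:type2paramcond3} forces $\mu^* \neq 0$; cancelling these factors yields \eqref{eq:type2cond2}. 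Reversing the argument is immediate: given \eqref{eq:type2cond1}--\eqref{eq:type2cond3}, the sum and difference above match, so $\vphi_1\phi_1 = \vphi'_1\phi'_1$ and $\vphi_d\phi_d = \vphi'_d\phi'_d$, and Theorem \ref{thm:kappamain}(iii) delivers compatibility.

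The main obstacle is purely bookkeeping: one must pick a suitable parameterization ($P, Q, R, S$) that exposes $\vphi_1\phi_1$ and $\vphi_d\phi_d$ as conjugate expressions of the form $X^2 - Y^2$, after which the sum/difference trick makes the algebra tractable. The only subtlety beyond that is ensuring the cancellations in the difference computation are legitimate, which is settled by the automatic inequalities $d-1\ne 0$ and $\mu^* \ne 0$ built into type II.
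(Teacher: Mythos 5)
Your proof is correct and follows essentially the same route as the paper: both reduce to Theorem \ref{thm:kappamain}(iii), identify $\kappa=\kappa'$ with \eqref{eq:type2cond1} via Lemma \ref{lem:type2kappa}, and then extract \eqref{eq:type2cond2} and \eqref{eq:type2cond3} from linear combinations of $\vphi_1\phi_1-\vphi'_1\phi'_1$ and $\vphi_d\phi_d-\vphi'_d\phi'_d$, using $d-1\neq 0$ and $\mu^*\neq 0$ (from Lemma \ref{lem:q}(ii) and \eqref{eq:type2paramcond3}) to cancel the extraneous factors. Your sum/difference packaging with the $X^2-Y^2$ factorization is a marginally cleaner bookkeeping of the paper's computation, which instead evaluates the difference and then $\vphi_1\phi_1-\vphi'_1\phi'_1$ under the assumption of \eqref{eq:type2cond2}, but the content is identical.
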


\begin{proof}
We will invoke Theorem \ref{thm:kappamain}.
To do this we investigate the conditions in \eqref{eq:kappakappad}.
By Lemma \ref{lem:type2kappa} and \eqref{eq:type2paramcond1},
$\kappa = \kappa'$ if and only if \eqref{eq:type2cond1} holds.
Using \eqref{eq:type2vphi}, \eqref{eq:type2phi} we find that under 
the assumption \eqref{eq:type2cond1}
the expression
$\vphi_1 \phi_1 - \vphi'_1 \phi'_1 - \vphi_d \phi_d + \vphi'_d \phi'_d$ is equal to
$d^2 (1-d) \mu^*$
times
\[
   2 h \tau + \mu^2 h^* - 2 h' \tau' - \mu^{\prime 2} h^*.
\]
Using \eqref{eq:type2vphi}, \eqref{eq:type2phi} we find that under the assumptions
\eqref{eq:type2cond1} and \eqref{eq:type2cond2}
the expression $\vphi_1 \phi_1 - \vphi'_1 \phi'_1$ is equal to
$d^2/4$
times
\[
  4 \tau^2 -\mu^2 \left( \mu^{* 2} + (d-1)^2 h^{* 2} \right)
 - 4 \tau^{\prime 2} + \mu^{\prime 2}  \left( \mu^{* 2} + (d-1)^2 h^{* 2} \right).
\]
By these comments and \eqref{eq:type2paramcond1}, \eqref{eq:type2paramcond3},
we find that under the assumption \eqref{eq:type2cond1}
both $\vphi_1 \phi_1 = \vphi'_1 \phi'_1$ and $\vphi_d \phi_d = \vphi'_d \phi'_d$
hold if and only if both \eqref{eq:type2cond2} and \eqref{eq:type2cond3} hold.
Now the result follows from Theorem \ref{thm:kappamain}.
\end{proof}

Our next goal is to solve the equations \eqref{eq:type2cond1}--\eqref{eq:type2cond3} 
for $\mu'$, $h'$, $\tau'$.

\begin{theorem}   \label{thm:type2sol}      \samepage
\ifDRAFT {\rm thm:type2sol}. \fi
The equations \eqref{eq:type2cond1}--\eqref{eq:type2cond3} hold if and only if at least one of the
following \eqref{eq:type2sol1}--\eqref{eq:type2sol5} holds:
\begin{align}
 & \; h' = h, \qquad\qquad \tau' = \tau, \qquad\qquad 
  \mu^{\prime 2} = \mu^2;                   \label{eq:type2sol1}
\\
&   \rule{0mm}{3ex}
\; h' = - h, \qquad\quad\; \tau' = - \tau, \qquad\quad \, 
    \mu^{\prime 2} = \mu^2;                      \label{eq:type2sol2}
\\
&   \rule{0mm}{7ex}
\begin{array}{lc}
\displaystyle
h^* \neq 0,   \qquad
h' = h \neq 0, \qquad
\tau' = - \tau - \frac{h} { 2 h^* } \left( \mu^{* 2} + (d-1)^2  h^{* 2} \right),
\\    \displaystyle
 \mu^{\prime 2} = \mu^2 + 
  \frac{h}{h^{* 2} }
   \left( 4 h^* \tau  + h \big( \mu^{* 2} + (d-1)^2 h^{* 2} \big)  \right);
\end{array}                                               \label{eq:type2sol3}
\\ \rule{0mm}{7ex}
&
\begin{array}{lc}
\displaystyle
h^* \neq 0,  \qquad 
h' = - h \neq 0, \qquad
 \tau' =  \tau + \frac{h} { 2 h^* } \left( \mu^{* 2} + (d-1)^2  h^{* 2} \right),
\\  \displaystyle
 \mu^{\prime 2} = \mu^2 + 
  \frac{h}{h^{* 2} }
   \left( 4  h^* \tau  + h (\mu^{* 2} + (d-1)^2 h^{* 2})  \right);
\end{array}                                                      \label{eq:type2sol4}
\\
&   \rule{0mm}{4ex}
\; h^* = 0, \qquad h' = h = 0,   \qquad
   4 (\tau^2 - \tau^{\prime 2}) = (\mu^2 - \mu^{\prime 2}) \mu^{* 2}.    \label{eq:type2sol5}
\end{align}
\end{theorem}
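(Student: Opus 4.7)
The plan is to follow the structure of the proof of Theorem \ref{thm:type1sol}. The forward direction, that each of \eqref{eq:type2sol1}--\eqref{eq:type2sol5} satisfies \eqref{eq:type2cond1}--\eqref{eq:type2cond3}, is a routine substitution that I would dispatch with one sentence. The substantive work is the converse.

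Assuming \eqref{eq:type2cond1}--\eqref{eq:type2cond3}, I would introduce $\epsilon \in \{+1,-1\}$ with $h' = \epsilon h$ (justified by \eqref{eq:type2cond1}), rewrite \eqref{eq:type2cond2} as
\[
2h(\tau - \epsilon \tau') = (\mu'^{\,2} - \mu^2) h^*,
\]
and rewrite \eqref{eq:type2cond3} as
\[
4(\tau - \tau')(\tau + \tau') = (\mu^2 - \mu'^{\,2})\bigl(\mu^{*2} + (d-1)^2 h^{*2}\bigr).
\]
I would then split into three cases by the vanishing of $h$ and $h^*$.

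If $h^* = 0$, note that $\mu^* \neq 0$, since otherwise $\th^*_i = \th^*_j$ for $i \neq j$ by Lemma \ref{lem:type2condpre}, contradicting Lemma \ref{lem:classify}(i). The first displayed equation then forces $h(\tau - \epsilon \tau') = 0$: when $h = 0$ the second displayed equation is precisely \eqref{eq:type2sol5}, and when $h \neq 0$ we get $\tau' = \epsilon \tau$ and then $\mu'^{\,2} = \mu^2$ using $\mu^* \neq 0$, yielding \eqref{eq:type2sol1} for $\epsilon = 1$ and \eqref{eq:type2sol2} for $\epsilon = -1$. If $h^* \neq 0$ and $h = 0$, the first displayed equation gives $\mu'^{\,2} = \mu^2$ and the second then gives $\tau'^{\,2} = \tau^2$, again giving \eqref{eq:type2sol1} or \eqref{eq:type2sol2}.

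The main case, and the principal obstacle, is $h h^* \neq 0$. Here I would solve the first displayed equation for $\mu^2 - \mu'^{\,2} = -2h(\tau - \epsilon\tau')/h^*$, substitute into the second, and multiply through by $h^*$ to obtain the unified factorization
\[
(\tau - \epsilon\tau')\bigl(2 h^*(\tau + \epsilon\tau') + h(\mu^{*2} + (d-1)^2 h^{*2})\bigr) = 0.
\]
The vanishing of the first factor gives $\tau' = \epsilon \tau$ and hence $\mu'^{\,2} = \mu^2$, producing \eqref{eq:type2sol1} or \eqref{eq:type2sol2}. The vanishing of the second factor gives $\tau' = -\epsilon\tau - \epsilon h(\mu^{*2} + (d-1)^2 h^{*2})/(2h^*)$, and back-substituting into $\mu'^{\,2} - \mu^2 = 2h(\tau - \epsilon\tau')/h^*$ yields the formula for $\mu'^{\,2}$ in \eqref{eq:type2sol3} (when $\epsilon = 1$) or \eqref{eq:type2sol4} (when $\epsilon = -1$). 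The hard part is finding the right manipulation that achieves this unified factorization; once it is in hand, the remaining bookkeeping is parallel to the type I argument.
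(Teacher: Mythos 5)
Your proposal is correct and follows essentially the same route as the paper: the same case split on the vanishing of $h$ and $h^*$, the same use of \eqref{eq:type2cond2} to express $\mu^{\prime 2}-\mu^2$, and the same substitution into \eqref{eq:type2cond3} to obtain the key factorization (your unified identity specializes to the paper's \eqref{eq:type2solaux3} and \eqref{eq:type2solaux3b} for $\epsilon=1$ and $\epsilon=-1$ respectively). The only difference is the sign parameter $\epsilon$, which merely merges the two sub-cases $h'=\pm h$ that the paper writes out separately.
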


\begin{proof}
One routinely checks that each of \eqref{eq:type2sol1}--\eqref{eq:type2sol5} gives
a solution to \eqref{eq:type2cond1}--\eqref{eq:type2cond3}.
Now assume that \eqref{eq:type2cond1}--\eqref{eq:type2cond3} hold.
We show that at least one of \eqref{eq:type2sol1}--\eqref{eq:type2sol5} holds.

First consider the case $h^*=0$.
Note by \eqref{eq:type2paramcond3} that $\mu^* \neq 0$.
By \eqref{eq:type2cond2} and \eqref{eq:type2cond3},
\begin{align}
 h \tau &= h' \tau',                                                                      \label{eq:type2cond2b}
\\
 4 (\tau^2 - \tau^{\prime 2}) &=  (\mu^2 - \mu^{\prime2 }) \mu^{* 2}.     \label{eq:type2cond3b}
\end{align}
We may assume that $h \neq 0$; otherwise $h=h'=0$ by \eqref{eq:type2cond1},
and so \eqref{eq:type2sol5} holds by \eqref{eq:type2cond3b}.
By \eqref{eq:type2cond1} we have either $h' = h$ or $h' = -h$.
If $h' = h$ then by \eqref{eq:type2cond2b} we get $\tau' = \tau$,
and so \eqref{eq:type2sol1} holds by \eqref{eq:type2cond3b}.
If $h' = -h$ then by \eqref{eq:type2cond2b} we get $\tau' = - \tau$,
and so \eqref{eq:type2sol2} holds by \eqref{eq:type2cond3b}.

Next consider the case $h^* \neq 0$.
First assume that $h=0$.
By \eqref{eq:type2cond1}, $h' = 0$.
By \eqref{eq:type2cond2}, $\mu^2 = \mu^{\prime 2}$.
By this and \eqref{eq:type2cond3}, $\tau^2 = \tau^{\prime 2}$.
So either $\tau' = \tau$ or $\tau' = - \tau$.
If $\tau' = \tau$ then \eqref{eq:type2sol1} holds.
If $\tau' = - \tau$ then \eqref{eq:type2sol2} holds.
Next assume that $h \neq 0$.
By \eqref{eq:type2cond1} we have either $h' = h$ or $h' = -h$.
First assume that $h' = h$.
By \eqref{eq:type2cond2},
\begin{equation}
 \mu^{\prime 2} = \mu^2 + \frac{ 2 h (\tau - \tau') } { h^* }.    \label{eq:type2solaux1}
\end{equation}
In \eqref{eq:type2cond3}, eliminate $\mu'$ using \eqref{eq:type2solaux1} to get
\begin{equation}
 (\tau - \tau') 
 \left( 2 ( \tau+ \tau') + \frac{h}{h^*} \big( \mu^{*2}+(d-1)^2 h^{* 2} \big) \right) = 0.  \label{eq:type2solaux3}
\end{equation}
We may assume that $\tau \neq \tau'$;
otherwise $\mu^{\prime 2} = \mu^2$ by \eqref{eq:type2solaux1},
and so \eqref{eq:type2sol1} holds.
By \eqref{eq:type2solaux3},
\[
   \tau' = - \tau - \frac{h}{2 h^*} \big( \mu^{*2} + (d-1)^2 h^{* 2} \big).
\]
By this and \eqref{eq:type2solaux1},
\[
  \mu^{\prime 2} = \mu^2 +
   \frac{h} {h^{*2} } \left( 4 \tau h^* + h \big( \mu^{*2}+(d-1)^2 h^{* 2} \big) \right).
\]
Thus \eqref{eq:type2sol3} holds.
Next assume that $h' = - h$.
By \eqref{eq:type2cond2},
\begin{equation}
 \mu^{\prime 2} = \mu^2 + \frac{ 2 h (\tau + \tau') } { h^* }.    \label{eq:type2solaux1b}
\end{equation}
In \eqref{eq:type2cond3}, eliminate $\mu'$ using \eqref{eq:type2solaux1b} to get
\begin{equation}
 (\tau + \tau') 
 \left( 2 ( \tau - \tau') + \frac{h}{h^*} \big( \mu^{*2}+(d-1)^2 h^{* 2} \big) \right) = 0.  \label{eq:type2solaux3b}
\end{equation}
We may assume that $\tau + \tau' \neq 0$;
otherwise $\mu^{\prime 2} = \mu^2$ by \eqref{eq:type2solaux1b},
and so \eqref{eq:type2sol2} holds.
By \eqref{eq:type2solaux3b},
\[
   \tau' = \tau + \frac{h}{2 h^*} \big( \mu^{*2} + (d-1)^2 h^{* 2} \big).
\]
By this and \eqref{eq:type2solaux1b},
\[
  \mu^{\prime 2} = \mu^2 +
   \frac{h} {h^{*2} } \left( 4 \tau h^* + h \big( \mu^{*2}+(d-1)^2 h^{* 2} \big) \right).
\]
Thus \eqref{eq:type2sol4} holds.
\end{proof}

We have some comments about \eqref{eq:type2sol1}, \eqref{eq:type2sol2}.
By Proposition \ref{prop:A=B}, \eqref{eq:type2sol1} holds
if and only if there exists $\zeta \in \F$ such that $B = A + \zeta I$.
By Proposition \ref{prop:ABvee}, \eqref{eq:type2sol2} holds
if and only if there exists $\zeta \in \F$ such that $B = A^\vee + \zeta I$.
The solutions \eqref{eq:type2sol1}--\eqref{eq:type2sol5} are not mutually exclusive.

\section{Describing the companions for a Leonard pair of type II}
\label{sec:companiontype2}
\ifDRAFT {\rm sec:companiontype2}. \fi

In this section we describe the companions for a Leonard pair of type II.
Throughout this section, Notation \ref{notation:all} is in effect.
Assume that $\Phi$ has type II.
Note that $\Phi'$ has type II.
Let
\begin{align}
& (\delta, \mu, h, \delta^*, \mu^*, h^*, \tau),  
&& 
 (\delta', \mu', h', \delta^*, \mu^*, h^*, \tau')                       \label{eq:type2pseqs}
\end{align}
denote the basic sequence of $\Phi$ and $\Phi'$, respectively.
Assume that $A$, $B$ are compatible, and consider the companion $K=A-B$.
We will give the entries of $K$.
To avoid complicated formulas, 
we assume that each of $\Phi$, $\Phi'$ is reduced,
so that $\delta=0$, $\delta'=0$, $\delta^* = 0$.

For the moment assume that  \eqref{eq:type2sol1} holds.
By the comments below Theorem \ref{thm:type2sol},
there exists $\zeta \in \F$ such that $B = A + \zeta I$.
By this and $\delta=\delta'=0$ we get $B=A$.
So $K=0$.
Next assume that \eqref{eq:type2sol2} holds.
By the comments below Theorem \ref{thm:type2sol},
there exists $\zeta \in \F$ such that $B = A^\vee + \zeta I$.
By this and $\delta=\delta'=0$ we get $B=A^\vee$.
By this and Lemma \ref{lem:bondcompOmega}, 
$K_{i,i} = 2 a_i$ for $0 \leq i \leq d$.
Next we give the $K$ that corresponds to solutions 
\eqref{eq:type2sol3}, \eqref{eq:type2sol4}.

\begin{theorem}      \label{thm:type2ex2}     \samepage
\ifDRAFT {\rm thm:type2ex2}. \fi
The following hold.
\begin{itemize}
\item[\rm (i)]
Assume that \eqref{eq:type2sol3} holds.
Then
\begin{align*}
K_{0,0}  &= - 
    \frac{d \left( 4 h^* \tau + h \big( \mu^{*2} + (d-1)^2 h^{* 2} \big) \right) }
           {2 h^* \big( \mu^* + (d-1) h^* \big)},  
\\
K_{i,i} &= - 
   \frac{ \left( (d-2i) \mu^* + \big( d(d+1)-2i (d-i)  \big) h^* \right)
            \left( 4 h^* \tau + h \big( \mu^{* 2} + (d-1)^2 h^{* 2} \big) \right) }
         {2 h^* \big( \mu^* + (d-2i-1) h^* \big) \big( \mu^* + (d-2i+1) h^* \big)}  &
\\ 
& \hspace{26em}  (1 \leq i \leq d-1),
\\
K_{d,d} &= - 
   \frac{d \left( 4 h^* \tau + h \big( \mu^{* 2} + (d-1)^2 h^{* 2} \big) \right) }
          {2 h^* \big(  - \mu^* + (d-1) h^* \big) }.    
\end{align*}
\item[\rm (ii)]
Assume that \eqref{eq:type2sol4} holds.
Then
\begin{align*}
K_{0,0} &= \frac{ d h \big( \mu^* +(d-1)h^* \big) } { 2 h^* },  
\\
K_{i,i} &= \frac{ h \left( (d-2i) \mu^* + \big( d(d-1)-2i(d-i) \big) h^* \right) } { 2 h^*} &&(1 \leq i \leq d-1),
\\
K_{d,d} &= \frac{ d h \big( - \mu^* + (d-1) h^* \big) } { 2 h^* }.
\end{align*}
\end{itemize}
\end{theorem}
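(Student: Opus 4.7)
The plan is to compute $K_{i,i}$ directly from the formula $K_{i,i}=a_i-a'_i$ of \eqref{eq:Kaa}, using the explicit description of $a_i$ from Lemma \ref{lem:aiparam} and the explicit description of the parameter array of a Leonard system of type~II from Lemma \ref{lem:type2param}. Under the reduction hypotheses $\delta=\delta'=0$ and $\delta^*=0$, the formulas \eqref{eq:type2th}--\eqref{eq:type2phi} simplify to polynomial expressions in the remaining basic variables, and the differences $\theta^*_i-\theta^*_{i\pm 1}$ that appear in the denominators of \eqref{eq:a0}--\eqref{eq:ad} reduce, via Lemma \ref{lem:type2condpre}, to
\[
\theta^*_i-\theta^*_{i-1}=\mu^*+(d-2i+1)h^*,\qquad
\theta^*_i-\theta^*_{i+1}=-\bigl(\mu^*+(d-2i-1)h^*\bigr).
\]

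For part (i), I would substitute the constraints \eqref{eq:type2sol3} (in particular $h'=h$) into $a_i-a'_i$. The key observation is that the $\theta_i-\theta'_i$ contribution and the $\mu^*$- and $h^*$-terms inside $\varphi_i-\varphi'_i$ both contain the factor $\mu-\mu'$, and after division by the denominator $\theta^*_i-\theta^*_{i-1}$ (or, for $i=0,d$, the appropriate single denominator) these $\mu-\mu'$ contributions cancel exactly. What remains is a multiple of $\tau-\tau'$, which by \eqref{eq:type2sol3} equals $\bigl(4h^*\tau+h(\mu^{*2}+(d-1)^2h^{*2})\bigr)/(2h^*)$; combining with the combinatorial factors arising from the denominator then yields the claimed expressions. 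For the generic index $1\le i\le d-1$, the denominator becomes the product $\bigl(\mu^*+(d-2i-1)h^*\bigr)\bigl(\mu^*+(d-2i+1)h^*\bigr)$, while the combined $\theta^*_i$-contributions produce the numerator factor $(d-2i)\mu^*+(d(d+1)-2i(d-i))h^*$.

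For part (ii), I would repeat the same computation but with the constraints \eqref{eq:type2sol4}, so $h'=-h$. Now the $(h-h')\mu^*$ contribution inside $\varphi_i-\varphi'_i$ no longer vanishes but produces a term proportional to $2h\mu^*$, and the $\mu-\mu'$ cancellation still occurs. The contribution from $\tau-\tau'=-h(\mu^{*2}+(d-1)^2h^{*2})/(2h^*)$ combines with the surviving $h\mu^*$ terms so as to produce the perfect square $(\mu^*+(d-2i\pm 1)h^*)^2$ (at the endpoints) or a corresponding quadratic at generic $i$; after dividing by the denominator one of the factors cancels, giving the stated linear-in-$\mu^*,h^*$ numerator.

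The main obstacle is bookkeeping: there is no single slick identity behind either formula, just a moderately lengthy algebraic simplification. The key qualitative point, which must be checked carefully for $i=0$, $i=d$, and $1\le i\le d-1$ separately (since $a_0$, $a_d$ use a single $\varphi$ while interior $a_i$ use two), is that the $(\mu-\mu')(\mu^*+(d-1)h^*)/2$ cancellations occur uniformly, so that the final answer depends only on the basic variables $h,\mu^*,h^*,\tau$ of $\Phi$ (and not on $\mu'$). One could alternatively verify the formulas by observing that Corollary \ref{cor:kappamain} forces $K_{i,i}$ to be determined by the pair $(a_0,a'_0)$ and the parameters, and then one needs only to check the three values $i=0,1,d$ (by comparing with Lemma \ref{lem:aithi}), but the direct substitution approach above is cleanest.
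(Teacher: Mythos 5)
Your proposal is correct and follows essentially the same route as the paper, whose proof is precisely to compute $K_{i,i}=a_i-a'_i$ via \eqref{eq:Kaa} using Lemmas \ref{lem:aiparam} and \ref{lem:type2param} with $\delta=\delta'=0$. The cancellations you describe (the $\mu-\mu'$ terms dropping out, the residual multiple of $\tau-\tau'$ in case (i), and the perfect square $(\mu^*+(d-1)h^*)^2$ in case (ii)) are exactly what occurs in that computation.
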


\begin{proof}
Use \eqref{eq:Kaa} and Lemmas \ref{lem:aiparam}, \ref{lem:type2param}
with $\delta = \delta' = 0$.
\end{proof}

Next we give the $K$ that corresponds to solution \eqref{eq:type2sol5}.

\begin{theorem}    \label{thm:type2ex1}    \samepage
\ifDRAFT {\rm thm:type2ex1}. \fi
Assume that \eqref{eq:type2sol5} holds.
Then 
\begin{align*}
    K_{i,i}  &= - \frac{(d-2i)(\tau - \tau') } {\mu^* }
   &&  (0 \leq i \leq d).
\end{align*}
\end{theorem}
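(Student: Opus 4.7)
The plan is to use formula \eqref{eq:Kaa}, which says $K_{i,i}=a_i-a'_i$, and then compute each side in closed form using Lemma \ref{lem:aiparam} together with the basic-variable description of the parameter arrays from Lemma \ref{lem:type2param}. Since both $\Phi$ and $\Phi'$ are reduced, we have $\delta=\delta'=\delta^*=0$, and the hypothesis \eqref{eq:type2sol5} gives $h=h'=h^*=0$. Under these specializations the formulas in Lemma \ref{lem:type2param} collapse to
\begin{align*}
\th_i &=\mu(i-d/2), & \th^*_i &= \mu^*(i-d/2),
& \vphi_i &= i(d-i+1)(\tau-\mu\mu^*/2),
\end{align*}
and likewise for $\Phi'$ with $\mu,\tau$ replaced by $\mu',\tau'$. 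Note $\th^*_i-\th^*_{i-1}=\mu^*$ and $\th^*_i-\th^*_{i+1}=-\mu^*$ are constants independent of $i$, which is the feature that makes the calculation pleasant.

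First I would compute $a_i$ for $1\le i\le d-1$ from \eqref{eq:ai}. Substituting the above expressions and using the telescoping identity $i(d-i+1)-(i+1)(d-i)=2i-d$, the contribution from the $\vphi_i,\vphi_{i+1}$ terms becomes $(2i-d)(\tau-\mu\mu^*/2)/\mu^*$; the $-\mu\mu^*/2$ piece exactly cancels the diagonal term $\th_i=\mu(i-d/2)$, leaving $a_i=(2i-d)\tau/\mu^*$. Then I would check the endpoint cases via \eqref{eq:a0} and \eqref{eq:ad}: a direct substitution gives $a_0=-d\tau/\mu^*$ and $a_d=d\tau/\mu^*$, which agree with the interior formula evaluated at $i=0$ and $i=d$. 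Thus $a_i=(2i-d)\tau/\mu^*$ for all $0\le i\le d$, and the same argument applied to $\Phi'$ gives $a'_i=(2i-d)\tau'/\mu^*$ (this uses crucially that $\mu^*$ is the same for $\Phi$ and $\Phi'$, which is built into Notation \ref{notation:all}).

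Finally, subtracting yields
\begin{align*}
K_{i,i}=a_i-a'_i=\frac{(2i-d)(\tau-\tau')}{\mu^*}=-\frac{(d-2i)(\tau-\tau')}{\mu^*},
\end{align*}
as required. The only potential obstacle is the cancellation step that eliminates $\mu$ (and $\mu'$) from the answer; since neither $\mu$ nor $\mu'$ appears in the claimed formula, one must verify that the $\th_i$ term indeed cancels the $\mu\mu^*/2$ contribution from $\vphi_i,\vphi_{i+1}$. This cancellation is a one-line algebraic check, so there is no real difficulty, and the theorem follows immediately.
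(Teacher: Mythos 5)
Your proposal is correct and follows exactly the route of the paper's proof, which simply cites \eqref{eq:Kaa} together with Lemmas \ref{lem:aiparam} and \ref{lem:type2param} specialized to $\delta=\delta'=0$ (and $h=h'=h^*=0$ from \eqref{eq:type2sol5}). Your explicit verification of the telescoping identity $i(d-i+1)-(i+1)(d-i)=2i-d$, the cancellation of the $\mu$-terms, and the endpoint checks via \eqref{eq:a0} and \eqref{eq:ad} are precisely the routine computations the paper leaves to the reader.
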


\begin{proof}
Use \eqref{eq:Kaa} and Lemmas \ref{lem:aiparam}, \ref{lem:type2param}
with $\delta = \delta' = 0$.
\end{proof}

\section{The parameter arrays of type III$^+$}
\label{sec:parraytype3+}
\ifDRAFT {\rm sec:parraytype3+}. \fi

In this section we describe the parameter arrays of type III$^+$.
We then prove Proposition \ref{prop:RST} for type III$^+$.
Throughout this section, assume that $d \geq 3$.

\begin{lemma}    \label{lem:type3+param}    \samepage
\ifDRAFT {\rm lem:type3+param}. \fi
Assume that $d$ is even and $\text{\rm Char}(\F) \neq 2$.
For a sequence
\begin{equation}
   (\delta, s, h, \delta^*,  s^*, h^*, \tau)                \label{eq:type3+pseq}
\end{equation}
of scalars in $\F$,  define
\begin{align}
\theta_i &=
  \begin{cases}
     \delta+s+h(i-d/2)  & \text{\rm if $i$ is even}, \\
     \delta-s -h(i-d/2) & \text{\rm if $i$ is odd}
  \end{cases}        
  &&  (0 \leq i \leq d),                                         \label{eq:type3+th}
\\
\theta^*_i &=
   \begin{cases}
     \delta^* +s^* +h^*(i-d/2)  &   \text{\rm if $i$ is even}, \\
     \delta^* - s^* -h^*(i-d/2) &  \text{\rm if $i$ is odd}
   \end{cases}  
   &&  (0 \leq i \leq d),                                                 \label{eq:type3+ths}
\end{align}
and for $1 \leq i \leq d$,
\begin{align}
\varphi_i &=
   \begin{cases}
      i(\tau-sh^*-s^*h-hh^*(i-(d+1)/2)) &   \text{\rm if $i$ is even}, \\
      (d-i+1)(\tau+sh^*+s^*h+hh^*(i-(d+1)/2))  & \text{\rm if $i$ is odd},    
   \end{cases}                                                                 \label{eq:type3+vphi}
\\
\phi_i &=
   \begin{cases}
      i(\tau-sh^*+s^*h+hh^*(i-(d+1)/2)) &   \text{\rm if $i$ is even}, \\
      (d-i+1)(\tau+sh^*-s^*h - hh^*(i-(d+1)/2))  & \text{\rm if $i$ is odd}.
   \end{cases}                                                                \label{eq:type3+phi}
\end{align}
Then the sequence
\begin{equation}
   (\{\th_i\}_{i=0}^d; \{\th^*_i\}_{i=0}^d; \{\vphi_i\}_{i=1}^d; \{\phi_i\}_{i=1}^d)        \label{eq:type3+parray}
\end{equation}
is a parameter array over $\F$ that has type III$^+$,
provided that the inequalities in Lemma \ref{lem:classify}{\rm (i),(ii)} hold.
Conversely, assume that the sequence \eqref{eq:type3+parray} is a parameter 
array over $\F$ that has type III$^+$.
Then there exists a unique sequence \eqref{eq:type3+pseq} of scalars in $\F$ that satisfies
\eqref{eq:type3+th}--\eqref{eq:type3+phi}.
\end{lemma}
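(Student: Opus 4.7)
The plan is to follow the model of the proofs of Lemma \ref{lem:type1param} and Lemma \ref{lem:type2param}. For the forward direction, I would assume the inequalities in Lemma \ref{lem:classify}(i),(ii) hold and directly verify conditions (iii)--(v) of Lemma \ref{lem:classify} using the explicit formulas \eqref{eq:type3+th}--\eqref{eq:type3+phi}. Because each of $\th_i$, $\th^*_i$, $\vphi_i$, $\phi_i$ is defined by a case split on the parity of $i$, the verification of (iii) and (iv) proceeds by splitting into the cases ``$i$ even'' and ``$i$ odd'' and plugging in. The telescoping sum $\sum_{\ell=0}^{i-1} (\th_\ell - \th_{d-\ell})/(\th_0 - \th_d)$ appearing in Lemma \ref{lem:classify}(iii),(iv) has a clean closed form under \eqref{eq:type3+th} because $d$ is even, which makes the alternating signs line up predictably.

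For condition (v) I would compute the ratio $(\th_{i-2}-\th_{i+1})/(\th_{i-1}-\th_i)$ and verify it equals $-1$. When $i$ is even, \eqref{eq:type3+th} yields
\begin{align*}
\th_{i-2}-\th_{i+1} &= 2s + h(2i-1-d), &
\th_{i-1}-\th_i &= -2s - h(2i-1-d),
\end{align*}
so the ratio is $-1$; the odd case is analogous, and the $\th^*$ computation is identical in form. This shows the fundamental constant is $\beta=-2$. Combined with the hypotheses $d\geq 3$ even and $\text{\rm Char}(\F)\neq 2$, Definition \ref{def:type} gives that \eqref{eq:type3+parray} has type III$^+$.

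For the converse, I would cite the corresponding classification result in \cite{NT:balanced} (analogous to \cite[Theorem 6.1]{NT:balanced} used for type I and \cite[Theorem 7.1]{NT:balanced} used for type II) to obtain both existence and uniqueness of the basic sequence \eqref{eq:type3+pseq} from the parameter array. The main obstacle is purely bookkeeping: the parity-based formulas force every verification to branch into two subcases, and within each subcase the signs in front of $s$, $s^*$, $h$, $h^*$ must be tracked carefully when substituting into the quadratic relations of Lemma \ref{lem:classify}(iii),(iv). No new ideas beyond routine manipulation are needed, but one must be methodical to avoid sign errors.
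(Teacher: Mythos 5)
Your proposal matches the paper's proof: the paper likewise verifies Lemma \ref{lem:classify}(iii)--(v) by routine substitution of \eqref{eq:type3+th}--\eqref{eq:type3+phi}, evaluates the left expression in \eqref{eq:indep} to get fundamental constant $\beta=-2$ (hence type III$^+$ given $d\geq 3$ even and $\text{\rm Char}(\F)\neq 2$), and cites \cite[Theorem 8.1]{NT:balanced} for the converse existence and uniqueness of the basic sequence. Your explicit parity computation for condition (v) is correct and is exactly the kind of bookkeeping the paper leaves as ``routine.''
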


\begin{proof}
Assume that the inequalities in Lemma \ref{lem:classify}(i),(ii) hold.
Using \eqref{eq:type3+th}--\eqref{eq:type3+phi} we routinely verify
the conditions Lemma \ref{lem:classify}(iii)--(v).
Thus the sequence \eqref{eq:type3+parray} is a parameter array over $\F$.
Evaluating the expression on the left in \eqref{eq:indep} using \eqref{eq:type3+th} we find that
the parameter array \eqref{eq:type3+parray} has fundamental constant $\beta = - 2$. 
So the parameter array \eqref{eq:type3+parray} has type III$^+$.
The last assertion comes from \cite[Theorem 8.1]{NT:balanced}.
\end{proof}

\begin{defi}    \label{def:type3+pseq}    \samepage
\ifDRAFT {\rm def:type3+pseq}. \fi
Referring to Lemma \ref{lem:type3+param},
assume that the sequence \eqref{eq:type3+parray} is a parameter array over $\F$.
We call the scalars  $\delta, s, h, \delta^*,  s^*, h^*, \tau$ 
the {\em basic variables of \eqref{eq:type3+parray}}.
We call the sequence $(\delta, s, h, \delta^*,  s^*, h^*, \tau)$
the {\em basic sequence of \eqref{eq:type3+parray}}.
\end{defi}

\begin{lemma}    \label{lem:type3+condpre}    \samepage
\ifDRAFT {\rm lem:type3+condpre}. \fi
Referring to Lemma \ref{lem:type3+param},
the following hold for $0 \leq i,j \leq d$:
\begin{align*}
\th_i - \th_j &=
 \begin{cases}
   h (i-j)  &  \text{\rm if $i$ is even, $j$ is even},
\\
  2 s + h (i+j-d) &  \text{\rm if $i$ is even, $j$ is odd},
\\
  h (j-i) & \text{\rm if $i$ is odd, $j$ is odd},
 \end{cases}
\\
\th^*_i - \th^*_j &=
 \begin{cases}
   h^* (i-j)  &  \text{\rm if $i$ is even, $j$ is even},
\\
  2 s^* + h^* (i+j-d) &  \text{\rm if $i$ is even, $j$ is odd},
\\
  h^* (j-i) & \text{\rm if $i$ is odd, $j$ is odd}.
 \end{cases}
\end{align*}
\end{lemma}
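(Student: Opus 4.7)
The plan is a direct case-by-case computation using the piecewise definitions of $\th_i$ and $\th^*_i$ from \eqref{eq:type3+th}, \eqref{eq:type3+ths}. Since the two identities in the lemma have identical structure (one in the unstarred variables $\delta, s, h$, the other in the starred variables $\delta^*, s^*, h^*$), it suffices to prove the first identity; the second follows by the same argument with every symbol starred.

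First I would split into the three parity cases listed in the statement. In the even--even case, both $\th_i$ and $\th_j$ equal $\delta+s+h(\cdot-d/2)$, so the constants $\delta$ and $s$ cancel and what remains is $h(i-d/2)-h(j-d/2)=h(i-j)$. In the odd--odd case, both $\th_i$ and $\th_j$ equal $\delta-s-h(\cdot-d/2)$; again $\delta$ and $s$ cancel, leaving $-h(i-d/2)+h(j-d/2)=h(j-i)$. In the mixed even--odd case, subtracting $\delta-s-h(j-d/2)$ from $\delta+s+h(i-d/2)$ yields $2s+h(i-d/2)+h(j-d/2)=2s+h(i+j-d)$, matching the claimed formula.

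There is really no main obstacle: each case reduces to one line of arithmetic in the definitions, so the entire argument is mechanical. The only thing worth flagging is that the statement lists only three of the four possible parity patterns for $(i,j)$; the remaining pattern (odd $i$, even $j$) is obtained for free by antisymmetry $\th_i-\th_j = -(\th_j-\th_i)$ from the even $i$, odd $j$ formula, so the three cases displayed are logically sufficient. Thus I would simply record the three substitutions in a short proof and remark that the starred identity is proved identically.
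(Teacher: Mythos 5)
Your proposal is correct and is exactly the "routine verification" the paper intends: a direct substitution of the piecewise formulas \eqref{eq:type3+th}, \eqref{eq:type3+ths} in each parity case, with the starred identity following by the same computation. The observation that the odd--even case is covered by antisymmetry is a sensible remark and consistent with how the lemma is stated.
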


\begin{proof}
Routine verification using \eqref{eq:type3+th} and \eqref{eq:type3+ths}.
\end{proof}

\begin{lemma}    \label{lem:type3+cond}   \samepage
\ifDRAFT {\rm lem::type3+cond}. \fi
Referring to Lemma \ref{lem:type3+param}, 
the inequalities in Lemma \ref{lem:classify}{\rm (i),(ii)} hold
if and only if
\begin{align}
 & \text{$\text{\rm Char}(\F)$ is equal to $0$ or greater than $d/2$},          \label{eq:type3+paramcond1}
\\
 & h \neq 0,   \qquad h^* \neq 0,                                      \label{eq:type3+paramcond1b}
\\
 & 2 s \neq i h  \qquad\;\;\, \text{\rm if $i$ is odd} \qquad  (1-d \leq i \leq d-1),          \label{eq:type3+paramcond2}
\\
 & 2 s^* \neq i h^*    \qquad \text{\rm if $i$ is odd} \qquad  (1-d \leq i \leq d-1),     \label{eq:type3+paramcond3}
\\
 & \tau \neq s h^* + s^* h + h h^* \big( i - (d+1)/2 \big)  
              \;\; \,   \qquad \text{\rm if $i$ is even}  \qquad (1 \leq i \leq d),       \label{eq:type3+paramcond4a}
\\
 & \tau \neq - s h^* - s^* h - h h^* \big( i - (d+1)/2 \big)  
                  \qquad \text{\rm if $i$ is odd}  \qquad \, (1 \leq i \leq d),       \label{eq:type3+paramcond5a}
\\
 & \tau \neq s h^* - s^* h - h h^* \big( i - (d+1)/2 \big)  
             \;\; \,    \qquad \text{\rm if $i$ is even}  \qquad (1 \leq i \leq d),       \label{eq:type3+paramcond4b}
\\
 & \tau \neq - s h^* + s^* h + h h^* \big( i - (d+1)/2 \big)  
                  \qquad \text{\rm if $i$ is odd}  \qquad \, (1 \leq i \leq d).       \label{eq:type3+paramcond5b}
\end{align}
\end{lemma}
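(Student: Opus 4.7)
The plan is to translate each inequality in Lemma \ref{lem:classify}(i),(ii) into a condition on the basic sequence $(\delta, s, h, \delta^*, s^*, h^*, \tau)$, using the difference formulas of Lemma \ref{lem:type3+condpre} together with the defining formulas \eqref{eq:type3+th}--\eqref{eq:type3+phi}. The proof is therefore a routine case analysis organized by the parities of the indices involved; this is the same strategy used in the proofs of Lemmas \ref{lem:type1cond} and \ref{lem:type2cond}, but with the added bookkeeping caused by the parity-dependent formulas.

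First I would handle the eigenvalue inequalities $\th_i \neq \th_j$ for $0 \leq i < j \leq d$. By Lemma \ref{lem:type3+condpre}, when $i,j$ share a parity we have $\th_j - \th_i = \pm h(j-i)$, so the inequality becomes $h(j-i) \neq 0$. Ranging over all such pairs forces $h \neq 0$ and forces $k \not\equiv 0 \pmod{\text{Char}(\F)}$ for every nonzero $k$ with $|k| \leq d$ of the form $j-i$ with $i,j$ of equal parity; the maximum such $|k|$ is $d-2$ if we restrict to strictly between, but combined with the parity of $k$, the resulting constraint is exactly that Char$(\F)$ is $0$ or greater than $d/2$ (recalling $d$ is even), which is \eqref{eq:type3+paramcond1}. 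When $i,j$ have opposite parities, $\th_i - \th_j = \pm(2s + h(i+j-d))$, and the inequality gives $2s \neq -h(i+j-d)$; as $i+j-d$ runs over the odd integers in $[1-d, d-1]$, this is equivalent to \eqref{eq:type3+paramcond2}. The analogous analysis applied to $\th^*_i \neq \th^*_j$ yields $h^* \neq 0$ (completing \eqref{eq:type3+paramcond1b}) and \eqref{eq:type3+paramcond3}, with the characteristic constraint already in hand.

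Next I would treat $\vphi_i \neq 0$ and $\phi_i \neq 0$ using \eqref{eq:type3+vphi}, \eqref{eq:type3+phi}, splitting on the parity of $i$. For even $i$ with $2 \leq i \leq d$, $\vphi_i \neq 0$ reduces to $i \neq 0$ in $\F$ (already guaranteed by \eqref{eq:type3+paramcond1}) together with \eqref{eq:type3+paramcond4a}. For odd $i$ with $1 \leq i \leq d-1$, $\vphi_i \neq 0$ reduces to $d-i+1 \neq 0$ in $\F$ (and $d-i+1$ is even with $2 \leq d-i+1 \leq d$, again covered by \eqref{eq:type3+paramcond1}) together with \eqref{eq:type3+paramcond5a}. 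An identical split applied to $\phi_i$ gives \eqref{eq:type3+paramcond4b} and \eqref{eq:type3+paramcond5b}. The conjunction of all these conditions is exactly what the lemma asserts. The only mild obstacle is index bookkeeping: confirming that the ranges $(1-d \leq i \leq d-1)$ in \eqref{eq:type3+paramcond2}, \eqref{eq:type3+paramcond3} and $(1 \leq i \leq d)$ in \eqref{eq:type3+paramcond4a}--\eqref{eq:type3+paramcond5b} capture every distinct inequality and no extraneous ones, and verifying that the characteristic constraint from \eqref{eq:type3+paramcond1} genuinely encompasses all the divisibility obstructions arising from both the eigenvalue differences and the leading factors in $\vphi_i, \phi_i$.
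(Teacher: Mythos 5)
Your proposal is correct and is exactly the verification the paper has in mind: the paper's proof of this lemma is the one-line ``Routine verification using \eqref{eq:type3+vphi}, \eqref{eq:type3+phi}, and Lemma \ref{lem:type3+condpre},'' and your parity-based case analysis is that routine verification written out, with the right reductions (same-parity eigenvalue differences give $h\neq 0$, $h^*\neq 0$ and the characteristic condition; opposite-parity differences give \eqref{eq:type3+paramcond2}, \eqref{eq:type3+paramcond3}; the leading factors $i$ and $d-i+1$ in $\vphi_i,\phi_i$ are absorbed by \eqref{eq:type3+paramcond1}, leaving the four $\tau$-conditions). The only blemish is the aside that the maximum same-parity difference is $d-2$ --- it is in fact $d$ (from $i=0$, $j=d$, both even) --- but this does not affect the resulting constraint, which you state correctly.
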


\begin{proof}
Routine verification using \eqref{eq:type3+vphi}, \eqref{eq:type3+phi},
and Lemma \ref{lem:type3+condpre}.
\end{proof}

For the rest of this section, let $\Phi$ denote a Leonard system over $\F$
that has type III$^+$ and parameter array
\begin{equation}
   (\{\th_i\}_{i=0}^d; \{\th^*_i\}_{i=0}^d; \{\vphi_i\}_{i=1}^d; \{\phi_i\}_{i=1}^d).        \label{eq:type3+parray2}
\end{equation}

\begin{defi}     \label{def:type3+pseqLS}    \samepage
\ifDRAFT {\rm def:type3+pseqLS}. \fi
By the {\em basic variables} (resp.\ {\em basic sequence}) {\em of $\Phi$} we mean 
the basic variables (resp.\ basic sequence) of the parameter array \eqref{eq:type3+parray2}.
\end{defi}

For the rest of this section, let
$(\delta, s, h, \delta^*, s^*, h^*, \tau)$
denote the basic sequence of $\Phi$.

\begin{lemma}    \label{lem:type3+paramDown}    \samepage
\ifDRAFT {\rm lem:type3+paramDown}. \fi
In the table below, for each Leonard system in the first column we give
the basic sequence:
\[
\begin{array}{cc}
\text{\rm Leonard system}  &  \text{\rm basic sequence}
\\ \hline
\Phi^\downarrow & (\delta, s, h,  \delta^*, s^*,  - h^*, \tau)  \rule{0mm}{3ex}
\\
\Phi^\Downarrow & (\delta, s,  -h, \delta^*, s^*, h^*,  \tau)   \rule{0mm}{2.7ex}
\\
\Phi^\vee &  (- \delta, -s, -h,  \delta^*, s^*, h^*,  - \tau)   \rule{0mm}{2.7ex}
\end{array}
\]
\end{lemma}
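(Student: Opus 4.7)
The plan is to follow the same strategy used to establish the analogous Lemmas \ref{lem:type1paramDown} and \ref{lem:type2paramDown}. For each of $\Phi^\downarrow$, $\Phi^\Downarrow$, $\Phi^\vee$, first write down the parameter array explicitly, using Lemma \ref{lem:parrayrelative} for $\Phi^\downarrow$ and $\Phi^\Downarrow$ and Lemma \ref{lem:parrayPhivee} for $\Phi^\vee$. Then identify the unique basic sequence by comparing each entry with the closed-form expressions \eqref{eq:type3+th}--\eqref{eq:type3+phi}, exploiting the fact that $d$ is even so that $i$ and $d-i$ have the same parity, and $d-i+1$ and $i$ have opposite parities.

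The $\Phi^\vee$ case is essentially immediate: Lemma \ref{lem:parrayPhivee} replaces $(\{\th_i\}, \{\th^*_i\}, \{\vphi_i\}, \{\phi_i\})$ by $(\{-\th_i\}, \{\th^*_i\}, \{-\vphi_i\}, \{-\phi_i\})$, and from the formulas \eqref{eq:type3+th}--\eqref{eq:type3+phi} I read off that negating $\th_i$ simultaneously negates $\delta$, $s$, $h$ while negating $\vphi_i$ and $\phi_i$ together negates $\tau$ (note that the products $sh^*$, $s^*h$, $hh^*$ are then left invariant, so the resulting formulas for $\vphi$, $\phi$ are indeed negated). Thus the basic sequence is $(-\delta, -s, -h, \delta^*, s^*, h^*, -\tau)$.

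For $\Phi^\Downarrow$, Lemma \ref{lem:parrayrelative}(iii) gives parameter array $(\{\th_{d-i}\}; \{\th^*_i\}; \{\phi_i\}; \{\vphi_i\})$. Since $d$ is even, substituting $i \mapsto d-i$ in \eqref{eq:type3+th} preserves the parity split and yields $\th_{d-i} = \delta \pm s \mp h(d/2-i)$, which matches \eqref{eq:type3+th} with $h$ replaced by $-h$; the dual eigenvalues are unchanged. The split sequences swap roles, and comparing the swapped formulas \eqref{eq:type3+vphi}, \eqref{eq:type3+phi} shows that this swap is compatible with $h \mapsto -h$ (and all other parameters fixed), yielding $(\delta, s, -h, \delta^*, s^*, h^*, \tau)$.

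For $\Phi^\downarrow$, Lemma \ref{lem:parrayrelative}(ii) gives $(\{\th_i\}; \{\th^*_{d-i}\}; \{\phi_{d-i+1}\}; \{\vphi_{d-i+1}\})$. The $\th_i$ are unchanged so $\delta, s, h$ are preserved. Substituting $i \mapsto d-i$ in \eqref{eq:type3+ths} and comparing with \eqref{eq:type3+ths} for candidate parameters $\delta^{*\prime}, s^{*\prime}, h^{*\prime}$, the parity-even and parity-odd equations jointly force $\delta^{*\prime}=\delta^*$, $s^{*\prime}=s^*$, $h^{*\prime}=-h^*$. The remaining verification is that $\phi_{d-i+1}$ and $\vphi_{d-i+1}$, with $i$ of a given parity, match the formulas \eqref{eq:type3+vphi}, \eqref{eq:type3+phi} for the new basic sequence $(\delta, s, h, \delta^*, s^*, -h^*, \tau)$; this amounts to a straightforward parity-by-parity computation, using that $d-i+1$ has parity opposite to $i$ and that $(d-i+1)-(d+1)/2 = -(i - (d+1)/2)$.

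The main obstacle is purely bookkeeping: in the $\Phi^\downarrow$ case one must correctly track which of $\vphi$ and $\phi$ each expression becomes after the swap, combined with the sign flip of $h^*$, and verify consistency on both parity classes. Once this is carried out, uniqueness of the basic sequence (the last assertion of Lemma \ref{lem:type3+param}) guarantees that the entries displayed in the table are the only possibility.
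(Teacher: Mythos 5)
Your proposal is correct and follows essentially the same route as the paper, which simply cites Lemma \ref{lem:parrayrelative} for $\Phi^\downarrow$, $\Phi^\Downarrow$ and Lemma \ref{lem:parrayPhivee} for $\Phi^\vee$ and leaves the comparison with \eqref{eq:type3+th}--\eqref{eq:type3+phi} as a routine verification. Your parity bookkeeping (in particular $(d-i+1)-(d+1)/2 = -(i-(d+1)/2)$ and the appeal to uniqueness in Lemma \ref{lem:type3+param}) fills in exactly the details the paper omits.
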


\begin{proof}
Concerning $\Phi^\downarrow$ and $\Phi^\Downarrow$, use 
Lemma \ref{lem:parrayrelative}.
Concerning $\Phi^\vee$, use Lemma \ref{lem:parrayPhivee}.
\end{proof}

\begin{lemma}    \label{lem:type3+affine}    \samepage
\ifDRAFT {\rm lem:type3+affine}. \fi
For scalars $\xi$, $\zeta$, $\xi^*$, $\zeta^*$ in $\F$ with $\xi \xi^* \neq 0$,
consider the Leonard system
\[
   (\xi A + \zeta I; \, \{E_i\}_{i=0}^d; \;  \xi^* A^* + \zeta^* I; \, \, \{E^*_i\}_{i=0}^d).
\]
For this Leonard system the basic sequence is equal to
\[
   ( \xi \delta + \zeta; \;  \xi s, \; \xi h,  \; 
     \xi^* \delta^* + \zeta^*;  \; \xi^* s^*, \; \xi^* h^*, \; \xi \xi^* \tau).
\]
\end{lemma}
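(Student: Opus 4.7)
The plan is to combine Lemma \ref{lem:affineparam} with the uniqueness statement in Lemma \ref{lem:type3+param}. By Lemma \ref{lem:affineparam}, the Leonard system $(\xi A + \zeta I;\, \{E_i\}_{i=0}^d;\, \xi^* A^* + \zeta^* I;\, \{E^*_i\}_{i=0}^d)$ has parameter array
\[
 (\{\xi \th_i + \zeta\}_{i=0}^d;\; \{\xi^* \th^*_i + \zeta^*\}_{i=0}^d;\; \{\xi\xi^* \vphi_i\}_{i=1}^d;\; \{\xi\xi^* \phi_i\}_{i=1}^d).
\]
Since this parameter array has type III$^+$ (because the type is determined by $d$, $\beta$, and $\text{\rm Char}(\F)$, and $\beta$ is unaffected by the affine transformation), Lemma \ref{lem:type3+param} guarantees that it admits a unique basic sequence. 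Hence it suffices to exhibit one basic sequence that produces this parameter array.

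I would then check that the candidate basic sequence $(\xi\delta + \zeta,\, \xi s,\, \xi h,\, \xi^*\delta^* + \zeta^*,\, \xi^* s^*,\, \xi^* h^*,\, \xi\xi^*\tau)$ reproduces the transformed array by direct substitution into \eqref{eq:type3+th}--\eqref{eq:type3+phi}. For instance, for even $i$ the formula \eqref{eq:type3+th} gives
\[
 (\xi\delta + \zeta) + \xi s + \xi h(i-d/2) = \xi\bigl(\delta + s + h(i-d/2)\bigr) + \zeta = \xi \th_i + \zeta,
\]
and the odd case is analogous. An identical substitution in \eqref{eq:type3+ths} gives $\xi^*\th^*_i + \zeta^*$. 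For \eqref{eq:type3+vphi} with $i$ even,
\[
 i\bigl(\xi\xi^*\tau - (\xi s)(\xi^* h^*) - (\xi^* s^*)(\xi h) - (\xi h)(\xi^* h^*)(i-(d+1)/2)\bigr) = \xi\xi^*\vphi_i,
\]
and similarly for $i$ odd and for \eqref{eq:type3+phi}; the bilinearity of each term in $(\text{first three slots})\times(\text{last three slots})$ makes every term pick up exactly one factor of $\xi$ and one of $\xi^*$.

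The step requiring the most care is keeping track of the parity-dependent signs in \eqref{eq:type3+th}--\eqref{eq:type3+phi}, but there is no real obstacle: every summand in the type III$^+$ formulas is either constant (absorbed by the affine shift on $\delta, \delta^*$) or bilinear in the $(s,h,\tau)$ and $(s^*,h^*)$ variables, so the linear scalings and translations pass through cleanly. Once the candidate is verified to yield the correct parameter array, the uniqueness clause in Lemma \ref{lem:type3+param} identifies it as the basic sequence of the transformed Leonard system, completing the proof.
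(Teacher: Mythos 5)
Your proposal is correct and follows the same route as the paper, whose proof is simply ``Use Lemma \ref{lem:affineparam}''; you have filled in the implicit details (computing the transformed parameter array, checking the candidate basic sequence against \eqref{eq:type3+th}--\eqref{eq:type3+phi}, and invoking the uniqueness clause of Lemma \ref{lem:type3+param}), all of which check out.
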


\begin{proof}
Use Lemma \ref{lem:affineparam}.
\end{proof}

\begin{corollary}    \label{cor:type3+affine}    \samepage
\ifDRAFT {\rm cor:type3+affine}. \fi
The Leonard system 
\[
 (A- \delta I; \, \{E_i\}_{i=0}^d; \, A^* - \delta^* I; \,  \{E^*_i\}_{i=0}^d)
\]
has basic sequence
$(0, s, h, 0, s^*, h^*, \tau)$.
\end{corollary}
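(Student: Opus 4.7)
The plan is to apply Lemma \ref{lem:type3+affine} directly with a specific choice of the four free parameters. Specifically, I would set $\xi = 1$, $\zeta = -\delta$, $\xi^* = 1$, $\zeta^* = -\delta^*$, which satisfies the hypothesis $\xi \xi^* = 1 \neq 0$ required by the lemma.

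With these choices, the Leonard system $(\xi A + \zeta I; \{E_i\}_{i=0}^d; \xi^* A^* + \zeta^* I; \{E^*_i\}_{i=0}^d)$ becomes precisely $(A - \delta I; \{E_i\}_{i=0}^d; A^* - \delta^* I; \{E^*_i\}_{i=0}^d)$, the Leonard system in the statement of the corollary. By Lemma \ref{lem:type3+affine}, its basic sequence is
\[
(\xi \delta + \zeta;\; \xi s,\; \xi h;\; \xi^* \delta^* + \zeta^*;\; \xi^* s^*,\; \xi^* h^*;\; \xi \xi^* \tau) = (\delta - \delta,\; s,\; h;\; \delta^* - \delta^*,\; s^*,\; h^*;\; \tau),
\]
which simplifies to $(0, s, h, 0, s^*, h^*, \tau)$, as required.

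There is no real obstacle here; the corollary is an immediate specialization of the preceding lemma, recorded separately because the reduction to the case $\delta = \delta^* = 0$ (the \emph{reduced} case of Definition \ref{def:type3+reduced}, by analogy with Definitions \ref{def:type1reduced} and \ref{def:type2reduced}) will be used repeatedly in later sections to simplify the analysis of companions for type III$^+$ Leonard pairs.
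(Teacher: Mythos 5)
Your proof is correct and is exactly the intended argument: the paper states this corollary without proof precisely because it is the specialization $\xi=\xi^*=1$, $\zeta=-\delta$, $\zeta^*=-\delta^*$ of Lemma \ref{lem:type3+affine}, which is what you carry out. Nothing is missing.
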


\begin{defi}    \label{def:type3+reduced}    \samepage
\ifDRAFT {\rm def:type3+reduced}. \fi
We say that $\Phi$ is {\em reduced} whenever
$\delta=0$ and $\delta^* = 0$.
\end{defi}

\begin{lemma}    \label{lem:type3+kappa}    \samepage
\ifDRAFT {\rm lem:type3+kappa}. \fi
The variable $\kappa$ for $\Phi$ satisfies
$\kappa = 4 h^2$.
\end{lemma}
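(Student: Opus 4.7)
The plan is to apply Lemma \ref{lem:kappa} directly, exploiting the fact that type III$^+$ has fundamental constant $\beta = -2$. Since $\beta + 2 = 0$, the formula in Lemma \ref{lem:kappa} collapses to
\[
  \kappa = (\th_{i-1} - \th_{i+1})^2 \qquad (1 \leq i \leq d-1),
\]
so it suffices to show that $\th_{i-1} - \th_{i+1} = \pm 2h$ for all such $i$, and in particular that its square equals $4h^2$.

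To compute $\th_{i-1} - \th_{i+1}$, I would use \eqref{eq:type3+th}. The key observation is that $i-1$ and $i+1$ have the same parity, so the two terms come from the same case of the piecewise formula, and the constants $\delta$ and $\pm s$ cancel. Concretely, if $i-1$ and $i+1$ are both even, then $\th_{i-1} - \th_{i+1} = h(i-1 - d/2) - h(i+1 - d/2) = -2h$; if both odd, then $\th_{i-1} - \th_{i+1} = -h(i-1-d/2) + h(i+1 - d/2) = 2h$. In either case $(\th_{i-1}-\th_{i+1})^2 = 4h^2$, giving $\kappa = 4h^2$.

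There is no real obstacle here; the only subtlety is to pick $i$ from the valid range $1 \leq i \leq d-1$ (which is nonempty since $d \geq 3$) and to handle the two parity cases of the piecewise definition of $\th_i$. The result is independent of the choice of $i$, consistent with Lemma \ref{lem:kappa}.
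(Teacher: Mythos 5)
Your proof is correct and follows essentially the same route as the paper, which simply cites Lemma \ref{lem:kappa} together with \eqref{eq:type3+th}; you have just made explicit the two observations the paper leaves to the reader, namely that $\beta+2=0$ kills the second term and that $i-1$, $i+1$ share a parity so that $\th_{i-1}-\th_{i+1}=\mp 2h$. No gaps.
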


\begin{proof}
By Lemma \ref{lem:kappa} and \eqref{eq:type3+th}.
\end{proof}

\begin{proofof}{Proposition \ref{prop:RST}, type III$^+$}
One routinely verifies \eqref{eq:RST} using \eqref{eq:Rtype1}--\eqref{eq:Ttype1}
and Lemmas \ref{lem:type3+param}, \ref{lem:type3+kappa}.
\end{proofof}

Note that Theorem \ref{thm:kappamain} holds for type III$^+$.

\section{A characterization of compatibility in terms of the basic sequence, type III$^+$}
\label{sec:characterizetype3+}
\ifDRAFT {\rm sec:characterizetype3+}. \fi

In this section we characterize the compatibility relation for Leonard pairs
of type III$^+$ in terms of the basic sequence.
Throughout this section, Notation \ref{notation:all} is in effect.
Assume that $\Phi$ has type III$^+$.
Note that $\Phi'$ has type III$^+$.
Let
\begin{align*}
& (\delta, s, h, \delta^*, s^*, h^*, \tau),  
&& 
 (\delta', s', h', \delta^*, s^*, h^*, \tau')
\end{align*}
denote the basic sequence of $\Phi$ and $\Phi'$, respectively.

\begin{theorem}    \label{thm:type3+main}    \samepage
\ifDRAFT {\rm thm:type3+main}. \fi
The matrices $A$, $B$ are compatible if and only if
the following \eqref{eq:type3+cond1}--\eqref{eq:type3+cond3} hold:
\begin{align}
h^2 &= h^{\prime 2},                                 \label{eq:type3+cond1}
\\
(\tau+ s h^*)^2 &= (\tau' + s' h^*)^2,          \label{eq:type3+cond2}
\\
(\tau-  s h^*)^2 & = (\tau' - s' h^*)^2.      \label{eq:type3+cond3}
\end{align}
\end{theorem}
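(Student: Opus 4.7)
The plan is to mirror the strategy used for the earlier types (cf.\ proof of Theorem \ref{thm:type2main}): invoke Theorem \ref{thm:kappamain} and translate each of the three scalar equalities $\kappa=\kappa'$, $\vphi_1\phi_1=\vphi'_1\phi'_1$, $\vphi_d\phi_d=\vphi'_d\phi'_d$ into conditions on the basic sequence using Lemma \ref{lem:type3+param} and Lemma \ref{lem:type3+kappa}.

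First, I would handle the invariant-value condition. Since $\text{Char}(\F)\neq 2$ for type III$^+$, Lemma \ref{lem:type3+kappa} gives $\kappa=4h^2$ and $\kappa'=4h'^2$, so $\kappa=\kappa'$ is equivalent to \eqref{eq:type3+cond1}. For the rest of the argument I would work under the standing assumption $h^2=h'^2$.

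Next I would compute $\vphi_1\phi_1$ from \eqref{eq:type3+vphi}, \eqref{eq:type3+phi}. Because $d$ is even, the index $i=1$ is odd, giving
\begin{align*}
\vphi_1 &= d\bigl(\tau + sh^{*} + s^{*}h + hh^{*}(1-(d+1)/2)\bigr),\\
\phi_1 &= d\bigl(\tau + sh^{*} - s^{*}h - hh^{*}(1-(d+1)/2)\bigr).
\end{align*}
Writing $U=\tau+sh^{*}$ and $X=s^{*}h-hh^{*}(d-1)/2 = h\bigl(s^{*}-h^{*}(d-1)/2\bigr)$, the product takes the clean form
\[
\vphi_1\phi_1 = d^{2}\bigl(U^{2}-X^{2}\bigr) = d^{2}\Bigl((\tau+sh^{*})^{2}-h^{2}\bigl(s^{*}-h^{*}(d-1)/2\bigr)^{2}\Bigr).
\]
Since the $X^{2}$ part involves only $h^{2}$ (not $h$ alone) times quantities shared by $\Phi$ and $\Phi'$, the assumption $h^2=h'^2$ makes the second term identical for $\Phi$ and $\Phi'$. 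Using \eqref{eq:type3+paramcond1} to see $d\neq 0$ in $\F$, the equation $\vphi_1\phi_1=\vphi'_1\phi'_1$ reduces to \eqref{eq:type3+cond2}.

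For $i=d$ (even), Lemma \ref{lem:type3+param} gives
\begin{align*}
\vphi_d &= d\bigl(\tau - sh^{*} - s^{*}h - hh^{*}(d-(d+1)/2)\bigr),\\
\phi_d &= d\bigl(\tau - sh^{*} + s^{*}h + hh^{*}(d-(d+1)/2)\bigr).
\end{align*}
With $U'=\tau-sh^{*}$ and $Y=h\bigl(s^{*}+h^{*}(d-1)/2\bigr)$, the same manipulation yields
\[
\vphi_d\phi_d = d^{2}\Bigl((\tau-sh^{*})^{2}-h^{2}\bigl(s^{*}+h^{*}(d-1)/2\bigr)^{2}\Bigr),
\]
and under $h^2=h'^2$ the condition $\vphi_d\phi_d=\vphi'_d\phi'_d$ reduces to \eqref{eq:type3+cond3}. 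Combining all three reductions with Theorem \ref{thm:kappamain} completes the proof.

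The only real obstacle is careful bookkeeping of the parity cases when substituting $i=1$ and $i=d$ into the four piecewise formulas \eqref{eq:type3+vphi}, \eqref{eq:type3+phi}; the algebraic miracle that $X^{2}$ and $Y^{2}$ depend on $h$ only through $h^{2}$ is what makes the invariant-value condition $h^{2}=h'^{2}$ exactly the right hypothesis to decouple the $\vphi_{1}\phi_{1}$ and $\vphi_{d}\phi_{d}$ equalities into the clean conditions \eqref{eq:type3+cond2}, \eqref{eq:type3+cond3}.
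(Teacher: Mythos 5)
Your proposal is correct and follows essentially the same route as the paper: both invoke Theorem \ref{thm:kappamain}, use Lemma \ref{lem:type3+kappa} with \eqref{eq:type3+paramcond1} to identify $\kappa=\kappa'$ with $h^2=h'^2$, and then show that under that assumption $\vphi_1\phi_1-\vphi'_1\phi'_1$ and $\vphi_d\phi_d-\vphi'_d\phi'_d$ equal $d^2$ times the differences in \eqref{eq:type3+cond2}, \eqref{eq:type3+cond3}. Your explicit difference-of-squares bookkeeping (the $U,X$ and $U',Y$ substitutions) just spells out the computation the paper labels as routine.
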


\begin{proof}
We will invoke Theorem \ref{thm:kappamain}.
To do this we investigate the conditions in \eqref{eq:kappakappad}.
By Lemma \ref{lem:type3+kappa} and \eqref{eq:type3+paramcond1},
$\kappa = \kappa'$ if and only if \eqref{eq:type3+cond1} holds.
Using \eqref{eq:type3+vphi}, \eqref{eq:type3+phi} we find that under 
the assumption \eqref{eq:type3+cond1}
the expression
$\vphi_1 \phi_1 - \vphi'_1 \phi'_1$ is equal to
$d^2$ times
\[
  (\tau + s h^*)^2 - (\tau' + s' h^*)^2,
\]
and the expression 
$\vphi_d \phi_d - \vphi'_d \phi'_d$ is equal to $d^2$ times
\[
  (\tau - s h^*)^2 - (\tau' - s' h^*)^2.
\]
By these comments and \eqref{eq:type3+paramcond1}, 
we find that under the assumption \eqref{eq:type3+cond1},
$\vphi_1 \phi_1 = \vphi'_1 \phi'_1$ holds if and only if \eqref{eq:type3+cond2} holds,
and
$\vphi_d \phi_d = \vphi'_d \phi'_d$ if and only if \eqref{eq:type3+cond3} holds.
Now the result follows from Theorem \ref{thm:kappamain}.
\end{proof}

Our next goal is to solve the equations \eqref{eq:type3+cond1}--\eqref{eq:type3+cond3}
for $s'$, $h'$, $\tau'$.

\begin{theorem}   \label{thm:type3+sol}      \samepage
\ifDRAFT {\rm thm:type3+sol}. \fi
The equations \eqref{eq:type3+cond1}--\eqref{eq:type3+cond3} hold if and only if at least one of the
following \eqref{eq:type3+sol1}--\eqref{eq:type3+sol4} holds:
\begin{align}
& s' = s,  \qquad\quad\qquad\; \, \tau' = \tau, \qquad\qquad \;\; h^{\prime 2} = h^2;        \label{eq:type3+sol1}
\\
&s' = -s,  \qquad\qquad\; \;\,  \tau' = - \tau,  \qquad\qquad   h^{\prime 2} = h^2;  \label{eq:type3+sol2}
\\
& s' = \tau/h^*,  \qquad\qquad  \tau' = s h^*,  \quad\qquad \; \; \,  h^{\prime 2} = h^2;             \label{eq:type3+sol3}
\\
& s' = - \tau/h^*,  \quad\qquad\;  \tau' = - s h^*,  \quad\qquad h^{\prime 2} = h^2.       \label{eq:type3+sol4}
\end{align}
\end{theorem}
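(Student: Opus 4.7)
The plan is straightforward: the condition \eqref{eq:type3+cond1} is identical across all four listed solutions, so the real work is to solve \eqref{eq:type3+cond2} and \eqref{eq:type3+cond3} for $s'$ and $\tau'$. I would make the change of variables
\[
u = \tau + s h^*, \quad v = \tau - s h^*, \quad u' = \tau' + s' h^*, \quad v' = \tau' - s' h^*,
\]
which turns \eqref{eq:type3+cond2}, \eqref{eq:type3+cond3} into the decoupled system $u^2 = u^{\prime 2}$ and $v^2 = v^{\prime 2}$. Since the type III$^+$ hypothesis gives $\text{\rm Char}(\F) \neq 2$, each of these yields $u' = \pm u$ and $v' = \pm v$ independently, producing four sign cases.

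In each case I would recover $\tau'$ and $s'h^*$ by taking half the sum and half the difference of $u'$ and $v'$, and then divide by $h^*$ (which is nonzero by \eqref{eq:type3+paramcond1b}) to isolate $s'$. The four sign combinations $(+,+)$, $(-,-)$, $(+,-)$, $(-,+)$ give exactly the four solutions \eqref{eq:type3+sol1}--\eqref{eq:type3+sol4}: the matching signs return $(s', \tau') = (s, \tau)$ or $(-s, -\tau)$, while the opposing signs swap the roles of $s$ and $\tau/h^*$ (up to sign), giving $(s', \tau') = (\tau/h^*, sh^*)$ or $(-\tau/h^*, -sh^*)$.

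The converse is a direct check: for each of the four displayed solutions, substitute back into \eqref{eq:type3+cond1}--\eqref{eq:type3+cond3} and verify the equations hold; this is mechanical. I do not anticipate any real obstacle, since the only structural ingredients needed are $\text{\rm Char}(\F) \neq 2$ and $h^* \neq 0$, both automatic for type III$^+$. The slightly subtle point worth flagging in the proof is that we are not asserting $h' = \pm h$ as a sign choice correlated with the sign choices for $u'$, $v'$; the constraint $h^{\prime 2} = h^2$ is independent of the $(s', \tau')$ equations, which is why it appears identically in all four solutions rather than being split into cases.
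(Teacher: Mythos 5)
Your proposal is correct and follows essentially the same route as the paper: the paper likewise reads off $\tau+sh^*=\pm(\tau'+s'h^*)$ from \eqref{eq:type3+cond2} and $\tau-sh^*=\pm(\tau'-s'h^*)$ from \eqref{eq:type3+cond3}, and combines the four sign cases (using $h^*\neq 0$ and $\mathrm{Char}(\F)\neq 2$) to land on \eqref{eq:type3+sol1}--\eqref{eq:type3+sol4}. Your explicit change of variables $u,v,u',v'$ is just a cleaner bookkeeping of the same argument.
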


\begin{proof}
One routinely checks that each of \eqref{eq:type3+sol1}--\eqref{eq:type3+sol4} gives
a solution to \eqref{eq:type3+cond1}--\eqref{eq:type3+cond3}.
Now assume that \eqref{eq:type3+cond1}--\eqref{eq:type3+cond3} hold.
Note by \eqref{eq:type3+paramcond1b} that $h^* \neq 0$.
By \eqref{eq:type3+cond2}, 
\begin{align*}
\tau + s h^* &= \tau' + s' h^*  \qquad \text{ or } \qquad
\tau + s h^* = - \tau' - s' h^*.
\end{align*}
By \eqref{eq:type3+cond3}, 
\begin{align*}
\tau - s h^* &= \tau' - s' h^*  \qquad \text{ or } \qquad
\tau - s h^* = - \tau' + s' h^*.
\end{align*}
By these comments and \eqref{eq:type3+cond1}, 
we get at least one of \eqref{eq:type3+sol1}--\eqref{eq:type3+sol4}.
\end{proof}

We have some comments about \eqref{eq:type3+sol1}, \eqref{eq:type3+sol2}.
By Proposition \ref{prop:A=B}, \eqref{eq:type3+sol1} holds
if and only if there exists $\zeta \in \F$ such that $B = A + \zeta I$.
By Proposition \ref{prop:ABvee}, \eqref{eq:type3+sol2} holds
if and only if there exists $\zeta \in \F$ such that $B = A^\vee + \zeta I$.
The solutions \eqref{eq:type3+sol1}--\eqref{eq:type3+sol4} are not mutually exclusive.

\section{Describing the companions for a Leonard pair of type III$^+$}
\label{sec:companiontype3+}
\ifDRAFT {\rm sec:companiontype3+}. \fi

In this section we describe the companions for a Leonard pair of type III$^+$.
Throughout this section, Notation \ref{notation:all} is in effect.
Assume that $\Phi$ has type III$^+$.
Note that $\Phi'$ has type III$^+$.
Let
\begin{align}
& (\delta, s, h, \delta^*, s^*, h^*, \tau),  
&& 
 (\delta', s', h', \delta^*,  s^*, h^*, \tau')                 \label{eq:type3+pseqs}
\end{align}
denote the basic sequence of $\Phi$ and $\Phi'$, respectively.
Assume that $A$, $B$ are compatible, and consider the companion $K=A-B$.
We will give the entries of $K$.
To avoid complicated formulas, 
we assume that each of $\Phi$, $\Phi'$ is reduced,
so that $\delta=0$, $\delta'=0$, $\delta^* = 0$.

For the moment assume that  \eqref{eq:type3+sol1} holds.
By the comments below Theorem \ref{thm:type3+sol},
there exists $\zeta \in \F$ such that $B = A + \zeta I$.
By this and $\delta=\delta'=0$ we get $B=A$.
So $K=0$.
Next assume that \eqref{eq:type3+sol2} holds.
By the comments below Theorem \ref{thm:type3+sol},
there exists $\zeta \in \F$ such that $B = A^\vee + \zeta I$.
By this and $\delta=\delta'=0$ we get $B=A^\vee$.
By this and Lemma \ref{lem:bondcompOmega}, 
$K_{i,i} = 2 a_i$ for $0 \leq i \leq d$.
We now give the $K$ that corresponds to solutions \eqref{eq:type3+sol3}, \eqref{eq:type3+sol4}.

\begin{theorem}      \label{thm:type3+ex2}     \samepage
\ifDRAFT {\rm thm:type3+ex2}. \fi
The following hold.
\begin{itemize}
\item[\rm (i)]
Assume \eqref{eq:type3+sol3} holds.
Then
\begin{align*}
K_{0,0}  &= s - \tau/h^*,
\\
K_{i,i} &=
 \begin{cases} \displaystyle
   \frac{ ( s  - \tau / h^*) \big( 2 s^* - (d+1) h^* \big) }
          { 2 s^* - (d-2i+1) h^* }
         &  \text{\rm if $i$ is even},
  \\   \displaystyle
  -  \frac{ ( s  - \tau / h^*) \big( 2 s^* - (d+1) h^* \big) }
          { 2 s^* - (d-2i-1) h^* }   
         &  \text{\rm if $i$ is odd}
  \end{cases}             &&    (1 \leq i \leq d).
\end{align*}
\item[\rm (ii)]
Assume that \eqref{eq:type3+sol4} holds.
Then
\begin{align*}
K_{i,i} &=
 \begin{cases} \displaystyle
   \frac{ ( s  + \tau / h^*) \big( 2 s^* + (d+1) h^* \big) }
          { 2 s^* - (d-2i-1) h^* }
         &  \text{\rm if $i$ is even},
  \\   \displaystyle
  -  \frac{ ( s  + \tau / h^*) \big( 2 s^* + (d+1) h^* \big) }
          { 2 s^* - (d-2i+1) h^* }   
         &  \text{\rm if $i$ is odd}
  \end{cases}             &&    (0 \leq i \leq d-1),
\\
K_{d,d} &= s + \tau/h^*.
\end{align*}
\end{itemize}
\end{theorem}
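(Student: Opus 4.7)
The plan is a direct, case-by-case computation using the formula $K_{i,i}=a_i-a'_i$ from equation \eqref{eq:Kaa} (valid since $A,B$ are compatible and $K=A-B$ is diagonal by Lemma \ref{lem:compOmega2}). Because both $\Phi$ and $\Phi'$ are reduced and share $A^*$, we have $\delta=\delta'=\delta^*=0$ and the dual eigenvalue sequence $\{\th^*_i\}_{i=0}^d$ is common. Thus the denominators $\th^*_i-\th^*_{i\pm 1}$ appearing in Lemma \ref{lem:aiparam} are identical for the two Leonard systems, and $K_{i,i}$ splits as
\[
K_{i,i}=(\th_i-\th'_i)+\frac{\vphi_i-\vphi'_i}{\th^*_i-\th^*_{i-1}}+\frac{\vphi_{i+1}-\vphi'_{i+1}}{\th^*_i-\th^*_{i+1}},
\]
with appropriate boundary modifications at $i=0,d$.

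The next step is to substitute the closed forms from Lemma \ref{lem:type3+param}, with the reduced choice $\delta=\delta^*=0$, and carry out the differences. Because $\th_i$, $\vphi_i$ have different formulas according to the parity of $i$, the computation naturally splits into an even-index case and an odd-index case; similarly for $\th^*_i$. Using Lemma \ref{lem:type3+condpre}, the dual denominators take a uniform shape: for interior even $i$ they reduce to $\pm\bigl(2s^*-(d-2i\pm 1)h^*\bigr)$, and for interior odd $i$ they are of the form $\pm h^*$ (times an integer). These denominators will appear as the denominators in the stated formulas for $K_{i,i}$.

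For part (i), I would assume \eqref{eq:type3+sol3}: $s'=\tau/h^*$, $\tau'=sh^*$, $h'^2=h^2$. One checks that the formula $K_{i,i}$ does not depend on the sign of $h'$ (a small verification: the parity-dependent extra terms produced by replacing $h'$ by $-h'$ cancel between the $\th_i-\th'_i$ contribution and the $\vphi-\vphi'$ contributions, using the identities in Lemma \ref{lem:type3+condpre}). Thus one may take $h'=h$ without loss. Substituting, the numerator collapses to a multiple of $(s-\tau/h^*)(2s^*-(d+1)h^*)$, with the denominator depending on parity of $i$; a direct simplification yields the displayed formulas, and the $i=0$ formula $K_{0,0}=s-\tau/h^*$ drops out as the boundary case. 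Part (ii) is entirely parallel, using \eqref{eq:type3+sol4} instead; one obtains the analogous expression with the sign changes $s^*\mapsto -s^*$ and a corresponding shift in the denominators' indices, and the boundary case at $i=d$ produces $K_{d,d}=s+\tau/h^*$.

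The only real obstacle is bookkeeping: keeping track of the parities of $i-1$, $i$, $i+1$ as they interact with the four-way case split in the formulas for $\vphi_i,\vphi'_i$ and with Lemma \ref{lem:type3+condpre}. The calculation is otherwise routine, and the non-vanishing of the denominators is guaranteed by Lemma \ref{lem:type3+cond} together with \eqref{eq:type3+paramcond1b}. No further ideas are needed beyond the substitution from Lemma \ref{lem:aiparam} and careful algebra.
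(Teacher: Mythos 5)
Your proposal is correct and is essentially the paper's own proof, which simply invokes \eqref{eq:Kaa} together with Lemmas \ref{lem:aiparam} and \ref{lem:type3+param} with $\delta=\delta'=0$ (your observation that the answer is independent of the sign of $h'$ is also right, and follows at once from Lemma \ref{lem:aiD}(ii) and Lemma \ref{lem:type3+paramDown}). One small slip in your narrative: for odd interior $i$ the adjacent dual differences $\th^*_i-\th^*_{i\pm1}$ still pair an odd with an even index, so by Lemma \ref{lem:type3+condpre} they have the form $-\bigl(2s^*-(d-2i\pm1)h^*\bigr)$ rather than an integer multiple of $h^*$; this does not affect the computation, whose outcome matches the displayed formulas.
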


\begin{proof}
Use \eqref{eq:Kaa} and Lemmas \ref{lem:aiparam}, \ref{lem:type3+param}
with $\delta = \delta' = 0$.
\end{proof}

\section{The parameter arrays of type III$^-$}
\label{sec:parraytype3-}
\ifDRAFT {\rm sec:parraytype3-}. \fi

In this section we describe the parameter arrays of type III$^-$.
We then prove Proposition \ref{prop:RST} for type III$^-$.
Throughout this section, assume that $d \geq 3$.

\begin{lemma}    \label{lem:type3-param}    \samepage
\ifDRAFT {\rm lem:type3-param}. \fi
Assume that $d$ is odd and $\text{\rm Char}(\F) \neq 2$.
For a sequence
\begin{equation}
   (\delta, s, h, \delta^*,  s^*, h^*, \tau)                \label{eq:type3-pseq}
\end{equation}
of scalars in $\F$,  define
\begin{align}
\theta_i &=
  \begin{cases}
     \delta+s+h(i-d/2)  & \text{\rm if $i$ is even}, \\
     \delta-s -h(i-d/2) & \text{\rm if $i$ is odd}
  \end{cases}   
        &&   (0 \leq i \leq d),                                       \label{eq:type3-th}
\\
\theta^*_i &=
   \begin{cases}
     \delta^* +s^* +h^*(i-d/2)  &   \text{\rm if $i$ is even}, \\
     \delta^* - s^* -h^*(i-d/2) &  \text{\rm if $i$ is odd}
   \end{cases}                                                 
        &&   ( 0 \leq i \leq d),                               \label{eq:type3-ths}
\end{align}
and for $1 \leq i \leq d$,
\begin{align}    
\varphi_i &=
   \begin{cases}
      h h^* i(d-i+1) &   \text{\rm if $i$ is even}, \\
      \tau- 2 s s^*+i(d-i+1) h h^* - (s h^* + s^* h)(2i- d-1)  & \text{\rm if $i$ is odd},
   \end{cases}                                                  \label{eq:type3-vphi}
\\
\phi_i &=
   \begin{cases}
      h h^* i(d-i+1) &   \text{\rm if $i$ is even}, \\
      \tau + 2s s^* + i(d-i+1) h h^* + (s h^* - s^* h)( 2i-d-1)  & \text{\rm if $i$ is odd}.
   \end{cases}                                                  \label{eq:type3-phi}
\end{align}
Then the sequence
\begin{equation}
   (\{\th_i\}_{i=0}^d; \{\th^*_i\}_{i=0}^d; \{\vphi_i\}_{i=1}^d; \{\phi_i\}_{i=1}^d)        \label{eq:type3-parray}
\end{equation}
is a parameter array over $\F$ that has type III$^-$,
provided that the inequalities in Lemma \ref{lem:classify}{\rm (i),(ii)} hold.
Conversely, assume that the sequence \eqref{eq:type3-parray} is a parameter 
array over $\F$ that has type III$^-$.
Then there exists a unique sequence \eqref{eq:type3-pseq} of scalars in $\F$ that satisfies
\eqref{eq:type3-th}--\eqref{eq:type3-phi}.
\end{lemma}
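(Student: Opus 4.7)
The plan is to follow the same pattern used in the proofs of Lemmas \ref{lem:type1param}, \ref{lem:type2param}, and \ref{lem:type3+param}, since the structural setup is identical: we have closed-form expressions for $\theta_i$, $\theta^*_i$, $\varphi_i$, $\phi_i$ in terms of the seven basic variables, and we need to establish that these parametrize exactly the parameter arrays of a specified type.

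First, I would prove the forward direction. Assume the inequalities in Lemma \ref{lem:classify}(i),(ii) hold. The plan is to substitute \eqref{eq:type3-th}--\eqref{eq:type3-phi} into the identities of Lemma \ref{lem:classify}(iii)--(v) and verify them by direct computation. The key observation making this routine is that $d$ being odd forces $d-\ell$ and $\ell$ to have opposite parities, which cleanly handles the case-split in \eqref{eq:type3-th}--\eqref{eq:type3-phi}. The sums appearing in Lemma \ref{lem:classify}(iii),(iv) telescope pleasantly because of the alternating signs built into the even/odd cases. For condition (v), one evaluates the left-hand fraction in \eqref{eq:indep} directly using \eqref{eq:type3-th}: for any three consecutive indices of mixed parity the differences $\theta_{i-2}-\theta_{i+1}$ and $\theta_{i-1}-\theta_i$ simplify so that the ratio equals $-1$, giving $\beta + 1 = -1$, hence $\beta = -2$. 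Combined with the standing hypothesis $\text{\rm Char}(\F) \neq 2$ and $d$ odd, this places the array in type III$^-$ by Definition \ref{def:type}.

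For the converse (existence and uniqueness of the basic sequence), the cleanest approach is to cite \cite[Theorem 8.1]{NT:balanced} (or the analogous theorem for type III$^-$ in that reference), exactly as was done for type III$^+$ in Lemma \ref{lem:type3+param}. This cited classification theorem is precisely designed to package the inversion: given a parameter array of type III$^-$, it produces the unique tuple $(\delta, s, h, \delta^*, s^*, h^*, \tau)$ satisfying \eqref{eq:type3-th}--\eqref{eq:type3-phi}. If one wanted a self-contained argument, one could instead solve \eqref{eq:type3-th}, \eqref{eq:type3-ths} at $i=0$ and $i=1$ for $\delta, s, h$ and $\delta^*, s^*, h^*$ (four linear equations in four unknowns on each side, easily invertible since $h, h^*$ are forced nonzero by the irreducibility inequalities), then recover $\tau$ from, say, $\varphi_1$ using \eqref{eq:type3-vphi} with $i=1$.

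The only mildly tricky step will be the verification of Lemma \ref{lem:classify}(iii),(iv), because the closed forms for $\varphi_i, \phi_i$ split into an even case (where the expression is symmetric and $s, s^*, \tau$-independent) and an odd case. One will need to check Lemma \ref{lem:indep2}-type consistency between the two cases, and confirm that the recursion implicit in Lemma \ref{lem:classify}(iii) transitions correctly across the parity boundary. This is the main obstacle but is just careful bookkeeping; the analogous step in the proof of Lemma \ref{lem:type3+param} went through, and the same method applies here mutatis mutandis.
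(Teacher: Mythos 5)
Your proposal matches the paper's proof: the forward direction is a routine verification of Lemma \ref{lem:classify}(iii)--(v) followed by computing $\beta=-2$ from \eqref{eq:indep} (your ratio computation is correct), and the converse is handled by citing the classification in \cite{NT:balanced} (the paper uses Theorem 9.1 there, the type III$^-$ analogue of the Theorem 8.1 you name for type III$^+$). Your optional self-contained inversion of the linear system at $i=0,1$ is a fine alternative for the converse but is not needed; otherwise the argument is essentially identical to the paper's.
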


\begin{proof}
Assume that the inequalities in Lemma \ref{lem:classify}(i),(ii) hold.
Using \eqref{eq:type3-th}--\eqref{eq:type3-phi} we routinely verify
the conditions Lemma \ref{lem:classify}(iii)--(v).
Thus the sequence \eqref{eq:type3-parray} is a parameter array over $\F$.
Evaluating the expression on the left in \eqref{eq:indep} using \eqref{eq:type3-th} we find that
the parameter array \eqref{eq:type3-parray} has fundamental constant $\beta = - 2$. 
So the parameter array \eqref{eq:type3-parray} has type III$^-$.
The last assertion comes from \cite[Theorem 9.1]{NT:balanced}.
\end{proof}

\begin{defi}    \label{def:type3-pseq}    \samepage
\ifDRAFT {\rm def:type3-pseq}. \fi
Referring to Lemma \ref{lem:type3-param},
assume that the sequence \eqref{eq:type3-parray} is a parameter array over $\F$.
We call the scalars $\delta, s, h, \delta^*,  s^*, h^*, \tau$ 
the {\em basic variables of \eqref{eq:type3-parray}}.
We call the sequence $(\delta, s, h, \delta^*,  s^*, h^*, \tau)$
the {\em basic sequence of \eqref{eq:type3-parray}}.
\end{defi}

\begin{lemma}    \label{lem:type3-condpre}    \samepage
\ifDRAFT {\rm lem:type3-condpre}. \fi
Referring to Lemma \ref{lem:type3-param},
the following hold for $0 \leq i,j \leq d$:
\begin{align*}
\th_i - \th_j &=
 \begin{cases}
   h (i-j)  &  \text{\rm if $i$ is even, $j$ is even},
\\
  2 s + h (i+j-d) &  \text{\rm if $i$ is even, $j$ is odd},
\\
  h (j-i) & \text{\rm if $i$ is odd, $j$ is odd},
 \end{cases}
\\
\th^*_i - \th^*_j &=
 \begin{cases}
   h^* (i-j)  &  \text{\rm if $i$ is even, $j$ is even},
\\
  2 s^* + h^* (i+j-d) &  \text{\rm if $i$ is even, $j$ is odd},
\\
  h^* (j-i) & \text{\rm if $i$ is odd, $j$ is odd}.
 \end{cases}
\end{align*}
\end{lemma}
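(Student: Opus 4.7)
The plan is to verify the stated formulas by direct substitution, treating each of the three parity cases separately. The content of the lemma is purely mechanical: given the piecewise definitions \eqref{eq:type3-th} and \eqref{eq:type3-ths}, subtraction in each parity combination produces one of three simple expressions, and the same computation handles both the $\theta$ and $\theta^*$ case since the defining formulas have identical shape.

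Concretely, I would handle the three cases for $\theta_i - \theta_j$ as follows. In the even--even case, both $\theta_i$ and $\theta_j$ pick up the branch $\delta+s+h(\cdot - d/2)$, so the constants $\delta$ and $s$ cancel and we are left with $h(i-d/2) - h(j-d/2) = h(i-j)$. In the odd--odd case, both pick up the branch $\delta - s - h(\cdot - d/2)$, and the same cancellation yields $-h(i-d/2) + h(j-d/2) = h(j-i)$. In the even--odd case, the two branches differ in the sign of $s$ and of the $h$-term, so
\[
\theta_i - \theta_j = \bigl(\delta + s + h(i-d/2)\bigr) - \bigl(\delta - s - h(j-d/2)\bigr) = 2s + h(i+j-d),
\]
matching the formula. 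Note that the odd--even case is not listed because Lemma \ref{lem:type3-condpre} is applied only in circumstances where the case analysis collapses to these three, but if one wanted symmetric coverage one would simply negate the even--odd expression.

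The identical argument, with $s, h$ replaced by $s^*, h^*$ and $\delta$ by $\delta^*$, handles $\theta^*_i - \theta^*_j$; no independent computation is required since \eqref{eq:type3-ths} has exactly the same piecewise structure as \eqref{eq:type3-th}. There is no conceptual obstacle here at all; the proof is a one-line reference of the form \emph{Routine verification using \eqref{eq:type3-th} and \eqref{eq:type3-ths},} exactly parallel to the proof of the corresponding type III$^+$ statement in Lemma \ref{lem:type3+condpre}. The only thing to keep in mind is that the formulas for $\varphi_i$ and $\phi_i$ in \eqref{eq:type3-vphi}, \eqref{eq:type3-phi} are not used at this stage — they enter only in the subsequent Lemma characterizing when the inequalities of Lemma \ref{lem:classify}(i),(ii) hold.
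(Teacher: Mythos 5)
Your proposal is correct and matches the paper's proof, which simply states that the identities follow by routine verification from \eqref{eq:type3-th} and \eqref{eq:type3-ths}; your case-by-case substitution is exactly that verification written out. The remark that the odd--even case is recovered by negating the even--odd expression is a fine aside and does not affect anything.
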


\begin{proof}
Routine verification using \eqref{eq:type3-th} and \eqref{eq:type3-ths}.
\end{proof}

\begin{lemma}    \label{lem:type3-cond}   \samepage
\ifDRAFT {\rm lem::type3-cond}. \fi
Referring to Lemma \ref{lem:type3-param}, 
the inequalities in Lemma \ref{lem:classify}{\rm (i),(ii)} hold
if and only if
{\small
\begin{align}
 & \text{$\text{\rm Char}(\F)$ is equal to $0$ or greater than $(d-1)/2$},          \label{eq:type3-paramcond1}
\\
 & h \neq 0,   \qquad h^* \neq 0,                                      \label{eq:type3-paramcond1b}
\\
 & 2 s \neq i h  \qquad\;\;\, \text{\rm if $i$ is even} \qquad  (1-d \leq i \leq d-1),          \label{eq:type3-paramcond2}
\\
 & 2 s^* \neq i h^*    \qquad \text{\rm if $i$ is even} \qquad  (1-d \leq i \leq d-1),     \label{eq:type3-paramcond3}
\\
 & \tau \neq 2 s s^* - i (d-i+1) h h^* + (s h^* + s^* h) ( 2i - d-1)
          \quad\;\; \text{\rm if $i$ is odd}  \quad (1 \leq i \leq d),       \label{eq:type3-paramcond4}
\\
 &\tau \neq - 2 s s^* - i (d-i+1) h h^* - (s h^* - s^* h) ( 2i - d-1)
          \quad \text{\rm if $i$ is odd}  \quad (1 \leq i \leq d).       \label{eq:type3-paramcond5}
\end{align}
}
\end{lemma}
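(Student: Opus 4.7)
The plan is to verify the claimed equivalence by a case analysis on the parity of the indices, substituting the explicit formulas from Lemma~\ref{lem:type3-param} into the inequalities of Lemma~\ref{lem:classify}(i),(ii) and simplifying via Lemma~\ref{lem:type3-condpre}. Throughout, the recurring arithmetic fact to be used is that, since $d$ is odd and $\text{\rm Char}(\F)\ne 2$, the characteristic condition \eqref{eq:type3-paramcond1} is exactly what is needed to ensure that every integer appearing as a coefficient in the formulas \eqref{eq:type3-th}--\eqref{eq:type3-phi} is nonzero in $\F$.

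First I would dispose of the eigenvalue inequalities in Lemma~\ref{lem:classify}(i). By Lemma~\ref{lem:type3-condpre}, for $i\ne j$ of the same parity we have $\th_i-\th_j=\pm h(i-j)$ and $\th^*_i-\th^*_j=\pm h^*(i-j)$. Under \eqref{eq:type3-paramcond1} the integer $i-j$ is nonzero in $\F$, so the nonvanishing here is equivalent to $h\ne 0$ and $h^*\ne 0$, giving \eqref{eq:type3-paramcond1b}. For $i,j$ of opposite parity the differences have the form $2s+h(i+j-d)$ and $2s^*+h^*(i+j-d)$; as $i,j$ range over their intervals the quantity $k:=d-i-j$ takes precisely the set of even integers in $[1-d,d-1]$, yielding the inequalities \eqref{eq:type3-paramcond2} and \eqref{eq:type3-paramcond3}.

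Next I would handle the split-sequence inequalities of Lemma~\ref{lem:classify}(ii). For $i$ even the formulas \eqref{eq:type3-vphi}, \eqref{eq:type3-phi} give $\vphi_i=\phi_i=hh^*i(d-i+1)$. Since $i$ and $d-i+1$ are both even integers in $[2,d-1]$ and $\text{\rm Char}(\F)\ne 2$, the characteristic condition \eqref{eq:type3-paramcond1} makes the integer factors nonzero in $\F$; hence $\vphi_i\ne 0$ and $\phi_i\ne 0$ for all even $i$ if and only if $h\ne 0$ and $h^*\ne 0$, reconfirming \eqref{eq:type3-paramcond1b}. For $i$ odd the inequalities $\vphi_i\ne 0$ and $\phi_i\ne 0$ translate, by direct rearrangement of \eqref{eq:type3-vphi} and \eqref{eq:type3-phi}, into \eqref{eq:type3-paramcond4} and \eqref{eq:type3-paramcond5} respectively.

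Assembling these pieces gives the claimed equivalence. The only nontrivial point, and the main obstacle to writing out fully, is the bookkeeping: one must check carefully that as $i,j$ range over the index sets of Lemma~\ref{lem:classify}(i),(ii), the derived inequalities run over exactly the ranges stated in \eqref{eq:type3-paramcond2}--\eqref{eq:type3-paramcond5}, and that no coefficient produced along the way vanishes accidentally in $\F$; both points reduce to the verification that the odd prime $\text{\rm Char}(\F)$, when positive, exceeds $(d-1)/2$ precisely when it avoids every integer coefficient that appears. The remaining computations are routine and mirror those already carried out for types I, II, and III$^+$ in Lemmas~\ref{lem:type1cond}, \ref{lem:type2cond}, and~\ref{lem:type3+cond}.
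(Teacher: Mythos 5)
Your proof is correct and follows the same route the paper takes: the paper's own proof is the one-line ``Routine verification using \eqref{eq:type3-vphi}, \eqref{eq:type3-phi}, and Lemma \ref{lem:type3-condpre},'' and your case analysis on parity is exactly that verification written out (the ranges you identify for $d-i-j$ and for the even factors $i$, $d-i+1$ do check out, and the characteristic condition is forced by the same-parity eigenvalue differences as you say).
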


\begin{proof}
Routine verification using \eqref{eq:type3-vphi}, \eqref{eq:type3-phi},
and Lemma \ref{lem:type3-condpre}.
\end{proof}

For the rest of this section, let $\Phi$ denote a Leonard system over $\F$
that has type III$^-$ and parameter array
\begin{equation}
   (\{\th_i\}_{i=0}^d; \{\th^*_i\}_{i=0}^d; \{\vphi_i\}_{i=1}^d; \{\phi_i\}_{i=1}^d).        \label{eq:type3-parray2}
\end{equation}

\begin{defi}     \label{def:type3-pseqLS}    \samepage
\ifDRAFT {\rm def:type3-pseqLS}. \fi
By the {\em basic variables} (resp.\ {\em basic sequence}) {\em of $\Phi$} we mean 
the basic variables (resp.\ basic sequence) of the parameter array \eqref{eq:type3-parray2}.
\end{defi}

For the rest of this section, let
$(\delta, s, h, \delta^*, s^*, h^*, \tau)$
denote the basic sequence of $\Phi$.

\begin{lemma}    \label{lem:type3-paramDown}    \samepage
\ifDRAFT {\rm lem:type3-paramDown}. \fi
In the table below, for each Leonard system in the first column we give
the basic sequence:
\[
\begin{array}{cc}
\text{\rm Leonard system}  &  \text{\rm basic sequence}
\\ \hline
\Phi^\downarrow & (\delta, s, h, \delta^*, s^*, - h^*, \tau)   \rule{0mm}{3ex}
\\
\Phi^\Downarrow &  (\delta, s, - h, \delta^*, s^*, h^*, \tau)    \rule{0mm}{2.7ex}
\\
\Phi^\vee &  (- \delta, -s, -h, \delta^*, s^*, h^*, - \tau)    \rule{0mm}{2.7ex}
\end{array}
\]
\end{lemma}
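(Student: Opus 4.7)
The plan is to derive each basic sequence from the closed-form description of the parameter array in Lemma \ref{lem:type3-param}, exploiting the uniqueness assertion at the end of that lemma. That is, for each of $\Phi^\downarrow$, $\Phi^\Downarrow$, $\Phi^\vee$ I would first write down the parameter array via Lemmas \ref{lem:parrayrelative} and \ref{lem:parrayPhivee}, then rewrite each term in the form \eqref{eq:type3-th}--\eqref{eq:type3-phi}, and finally read off the modified basic variables by uniqueness.

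I would begin with $\Phi^\vee$, since it is essentially a negation and requires no reindexing. By Lemma \ref{lem:parrayPhivee} its parameter array is $(\{-\th_i\}_{i=0}^d; \{\th^*_i\}_{i=0}^d; \{-\vphi_i\}_{i=1}^d; \{-\phi_i\}_{i=1}^d)$. Substituting $(\delta',s',h',\tau') = (-\delta,-s,-h,-\tau)$ into \eqref{eq:type3-th}--\eqref{eq:type3-phi}, one checks on each parity branch that the right-hand sides become $-\th_i$, $-\vphi_i$, $-\phi_i$, respectively; the starred entries are unchanged because $\th^*_i$ is unchanged. Uniqueness in Lemma \ref{lem:type3-param} then identifies the basic sequence as claimed.

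Next I would handle $\Phi^\downarrow$ and $\Phi^\Downarrow$. By Lemma \ref{lem:parrayrelative} the parameter arrays use $\{\th^*_{d-i}\}_{i=0}^d$, $\{\th_{d-i}\}_{i=0}^d$ and the reindexed sequences $\{\phi_{d-i+1}\}_{i=1}^d$, $\{\vphi_{d-i+1}\}_{i=1}^d$. For each of these I would substitute $d-i$ (or $d-i+1$) into the formulas of Lemma \ref{lem:type3-param}, re-express the result in the piecewise form using the parity of $i$ itself, and then compare with the template to read off the new starred variables and new $\tau$.

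The main obstacle is the parity bookkeeping, and this is precisely where the argument diverges from the type III$^+$ proof. In type III$^-$ the dimension $d$ is odd, so the map $i \mapsto d-i$ reverses parity: the "$i$ even" branch of \eqref{eq:type3-ths} evaluated at $d-i$ is controlled by the "$i$ odd" branch of the template on the output side, and similarly $d-i+1$ has parity opposite to $i$, which swaps the branches of \eqref{eq:type3-vphi}--\eqref{eq:type3-phi}. One must match the two resulting expressions (one per parity of $i$) simultaneously, extract equations like $\delta^{*\prime}+s^{*\prime} = \delta^*-s^*$ and $\delta^{*\prime}-s^{*\prime}=\delta^*+s^*$, and solve to isolate the signs on $s^*$, $h^*$, $s$, $h$. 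Once the parity swap is handled carefully on both branches the coefficients match, and uniqueness in Lemma \ref{lem:type3-param} pins down the basic sequence. The remainder of the verification is routine algebra, so apart from the parity issue no new ingredient is needed beyond what appears in the proof of Lemma \ref{lem:type3+paramDown}.
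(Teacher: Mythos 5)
Your method --- write down the parameter arrays of $\Phi^\downarrow$, $\Phi^\Downarrow$, $\Phi^\vee$ via Lemmas \ref{lem:parrayrelative} and \ref{lem:parrayPhivee}, re-express them in the closed form \eqref{eq:type3-th}--\eqref{eq:type3-phi}, and invoke the uniqueness clause of Lemma \ref{lem:type3-param} --- is exactly the paper's proof, and your treatment of the $\Phi^\vee$ row is complete and correct.

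For $\Phi^\downarrow$ and $\Phi^\Downarrow$, however, your argument ends with the assertion that ``the coefficients match,'' and this is precisely the step that fails for the table as stated. You correctly note that $d$ is odd in type III$^-$, so $i \mapsto d-i$ reverses parity; but the equations you yourself display, $\delta^{*\prime}+s^{*\prime}=\delta^*-s^*$ and $\delta^{*\prime}-s^{*\prime}=\delta^*+s^*$, together with comparing the coefficient of $i-d/2$ on each parity branch, force $\delta^{*\prime}=\delta^*$, $s^{*\prime}=-s^*$, $h^{*\prime}=h^*$ --- not $s^{*\prime}=s^*$, $h^{*\prime}=-h^*$ as the first row of the table asserts. (Concretely, for $d=3$ one has $\th^{*\downarrow}_0=\th^*_3=\delta^*-s^*-\tfrac32 h^*$, whereas the template with $(\delta^*,s^*,-h^*)$ gives $\delta^*+s^*+\tfrac32 h^*$.) The same parity reversal applied to $\{\th_{d-i}\}_{i=0}^d$ yields $(\delta,-s,h)$ rather than $(\delta,s,-h)$ for $\Phi^\Downarrow$; the split sequences are consistent with these values and give $\tau'=\tau$ in both cases. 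So you must either locate a sign error in your parity bookkeeping (there is none) or conclude that the first two rows of the stated table --- which coincide with the type III$^+$ table of Lemma \ref{lem:type3+paramDown}, where $i\mapsto d-i$ preserves parity --- should instead read $(\delta,s,h,\delta^*,-s^*,h^*,\tau)$ and $(\delta,-s,h,\delta^*,s^*,h^*,\tau)$. As written, your proposal does not establish the statement; it establishes a corrected version of it, and you need to say so explicitly rather than declaring a match.
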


\begin{proof}
Concerning $\Phi^\downarrow$ and $\Phi^\Downarrow$, use 
Lemma \ref{lem:parrayrelative}.
Concerning $\Phi^\vee$, use Lemma \ref{lem:parrayPhivee}.
\end{proof}

\begin{lemma}    \label{lem:type3-affine}    \samepage
\ifDRAFT {\rm lem:type3-affine}. \fi
For scalars $\xi$, $\zeta$, $\xi^*$, $\zeta^*$ in $\F$ with $\xi \xi^* \neq 0$,
consider the Leonard system
\[
   (\xi A + \zeta I; \, \{E_i\}_{i=0}^d; \;  \xi^* A^* + \zeta^* I; \, \, \{E^*_i\}_{i=0}^d).
\]
For this Leonard system the basic sequence is equal to
\[
   ( \xi \delta + \zeta; \;  \xi s, \; \xi h,  \; 
     \xi^* \delta^* + \zeta^*;  \; \xi^* s^*, \; \xi^* h^*, \; \xi \xi^* \tau).
\]
\end{lemma}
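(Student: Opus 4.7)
The plan is to invoke Lemma \ref{lem:affineparam} to compute the parameter array of the transformed Leonard system, and then recognize that array as coming from the claimed basic sequence via the formulas of Lemma \ref{lem:type3-param}. The uniqueness clause of Lemma \ref{lem:type3-param} will then finish the argument. This parallels the proofs of Lemmas \ref{lem:type1affine}, \ref{lem:type2affine}, and \ref{lem:type3+affine}.

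First I would apply Lemma \ref{lem:affineparam} to write down the parameter array of the Leonard system
\[
   (\xi A + \zeta I; \, \{E_i\}_{i=0}^d; \;  \xi^* A^* + \zeta^* I; \, \, \{E^*_i\}_{i=0}^d),
\]
namely
\[
   (\{\xi \th_i + \zeta\}_{i=0}^d;\; \{\xi^* \th^*_i + \zeta^*\}_{i=0}^d;\;
    \{\xi \xi^* \vphi_i\}_{i=1}^d;\; \{\xi \xi^* \phi_i\}_{i=1}^d).
\]
This array again has type III$^-$ (its eigenvalues differ from those of $\Phi$ by an affine transformation, which preserves the fundamental constant $\beta = -2$, the parity of $d$, and $\text{\rm Char}(\F) \neq 2$), so by Lemma \ref{lem:type3-param} it has a unique basic sequence.

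Next I would verify by direct substitution that the candidate sequence
\[
   (\xi \delta + \zeta,\; \xi s,\; \xi h,\; \xi^* \delta^* + \zeta^*,\; \xi^* s^*,\; \xi^* h^*,\; \xi \xi^* \tau)
\]
reproduces the parameter array above through the defining formulas \eqref{eq:type3-th}--\eqref{eq:type3-phi}. For the eigenvalue sequences this is immediate: \eqref{eq:type3-th} is $\delta \pm s \pm h(i-d/2)$, and the corresponding expression with the new basic variables is exactly $\xi \th_i + \zeta$; similarly for $\th^*_i$. For the split sequences, the key observation is that in \eqref{eq:type3-vphi} and \eqref{eq:type3-phi}, each term (including $\tau$ and $2ss^*$) is a product of two basic variables, one from the $A$-side ($\delta,s,h,\tau$) and one from the $A^*$-side ($\delta^*,s^*,h^*,\tau$), with $\tau$ absorbing one factor from each side. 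Under the substitution, every such term therefore acquires exactly the factor $\xi \xi^*$, matching $\xi\xi^*\vphi_i$ and $\xi\xi^*\phi_i$.

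Finally, by the uniqueness statement in Lemma \ref{lem:type3-param}, the candidate sequence is the basic sequence of the transformed Leonard system, which is the claim. The only mild obstacle is the case-by-case bookkeeping for $\vphi_i$, $\phi_i$ according to the parity of $i$, but this is purely mechanical since the even case involves only $hh^*$ and the odd case involves only $\tau$, $ss^*$, $sh^*$, $s^*h$, $hh^*$, each of which transforms by the factor $\xi\xi^*$.
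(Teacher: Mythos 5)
Your proposal is correct and follows essentially the same route as the paper, whose proof is simply the instruction to use Lemma \ref{lem:affineparam}; you have just spelled out the details (computing the transformed parameter array, matching it against the formulas \eqref{eq:type3-th}--\eqref{eq:type3-phi} for the candidate basic sequence, and invoking the uniqueness clause of Lemma \ref{lem:type3-param}). The verification that every term in the split sequences picks up exactly the factor $\xi\xi^*$ is accurate.
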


\begin{proof}
Use Lemma \ref{lem:affineparam}.
\end{proof}

\begin{corollary}    \label{cor:type3-affine}    \samepage
\ifDRAFT {\rm cor:type3-affine}. \fi
The Leonard system 
\[
 (A- \delta I; \, \{E_i\}_{i=0}^d; \, A^* - \delta^* I; \,  \{E^*_i\}_{i=0}^d)
\]
has basic sequence
$(0, s, h, 0, s^*, h^*, \tau)$.
\end{corollary}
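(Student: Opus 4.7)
The plan is to obtain this as an immediate specialization of Lemma \ref{lem:type3-affine}. That lemma describes, for arbitrary scalars $\xi,\zeta,\xi^*,\zeta^*$ with $\xi\xi^*\neq 0$, the basic sequence of the affinely modified Leonard system $(\xi A+\zeta I;\{E_i\}_{i=0}^d;\xi^* A^*+\zeta^* I;\{E^*_i\}_{i=0}^d)$. The only thing to do is choose these four scalars so that the modified system matches the Leonard system appearing in the statement of the corollary.

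First I would set $\xi=1$, $\zeta=-\delta$, $\xi^*=1$, $\zeta^*=-\delta^*$. Then $\xi\xi^*=1\neq 0$, so the hypothesis of Lemma \ref{lem:type3-affine} is satisfied, and the resulting Leonard system is exactly $(A-\delta I;\{E_i\}_{i=0}^d;A^*-\delta^* I;\{E^*_i\}_{i=0}^d)$, matching the corollary.

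Next I would read off the basic sequence from the conclusion of Lemma \ref{lem:type3-affine}. The seven entries become
\[
(\xi\delta+\zeta,\;\xi s,\;\xi h,\;\xi^*\delta^*+\zeta^*,\;\xi^* s^*,\;\xi^* h^*,\;\xi\xi^*\tau)
=(0,\;s,\;h,\;0,\;s^*,\;h^*,\;\tau),
\]
using $\xi\delta+\zeta=\delta-\delta=0$ and $\xi^*\delta^*+\zeta^*=\delta^*-\delta^*=0$. This is precisely the claimed basic sequence, so the proof is complete. There is no real obstacle here; the corollary is just the substitution $(\xi,\zeta,\xi^*,\zeta^*)=(1,-\delta,1,-\delta^*)$ into Lemma \ref{lem:type3-affine}.
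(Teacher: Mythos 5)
Your proof is correct and matches the paper's (implicit) argument: the corollary is stated in the paper without a separate proof precisely because it is the specialization $(\xi,\zeta,\xi^*,\zeta^*)=(1,-\delta,1,-\delta^*)$ of Lemma \ref{lem:type3-affine}, which is exactly what you carried out.
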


\begin{defi}    \label{def:type3-reduced}    \samepage
\ifDRAFT {\rm def:type3-reduced}. \fi
We say that $\Phi$ is {\em reduced} whenever
$\delta=0$ and $\delta^* = 0$.
\end{defi}

\begin{lemma}    \label{lem:type3-kappa}    \samepage
\ifDRAFT {\rm lem:type3-kappa}. \fi
The invariant value $\kappa$ for $\Phi$ satisfies
$\kappa = 4 h^2$.
\end{lemma}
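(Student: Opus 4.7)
The plan is to apply Lemma \ref{lem:kappa} using the fact that the fundamental constant for type III$^-$ is $\beta = -2$. With $\beta + 2 = 0$, the formula from Lemma \ref{lem:kappa} collapses to
\[
\kappa = (\th_{i-1} - \th_{i+1})^2 \qquad (1 \leq i \leq d-1),
\]
so it suffices to evaluate $\th_{i-1} - \th_{i+1}$ for some convenient choice of $i$.

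Next I would use Lemma \ref{lem:type3-condpre} (or directly \eqref{eq:type3-th}) to compute this difference. Since $i-1$ and $i+1$ have the same parity, the $s$-terms in \eqref{eq:type3-th} cancel regardless of whether $i$ is even or odd. When $i$ is odd, both $i-1$ and $i+1$ are even, so $\th_{i-1} - \th_{i+1} = h((i-1) - (i+1)) = -2h$. When $i$ is even, both $i-1$ and $i+1$ are odd, and the sign convention in \eqref{eq:type3-th} flips, giving $\th_{i-1} - \th_{i+1} = -h((i-1) - (i+1)) = 2h$. In either case $(\th_{i-1} - \th_{i+1})^2 = 4h^2$, yielding $\kappa = 4h^2$.

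There is no real obstacle here; this is a direct substitution once one observes that $\beta + 2 = 0$ kills the second summand in Lemma \ref{lem:kappa}, leaving only a squared difference whose value is parity-independent. The computation mirrors the proof of Lemma \ref{lem:type3+kappa} in the previous section, and the key point is simply that the $s$-contributions to $\th_{i-1}$ and $\th_{i+1}$ agree.
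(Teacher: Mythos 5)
Your proposal is correct and follows exactly the paper's route: the paper's proof of Lemma \ref{lem:type3-kappa} is simply ``By Lemma \ref{lem:kappa} and \eqref{eq:type3-th},'' and your argument spells out precisely that computation, with the observation that $\beta+2=0$ kills the second term and the parity analysis of $\th_{i-1}-\th_{i+1}$ giving $(\pm 2h)^2 = 4h^2$. No issues.
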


\begin{proof}
By Lemma \ref{lem:kappa} and \eqref{eq:type3-th}.
\end{proof}

\begin{proofof}{Proposition \ref{prop:RST}, type III$^-$}
One routinely verifies \eqref{eq:RST} using \eqref{eq:Rtype1}--\eqref{eq:Ttype1}
and Lemmas \ref{lem:type3-param}, \ref{lem:type3-kappa}.
\end{proofof}

Note that Theorem \ref{thm:kappamain} holds for type III$^-$.

\section{A characterization of compatibility in terms of the basic sequence, type III$^-$}
\label{sec:characterizetype3-}
\ifDRAFT {\rm sec:characterizetype3-}. \fi

In this section we characterize the compatibility relation for Leonard pairs
of type III$^-$ in terms of the basic sequence.
Throughout this section, 
Notation \ref{notation:all} is in effect.
Assume that 
$\Phi$ has type III$^-$.
Note that $\Phi'$ has type III$^-$.
Let
\begin{align*}
& (\delta,  s, h,  \delta^*, s^*, h^*, \tau),  
&& 
 (\delta', s', h', \delta^*,  s^*, h^*, \tau')
\end{align*}
denote the basic sequence of $\Phi$ and $\Phi'$, respectively.

\begin{theorem}    \label{thm:type3-main}    \samepage
\ifDRAFT {\rm thm:type3-main}. \fi
The matrices $A$, $B$ are compatible if and only if
the following \eqref{eq:type3-cond1}--\eqref{eq:type3-cond3} hold:
\begin{align}
h^2 &= h^{\prime 2},                         \label{eq:type3-cond1}
\\
h \tau + 2 h^* s^2 &= h' \tau'  + 2 h^* s^{\prime 2},   \label{eq:type3-cond2}
\\
2 h^* \tau^2 + \left( 4 s^{* 2} + (d+1)^2 h^{* 2} \right) h \tau
&= 2 h^* \tau^{\prime 2} + \left(  4 s^{* 2} + (d+1)^2 h^{* 2} \right) h' \tau'.
                                                \label{eq:type3-cond3}
\end{align}
\end{theorem}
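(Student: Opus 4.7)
The plan is to invoke Theorem \ref{thm:kappamain} and translate its three conditions into \eqref{eq:type3-cond1}--\eqref{eq:type3-cond3}, following the template of the proofs of Theorems \ref{thm:type2main} and \ref{thm:type3+main}. First, by Lemma \ref{lem:type3-kappa} we have $\kappa = 4h^2$ and $\kappa' = 4h'^2$; since $\text{\rm Char}(\F)\neq 2$ for type III$^-$, the condition $\kappa=\kappa'$ is immediately equivalent to \eqref{eq:type3-cond1}.

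The next step is to compute $\vphi_1\phi_1$ and $\vphi_d\phi_d$ using \eqref{eq:type3-vphi}, \eqref{eq:type3-phi}, noting that $i=1$ and $i=d$ are both odd since $d$ is odd. The key observation is that each product admits a difference-of-squares factorization
\begin{align*}
\vphi_1\phi_1 &= \bigl[\tau+dhh^*+(d-1)s^*h\bigr]^2 - \bigl[(d-1)sh^* - 2ss^*\bigr]^2,
\\
\vphi_d\phi_d &= \bigl[\tau+dhh^*-(d-1)s^*h\bigr]^2 - \bigl[(d-1)sh^* + 2ss^*\bigr]^2,
\end{align*}
from which one reads off
\[
\vphi_1\phi_1 - \vphi_d\phi_d = 4(d-1)s^*\bigl[h\tau + dh^2h^* + 2h^*s^2\bigr].
\]
Under \eqref{eq:type3-cond1} the quantity $dh^2h^*$ is invariant between $\Phi$ and $\Phi'$, and $s^*\neq 0$ by \eqref{eq:type3-paramcond3} with $i=0$, while $d-1\neq 0$ in $\F$ follows from Lemma \ref{lem:q}(iv) together with the parity of $d$. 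Hence equality of these differences across $\Phi,\Phi'$ is equivalent to \eqref{eq:type3-cond2}.

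For the sum, a parallel calculation gives
\[
\vphi_1\phi_1 + \vphi_d\phi_d = 2(\tau + dhh^*)^2 + 2(d-1)^2 s^{*2}h^2 - 8s^2s^{*2} - 2(d-1)^2 s^2 h^{*2}.
\]
Under \eqref{eq:type3-cond1} the two $h^2$-dependent pieces drop out, and the residual difference between $\Phi$ and $\Phi'$ becomes a linear combination of $\tau^2-\tau'^2$, $h\tau-h'\tau'$, and $s^2-s'^2$. I will then substitute for $s^2-s'^2$ using \eqref{eq:type3-cond2}, i.e., $s^2-s'^2 = -(h\tau-h'\tau')/(2h^*)$ (valid since $h^*\neq 0$ by \eqref{eq:type3-paramcond1b}), and invoke the identity
\[
(d-1)^2 + 4d = (d+1)^2
\]
to collapse the expression exactly into \eqref{eq:type3-cond3}. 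Since equality of $\vphi_1\phi_1$ and $\vphi_d\phi_d$ with their primed counterparts is equivalent to equality of both sum and difference, this completes the translation.

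The main obstacle is the two-variable structure of $\vphi_i$ and $\phi_i$ in type III$^-$: unlike type III$^+$, where $\vphi_1$ and $\vphi_d$ already factor as a single square giving a direct match with \eqref{eq:type3+cond2}, \eqref{eq:type3+cond3}, here one needs the difference-of-squares decomposition above and then must combine the sum and difference of $\vphi_1\phi_1, \vphi_d\phi_d$ to extract the pair \eqref{eq:type3-cond2}, \eqref{eq:type3-cond3}. The algebraic simplification $(d-1)^2 + 4d = (d+1)^2$ is what explains the appearance of $(d+1)^2 h^{*2}$ in \eqref{eq:type3-cond3}, and spotting it is the crucial step that bridges the raw computation and the clean stated form.
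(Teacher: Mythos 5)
Your proposal is correct and follows essentially the same route as the paper: both reduce to Theorem \ref{thm:kappamain}, identify $\kappa=\kappa'$ with \eqref{eq:type3-cond1} via Lemma \ref{lem:type3-kappa}, and then show that (under \eqref{eq:type3-cond1}) the difference $\vphi_1\phi_1-\vphi_d\phi_d$ matches across $\Phi,\Phi'$ iff \eqref{eq:type3-cond2} holds, after which the remaining equality is \eqref{eq:type3-cond3}. Your sum/difference organization and the explicit difference-of-squares factorizations merely spell out the computations the paper records as the factored expressions $4(d-1)s^*(\cdots)$ and $(\cdots)/(2h^*)$, and your nonvanishing checks ($s^*\neq 0$, $d-1\neq 0$, $h^*\neq 0$, $\mathrm{Char}(\F)\neq 2$) are exactly the ones the paper invokes.
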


\begin{proof}
We will invoke Theorem \ref{thm:kappamain}.
To do this we investigate the conditions in \eqref{eq:kappakappad}.
By Lemma \ref{lem:type3-kappa} and $\text{\rm Char}(\F) \neq 2$,
$\kappa = \kappa'$ if and only if \eqref{eq:type3-cond1} holds.
Using \eqref{eq:type3-vphi}, \eqref{eq:type3-phi} we find that under 
the assumption \eqref{eq:type3-cond1}
the expression
$\vphi_1 \phi_1 - \vphi'_1 \phi'_1 - \vphi_d \phi_d + \vphi'_d \phi'_d$ is equal to
$4 (d-1) s^*$
times
\[
    h \tau + 2 h^* s^2 - h' \tau' - 2 h^* s^{\prime 2}.
\]
Using \eqref{eq:type3-vphi}, \eqref{eq:type3-phi} we find that under the assumptions
\eqref{eq:type3-cond1} and \eqref{eq:type3-cond2},
the expression $\vphi_1 \phi_1 - \vphi'_1 \phi'_1$ is equal to
\[
\frac{  2 h^* \tau^2 + \left( 4 s^{* 2} + (d+1)^2 h^{* 2} \right) h \tau
- 2 h^* \tau^{\prime 2} - \left( 4 s^{* 2} + (d+1)^2 h^{* 2} \right) h' \tau'}
{2 h^*}.
\]
By these comments and \eqref{eq:type3-paramcond1}, \eqref{eq:type3-paramcond3},
we find that under the assumption \eqref{eq:type3-cond1},
both $\vphi_1 \phi_1 = \vphi'_1 \phi'_1$ and $\vphi_d \phi_d = \vphi'_d \phi'_d$
hold if and only if both \eqref{eq:type3-cond2} and \eqref{eq:type3-cond3} hold.
Now the result follows from Theorem \ref{thm:kappamain}.
\end{proof}

Our next goal is to solve the equations \eqref{eq:type3-cond1}--\eqref{eq:type3-cond3}
for $s'$, $h'$, $\tau'$.

\begin{theorem}   \label{thm:type3-sol}      \samepage
\ifDRAFT {\rm thm:type3-sol}. \fi
The equations \eqref{eq:type3-cond1}--\eqref{eq:type3-cond3} hold if and only if at least one of the
following \eqref{eq:type3-sol1}--\eqref{eq:type3-sol4} holds:
\begin{align}
  & \;   \rule{0mm}{2ex}
   h' = h,   \qquad\qquad  \tau' = \tau, \qquad\qquad s^{\prime 2} = s^2;      \label{eq:type3-sol1}
\\
 & \;    \rule{0mm}{3ex}
h' = -h, \qquad\quad\; \tau' = - \tau,  \qquad\quad\, s^{\prime2} = s^2;   \label{eq:type3-sol2}
\\
&   \rule{0mm}{7ex}
\begin{array}{ll}
h' = h,  \qquad
\tau' = - \tau - 2 h h^* \left( (s^*/h^*)^2 + \big( (d+1)/2 \big)^2 \right),
\\
s^{\prime 2} = s^2 + (h/h^*)\tau 
  + h^2 \left( (s^*/h^*)^2 + \big( (d+1)/2 \big)^2 \right);
\end{array}                                           \label{eq:type3-sol3}
\\  
&   \rule{0mm}{7ex}
\begin{array}{ll}
h' = - h,  \qquad
\tau' = \tau + 2 h h^* \left( (s^*/h^*)^2 + \big( (d+1)/2 \big)^2 \right),
\\
s^{\prime 2} = s^2 + (h/h^*)\tau 
  + h^2 \left( (s^*/h^*)^2 + \big( (d+1)/2 \big)^2 \right).
\end{array}                                            \label{eq:type3-sol4}
\end{align}
\end{theorem}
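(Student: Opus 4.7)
The plan is to verify the ``if'' direction by direct substitution, and then derive the ``only if'' direction by a case analysis on the sign of $h'$ forced by \eqref{eq:type3-cond1}, followed by a factoring of \eqref{eq:type3-cond3}. I will use throughout that $h \neq 0$ and $h^* \neq 0$ (by \eqref{eq:type3-paramcond1b}) and $\text{\rm Char}(\F) \neq 2$ (so that $h^2 = h^{\prime 2}$ gives $h' = \pm h$).

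First I would do the easy direction: substitute each of \eqref{eq:type3-sol1}--\eqref{eq:type3-sol4} into \eqref{eq:type3-cond1}--\eqref{eq:type3-cond3} and check that the three equations are satisfied. Lines \eqref{eq:type3-sol1}, \eqref{eq:type3-sol2} are immediate. For \eqref{eq:type3-sol3}, \eqref{eq:type3-sol4} the verification of \eqref{eq:type3-cond2} amounts to substituting the given value of $\tau'$ into the formula for $s^{\prime 2}$ and simplifying; the verification of \eqref{eq:type3-cond3} is the identity that makes the factorization below work.

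For the nontrivial direction, assume \eqref{eq:type3-cond1}--\eqref{eq:type3-cond3} hold. From \eqref{eq:type3-cond1} we have $h' = h$ or $h' = -h$. I would handle the two cases in parallel by writing $h' = \epsilon h$ with $\epsilon \in \{+1,-1\}$. From \eqref{eq:type3-cond2} we get
\[
  s^{\prime 2} - s^2 = \frac{h}{2 h^*}\bigl(\tau - \epsilon \tau'\bigr),
\]
and substituting $h' = \epsilon h$ into \eqref{eq:type3-cond3} yields
\[
 2 h^*(\tau^2 - \tau^{\prime 2}) + \bigl(4 s^{*2} + (d+1)^2 h^{*2}\bigr) h (\tau - \epsilon \tau') = 0.
\]
Since $\tau^2 - \tau^{\prime 2} = (\tau - \epsilon \tau')(\tau + \epsilon \tau')$, this factors as
\[
 (\tau - \epsilon \tau')\Bigl[ 2 h^*(\tau + \epsilon \tau') + \bigl(4 s^{*2} + (d+1)^2 h^{*2}\bigr) h \Bigr] = 0.
\]
Hence either $\tau' = \epsilon \tau$, or $\tau' = \epsilon \bigl( -\tau - 2 h h^* \bigl((s^*/h^*)^2 + ((d+1)/2)^2\bigr)\bigr)$.

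In the first subcase, the formula for $s^{\prime 2} - s^2$ gives $s^{\prime 2} = s^2$, producing \eqref{eq:type3-sol1} when $\epsilon = 1$ and \eqref{eq:type3-sol2} when $\epsilon = -1$. In the second subcase, I would substitute the resulting value of $\tau'$ back into the expression $(h/(2h^*))(\tau - \epsilon \tau')$ to compute
\[
 s^{\prime 2} - s^2 = \frac{h}{h^*}\tau + h^2\Bigl(\bigl(s^*/h^*\bigr)^2 + \bigl((d+1)/2\bigr)^2\Bigr),
\]
which produces \eqref{eq:type3-sol3} when $\epsilon = 1$ and \eqref{eq:type3-sol4} when $\epsilon = -1$. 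The only step requiring care is keeping the $\epsilon$-bookkeeping consistent across \eqref{eq:type3-cond2}, \eqref{eq:type3-cond3}, and the final formula for $s^{\prime 2} - s^2$; all else is routine algebra, so I do not anticipate a genuine obstacle.
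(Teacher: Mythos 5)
Your proposal is correct and follows essentially the same route as the paper's proof: the paper also splits on $h'=\pm h$ from \eqref{eq:type3-cond1}, extracts $s^{\prime 2}-s^2$ from \eqref{eq:type3-cond2}, and factors \eqref{eq:type3-cond3} as $(\tau\mp\tau')\bigl(2h^*(\tau\pm\tau')+h(4s^{*2}+(d+1)^2h^{*2})\bigr)=0$; your $\epsilon$-bookkeeping merely merges the two cases the paper treats separately.
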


\begin{proof}
One routinely checks that each of \eqref{eq:type3-sol1}--\eqref{eq:type3-sol4} gives
a solution to \eqref{eq:type3-cond1}--\eqref{eq:type3-cond3}.
Now assume that \eqref{eq:type3-cond1}--\eqref{eq:type3-cond3} hold.
We show that at least one of \eqref{eq:type3-sol1}--\eqref{eq:type3-sol4} holds.

By \eqref{eq:type3-cond1} we have either
$h' = h$ or $h' = - h$.
First assume that $h' = h$.
We may assume that $\tau \neq \tau'$;
otherwise $s^2 = s^{\prime 2}$ by \eqref{eq:type3-cond2}, and so \eqref{eq:type3-sol1} holds.
By \eqref{eq:type3-cond3},
\[
 (\tau - \tau')
 \left(   2 h^* (\tau + \tau') + h  \big( 4 s^{* 2} + (d+1)^2 h^{* 2} \big)  \rule{0mm}{2.3ex} \right) = 0.
\]
By this and $\tau - \tau' \neq 0$,
\begin{equation}
 2 h^* (\tau + \tau') + h  \left( 4 s^{* 2} + (d+1)^2 h^{* 2} \right) = 0.      \label{eq:type3-aux}
\end{equation}
By \eqref{eq:type3-cond2},
\[
  s^{\prime 2} = s^2 + \frac{h (\tau -\tau') }{ 2 h^* }.
\]
By this and \eqref{eq:type3-aux} we get \eqref{eq:type3-sol3}.
Next assume that $h' = - h$.
We may assume that $\tau' \neq - \tau$;
otherwise $s^2 = s^{\prime 2}$ by \eqref{eq:type3-cond2}, and so \eqref{eq:type3-sol2} holds.
By \eqref{eq:type3-cond3},
\[
 (\tau + \tau')
 \left( 2 h^* (\tau - \tau') + h  \big( 4 s^{* 2} + (d+1)^2 h^{* 2} \big)  \rule{0mm}{2.3ex}  \right) = 0.
\]
By this and $\tau + \tau' \neq 0$,
\begin{equation}
 2 h^* (\tau - \tau') + h  \left( 4 s^{* 2} + (d+1)^2 h^{* 2} \right) = 0.      \label{eq:type3-auxb}
\end{equation}
By \eqref{eq:type3-cond2},
\[
  s^{\prime 2} = s^2 + \frac{h (\tau + \tau') }{ 2 h^* }.
\]
By this and \eqref{eq:type3-auxb} we get \eqref{eq:type3-sol4}.
\end{proof}

We have some comments about \eqref{eq:type3-sol1}, \eqref{eq:type3-sol2}.
By Proposition \ref{prop:A=B}, \eqref{eq:type3-sol1} holds
if and only if there exists $\zeta \in \F$ such that $B = A + \zeta I$.
By Proposition \ref{prop:ABvee}, \eqref{eq:type3-sol2} holds
if and only if there exists $\zeta \in \F$ such that $B = A^\vee + \zeta I$.
The solutions \eqref{eq:type3-sol1}--\eqref{eq:type3-sol4} are not mutually exclusive.

\section{Describing the companions for a Leonard pair of type III$^-$}
\label{sec:companiontype3-}
\ifDRAFT {\rm sec:companiontype3-}. \fi

In this section we describe the companions for a Leonard pair of type III$^-$.
Throughout this section, Notation \ref{notation:all} is in effect.
Assume that $\Phi$ has type III$^-$.
Note that $\Phi'$ has type III$^-$.
Let
\begin{align}
& (\delta, s, h, \delta^*, s^*, h^*, \tau),  
&& 
 (\delta', s', h', \delta^*,  s^*, h^*, \tau')                 \label{eq:type3-pseqs}
\end{align}
denote the basic sequence of $\Phi$ and $\Phi'$, respectively.
Assume that $A$, $B$ are compatible, and consider the companion $K=A-B$.
We will give the entries of $K$.
To avoid complicated formulas, 
we assume that each of $\Phi$, $\Phi'$ is reduced,
so that $\delta=0$, $\delta'=0$, $\delta^* = 0$.

For the moment assume that  \eqref{eq:type3-sol1} holds.
By the comments below Theorem \ref{thm:type3-sol},
there exists $\zeta \in \F$ such that $B = A + \zeta I$.
By this and $\delta=\delta'=0$ we get $B=A$.
So $K=0$.
Next assume that \eqref{eq:type3-sol2} holds.
By the comments below Theorem \ref{thm:type3-sol},
there exists $\zeta \in \F$ such that $B = A^\vee + \zeta I$.
By this and $\delta=\delta'=0$ we get $B=A^\vee$.
By this and Lemma \ref{lem:bondcompOmega}, 
$K_{i,i} = 2 a_i$ for $0 \leq i \leq d$.
We now give the $K$ that corresponds to solutions \eqref{eq:type3-sol3}, \eqref{eq:type3-sol4}.

\begin{theorem}      \label{thm:type3-ex2}     \samepage
\ifDRAFT {\rm thm:type3-ex2}. \fi
The following hold.
\begin{itemize}
\item[\rm (i)]
Assume that \eqref{eq:type3-sol3}  holds.
Then
\begin{align*}
K_{i,i} &=
 \begin{cases} \displaystyle
    \frac{ \tau + h h^* \left( (s^*/h^*)^2 + \big( (d+1)/2 \big)^2 \right) }
          { s^* - \big( (d-1)/2 - i \big) h^*    }
         &  \text{\rm if $i$ is even},
  \\   \displaystyle
  -  \frac{ \tau + h h^* \left( (s^*/h^*)^2 + \big( (d+1)/2 \big)^2 \right) }
          { s^* - \big( (d+1)/2 - i \big) h^*    }
         &  \text{\rm if $i$ is odd}
  \end{cases}             &&    (0 \leq i \leq d).
\end{align*}
\item[\rm (ii)]
Assume that \eqref{eq:type3-sol4} holds.
Then
\begin{align*}
K_{0,0} &= - h \left( s^*/h^* + (d+1)/2 \right),
\\ 
K_{i,i} &=
 \begin{cases} \displaystyle
     - \frac{ h h^* \left( (s^*/h^*)^2 - \big( (d+1)/2 \big)^2 \right) }
              { s^* - \left( (d+1)/2 - i \right) h^* }
         &  \text{\rm if $i$ even},
  \\   \displaystyle
      \frac{ h h^* \left( (s^*/h^*)^2 - \big( (d+1)/2 \big)^2 \right) }
              { s^* - \left( (d-1)/2 - i \right) h^* }
         &  \text{\rm if $i$ odd}
  \end{cases}             &&    (1 \leq i \leq d-1),
\\
K_{d,d} &=  h \left( s^*/h^* -  (d+1)/2 \right).
\end{align*}
\end{itemize}
\end{theorem}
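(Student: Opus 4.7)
The plan is to compute $K_{i,i}$ directly via equation \eqref{eq:Kaa}, which states $K_{i,i}=a_i-a'_i$. Since $K$ is diagonal (by Lemma \ref{lem:compOmega2}), this fully determines $K$. Lemma \ref{lem:aiparam} gives the closed forms
\begin{align*}
a_0 &= \th_0 + \frac{\vphi_1}{\th^*_0-\th^*_1}, \qquad
a_d = \th_d + \frac{\vphi_d}{\th^*_d-\th^*_{d-1}},\\
a_i &= \th_i + \frac{\vphi_i}{\th^*_i-\th^*_{i-1}} + \frac{\vphi_{i+1}}{\th^*_i-\th^*_{i+1}} \qquad (1\le i\le d-1),
\end{align*}
and analogously for $a'_i$ in terms of the primed parameter array. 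Lemma \ref{lem:type3-param} then expresses $\th_i,\th^*_i,\vphi_i$ (and the primed versions) in terms of the basic sequences $(\delta,s,h,\delta^*,s^*,h^*,\tau)$ and $(\delta',s',h',\delta^*,s^*,h^*,\tau')$, with $\delta=\delta'=\delta^*=0$ by the reduced assumption.

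For case (i), I would substitute the relations $h'=h$, $\tau' = -\tau - 2hh^*\bigl((s^*/h^*)^2+((d+1)/2)^2\bigr)$, and the expression for $s'^2$ coming from \eqref{eq:type3-sol3}. The parity split from Lemma \ref{lem:type3-param} (formulas differ for even vs.\ odd $i$ in $\vphi_i$) forces me to treat even and odd indices separately. Note that $\th'_i=\th_i$ when $i$ is even, while when $i$ is odd $\th'_i-\th_i = -(s'-s)$; moreover the even-indexed $\vphi'_i=\vphi_i$, so only the odd-indexed contributions produce changes. The expected cancellation of all dependence on $s$ and $s'$ (consistent with the stated formulas involving only $\tau,h,h^*,s^*$) should emerge from the identity
\[
2ss^*(1-1) + (sh^*+s^*h)(2i-d-1) \cdot (\text{paired telescoping with opposite parity term})=0,
\]
and from the fact that the even-to-odd jumps in $\th^*_i$ combine with the $(s-s')$ jumps in $\th'_i$ to annihilate the remaining $s,s'$ terms. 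For case (ii), the substitutions $h'=-h$ and the corresponding $\tau'$ and $s'^2$ from \eqref{eq:type3-sol4} are plugged in identically, and again one separates even and odd $i$, plus the boundary cases $i=0,d$.

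Concretely, for the interior indices in case (i), I expect the computation to go as follows: write
\[
K_{i,i} = (\th_i-\th'_i) + \frac{\vphi_i-\vphi'_i}{\th^*_i-\th^*_{i-1}} + \frac{\vphi_{i+1}-\vphi'_{i+1}}{\th^*_i-\th^*_{i+1}},
\]
then for $i$ even the first term vanishes, $\vphi_i=\vphi'_i$ (even index), and only $\vphi_{i+1}-\vphi'_{i+1}$ contributes (odd index). Expanding the latter via \eqref{eq:type3-vphi} and using \eqref{eq:type3-sol3} yields a single fraction whose numerator factors as $\tau+hh^*((s^*/h^*)^2+((d+1)/2)^2)$ times a linear function of $i$ that cancels against one factor of the denominator $\th^*_i-\th^*_{i+1}=\pm(2s^*+h^*(2i+1-d))$. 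The situation for odd $i$ is symmetric, explaining the sign flip. The boundary cases $i=0$ and $i=d$ follow the same pattern but use only one $\vphi$-term; for case (ii) the asymmetric appearance of the formula for $K_{0,0}$ (a standalone closed form) and $K_{d,d}$ reflects that, at the boundary, all of $s,s'$ dependence collapses into a single closed expression.

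The main obstacle is purely bookkeeping: correctly tracking the parity-dependent signs and the telescoping cancellations between the even/odd pieces of $\vphi$, $\th$, and $\th^*$, together with verifying that $s$ and $s'$ genuinely drop out. No conceptual step beyond the formulas already in Lemmas \ref{lem:aiparam} and \ref{lem:type3-param} is required; the proof reduces to a careful substitute-and-simplify argument, exactly as signaled by the proof the paper ultimately supplies.
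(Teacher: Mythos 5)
Your proposal is correct and follows essentially the same route as the paper's proof, which is precisely the one-line instruction to combine \eqref{eq:Kaa} with Lemmas \ref{lem:aiparam} and \ref{lem:type3-param} under the reduced assumption $\delta=\delta'=\delta^*=0$. One small slip in your narrative of the cancellation: for even $i$ one has $\th'_i-\th_i=s'-s$, which is not zero in general under \eqref{eq:type3-sol3} (only the odd-index difference is $-(s'-s)$), so the even-indexed terms do contribute to $a_i-a'_i$; this does not affect the validity of the substitute-and-simplify computation, since the $s,s'$ dependence still cancels in the end.
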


\begin{proof}
Use \eqref{eq:Kaa} and Lemmas \ref{lem:aiparam}, \ref{lem:type3-param}
with $\delta = \delta' = 0$.
\end{proof}

\section{The parameter arrays of type IV}
\label{sec:parraytype4}
\ifDRAFT {\rm sec:parraytype4}. \fi

In this section we describe the parameter arrays of type IV.
We then prove Proposition \ref{prop:RST} for type IV.
Note by Lemma \ref{lem:q} that $d=3$ for type IV.

\begin{lemma}    \label{lem:type4param}    \samepage
\ifDRAFT {\rm lem:type4param}. \fi
Assume that $d=3$ and $\text{\rm Char}(\F) = 2$.
For a sequence
\begin{equation}
   (\delta, h, s, \delta^*,  h^*, s^*, r)                \label{eq:type4pseq}
\end{equation}
of scalars in $\F$,  define
\begin{align}
 & \th_0 = \delta,
  \;\;\;\;\;\;\;\, \theta_1 = \delta + h(s+1),
  \quad\qquad\, \theta_2 = \delta + h,
  \qquad \;\, \theta_3 = \delta + h s,                     \label{eq:type4th}
\\
 & \th^*_0 = \delta^*,
  \quad \;\;  \theta^*_1 = \delta^* + h^*(s^*+1), 
  \qquad \theta^*_2 = \delta^* + h^*,
  \quad \;\; \theta^*_3 = \delta^* + h^*s^*,          \label{eq:type4ths}
\\
 & \quad  \varphi_1 = h h^* r,
  \qquad\qquad\qquad\;\;\;  \varphi_2 =  h h^*, 
  \qquad \varphi_3 = hh^*(r+s+s^*),                     \label{eq:type4vphi}
\\
  &  \quad \phi_1 = h h^*(r+s+ s s^*),
  \qquad \phi_2 = h h^*,
  \qquad \phi_3 = h h^*(r+s^* + s s^*).                 \label{eq:type4phi}
\end{align}
Then the sequence
\begin{equation}
   (\{\th_i\}_{i=0}^d; \{\th^*_i\}_{i=0}^d; \{\vphi_i\}_{i=1}^d; \{\phi_i\}_{i=1}^d)        \label{eq:type4parray}
\end{equation}
is a parameter array over $\F$ that has type IV,
provided that the inequalities in Lemma \ref{lem:classify}{\rm (i),(ii)} hold.
Conversely, assume that the sequence \eqref{eq:type4parray} is a parameter 
array over $\F$ that has type IV.
Then there exists a unique sequence \eqref{eq:type4pseq}
of scalars in $\F$ that satisfies
\eqref{eq:type4th}--\eqref{eq:type4phi}.
\end{lemma}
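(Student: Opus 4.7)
The plan is to prove the forward direction and then cite \cite{NT:balanced} for the converse, following the template established for types I, II, III$^+$, III$^-$ in the preceding sections.

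For the forward direction, I assume the inequalities in Lemma \ref{lem:classify}(i),(ii) hold and verify conditions (iii)--(v). Since $d=3$, condition (v) applies only at $i=2$, so I will need
\begin{align*}
\frac{\th_0 - \th_3}{\th_1 - \th_2} &= \frac{\th^*_0 - \th^*_3}{\th^*_1 - \th^*_2}.
\end{align*}
Using \eqref{eq:type4th}, a direct computation gives $\th_0 - \th_3 = -hs$ and $\th_1 - \th_2 = hs$, so the left fraction equals $-1$, which in $\text{\rm Char}(\F) = 2$ equals $1$. The same computation applies to the right fraction. Thus condition (v) holds, and since $\beta + 1$ equals this common value $1$, we obtain $\beta = 0 = 2$; combined with $\text{\rm Char}(\F) = 2$ and $d=3$, the parameter array has type IV by Definition \ref{def:type}.

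Next I verify conditions (iii) and (iv) of Lemma \ref{lem:classify}. Because $d=3$, there are only three values of $i$ to check for each. The partial sums
\begin{align*}
\sum_{\ell=0}^{i-1} \frac{\th_\ell - \th_{d-\ell}}{\th_0 - \th_d}
\end{align*}
simplify to explicit expressions in $s$ via \eqref{eq:type4th}, and the products $(\th^*_i - \th^*_0)(\th_{i-1} - \th_d)$ and $(\th^*_i - \th^*_0)(\th_{d-i+1} - \th_0)$ expand to polynomials in $h, h^*, s, s^*$. Substituting these into the right-hand sides of Lemma \ref{lem:classify}(iii),(iv) and comparing with the expressions for $\vphi_i$, $\phi_i$ in \eqref{eq:type4vphi}, \eqref{eq:type4phi} is a routine check that uses $\text{\rm Char}(\F)=2$ to collapse signs. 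This establishes that \eqref{eq:type4parray} is a parameter array of type IV.

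For the converse, I will cite \cite[Theorem 10.1]{NT:balanced} (or the analogous result), which provides the existence and uniqueness of a basic sequence for any type IV parameter array, exactly as done for the other types in Lemmas \ref{lem:type1param}, \ref{lem:type2param}, \ref{lem:type3+param}, \ref{lem:type3-param}. The only subtle point is the asymmetry in the formulas between $\vphi_i$ and $\phi_i$ (the roles of $s$ and $s^*$ are not simply swapped), but this is dictated by the classification in \cite{NT:balanced} and does not affect the proof strategy. The main obstacle, if any, is the algebraic bookkeeping needed to verify (iii) and (iv) at each of $i=1,2,3$ in characteristic $2$; however, since there are only nine equations total and each involves only a handful of terms, this is a finite and entirely mechanical check rather than a conceptual difficulty.
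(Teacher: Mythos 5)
Your proposal is correct and follows essentially the same route as the paper: assume the inequalities of Lemma \ref{lem:classify}(i),(ii), verify conditions (iii)--(v) by direct (routine) computation from \eqref{eq:type4th}--\eqref{eq:type4phi}, evaluate the common value in \eqref{eq:indep} to get fundamental constant $\beta=2$ (equivalently $0$ in characteristic $2$) and hence type IV, and cite \cite[Theorem 10.1]{NT:balanced} for the existence and uniqueness of the basic sequence in the converse direction. The extra detail you supply for the $i=2$ check of condition (v) is a correct expansion of what the paper leaves as a routine verification.
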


\begin{proof}
Assume that the inequalities in Lemma \ref{lem:classify}(i),(ii) hold.
Using \eqref{eq:type4th}--\eqref{eq:type4phi} we routinely verify
the conditions Lemma \ref{lem:classify}(iii)--(v).
Thus the sequence \eqref{eq:type4parray} is a parameter array over $\F$.
Evaluating the expression on the left in \eqref{eq:indep} using \eqref{eq:type4th} we find that
the parameter array \eqref{eq:type4parray} has fundamental constant $\beta = 2$. 
So the parameter array \eqref{eq:type4parray} has type IV.
The last assertion comes from \cite[Theorem 10.1]{NT:balanced}.
\end{proof}

\begin{defi}    \label{def:type4pseq}    \samepage
\ifDRAFT {\rm def:type4pseq}. \fi
Referring to Lemma \ref{lem:type4param},
assume that the sequence \eqref{eq:type4parray} is a parameter array over $\F$.
We call the scalars $\delta, h, s, \delta^*, h^*, s^*, r$ 
the {\em basic variables of \eqref{eq:type4parray}}.
We call the sequence $(\delta, h, s, \delta^*,  h^*, s^*, r)$ 
the {\em basic sequence of \eqref{eq:type4parray}}.
\end{defi}

\begin{lemma}    \label{lem:type4cond}   \samepage
\ifDRAFT {\rm lem::type4cond}. \fi
Referring to Lemma \ref{lem:type4param}, 
the inequalities in Lemma \ref{lem:classify}{\rm (i),(ii)} hold
if and only if
\begin{align}
& h \neq 0,  \qquad
 s \neq 0,    \qquad 
 s+1 \neq 0,  \qquad 
 h^* \neq 0, \qquad
 s^* \neq 0,  \qquad
 s^* + 1 \neq 0,                             \label{eq:type4paramcond1}
\\
& r \neq 0,  \qquad\;\;
 r+s+s^* \neq 0,  \qquad\;\;
 r + s  + s s^* \neq 0,   \qquad\;\;\,
 r+s^* + s s^* \neq 0.                      \label{eq:type4paramcond2}
\end{align}
\end{lemma}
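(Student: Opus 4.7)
The proof is a direct computational verification from \eqref{eq:type4th}--\eqref{eq:type4phi}, organized by separating the distinctness conditions on the eigenvalue sequences from the nonvanishing conditions on the split sequences. I would proceed as follows.

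First I would compute the six differences $\theta_i-\theta_j$ with $0\leq i<j\leq 3$ using \eqref{eq:type4th}. Working in $\text{Char}(\F)=2$, these are
\[
 \theta_1-\theta_0 = h(s+1),\quad \theta_2-\theta_0 = h,\quad \theta_3-\theta_0 = hs,
\]
\[
 \theta_2-\theta_1 = hs,\quad \theta_3-\theta_1 = h,\quad \theta_3-\theta_2 = h(s+1),
\]
where the last equality uses $1-s=1+s$ in characteristic $2$. Thus the six conditions $\theta_i\neq\theta_j$ collapse to the three conditions $h\neq 0$, $s\neq 0$, $s+1\neq 0$. An identical computation with \eqref{eq:type4ths} gives $h^*\neq 0$, $s^*\neq 0$, $s^*+1\neq 0$ as the condition for $\{\theta^*_i\}_{i=0}^3$ to be mutually distinct. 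Together these yield \eqref{eq:type4paramcond1}.

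Next, assuming $h\neq 0$ and $h^*\neq 0$, I would read off from \eqref{eq:type4vphi} that $\varphi_1\neq 0 \Leftrightarrow r\neq 0$, $\varphi_2\neq 0$ holds automatically, and $\varphi_3\neq 0 \Leftrightarrow r+s+s^*\neq 0$. Similarly from \eqref{eq:type4phi}, $\phi_1\neq 0 \Leftrightarrow r+s+ss^*\neq 0$, $\phi_2\neq 0$ holds automatically, and $\phi_3\neq 0 \Leftrightarrow r+s^*+ss^*\neq 0$. These are exactly the four conditions in \eqref{eq:type4paramcond2}.

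Combining the two paragraphs gives the equivalence of the inequalities in Lemma \ref{lem:classify}(i),(ii) with the conjunction of \eqref{eq:type4paramcond1} and \eqref{eq:type4paramcond2}. There is no real obstacle here; the only subtlety worth flagging is that several coincidences among the $\theta_i$ (notably $\theta_2-\theta_3=h(1-s)$) simplify to $h(s+1)$ only because $\text{Char}(\F)=2$, which is why the list of conditions in \eqref{eq:type4paramcond1} is as short as it is. Once this is noted, the verification is mechanical.
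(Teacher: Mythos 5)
Your proof is correct and is exactly the "routine verification using \eqref{eq:type4th}--\eqref{eq:type4phi}" that the paper gives as its entire proof; you have simply written out the mechanical details (the collapse of the six eigenvalue differences to three conditions via $\text{\rm Char}(\F)=2$, and the reading off of \eqref{eq:type4paramcond2} from the split sequences once $h h^* \neq 0$ is in hand). No discrepancy with the paper's argument.
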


\begin{proof}
Routine verification using \eqref{eq:type4th}--\eqref{eq:type4phi}.
\end{proof}

For the rest of this section, let $\Phi$ denote a Leonard system over $\F$
that has type IV and parameter array
\begin{equation}
   (\{\th_i\}_{i=0}^d; \{\th^*_i\}_{i=0}^d; \{\vphi_i\}_{i=1}^d; \{\phi_i\}_{i=1}^d).        \label{eq:type4parray2}
\end{equation}

\begin{defi}     \label{def:type4pseqLS}    \samepage
\ifDRAFT {\rm def:type4pseqLS}. \fi
By the {\em basic variables} (resp.\ {\em basic sequence}) {\em of $\Phi$} we mean 
the basic variables (resp.\ basic sequence) of the parameter array \eqref{eq:type4parray2}.
\end{defi}

For the rest of this section, let
$(\delta, h, s, \delta^*, h^*, s^*, r)$
denote the basic sequence of $\Phi$.

\begin{lemma}    \label{lem:type4paramDown}    \samepage
\ifDRAFT {\rm lem:type4paramDown}. \fi
In the table below,
for each Leonard system in the first column we give the basic sequence:
\[
\begin{array}{cc}
\text{\rm Leonard system}  &  \text{\rm basic sequence}
\\ \hline
\Phi^\downarrow & (\delta, s, h, \delta^* + h^* s^* , s^*, h^*, r + s^* + s s^*)   \rule{0mm}{3ex}
\\
\Phi^\Downarrow &  (\delta + h s, s, h, \delta^*, s^*, h^*, r + s + s s^*)    \rule{0mm}{2.7ex}
\\
\Phi^\vee &  (\delta, s, h, \delta^*, s^*, h^*, r)    \rule{0mm}{2.7ex}
\end{array}
\]
\end{lemma}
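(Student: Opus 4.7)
The plan is straightforward bookkeeping based on the uniqueness clause in Lemma \ref{lem:type4param}: for each of $\Phi^\vee$, $\Phi^\downarrow$, $\Phi^\Downarrow$ I would first obtain an explicit description of the parameter array using results already in the paper, and then read off the unique basic sequence that reproduces this parameter array via the formulas \eqref{eq:type4th}--\eqref{eq:type4phi}. Throughout, the simplifications rely on the identity $x + x = 0$ that holds in characteristic $2$.

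For $\Phi^\vee$ the assertion is immediate. By Note \ref{note:veeLS2b}, in characteristic $2$ we have $\Phi^\vee = \Phi$, so the basic sequence of $\Phi^\vee$ equals that of $\Phi$. (Alternatively, Lemma \ref{lem:parrayPhivee} gives the parameter array of $\Phi^\vee$ with all signs flipped, which in characteristic $2$ leaves the array unchanged.)

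For $\Phi^\downarrow$ I would apply Lemma \ref{lem:parrayrelative}(ii) to write the parameter array of $\Phi^\downarrow$ as $(\{\th_i\}_{i=0}^{3};\,\{\th^*_{3-i}\}_{i=0}^{3};\,\{\phi_{4-i}\}_{i=1}^{3};\,\{\vphi_{4-i}\}_{i=1}^{3})$. Since the $\th_i$ are unchanged, matching $\th_0, \th_2, \th_3$ against \eqref{eq:type4th} forces the first three basic variables of $\Phi^\downarrow$ to coincide with those of $\Phi$. Next I would evaluate $\th^*_3 = \delta^*+h^*s^*$, $\th^*_2 = \delta^*+h^*$, $\th^*_1 = \delta^*+h^*(s^*+1)$, $\th^*_0 = \delta^*$ from \eqref{eq:type4ths} and match against \eqref{eq:type4ths} to determine $\delta^{*\downarrow}=\delta^*+h^*s^*$, $h^{*\downarrow}=h^*$, $s^{*\downarrow}=s^*$ (the simplifications such as $\delta^*+h^*s^*+h^*(s^*+1)=\delta^*+h^*$ use characteristic $2$). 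Finally, the equation $\vphi^{\downarrow}_1 = h^{\downarrow}h^{*\downarrow}r^{\downarrow} = \phi_3 = hh^*(r+s^*+ss^*)$ forces $r^{\downarrow} = r+s^*+ss^*$. The remaining split-sequence entries ($\vphi^{\downarrow}_2, \vphi^{\downarrow}_3, \phi^{\downarrow}_1, \phi^{\downarrow}_2, \phi^{\downarrow}_3$) then serve as consistency checks that are immediate applications of the characteristic-$2$ identity.

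For $\Phi^\Downarrow$ the argument is entirely analogous. By Lemma \ref{lem:parrayrelative}(iii), the parameter array of $\Phi^\Downarrow$ is $(\{\th_{3-i}\}_{i=0}^{3};\,\{\th^*_i\}_{i=0}^{3};\,\{\phi_i\}_{i=1}^{3};\,\{\vphi_i\}_{i=1}^{3})$. The dual eigenvalue sequence is unchanged, so the starred basic variables coincide with those of $\Phi$. Matching $\th^{\Downarrow}_0 = \th_3 = \delta+hs$, $\th^{\Downarrow}_2 = \th_1 = \delta+h(s+1)$, $\th^{\Downarrow}_3 = \th_0 = \delta$ against \eqref{eq:type4th} and simplifying in characteristic $2$ yields $\delta^{\Downarrow} = \delta+hs$, $h^{\Downarrow} = h$, $s^{\Downarrow} = s$. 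The equation $\vphi^{\Downarrow}_1 = h h^* r^{\Downarrow} = \phi_1 = hh^*(r+s+ss^*)$ then gives $r^{\Downarrow} = r+s+ss^*$, and the other split-sequence entries verify consistency. The only ``obstacle'' in the whole proof is bookkeeping of the index reversals and the repeated use of $x+x=0$; there is no conceptual difficulty.
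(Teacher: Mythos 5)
Your proof is correct and follows essentially the same route as the paper's (one-line) proof: apply Lemma \ref{lem:parrayrelative} for $\Phi^\downarrow$, $\Phi^\Downarrow$ and Lemma \ref{lem:parrayPhivee} (or Note \ref{note:veeLS2b}) for $\Phi^\vee$, then match the resulting parameter arrays against \eqref{eq:type4th}--\eqref{eq:type4phi} and invoke the uniqueness clause of Lemma \ref{lem:type4param}, simplifying with $x+x=0$. One remark: your matching correctly yields $h^\downarrow=h$, $s^\downarrow=s$, $h^{*\downarrow}=h^*$, $s^{*\downarrow}=s^*$ (and likewise for $\Phi^\Downarrow$ and $\Phi^\vee$, where $\Phi^\vee=\Phi$ forces the basic sequence to be unchanged), which shows that the table in the statement is listing its entries in the order $(\delta,s,h,\delta^*,s^*,h^*,r)$ rather than in the order $(\delta,h,s,\delta^*,h^*,s^*,r)$ fixed by Definition \ref{def:type4pseq} --- a notational inconsistency in the statement, not a flaw in your argument.
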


\begin{proof}
Concerning $\Phi^\downarrow$ and $\Phi^\Downarrow$, use 
Lemma \ref{lem:parrayrelative}.
Concerning $\Phi^\vee$, use Lemma \ref{lem:parrayPhivee}.
\end{proof}

\begin{lemma}   \label{lem:type4affine}    \samepage
\ifDRAFT {\rm lem:type4affine}. \fi
For scalars $\xi$, $\zeta$, $\xi^*$, $\zeta^*$ in $\F$
with $\xi \xi^* \neq 0$,
consider the Leonard system
\[
  ( \xi A + \zeta I; \{E_i\}_{i=0}^d; \xi^* A^* + \zeta^* I; \{E^*_i\}_{i=0}^d).
\]
For this Leonard system the basic sequence is equal to
\[
  (\xi \delta + \zeta, \xi h, s, \xi^* \delta^* + \zeta^*, \xi^* h^*, s^*, r).
\]
\end{lemma}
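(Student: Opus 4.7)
The plan is to combine Lemma \ref{lem:affineparam} with the uniqueness assertion in Lemma \ref{lem:type4param}. First I would invoke Lemma \ref{lem:affineparam} to obtain the parameter array of the Leonard system $(\xi A + \zeta I; \{E_i\}_{i=0}^d; \xi^* A^* + \zeta^* I; \{E^*_i\}_{i=0}^d)$: its eigenvalue sequence is $\{\xi \th_i + \zeta\}_{i=0}^d$, its dual eigenvalue sequence is $\{\xi^* \th^*_i + \zeta^*\}_{i=0}^d$, and its first and second split sequences are $\{\xi\xi^* \vphi_i\}_{i=1}^d$ and $\{\xi\xi^* \phi_i\}_{i=1}^d$ respectively.

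Next I would check that the candidate basic sequence $(\xi \delta + \zeta, \xi h, s, \xi^* \delta^* + \zeta^*, \xi^* h^*, s^*, r)$ reproduces this array under the formulas \eqref{eq:type4th}--\eqref{eq:type4phi}. For the $\th_i$, substituting into \eqref{eq:type4th} gives $\xi\delta + \zeta$, $\xi\delta + \zeta + \xi h(s+1)$, $\xi\delta+\zeta+\xi h$, $\xi\delta+\zeta+\xi h s$, which match $\xi \th_i + \zeta$ termwise; the same direct substitution works for $\th^*_i$ via \eqref{eq:type4ths}. For $\vphi_i$ and $\phi_i$, note that the products $h h^*$ appearing in \eqref{eq:type4vphi}, \eqref{eq:type4phi} become $(\xi h)(\xi^* h^*) = \xi\xi^* h h^*$, while the scalars $r$, $s$, $s^*$ are unchanged; hence the formulas produce $\xi\xi^* \vphi_i$ and $\xi\xi^* \phi_i$, as required.

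Finally, Lemma \ref{lem:type4param} tells us that a parameter array of type IV uniquely determines its basic sequence. Since the candidate sequence produces the correct parameter array, it must be the basic sequence of the new Leonard system. The main obstacle here is essentially nonexistent, as the entire argument reduces to two routine substitutions, with the only subtlety being that one must verify the new Leonard system actually has type IV so that Lemma \ref{lem:type4param} applies; this follows from Lemma \ref{lem:affineLP} (so the pair is still a Leonard pair, hence gives a Leonard system) together with the fact that the affine transformation preserves the fundamental constant $\beta = 2$ and does not alter $\text{Char}(\F)$ or $d=3$, so the type is still IV by Definition \ref{def:type}.
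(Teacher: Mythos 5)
Your proof is correct and follows essentially the same route as the paper, whose entire proof is ``Use Lemma \ref{lem:affineparam}''; you have simply written out the routine substitution into \eqref{eq:type4th}--\eqref{eq:type4phi} and the appeal to the uniqueness clause of Lemma \ref{lem:type4param} that the paper leaves implicit. The extra remark that the transformed system still has type IV (so that Lemma \ref{lem:type4param} applies) is a sensible bit of care that the paper omits.
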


\begin{proof}
Use Lemma \ref{lem:affineparam}.
\end{proof}

\begin{lemma}    \label{lem:type4kappa}    \samepage
\ifDRAFT {\rm lem:type4kappa}. \fi
The invariant value $\kappa$ for $\Phi$ satisfies
$\kappa = h^2$.
\end{lemma}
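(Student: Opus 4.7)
The plan is to invoke Lemma \ref{lem:kappa}, which expresses the invariant value as
\[
\kappa = (\th_{i-1} - \th_{i+1})^2 + (\beta + 2)(\th_i - \th_{i-1})(\th_i - \th_{i+1})
\]
for any $1 \leq i \leq d-1$, and specialize to the type IV situation.

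The key observation is that for type IV we have $\beta = 2$ and $\text{Char}(\F) = 2$, so $\beta + 2 = 0$ in $\F$. This kills the second term entirely, leaving $\kappa = (\th_{i-1} - \th_{i+1})^2$ for any valid $i$. Taking $i = 1$ (which is allowed since $d = 3$), we need to compute $(\th_0 - \th_2)^2$. By \eqref{eq:type4th} we have $\th_0 = \delta$ and $\th_2 = \delta + h$, so $\th_0 - \th_2 = -h = h$ in characteristic $2$, whence $\kappa = h^2$.

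There is no real obstacle here; the computation is completely routine once we apply Lemma \ref{lem:kappa} and exploit the characteristic $2$ hypothesis. The only minor point worth noting is that one could alternatively compute $\gamma$ from Lemma \ref{lem:gammarho} (giving $\gamma = \th_0 + \th_2 = h$ in characteristic $2$) and then use $\kappa = \gamma^2 + (2-\beta)\varrho = \gamma^2$ from Definition \ref{def:kappa}; this yields the same answer $\kappa = h^2$ and serves as a sanity check.
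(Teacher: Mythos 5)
Your proof is correct and follows the same route as the paper, which simply cites Lemma \ref{lem:kappa} together with \eqref{eq:type4th}; your explicit use of $\beta+2=0$ in characteristic $2$ and the evaluation $(\th_0-\th_2)^2=h^2$ is exactly the intended computation. The sanity check via $\gamma$ and Definition \ref{def:kappa} is a fine bonus but not a genuinely different argument.
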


\begin{proof}
By Lemma \ref{lem:kappa} and \eqref{eq:type4th}.
\end{proof}

\begin{proofof}{Proposition \ref{prop:RST}, type IV}
One routinely verifies \eqref{eq:RST} using \eqref{eq:Rtype1}--\eqref{eq:Ttype1}
and Lemmas \ref{lem:type4param}, \ref{lem:type4kappa}.
\end{proofof}

Note that Theorem \ref{thm:kappamain} holds for type IV.

\section{A characterization of compatibility in terms of the basic sequence, type IV}
\label{sec:characterizetype4}
\ifDRAFT {\rm sec:characterizetype4}. \fi

In this section we characterize the compatibility relation for Leonard pairs
of type IV in terms of the basic sequence.
Throughout this section, 
Notation \ref{notation:all} is in effect.
Assume that 
$\Phi$ has type IV.
Note that $\Phi'$ has type IV.
Let
\begin{align*}
& (\delta, h, s, \delta^*, h^*, s^*, r)
&& 
 (\delta', h', s', \delta^*, h^*, s^*, r')
\end{align*}
denote the basic sequence of $\Phi$ and $\Phi'$, respectively.

\begin{theorem}    \label{thm:type4main}    \samepage
\ifDRAFT {\rm thm:type4main}. \fi
The matrices $A$ and $B$ are compatible if and only if
the following \eqref{eq:type4cond1}--\eqref{eq:type4cond3} hold:
\begin{align}
 h &= h',                                        \label{eq:type4cond1}
\\
 s (1 + s + s^*) &= s' (1 + s' + s^*),               \label{eq:type4cond2}
\\
 r (r + s + s s^*) &= r' (r' + s' + s' s^*).     \label{eq:type4cond3}
\end{align}
\end{theorem}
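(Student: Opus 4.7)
The plan is to mimic the strategy used for types I, II, III$^+$, and III$^-$, namely to invoke Theorem \ref{thm:kappamain} and translate each of the three conditions $\kappa=\kappa'$, $\vphi_1\phi_1=\vphi'_1\phi'_1$, $\vphi_d\phi_d=\vphi'_d\phi'_d$ into a condition on the basic variables. Since $d=3$ for type IV (Lemma \ref{lem:q}(v)), the indices $1$ and $d$ are $1$ and $3$, and the formulas of Lemma \ref{lem:type4param} are explicit, so the computation is purely algebraic.

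First I would handle the invariant value. By Lemma \ref{lem:type4kappa} we have $\kappa=h^2$ and $\kappa'=h'^2$. Because $\text{\rm Char}(\F)=2$, the identity $h^2=h'^2$ is equivalent to $(h-h')^2=0$, hence to $h=h'$. This gives \eqref{eq:type4cond1}. Assume from now on that $h=h'$; in particular $(hh^*)^2=(h'h^*)^2$.

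Next I would use the explicit expressions \eqref{eq:type4vphi}, \eqref{eq:type4phi}:
\[
\vphi_1\phi_1=(hh^*)^2\,r(r+s+ss^*),\qquad
\vphi_3\phi_3=(hh^*)^2\,(r+s+s^*)(r+s^*+ss^*),
\]
and the analogous expressions for $\vphi'_i\phi'_i$. Dividing out the nonzero factor $(hh^*)^2$ (using \eqref{eq:type4paramcond1}) shows that $\vphi_1\phi_1=\vphi'_1\phi'_1$ is equivalent to \eqref{eq:type4cond3}. For the remaining condition, I would expand in characteristic 2 and find
\[
(r+s+s^*)(r+s^*+ss^*)-r(r+s+ss^*)=s^*\bigl(s(1+s+s^*)+s^*\bigr),
\]
using that $2rs^*=0$. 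The same identity with primes gives the analogous expression with $s$ replaced by $s'$, and the $s^{*2}$ terms cancel in the difference. Thus, assuming \eqref{eq:type4cond3} already holds, the difference $\vphi_3\phi_3-\vphi'_3\phi'_3$ equals $(hh^*)^2 s^*\bigl(s(1+s+s^*)-s'(1+s'+s^*)\bigr)$, and by \eqref{eq:type4paramcond1} the factor $(hh^*)^2 s^*$ is nonzero, so this vanishes exactly when \eqref{eq:type4cond2} holds.

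Combining these three equivalences through Theorem \ref{thm:kappamain} yields the theorem. There is no real obstacle here: the only subtlety is being careful that additive cancellations in characteristic 2 (for instance $2rs^*=0$ and $(h-h')^2=h^2-h'^2$) are exploited correctly, but otherwise the computation is routine polynomial algebra in the variables $r,s,s^*,h,h^*$.
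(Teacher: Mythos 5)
Your proposal is correct and follows essentially the same route as the paper: invoke Theorem \ref{thm:kappamain}, use Lemma \ref{lem:type4kappa} together with $\text{\rm Char}(\F)=2$ to turn $\kappa=\kappa'$ into \eqref{eq:type4cond1}, and then translate the two product conditions via the explicit formulas \eqref{eq:type4vphi}, \eqref{eq:type4phi} and the nonvanishing constraints \eqref{eq:type4paramcond1}. The only (cosmetic) difference is the bookkeeping of which displayed equation matches which product condition: you correctly identify $\vphi_1\phi_1=\vphi'_1\phi'_1$ with \eqref{eq:type4cond3} and, given that, $\vphi_3\phi_3=\vphi'_3\phi'_3$ with \eqref{eq:type4cond2} via the characteristic-two identity $(r+s+s^*)(r+s^*+ss^*)=r(r+s+ss^*)+s^*\bigl(s(1+s+s^*)+s^*\bigr)$, whereas the paper states the pairing the other way around; the joint equivalence, which is all that is needed, is the same.
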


\begin{proof}
We will invoke Theorem \ref{thm:kappamain}.
To do this we investigate the conditions in \eqref{eq:kappakappad}.
By Lemma \ref{lem:type4kappa} and since $\text{\rm Char}(\F)=2$,
$\kappa = \kappa'$ if and only if \eqref{eq:type4cond1} holds.
Using \eqref{eq:type4vphi}, \eqref{eq:type4phi} we find that under 
the assumption \eqref{eq:type4cond1}
the expression
$\vphi_1 \phi_1 - \vphi'_1 \phi'_1$ is equal to
$h^2 h^{* 2}$
times
\[
   r ( r + s + s s^*)  - r' (r' + s' + s' s^*),
\]
and the expression $\vphi_3 \phi_3 - \vphi'_3 \phi'_3$ is equal to
$h^2 h^{* 2}$
times
\[
 (r + s + s^*) (r + s^* + s s^*) - (r' + s' + s^*)(r' + s^* + s' s^*).
\]
By these comments and \eqref{eq:type4paramcond1},
we find that under the assumption \eqref{eq:type4cond1},
$\vphi_1 \phi_1 = \vphi'_1 \phi'_1$ holds if and only if \eqref{eq:type4cond2} holds,
and 
$\vphi_d \phi_d = \vphi'_d \phi'_d$  holds if and only if \eqref{eq:type4cond3} holds.
Now the result follows from Theorem \ref{thm:kappamain}.
\end{proof}

Our next goal is solve the equations \eqref{eq:type4cond1}--\eqref{eq:type4cond3}
for $h'$, $s'$, $r'$.

\begin{theorem}     \label{thm:type4sol}    \samepage
\ifDRAFT {\rm thm:type4sol}. \fi
The equations \eqref{eq:type4cond1}--\eqref{eq:type4cond3} hold if and only if
at least one of the following \eqref{eq:type4sol1}, \eqref{eq:type4sol3} holds:
\begin{align}
& h' = h, \qquad s' = s, \qquad \qquad \qquad
  r' = r \quad \text{or} \quad  r' = r + s + s s^*,    \label{eq:type4sol1}
\\
&h' = h,  \qquad 
  s' =  1+s+s^*,      \qquad
  \frac{r+r'}{1+s^*} + \frac{r' (1+s^*) } { r + r'} = s,
 \qquad r' + r \neq 0.       \label{eq:type4sol3}
\end{align}
\end{theorem}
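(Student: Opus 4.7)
The plan is to prove both directions. For the forward direction, I would substitute each of the two solution families into the three equations \eqref{eq:type4cond1}--\eqref{eq:type4cond3} and verify them directly. In the second alternative of \eqref{eq:type4sol1}, one uses $r'+s+ss^* = (r+s+ss^*)+s+ss^* = r$ in characteristic $2$ to check \eqref{eq:type4cond3}. In \eqref{eq:type4sol3}, one uses $1+s'+s^* = 1+(1+s+s^*)+s^* = s$ to verify \eqref{eq:type4cond2}, while \eqref{eq:type4cond3} follows by reversing the algebraic manipulation described below.

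For the reverse direction, \eqref{eq:type4cond1} gives $h'=h$ at once. For \eqref{eq:type4cond2}, I would bring both terms to the same side; in characteristic $2$ the difference factors as
\[
s(1+s+s^*) + s'(1+s'+s^*) = (s+s')\bigl(1+s+s'+s^*\bigr),
\]
using $s^2 + s'^2 = (s+s')^2$. Hence either $s' = s$ (Case A) or $s' = 1+s+s^*$ (Case B). These cases are disjoint since $1+s^* \neq 0$ by \eqref{eq:type4paramcond1}.

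In Case A, equation \eqref{eq:type4cond3} becomes $r^2+r(s+ss^*) = r'^2+r'(s+ss^*)$, which factors as $(r+r')(r+r'+s+ss^*)=0$ in characteristic $2$. This yields $r'=r$ or $r'=r+s+ss^*$, giving \eqref{eq:type4sol1}. In Case B, I would compute $s'+s's^* = 1+s+ss^*+s^{*2}$, substitute into \eqref{eq:type4cond3}, and simplify; using $(1+s^*)^2 = 1+s^{*2}$ and the identity $(r+r')^2 = r^2+r'^2$, the equation collapses to
\[
(r+r')^2 + (r+r')\,s\,(1+s^*) + r'\,(1+s^*)^2 = 0.
\]
Since $1+s^*\neq 0$, divide by $(1+s^*)^2$ and set $y = (r+r')/(1+s^*)$ to obtain $y^2+ys+r' = 0$. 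If $y=0$ then $r'=0$, contradicting \eqref{eq:type4paramcond2}; thus $r+r'\neq 0$ automatically. Dividing by $y$ and rewriting $r'/y = r'(1+s^*)/(r+r')$ yields the displayed relation of \eqref{eq:type4sol3}.

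The main obstacle is the algebraic bookkeeping in Case B---in particular, spotting the substitution $y = (r+r')/(1+s^*)$ that converts \eqref{eq:type4cond3} into the symmetric form displayed in \eqref{eq:type4sol3}. Once this substitution is identified, the rest is routine characteristic-$2$ manipulation, and the nonvanishing condition $r+r'\neq 0$ in \eqref{eq:type4sol3} emerges as a consequence of \eqref{eq:type4paramcond2} rather than an extra hypothesis.
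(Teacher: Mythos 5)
Your proposal is correct and follows essentially the same route as the paper: extract $h'=h$, factor \eqref{eq:type4cond2} in characteristic $2$ to split into the cases $s'=s$ and $s'=1+s+s^*$, factor \eqref{eq:type4cond3} in the first case, and in the second case reduce \eqref{eq:type4cond3} to $(r+r')^2+s(r+r')(1+s^*)+r'(1+s^*)^2=0$, ruling out $r+r'=0$ via $r'\neq 0$ from Lemma \ref{lem:type4cond}. Your substitution $y=(r+r')/(1+s^*)$ is only a notational repackaging of the paper's step of multiplying by $(r+r')^{-1}(1+s^*)^{-1}$.
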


\begin{proof}
Recall that $\text{\rm Char}(\F)=2$.
One routinely checks that each of \eqref{eq:type4sol1}, \eqref{eq:type4sol3} gives
a solution to \eqref{eq:type4cond1}--\eqref{eq:type4cond3}.
Now assume that \eqref{eq:type4cond1}--\eqref{eq:type4cond3} hold.
We show that \eqref{eq:type4sol1} or \eqref{eq:type4sol3} holds.

By \eqref{eq:type4cond1}, $h' = h$.
Using \eqref{eq:type4cond2} we find that
\[
 (s' + s)(s' + 1 + s + s^*) = 0.
\]
So either $s' =s$ or $s' = 1+ s + s^*$.
First assume that $s' = s$.
By \eqref{eq:type4cond3} with $s'=s$,
\[
  (r + r' ) ( r + r' + s + s s^*) = 0.
\]
So either $r' = r$ or $r' = r + s + s s^*$.
Thus \eqref{eq:type4sol1} holds.
Next assume that $s' = 1 + s + s^*$.
In \eqref{eq:type4cond3}, eliminate $s'$ to get
\[
 r (r + s + s s^*) + r' \big( r' + (1+s+s^*)(1+s^*) \big) = 0.
\]
In this equation, rearrange terms to get
\[
 (r+r')^2 + s (r+r')(1+s^*) + r' (1+s^*)^2 = 0.
\]
We have $r+r' \neq 0$; otherwise $r' (1+s^*)^2 = 0$, contradicting Lemma \ref{lem:type4cond}.
In the above equation, multiply each side by $(r+r')^{-1} (1+s^*)^{-1}$ to get
the  third equation in \eqref{eq:type4sol3}.
By these comments \eqref{eq:type4sol3} holds.
\end{proof}

We have a comment about \eqref{eq:type4sol1}.
By Proposition \ref{prop:A=B}, solution \eqref{eq:type4sol1} holds if and only if
there exists $\zeta \in \F$ such that $B = A + \zeta I$.
In this case $\zeta = \delta' - \delta$ in view of Lemma \ref{lem:type4affine}.

\section{Describing the companions for a Leonard pair of type IV}
\label{sec:companiontype4}
\ifDRAFT {\rm sec:companiontype4}. \fi

In this section we describe the companions for a Leonard pair of type IV.
Throughout this section, Notation \ref{notation:all} is in effect.
Assume that $\Phi$ has type IV.
Note that $\Phi'$ has type IV.
Let
\begin{align}
& (\delta, h, s, \delta^*, h^*, s^*, r)
&& 
 (\delta', h', s', \delta^*, h^*, s^*, r')                  \label{eq:type4pseqs}
\end{align}
denote the basic sequence of $\Phi$ and $\Phi'$, respectively.
Assume that $A$, $B$ are compatible, and consider the companion $K=A-B$.
We will give the entries of $K$.

For the moment assume that \eqref{eq:type4sol1} holds.
By the comment below  Theorem \ref{thm:type4sol},
we have $B = A + (\delta' - \delta) I$,
so $K = (\delta - \delta') I$.
We now give the $K$ that corresponds to solution \eqref{eq:type4sol3}.

\begin{theorem}     \label{thm:type4K}    \samepage
\ifDRAFT {\rm thm:type4K}. \fi
Assume that \eqref{eq:type4sol3} holds.
Then
\begin{align*}
K_{0,0} &= \delta - \delta' + \frac{ h (r+r') } { s^* + 1},
&
K_{1,1} &= \delta - \delta' + h \left( 1 + s^* + \frac{ r+r'} { s^* + 1} \right),
\\
K_{2,2}  &= \delta - \delta' + h \left( 1 + \frac{ r+r'} { s^* + 1} \right),
&
K_{3,3} &= \delta - \delta' + h \left( s^* + \frac{ r+r'} { s^* + 1} \right).
\end{align*}
\end{theorem}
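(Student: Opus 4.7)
The plan is to combine equation \eqref{eq:Kaa}, which asserts $K_{i,i} = a_i - a'_i$ for $0 \leq i \leq d = 3$, with the explicit formulas in Lemma \ref{lem:aiparam} that express each $a_i$ in terms of the eigenvalues $\{\th_i\}_{i=0}^3$, the dual eigenvalues $\{\th^*_i\}_{i=0}^3$, and the first split sequence $\{\vphi_i\}_{i=1}^3$. Since $\Phi$ has type IV, I can substitute the type IV parameter data \eqref{eq:type4th}--\eqref{eq:type4vphi} into Lemma \ref{lem:aiparam} to obtain closed forms for $\{a_i\}_{i=0}^3$ in terms of the basic sequence $(\delta, h, s, \delta^*, h^*, s^*, r)$, and likewise for $\{a'_i\}_{i=0}^3$.

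Working in $\text{Char}(\F) = 2$, the relevant dual-eigenvalue differences simplify to $\th^*_0 - \th^*_1 = h^*(s^*+1)$, $\th^*_1 - \th^*_2 = h^*s^*$, and $\th^*_2 - \th^*_3 = h^*(s^*+1)$ (signs being irrelevant). The factor $h^*$ in each $\vphi_i$ cancels the $h^*$ in these denominators, producing expressions of the form $\delta + h \cdot (\text{rational in } s, s^*, r)$. First I would record these formulas for $\Phi$ and $\Phi'$ in parallel.

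Next I would form the differences $K_{i,i} = a_i - a'_i$ and invoke \eqref{eq:type4sol3}, which gives $h' = h$ and $s' = 1+s+s^*$. The condition $h' = h$ ensures that the constant-in-$r$ denominators match, so many common summands, including the term $h/s^*$ appearing in both $a_1$ and $a_1'$ (and in both $a_2$ and $a_2'$), cancel. The remaining contribution splits cleanly into a part involving $r - r' = r + r'$ (char $2$) divided by $s^* + 1$, and a part involving $s, s'$. For the latter, the two consequences of $s' = 1 + s + s^*$ in characteristic $2$, namely $s + s' = 1 + s^*$ and $s - s' = 1 + s^*$, reduce each such expression to the stated forms; for example, in $K_{2,2}$ the numerator $(r+s+s^*) - (r'+s'+s^*)$ collapses to $(r+r') + (1+s^*)$, giving $\delta - \delta' + h + h(r+r')/(s^*+1)$, and in $K_{3,3}$ the identity $1 + s^* + 1 = s^*$ yields the stated $hs^* + h(r+r')/(s^*+1)$ term.

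There is no substantive obstacle; the proof is a direct substitution followed by arithmetic simplification. The only point demanding care is the systematic use of characteristic $2$ identities and the two collapses $s + s' = 1 + s^* = s - s'$, which allow the seemingly asymmetric expressions on the right-hand sides of Theorem \ref{thm:type4K} to emerge uniformly. Notably, the third condition in \eqref{eq:type4sol3} relating $r, r', s, s^*$ is not used in deriving the formulas for $K_{i,i}$; it serves only to identify the solution branch under consideration.
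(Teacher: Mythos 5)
Your proposal is correct and follows exactly the route the paper takes: its proof of Theorem \ref{thm:type4K} is precisely ``use \eqref{eq:Kaa} and Lemmas \ref{lem:aiparam}, \ref{lem:type4param},'' and your substitution of the type IV data into Lemma \ref{lem:aiparam}, the characteristic-$2$ simplifications, and the use of $s+s'=s-s'=1+s^*$ all check out against the stated entries of $K$. Your closing observation that only $h'=h$ and $s'=1+s+s^*$ from \eqref{eq:type4sol3} enter the computation is also accurate.
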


\begin{proof}
Use \eqref{eq:Kaa} and Lemmas \ref{lem:aiparam}, \ref{lem:type4param}.
\end{proof}

{
\small

\bigskip\bigskip\noindent
Kazumasa Nomura\\
Tokyo Medical and Dental University\\
Kohnodai, Ichikawa, 272-0827 Japan\\
email: knomura@pop11.odn.ne.jp

\bigskip\noindent
Paul Terwilliger\\
Department of Mathematics\\
University of Wisconsin\\
480 Lincoln Drive\\ 
Madison, Wisconsin, 53706 USA\\
email: terwilli@math.wisc.edu

\bigskip\noindent
{\bf Keywords.}
Leonard system,
parameter array,
compatibility,
companion,
bond relation.
\\
\noindent
{\bf 2020 Mathematics Subject Classification.} 
05E30,  15A21, 15B10.

\end{document}